\newtheorem{theorem}{Theorem}[section]
\newtheorem{lemma}[theorem]{Lemma}
\newtheorem{proposition}[theorem]{Proposition}
\newtheorem{corollary}[theorem]{Corollary}
\newtheorem{maintheorem}{Theorem}
\theoremstyle{definition}
\newtheorem{definition}[theorem]{Definition}
\newtheorem{remark}[theorem]{Remark}
\newtheorem{example}[theorem]{Example}
\newcommand{\Q}{\mathbb{Q}}
\newcommand{\Qp}{\mathbb{Q}_p}
\newcommand{\Zp}{\mathbb{Z}_p}
\newcommand{\N}{\mathbb{N}}
\newcommand{\R}{\mathbb{R}}
\newcommand{\Z}{\mathbb{Z}}
\newcommand{\F}{\mathbb{F}}
\newcommand{\ball}{\mathrm{B}}
\newcommand{\cyl}{\mathrm{Z}}
\newcommand{\dd}{\mathrm{d}}
\newcommand{\ii}{\mathrm{i}}
\newcommand{\e}{\mathrm{e}}
\newcommand{\tr}{^{\mathrm{T}}}
\newcommand{\linearwidth}{{\mathrm{w_L}}}
\newcommand{\Circle}{\mathrm{S}^1_p}
\newcommand{\letnpos}{Let $n$ be a positive integer}
\newcommand{\letpprime}{Let $p$ be a prime number}
\newcommand{\M}{\mathcal{M}}
\renewcommand{\le}{\leqslant}
\renewcommand{\ge}{\geqslant}
\DeclareMathOperator{\ord}{ord}
\DeclareMathOperator{\ASp}{ASp}
\numberwithin{equation}{section}
\title{Rigidity and flexibility in $p$-adic symplectic geometry}
\author[Luis Crespo, \'Alvaro Pelayo]{Luis Crespo\,\,\,\,\,\, \'Alvaro Pelayo}
\address{Luis Crespo,
	Departamento de Matem\'{a}ticas, Estad\'{i}stica y Computaci\'{o}n, Universidad de Cantabria, Av.~de Los Castros 48, 39005 Santander, Spain}
\email{luis.cresporuiz@unican.es}
\address{\'Alvaro Pelayo,
	Facultad de Ciencias Matem\'aticas,
	Universidad Complutense de Madrid, 28040 Madrid, Spain, and Real Academia de Ciencias Exactas, F\'isicas y Naturales, Madrid, Spain}
\email{alvpel01@ucm.es}
\begin{document}
	
\begin{abstract}
	Let $n\ge 2$ be an integer and let $p$ be a prime number. We prove that the analog of Gromov's non-squeezing theorem does not hold for $p$-adic embeddings: for any $p$-adic absolute value $R$, the entire $p$-adic space $(\Qp)^{2n}$ is symplectomorphic to the $p$-adic cylinder $\cyl_p^{2n}(R)$ of radius $R$, showing a degree of flexibility which stands in contrast with the real case.
	However, some rigidity remains: we prove that the $p$-adic affine analog of Gromov's result still holds. We will also show that in the non-linear situation, if the $p$-adic embeddings are equivariant with respect to a torus action, then non-squeezing holds, which generalizes a recent result by Figalli, Palmer and the second author. This allows us to introduce equivariant $p$-adic analytic symplectic capacities, of which the $p$-adic equivariant Gromov width is an example.
\end{abstract}

\maketitle

\section{Introduction}

Let $n\ge 2$ be an integer. Let $(x_1,y_1,\ldots,x_n,y_n)$ be the standard coordinates on $\R^{2n}$. Let us consider the $2n$-dimensional ball $\ball^{2n}(r)$ of radius $r>0$ defined by the inequality $\sum_{i=1}^n x_i^2+y_i^2<r^2$ and the $2n$-dimensional cylinder $\cyl^{2n}(R)=\ball^2(R)\times\R^{2n-2}$ of radius $R>0$. Endow both the ball $\ball^{2n}(r)$ and the cylinder $\cyl^{2n}(R)$ with the standard symplectic form $\sum_{i=1}^n\dd x_i\wedge\dd y_i$ on $\R^{2n}$. \emph{Gromov's non-squeezing theorem} \cite{Gromov} states that there exists a symplectic embedding
$f:\ball^{2n}(r)\hookrightarrow\cyl^{2n}(R)$
if and only if $r\le R$. However, since $\cyl^{2n}(R)$ has infinite volume with respect to the volume form $\dd x_1\wedge\dd y_1\wedge\ldots\wedge\dd x_n\wedge\dd y_n$, even if $r>R$ one can always find volume-preserving embeddings from $\ball^{2n}(r)$ to $\cyl^{2n}(R)$, with respect to this volume form. This is a \emph{rigidity} result, it shows that being ``symplectic'' is more rigid than being ``volume-preserving''. See Figure \ref{fig:squeezing-real} for an illustration. Gromov's theorem and the techniques Gromov used to prove it in his pioneering paper \cite{Gromov} have played an important role in Hamiltonian and symplectic dynamics, see for example the classical book by Hofer-Zehnder \cite{HofZeh} and the article by Bramham-Hofer \cite{BraHof}. In the present paper we explore Gromov's result from the angle of $p$-adic analytic geometry.

Next we state the main results of the paper. In later sections we will
prove slightly more general versions of these results, but which are also more technical.

\subsection{$p$-adic linear non-squeezing and $p$-adic non-linear squeezing}

Let $p$ be a prime number. We
endow the $2n$-dimensional $p$-adic space $(\Qp)^{2n}$ with the standard $p$-adic symplectic form
$\omega_0=\sum_{i=1}^n \dd x_i\wedge \dd y_i$ on $(\Qp)^{2n}$, where $(x_1,y_1,\ldots,x_n,y_n)$ are the standard coordinates on $(\Qp)^{2n}$. Also, for any $p$-adic absolute values $r,R$ let $\ball_p^{2n}(r)$ and $\cyl_p^{2n}(R)$ respectively denote the $2n$-dimensional $p$-adic ball and cylinder endowed with $\omega_0$ (see expression \eqref{eq:cylinder} for the explicit definition of $\cyl_p^{2n}(R)$, which can be given by direct analogy with the real case). We start with the $p$-adic affine version of Gromov's non-squeezing theorem. The $p$-adic analogue of the situation in the real case still holds for $p$-adic analytic embeddings, which shows the \emph{linear rigidity} of $p$-adic symplectic geometry:

\begin{maintheorem}\label{thm:linear1}
	Let $n$ be an integer with $n\ge 2$. Let $p$ be a prime number. Let $r$ be a $p$-adic absolute value. There exists a $p$-adic affine symplectic embedding $f : \ball_p^{2n}(r) \hookrightarrow \cyl_p^{2n} (1)$ if and only if $r \le 1$.
\end{maintheorem}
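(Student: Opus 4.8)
The plan is to understand exactly what a $p$-adic affine symplectic embedding of the ball into the unit cylinder can look like, and to extract the single coordinate inequality that controls squeezing. An affine symplectic embedding $f$ is of the form $f(z)=Az+b$ with $A\in\ASp(2n,\Qp)$ (or at least $A$ linear symplectic), $b\in(\Qp)^{2n}$; the ball and cylinder are defined by $p$-adic absolute value conditions, so the ``if'' direction is immediate: if $r\le 1$ then $\ball_p^{2n}(r)\subseteq\ball_p^{2n}(1)\subseteq\cyl_p^{2n}(1)$ and the identity works. So the content is the ``only if'' direction: from the existence of such an $f$ with $r>1$ we must derive a contradiction.

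First I would translate the cylinder condition into a statement about the first two coordinates: writing $f=(f_1,\dots,f_{2n})$ with each $f_j$ affine, the condition $f(\ball_p^{2n}(r))\subseteq\cyl_p^{2n}(1)$ says precisely that $|f_1(z)|\le 1$ and $|f_2(z)|\le 1$ for all $z$ with $|z_i|<r$ (here I am using $\eqref{eq:cylinder}$ and whatever normalization of ``$\cyl_p^{2n}(1)$'' the paper fixes — I will match the paper's convention for the strict/non-strict inequalities). Since a $p$-adic affine function $\ell(z)=\sum_j c_j z_j + c_0$ is bounded on the polydisc $|z_i|<r$ if and only if $|c_j|\, r\le 1$ for every $j$ with $c_j\neq 0$ and $|c_0|\le 1$, boundedness of $f_1$ and $f_2$ forces all the linear coefficients $a_{1j},a_{2j}$ of the first two rows of $A$ to satisfy $|a_{1j}|\le r^{-1}$ and $|a_{2j}|\le r^{-1}$. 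In particular, if $r>1$, the first two rows of $A$ have all entries of absolute value $<1$.

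The key step is then a determinant/symplectic obstruction. Because $A$ is symplectic, the $2\times 2n$ matrix formed by its first two rows must have a $2\times 2$ minor that pairs nontrivially under $\omega_0$ — concretely, rows $1$ and $2$ of a symplectic matrix cannot both lie in a common Lagrangian, so $\omega_0$ evaluated on the first two rows, which is $\sum_{i=1}^n (a_{1,2i-1}a_{2,2i}-a_{1,2i}a_{2,2i-1})$, must equal... actually, the cleanest fact is that $A^{\mathrm{T}}J A=J$ forces the $(1,2)$ entry of $A^{\mathrm{T}}JA$ to be $1$, i.e. $\sum_{i=1}^n(a_{1,2i-1}a_{2,2i}-a_{1,2i}a_{2,2i-1})=1$. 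But every term on the left is a product of two entries each of absolute value $\le r^{-1}<1$, so by the ultrametric inequality the left side has absolute value $\le r^{-2}<1$, contradicting $|1|=1$. Hence $r\le 1$.

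The main obstacle I anticipate is bookkeeping rather than depth: pinning down the exact definition of $\cyl_p^{2n}(1)$ from $\eqref{eq:cylinder}$ (which pair of coordinates, strict vs.\ non-strict absolute value bounds, and whether ``$p$-adic absolute value'' means an element of $p^{\Z}$), and correspondingly being careful about whether the bound on the $a_{1j}$ is $\le r^{-1}$ or $<r^{-1}$ — but since $r>1$ is equivalent to $r\ge p$ in the value group, any of these variants still gives $|a_{1j}|,|a_{2j}|\le p^{-1}<1$ and the ultrametric estimate closes the argument. A secondary point is to confirm that the constant terms $b_1,b_2$ play no role in the obstruction (they do not: they only shift, and the symplectic identity involves only the linear part $A$). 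I would also remark that this argument shows something slightly stronger, matching the paper's promise of a ``more general version'': it suffices that $f$ be an affine map whose linear part is symplectic and whose first two component functions are bounded by $1$ on the ball, and one even gets a quantitative statement ($r\le 1$ with no slack) that mirrors the sharpness in Gromov's theorem.
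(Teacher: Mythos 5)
Your proposal is correct and is essentially the paper's own argument in contrapositive form: the paper evaluates $f$ at the vectors $\e_i/r$ to deduce that every entry of the first two rows of the symplectic matrix has absolute value at most $R/r$, and then uses $\omega_0(S\tr\e_1,S\tr\e_2)=1$ together with the ultrametric bound $|\omega_0(u,v)|_p\le\|u\|_p\|v\|_p$ to force $r\le R$, exactly your coefficient bound plus the identity $\sum_{i}(a_{1,2i-1}a_{2,2i}-a_{1,2i}a_{2,2i-1})=1$. (Only cosmetic point: that sum is the $(1,2)$ entry of $A\Omega_0A\tr$ rather than of $A\tr\Omega_0A$, but both equal $\Omega_0$ since the symplectic group is closed under transposition, an implicit step the paper also takes.)
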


We refer to Theorem \ref{thm:linear} for a more general, but equivalent, version of Theorem \ref{thm:linear1}. In addition, also concerning the linear/affine situation, we will prove several $p$-adic analogs of classical results in real symplectic topology such as a characterization of the linear squeezing property (Theorem \ref{thm:rigidity}) or a characterization of matrices which preserve the $p$-adic linear symplectic width (Theorem \ref{thm:width}).

In the nonlinear situation then there is no analog of Theorem \ref{thm:linear1}. On the contrary, the following nonlinear statement stands in strong contrast with Theorem \ref{thm:linear1}:

\begin{maintheorem}\label{thm:total-embedding1}
	Let $n$ be an integer with $n\ge 2$. Let $p$ be a prime number. There exists a $p$-adic analytic symplectomorphism \[\phi : (\Qp)^{2n}\overset{\cong}{\longrightarrow} \cyl_p^{2n} (1).\] In particular, $\phi|_{\ball_p^{2n}(r)}:\ball_p^{2n}(r)\hookrightarrow\cyl_p^{2n}(1)$ is a $p$-adic analytic symplectic embedding for every $p$-adic absolute value $r$.
\end{maintheorem}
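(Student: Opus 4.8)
The plan is to reduce at once to the case $n=2$ and then to build the symplectomorphism by an explicit piecewise construction. For the reduction, note that under $(\Qp)^{2n}=(\Qp)^4\times(\Qp)^{2n-4}$ the standard form $\omega_0$ splits as the sum of the standard forms on the two factors, while $\cyl_p^{2n}(1)=\ball_p^2(1)\times(\Qp)^{2n-2}=\bigl(\ball_p^2(1)\times(\Qp)^2\bigr)\times(\Qp)^{2n-4}=\cyl_p^4(1)\times(\Qp)^{2n-4}$; hence if $\psi\colon(\Qp)^4\overset{\cong}{\longrightarrow}\cyl_p^4(1)$ is a $p$-adic analytic symplectomorphism then so is $\psi\times\mathrm{id}_{(\Qp)^{2n-4}}\colon(\Qp)^{2n}\to\cyl_p^{2n}(1)$. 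So it suffices to produce an analytic symplectomorphism $(\Qp)^4\to\cyl_p^4(1)$.

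For $n=2$ the construction rests on two $p$-adic features with no real counterpart. First, $\Qp$ is a countable increasing union of balls $p^{-k}\Zp$ and is not compact, so there is ``room at infinity'' of Hilbert--hotel type. Second, if $c$ is a \emph{locally constant} $\Qp$-valued function then $\dd c=0$, so maps such as $(x_1,y_1,x_2,y_2)\mapsto(p^{m}x_1,p^{-m}y_1,p^{m'}x_2,p^{-m'}y_2)$ and $(x_1,y_1,x_2,y_2)\mapsto(x_1,y_1,x_2+c,y_2)$ with $m,m',c$ locally constant are analytic symplectomorphisms of $(\Qp)^4$. The plan is to fix a decomposition of $(\Qp)^4$ into countably many clopen ``shells'' — for instance indexed by the $p$-adic sizes of $(x_1,y_1)$ and of $(x_2,y_2)$ — and a matching clopen decomposition of $\cyl_p^4(1)$, and then to define $\psi$ shell by shell as a composition of moves of the above type (a hyperbolic rescaling with locally constant exponents, a linear twist in $\ASp(4,\Zp)$, and a locally constant shear), chosen so that each shell of the source is carried bijectively onto its partner in $\cyl_p^4(1)$; the parts of $\ball_p^2(1)\times(\Qp)^2$ that must be moved aside to accommodate the shells of $(x_1,y_1)$ sticking out of $\ball_p^2(1)$ are shifted off to infinity inside the free $(\Qp)^2$ factor. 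Granting such a $\psi$, it is a bijection by construction; it is a symplectomorphism because it is one on each clopen piece and this is a local condition; and it is $p$-adic analytic because on each clopen piece it is a polynomial map (the composite of an affine map and a locally constant shear), hence locally given by a convergent power series.

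The main obstacle is precisely this bookkeeping: arranging the clopen decompositions and the piecewise maps so that they glue into a global bijection with image exactly $\cyl_p^4(1)$. Two points make it delicate, and also explain why no such thing can exist over $\R$. On the one hand the map cannot be given by a single analytic formula — a power series converging on all of $(\Qp)^4$ is a polynomial, so the first two coordinates of a global-analytic $\psi$ would be bounded polynomials, hence constant, and $\psi$ could not be injective — and it cannot even be piecewise affine, since by an argument in the spirit of the proof of Theorem \ref{thm:linear1} an affine symplectic map cannot carry a shell on which both $(x_1,y_1)$ and $(x_2,y_2)$ are large into $\cyl_p^4(1)$; this is what forces the locally constant shears. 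On the other hand, there is no $p$-adic analog of Gromov's pseudoholomorphic-curve obstruction, so nothing prevents the shell count from closing up — which is exactly the flexibility the theorem records, in contrast with Theorem \ref{thm:linear1}. I would expect the final write-up to isolate the shell-by-shell recipe as a lemma (in the more general form promised in the introduction), after which bijectivity, symplecticity and analyticity are routine.
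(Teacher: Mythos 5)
Your overall strategy is in the same spirit as the paper's proof -- the paper's map is, on each coset of $(\Zp)^{2n}$, exactly one of your allowed moves (a translation by a locally constant vector), so ``clopen decomposition plus piecewise affine moves with locally constant data'' is the right idea. But as written your proposal has a genuine gap: the entire content of the theorem is the explicit matching of pieces, and you defer precisely that to ``bookkeeping'' and a hoped-for lemma. Nothing in the proposal exhibits a bijection between your shells of $(\Qp)^4$ and a clopen tiling of $\cyl_p^4(1)$ realized by the stated moves, and it is not routine: all your moves are measure-preserving affine maps, so one must actually produce a tiling of the cylinder by the affine images of the shells, with nothing left over and nothing hit twice. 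The paper resolves this with one concrete formula: write $x_1=a_0+\sum_{i\ge1}a_ip^{-i}$, $x_2=c_0+\sum_{i\ge1}c_ip^{-i}$ (and similarly $y_1,y_2$) with $a_0,c_0\in\Zp$, keep the integer parts as the new $x_1',x_2'$-integer data, and interleave the fractional digits of $x_1$ and $x_2$ into the fractional expansion of $x_2'$ (likewise for the $y$'s), leaving $x_3,y_3,\ldots$ untouched. Uniqueness of $p$-adic expansions makes this visibly a bijection onto $\cyl_p^{2n}(1)$, and since the fractional digits are constant on each unit ball, the map is a translation there, hence analytic and symplectic. Until you supply an equally explicit recipe (or an honest combinatorial lemma proving one exists), your argument establishes plausibility, not the theorem.

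Two smaller points. First, your shells indexed by the norms of $(x_1,y_1)$ and $(x_2,y_2)$ do not form a clopen partition of $(\Qp)^4$: the loci where $(x_1,y_1)=(0,0)$ or $(x_2,y_2)=(0,0)$ lie in no such shell and are not open, so the decomposition itself needs repair (using cosets of $(\Zp)^4$, as the paper implicitly does, avoids this). Second, your side remark that a power series converging on all of $(\Qp)^4$ must be a polynomial is false: for example $\sum_{n\ge0}p^{n^2}x^n$ converges everywhere on $\Qp$ and is not a polynomial, so the argument that a single global analytic formula is impossible does not stand as given (this does not affect the main construction, but it should not appear as a justification). Your reduction to $n=2$ and the observations that locally constant functions have zero differential, that symplecticity is local, and that affine maps are analytic are all correct.
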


Theorem \ref{thm:total-embedding1} may be considered a \emph{flexibility} result:
the entire $p$-adic space $(\Qp)^{2n}$ can be symplectically squeezed into a thin cylinder.
We refer to Theorem \ref{thm:total-embedding} for an extended version of Theorem \ref{thm:total-embedding1}, which in particular includes
detailed information about the properties of the $p$-adic analytic symplectomorphism $\phi$.

\subsection{$p$-adic squeezing and non-squeezing while preserving symmetries}

Next we state a result which shows how subtle the phenomena of $p$-adic symplectic squeezing is, in the sense that if we want to squeeze while preserving symmetries then it may or not be possible, depending on ``how much'' symmetry we require.  These symmetries are expressed by means of invariance under natural group actions. Let $\Circle$ be the $p$-adic circle. Consider the standard rotational action of the $n$-dimensional torus $(\Circle)^n$ on $(\Qp)^{2n}$ component by component:
\[((a_1,b_1),\ldots,(a_n,b_n))\cdot(x_1,y_1,\ldots,x_n,y_n)=((a_1,b_1)\cdot(x_1,y_1),\ldots,(a_n,b_n)\cdot(x_n,y_n)),\]
where the action of $\Circle$ on $(\Qp)^2$ is given by
\begin{equation}\label{eq:action}
	(a,b)\cdot(x,y)=(ax-by,bx+ay).
\end{equation}

The action given in \eqref{eq:action} is well defined on $(\Qp)^{2n}$ but is not well defined when restricted to $\ball_p^{2n}(r)$ because it does not leave $\ball_p^2(r)$ invariant. But this technical problem can be dealt with by instead considering the largest subgroup $\mathrm{G}_p$ of $\Circle$ which leaves $\ball_p^2(r)$ invariant, which results in a well defined action of $(\mathrm{G}_p)^n$ on $\ball_p^{2n}(r)$. In fact,  $\mathrm{G}_p$ can be explicitly described as
follows: $\mathrm{G}_p=\Circle$, if $p\not\equiv 1\mod 4$; if on the other hand $p \equiv 1 \mod 4$, then $\mathrm{G}_p$ consists exactly of the elements $(a,b)$ of $\Circle$ such that $\ord_p(a + \ii b) = 0$,
where $\ii$ is any number in $\Qp$ such that $\ii^2 = -1$. Finally, let us consider the open subset of $(\Qp)^{2n}$ given by \[\mathrm{T}_p^{2n}=\Big\{(x,y)\in(\Qp)^2:(x,y)\ne(0,0)\Big\}^n.\] The following result displays both rigidity and flexibility aspects of $p$-adic symplectic geometry under symmetries.

\begin{maintheorem}\label{thm:equivariant}
	Let $n$ be an integer with $n\ge 2$. Let $p$ be a prime number. There is a $(\mathrm{G}_p \times (\Circle)^{n-1})$-equivariant $p$-adic analytic
	symplectomorphism \[\phi : \mathrm{T}_p^{2n}\overset{\cong}{\longrightarrow} \mathrm{T}_p^{2n}\cap\cyl_p^{2n} (1).\]
	However, for any $p$-adic absolute value $r>1$ there is no $(\mathrm{G}_p)^n$-equivariant $p$-adic analytic symplectic embedding
	$f : \ball_p^{2n} (r) \hookrightarrow \cyl_p^{2n} (1)$.
\end{maintheorem}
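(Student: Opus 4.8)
The plan is to treat the two halves separately, since they pull in opposite directions. For the flexibility half I would work one $\Circle$-factor at a time: on a single punctured plane $\mathrm{T}_p^2=\{(x,y)\ne(0,0)\}$, the $p$-adic modulus (or rather the quantity $x^2+y^2$, up to the subtlety with $p\equiv 1\bmod 4$ and the factorization over $\Qp(\ii)$) together with an ``angular'' coordinate gives polar-type coordinates on which the $\Circle$-action is rotation in the angle and trivial on the radius. In these coordinates a symplectomorphism $\mathrm{T}_p^2\to\mathrm{T}_p^2\cap\cyl_p^2(1)$ that only rescales the radial variable while fixing the angle is automatically $\Circle$-equivariant; one then only has to arrange that it is symplectic, which in a one-variable radial problem is a single ODE-type normalization (the area form in polar coordinates is $\tfrac12\,\dd(x^2+y^2)\wedge\dd\theta$, so the map must preserve $\dd(x^2+y^2)$), and that its image lies in the cylinder, i.e.\ that the new radial variable takes values of modulus $\le 1$. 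This is exactly the sort of construction underlying Theorem~\ref{thm:total-embedding1}, but performed so as to respect the product structure $\mathrm{T}_p^{2n}=(\mathrm{T}_p^2)^n$: apply the radial squeezing only in the first factor (this accounts for the $\mathrm{G}_p$, since there we must respect the largest subgroup preserving balls, but on $\mathrm{T}_p^2$ we have all of $\Circle$ — I would double-check which factor carries $\mathrm{G}_p$ and which carry $\Circle$ and match the statement accordingly) and the identity, or a similar radial map, on the remaining $n-1$ factors; the product map is $(\mathrm{G}_p\times(\Circle)^{n-1})$-equivariant because each factor map is equivariant for its own circle, it is symplectic because it is a product of symplectic maps, and it is analytic because each radial reparametrization can be chosen to be given by a convergent $p$-adic power series.

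For the rigidity half, the point is that equivariance is a very strong constraint and effectively linearizes the problem near the fixed locus. Suppose $f:\ball_p^{2n}(r)\hookrightarrow\cyl_p^{2n}(1)$ is $(\mathrm{G}_p)^n$-equivariant, analytic and symplectic, with $r>1$. The origin is the unique point of $\ball_p^{2n}(r)$ fixed by all of $(\mathrm{G}_p)^n$, so $f(0)$ must be a fixed point of the image torus action; by composing with a translation I may assume $f(0)=0$. Then I would examine the Taylor expansion of $f$ at $0$: $(\mathrm{G}_p)^n$-equivariance forces the expansion to respect the weight decomposition of the torus action, and because the action on each $(\Qp)^2$ factor is the standard rotation with all weights equal to $1$, the only equivariant linear maps and the only equivariant higher-order terms are severely restricted — essentially $f$ must be, up to the torus symmetry, block-diagonal and in fact its first Taylor coefficient $\dd f_0=:A$ must already be a symplectic linear map that intertwines the two torus actions, i.e.\ $A$ permutes and rescales the symplectic planes in an $\mathrm{ASp}$-admissible way. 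Now I invoke the affine/linear non-squeezing result, Theorem~\ref{thm:linear1} (or the more general Theorem~\ref{thm:linear}): an affine symplectic embedding of $\ball_p^{2n}(r)$ into $\cyl_p^{2n}(1)$ exists only if $r\le 1$. Since the linear part $A$ of our equivariant $f$ is forced to be (conjugate to) such an affine symplectic embedding — the projection to the distinguished $(\Qp)^2$ factor of the cylinder must, by equivariance, factor through the corresponding radial coordinate and hence be controlled by $A$ alone — we get $r\le 1$, contradicting $r>1$.

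The main obstacle, and the step I would spend the most care on, is making precise the claim that equivariance reduces the nonlinear embedding to its linear part as far as the squeezing obstruction is concerned. Concretely: I need to show that the composite $\ball_p^2(r)\hookrightarrow\ball_p^{2n}(r)\xrightarrow{f}\cyl_p^{2n}(1)\twoheadrightarrow\ball_p^2(1)$ (inclusion into the first factor, then $f$, then projection onto the base of the cylinder) has modulus-shrinking behaviour detected already at order zero — i.e.\ that the higher-order terms of $f$, constrained to be $(\mathrm{G}_p)^n$-equivariant and to preserve $\omega_0$, cannot ``rescue'' a ball of radius $r>1$ by mapping it into the thin direction. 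This is where the specific description of $\mathrm{G}_p$ (all of $\Circle$ unless $p\equiv 1\bmod 4$, and an explicit index-controlled subgroup otherwise) really matters, because the size of $\mathrm{G}_p$ governs how many Taylor monomials survive the equivariance constraint; I would handle the $p\equiv 1\bmod 4$ case by passing to $\Qp(\ii)$ and diagonalizing the $\mathrm{G}_p$-action, reducing everything to monomial weight bookkeeping, and then quote Theorem~\ref{thm:linear} to close the argument. The flexibility half, by contrast, I expect to be essentially a bookkeeping exercise once the single-factor radial squeezing map from the proof of Theorem~\ref{thm:total-embedding1} is in hand.
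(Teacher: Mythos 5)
Your plan for the flexibility half breaks down at the very first step. A product map $f_1\times\cdots\times f_n$ in which each $f_i$ acts on its own $(\Qp)^2$ factor cannot squeeze: a symplectomorphism of a $2$-dimensional factor preserves $\dd x\wedge\dd y=\tfrac12\,\dd z\wedge\dd t$ and hence the $p$-adic measure, but $\mathrm{T}_p^2$ has infinite measure while $\mathrm{T}_p^2\cap\ball_p^2(1)=(\Zp)^2\setminus\{0\}$ has finite measure, so no ``radial squeezing'' of the first factor alone onto its intersection with the cylinder exists. The paper's construction (Theorem \ref{thm:total-actions-nofixed}) is forced to couple the factors: in the polar coordinates of Section \ref{sec:polar} it interleaves the fractional digits of $z_1=x_1^2+y_1^2$ with those of $z_2$, pushing the excess of $z_1$ into $z_2$ exactly as in the digit-rearrangement of Theorem \ref{thm:total-embedding} but applied to the torus-invariant coordinates $z_i$; equivariance then follows from Proposition \ref{prop:polar-actions} because the group action only touches $(a_i,b_i,t_i)$ (and $k_i',k_i''$ when $p\equiv 1\bmod 4$), symplecticity follows from $\omega_0=\tfrac12\sum_i\dd z_i\wedge\dd t_i$, and a Schr\"oder--Bernstein argument upgrades the resulting embedding to a symplectomorphism. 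None of this survives in a factor-by-factor construction, so your first half is not a bookkeeping exercise but is actually impossible as stated.

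For the rigidity half, your argument is incomplete at precisely the step you flag as the main obstacle, and that step is not a technicality: reducing a nonlinear symplectic embedding to its linearization at a fixed point is in general exactly what fails in squeezing problems, and the Taylor/weight bookkeeping you sketch does not rule out higher-order equivariant terms carrying a large ball into the thin directions. The paper (Theorem \ref{thm:analytic-actions-complete}) avoids linearization entirely. For a generalized equivariant $f$ with twisting isomorphism $h$, comparing dimensions of fixed-point sets of group elements shows that $h$ permutes the $n$ circle factors; choosing the index $j$ that $h$ sends to the first factor, the $2$-disk $D=\{x_i=y_i=0 \text{ for } i\ne j\}\cap\ball_p^{2n}(r)$ is the fixed set of a subtorus, so equivariance forces $f(D)$ into the corresponding fixed set of the target, namely the $2$-disk $\{x_i=y_i=0 \text{ for } i\ne 1\}\cap\cyl_p^{2n}(1)$ of radius $1$; since $\omega_0$ restricts to the area form on both disks and $f$ preserves it, $r^2=\mu(D)=\mu(f(D))\le 1$. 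This two-dimensional area comparison on a fixed-point submanifold is the missing idea; without it (or a genuine proof of your linearization claim) the second half does not close either.
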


Theorem \ref{thm:equivariant} is implied by the slightly more general statements Theorems \ref{thm:analytic-actions-complete} and \ref{thm:total-actions-nofixed}.  We will also prove other results along the lines of Theorem \ref{thm:equivariant} (such as Theorem \ref{thm:total-actions-semi}).

We conclude with the following result concerning symplectic capacities.

\begin{maintheorem}\label{thm:capacity1}
	Let $n$ be an integer with $n\ge 2$. Let $p$ be a prime number. There is no $p$-adic analytic symplectic capacity on $(\Qp)^{2n}$ but there exists a $p$-adic analytic symplectic $(\mathrm{G}_p)^n$-capacity on $(\Qp)^{2n}$.
\end{maintheorem}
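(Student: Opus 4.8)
The plan is to deduce Theorem~\ref{thm:capacity1} formally from Theorems~\ref{thm:total-embedding1} and~\ref{thm:equivariant}; essentially all the content lies in unwinding the defining axioms of a $p$-adic analytic symplectic capacity — monotonicity under $p$-adic analytic symplectic embeddings, conformality under $p$-adic dilations, and the normalization requiring $c(\ball_p^{2n}(1))$ and $c(\cyl_p^{2n}(1))$ to be positive and finite — together with those of its $(\mathrm{G}_p)^n$-equivariant analog, where one restricts to $(\mathrm{G}_p)^n$-invariant open subsets and to $(\mathrm{G}_p)^n$-equivariant embeddings.

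For the non-existence half, suppose $c$ were a $p$-adic analytic symplectic capacity on $(\Qp)^{2n}$. The final clause of Theorem~\ref{thm:total-embedding1} supplies, for every $p$-adic absolute value $r$, a $p$-adic analytic symplectic embedding $\ball_p^{2n}(r)\hookrightarrow\cyl_p^{2n}(1)$, so monotonicity gives $c(\ball_p^{2n}(r))\le c(\cyl_p^{2n}(1))<\infty$ for all such $r$. On the other hand, a $p$-adic dilation identifies $\ball_p^{2n}(p^k)$ with $\ball_p^{2n}(1)$ up to rescaling $\omega_0$ by a power of $p$, so conformality forces $c(\ball_p^{2n}(p^k))$ to equal $c(\ball_p^{2n}(1))$ times a positive power of $p$ that tends to infinity with $k$. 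Combining the two bounds yields $c(\ball_p^{2n}(1))=0$, contradicting the normalization. (Equivalently: a dilation realizes $((\Qp)^{2n},\omega_0)$ as conformally symplectomorphic to itself with conformal factor a power of $p$ distinct from $1$, so $c((\Qp)^{2n})\in\{0,\infty\}$, whereas Theorem~\ref{thm:total-embedding1} identifies $c((\Qp)^{2n})$ with $c(\cyl_p^{2n}(1))$.)

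For the existence half, I would take $c$ to be the \emph{$(\mathrm{G}_p)^n$-equivariant Gromov width}, assigning to a $(\mathrm{G}_p)^n$-invariant open $U\subseteq(\Qp)^{2n}$ the supremum of those $p$-adic absolute values $r$ (or their squares, to match the homogeneity degree prescribed by the conformality axiom) for which there is a $(\mathrm{G}_p)^n$-equivariant $p$-adic analytic symplectic embedding $\ball_p^{2n}(r)\hookrightarrow U$, and then check the axioms. Monotonicity is immediate by composing equivariant embeddings. Conformality holds because the dilation $\phi_\lambda\colon v\mapsto\lambda v$ commutes with the $(\mathrm{G}_p)^n$-action and satisfies $\phi_\lambda^{*}\omega_0=\lambda^2\omega_0$, so $g\mapsto\phi_\lambda\circ g\circ\phi_\lambda^{-1}$ is a bijection between $(\mathrm{G}_p)^n$-equivariant symplectic embeddings $\ball_p^{2n}(r)\hookrightarrow U$ and those $\ball_p^{2n}(|\lambda|_p r)\hookrightarrow\lambda\cdot U$. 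For the normalization, the identity gives $c(\ball_p^{2n}(1))\ge 1$; since $\mathrm{G}_p$ leaves $\ball_p^{2}(1)$ invariant, the inclusion $\ball_p^{2n}(1)\hookrightarrow\cyl_p^{2n}(1)$ is $(\mathrm{G}_p)^n$-equivariant, so monotonicity gives $1\le c(\ball_p^{2n}(1))\le c(\cyl_p^{2n}(1))$; and the non-squeezing half of Theorem~\ref{thm:equivariant} states precisely that no $(\mathrm{G}_p)^n$-equivariant symplectic embedding $\ball_p^{2n}(r)\hookrightarrow\cyl_p^{2n}(1)$ exists for $r>1$, whence $c(\cyl_p^{2n}(1))\le 1$. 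Therefore $c(\ball_p^{2n}(1))=c(\cyl_p^{2n}(1))=1$, both positive and finite, and $c$ is a $p$-adic analytic symplectic $(\mathrm{G}_p)^n$-capacity.

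I expect the only obstacles to be bookkeeping ones: fixing the precise $p$-adic form of the conformality axiom and confirming that the equivariant Gromov width obeys it — which comes down to the facts that $p$-adic dilations are conformally symplectic and commute with the torus action — and observing that the single point at which the construction could collapse, namely the finiteness of $c(\cyl_p^{2n}(1))$ (which fails for the unrestricted Gromov width exactly by Theorem~\ref{thm:total-embedding1}), is precisely what the equivariant non-squeezing statement of Theorem~\ref{thm:equivariant} guarantees.
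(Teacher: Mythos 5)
Your proposal is correct and follows essentially the same route as the paper, which splits the statement into Theorems \ref{thm:no-capacity} and \ref{thm:capacity}: non-existence by playing the squeezing symplectomorphism of Theorem \ref{thm:total-embedding1} against conformality and non-triviality, and existence by taking the $(\mathrm{G}_p)^n$-equivariant Gromov width \eqref{eq:capacity} and normalizing $c(\cyl_p^{2n}(1))=1$ via the equivariant non-squeezing of Theorem \ref{thm:analytic-actions-complete}. The only (shared) point of imprecision is that Definition \ref{def:capacity} demands monotonicity under \emph{generalized} equivariant embeddings while the width is built from strictly equivariant ones, a bookkeeping issue the paper also leaves implicit.
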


Theorem \ref{thm:capacity1} is implied by Theorems \ref{thm:no-capacity} and \ref{thm:capacity}.

\subsection{Works on $p$-adic geometry and symplectic geometry}

The literature on $p$-adic geometry is very extensive, as it is also
the case in symplectic geometry and topology. We recommend the articles \cite{Eliashberg,Pelayo-hamiltonian,Schlenk,Weinstein-symplectic} for surveys on different aspects of symplectic geometry and topology, and \cite{HofZeh,MarRat,McDSal,OrtRat} for books in symplectic geometry and its connection to mechanics. The field of $p$-adic geometry is large and broad, see Lurie's lecture \cite{Lurie}, the book by Scholze-Weinstein \cite{SchWei} and the references therein for a recent treatment. See also Zelenov \cite{Zelenov} and Hu-Hu \cite{HuHu} for a construction of symplectic $p$-adic vector spaces and the $p$-adic Heisenberg group and Maslov index.

The present paper is the fourth of a sequence initiated ten years ago by Voevodsky, Warren the second author \cite[Section 7]{PVW} with the goal of developing symplectic geometry and integrable systems with $p$-adic coefficients, and eventually implementing it using homotopy type theory and the proof assistant Coq. In our recent papers \cite{CrePel-JC,CrePel-williamson} we carried out part of this proposal. However, the present paper is the first of the four to focus on results of flexibility/rigidity in $p$-adic symplectic geometry and topology.

\subsection*{Structure of the paper}

The paper is organized as follows. Section \ref{sec:ball-cylinder} introduces $p$-adic balls and $p$-adic cylinders; Section \ref{sec:affine} presents our first main result, about $p$-adic symplectic non-squeezing in the linear case; Section \ref{sec:nonlinear} presents the second main result, about $p$-adic symplectic squeezing in the nonlinear case; Sections \ref{sec:char-squeezing} and \ref{sec:width} continue the study of the linear case, defining the $p$-adic analogs of affine rigidity and linear symplectic width; Section \ref{sec:polar} gives a new construction of polar coordinates on the plane $(\Qp)^2$; Section \ref{sec:equivariant} uses the coordinates of Section \ref{sec:polar} in order to prove an equivariant version of symplectic squeezing in the non-linear $p$-adic case; Section \ref{sec:capacities} introduces the concept of $p$-adic analytic symplectic capacity, which is a nonlinear $p$-adic generalization of the concept of linear symplectic width; Section \ref{sec:examples} gives some examples and Section \ref{sec:remarks} some final remarks; we close the paper with an appendix (Section \ref{sec:appendix}), where we recall the basic properties of the $p$-adic numbers.

\section{$p$-adic balls and $p$-adic cylinders}\label{sec:ball-cylinder}

In this section we recall the concept of $p$-adic ball and introduce the notion of $p$-adic cylinder, both of which are needed to formulate the main results of the present paper. We refer to the Appendix (Section \ref{sec:appendix}) for a brief review of the basic concepts regarding $p$-adic numbers.

\letpprime. Recall that the \emph{$p$-adic absolute value} $|\cdot|_p$ on the field of $p$-adic numbers $\Qp$ is defined by
\[|x|_p=p^{-\ord_p(x)}.\]
Let $m$ be a positive integer. The \emph{$p$-adic norm} $\|\cdot\|_p$ on $(\Qp)^{m}$ is defined by
\[\|(v_1,\ldots,v_m)\|_p=\max\Big\{|v_1|_p,\ldots,|v_m|_p\Big\}.\]
Let $r$ be a $p$-adic absolute value, that is, an element of $\Q$ which is the absolute value of some nonzero element of $\Qp$. The \emph{$m$-dimensional $p$-adic ball of radius $r$} is the set
\begin{equation}\label{eq:ball}
	\ball_p^{m}(r)=\Big\{(v_1,v_2,\ldots,v_m)\in(\Qp)^{n}:\|(v_1,v_2,\ldots,v_m)\|_p<pr\Big\}.
\end{equation}
Let $R$ be a $p$-adic absolute value. Suppose that $m\ge 2$. The \emph{$m$-dimensional $p$-adic cylinder of radius $R$} is the set
\begin{equation}\label{eq:cylinder}
	\cyl_p^{m}(R)=\Big\{(v_1,v_2,\ldots,v_m)\in(\Qp)^{m}:\|(v_1,v_2)\|_p< pR\Big\}=\ball_p^2(R)\times(\Qp)^{m-2}.
\end{equation}
If $m=2$ then
\[\cyl_p^2(R)=\ball_p^2(R).\]
We often simply say that $\ball_p^{m}(r)$ is a \emph{$p$-adic ball} and $\cyl_p^{m}(R)$ is a \emph{$p$-adic cylinder}, without specifying the parameters $r$, $R$ or $m$. The topology on $(\Qp)^m$ is the one induced by the $p$-adic norm. In this topology, both $\ball_p^m(r)$ and $\cyl_p^m(R)$ are open and closed subsets of $(\Qp)^m$, because for any $v\in(\Qp)^m$, \[\|v\|_p<pr\] is equivalent to \[\|v\|_p\le r.\] This is in strong contrast with the intuition we have about these sets in the real case, where neither of them is a closed subset of $\R^m$. We refer to Figures \ref{fig:ball-cylinder-2} and \ref{fig:ball-cylinder-3} for a representation of $p$-adic balls, $p$-adic cylinders and similar products in dimension $2$ and $3$.

\begin{figure}
	\begin{tabular}{cc}
		\includegraphics[width=0.45\linewidth]{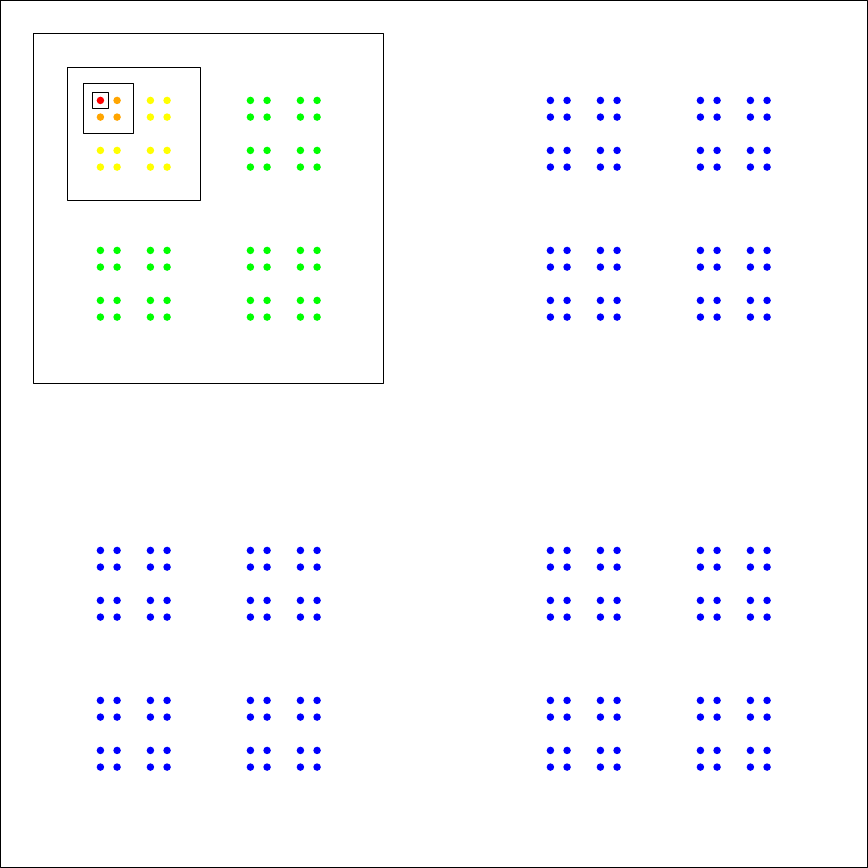} & \includegraphics[width=0.45\linewidth]{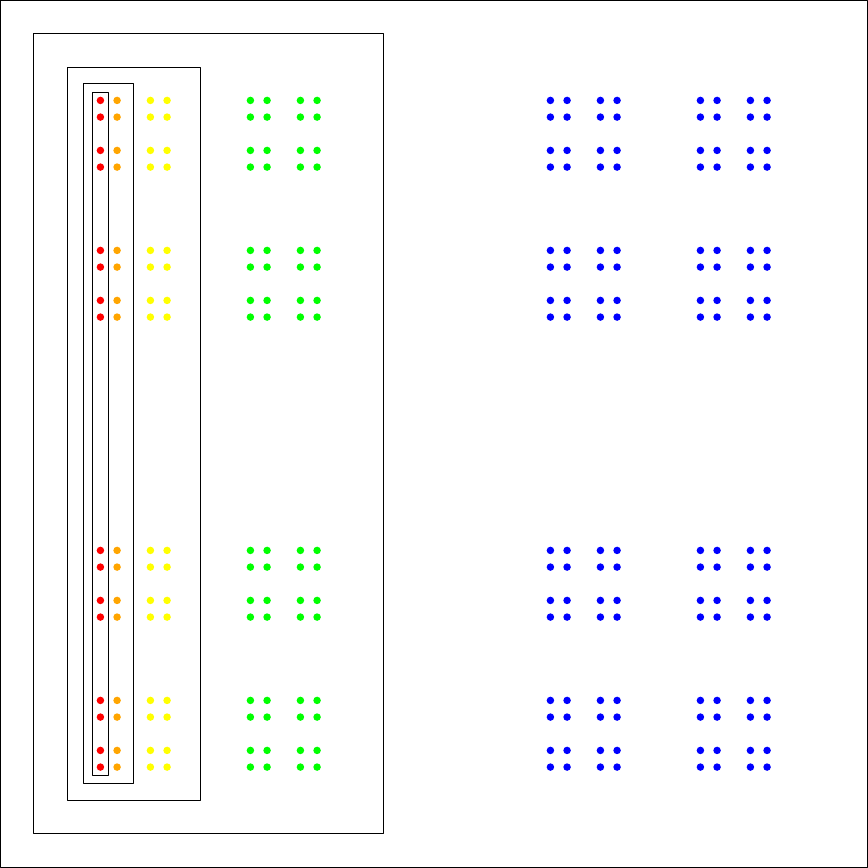} \\
		$m=2,s=2,r_0=16$ & $m=2,s=1,r_0=16$ \\[2cm]
		\fbox{\includegraphics[width=0.45\linewidth]{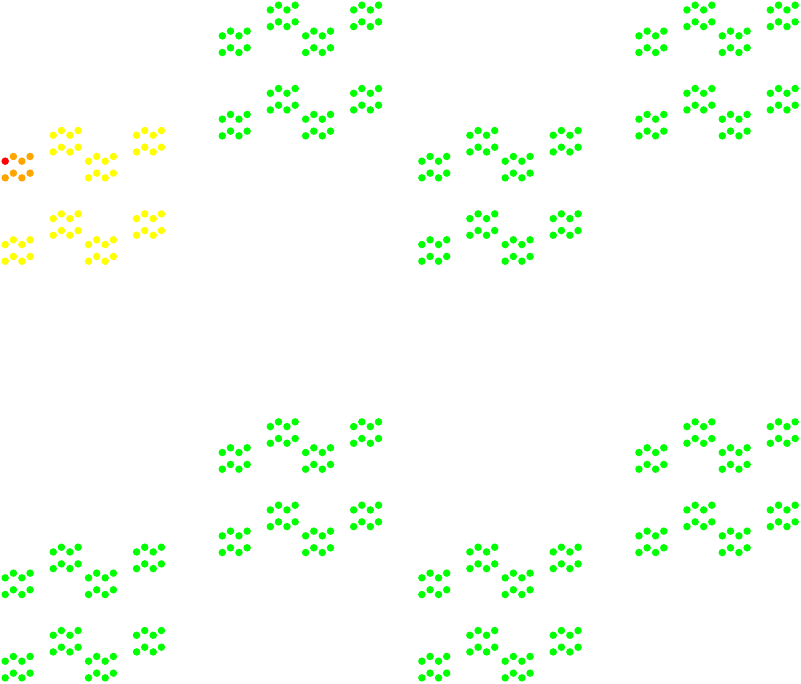}} & \fbox{\includegraphics[width=0.45\linewidth]{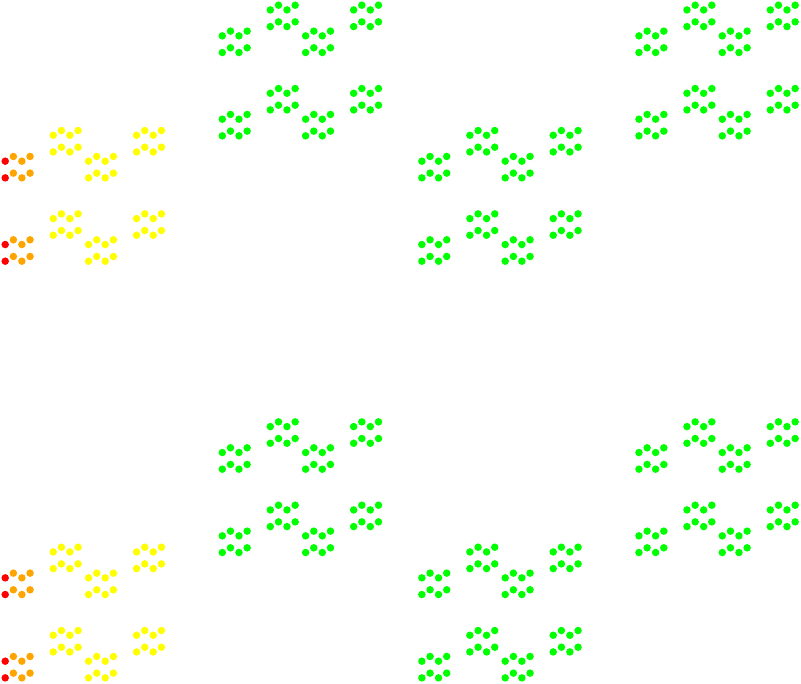}} \\
		$m=3,s=3,r_0=8$ & $m=3,s=2,r_0=8$
	\end{tabular}
	\caption{Each one of the four figures is a symbolic representation of \[\ball_2^m(r_0)=\Big\{(v_1,v_2,\ldots,v_m)\in(\Q_2)^{m}:\|(v_1,v_2,\ldots,v_m)\|_2<2r_0\Big\},\] as it appears in expression \eqref{eq:ball}, for some $m$ ($2$ in the upper figures and $3$ in the lower figures) and $r_0$ ($16$ and $8$ respectively). Each dot is a $2$-adic ball of radius $1$, and the $2$-adic balls that are closer in the representation are also $2$-adically closer. The dots of the same color are those $2$-adic balls that are contained in \[\ball_2^s(r)\times(\Q_2)^{m-s},\] where $s$ is $1$, $2$ or $3$ depending on the figure and $r$ is $1$ for the red dots, $2$ for orange, $4$ for yellow, $8$ for green and $16$ for blue.}
	\label{fig:ball-cylinder-2}
\end{figure}

\begin{figure}
	\begin{tabular}{cc}
		\includegraphics[width=0.45\linewidth]{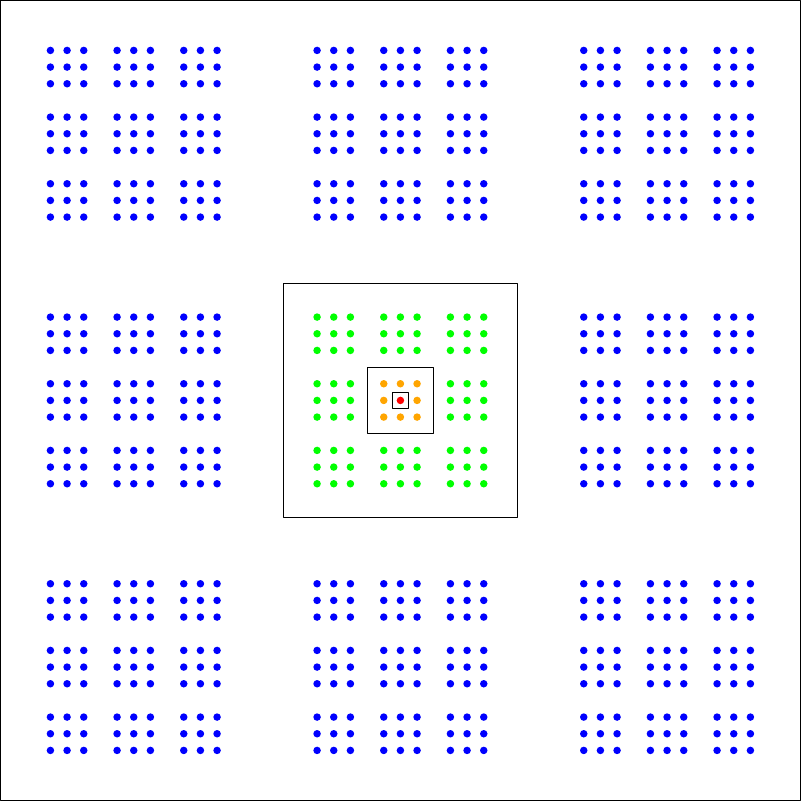} & \includegraphics[width=0.45\linewidth]{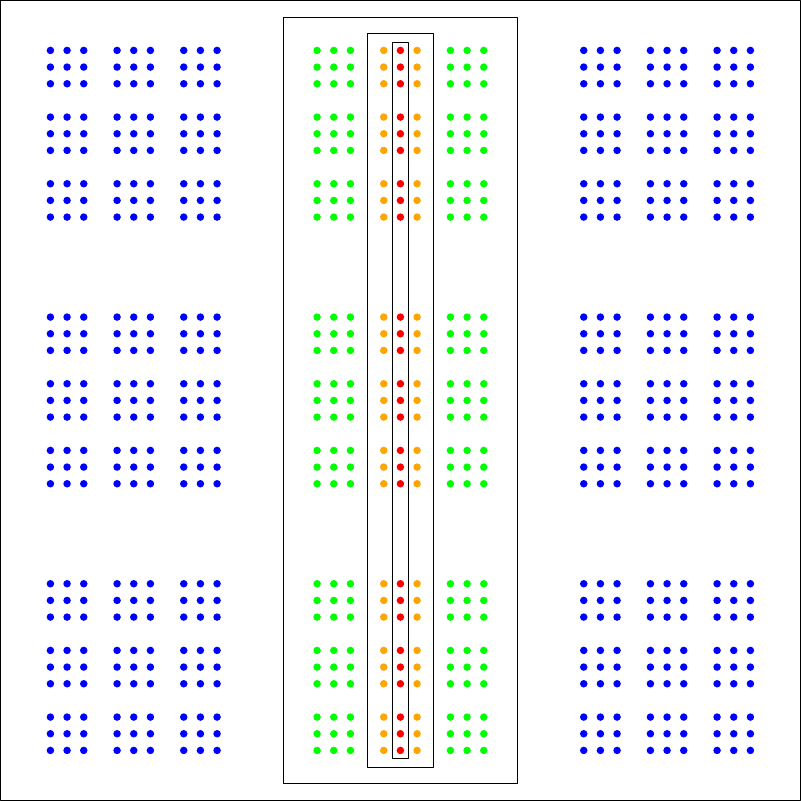} \\
		$m=2,s=2,r_0=27$ & $m=2,s=1,r_0=27$ \\[2cm]
		\fbox{\includegraphics[width=0.45\linewidth]{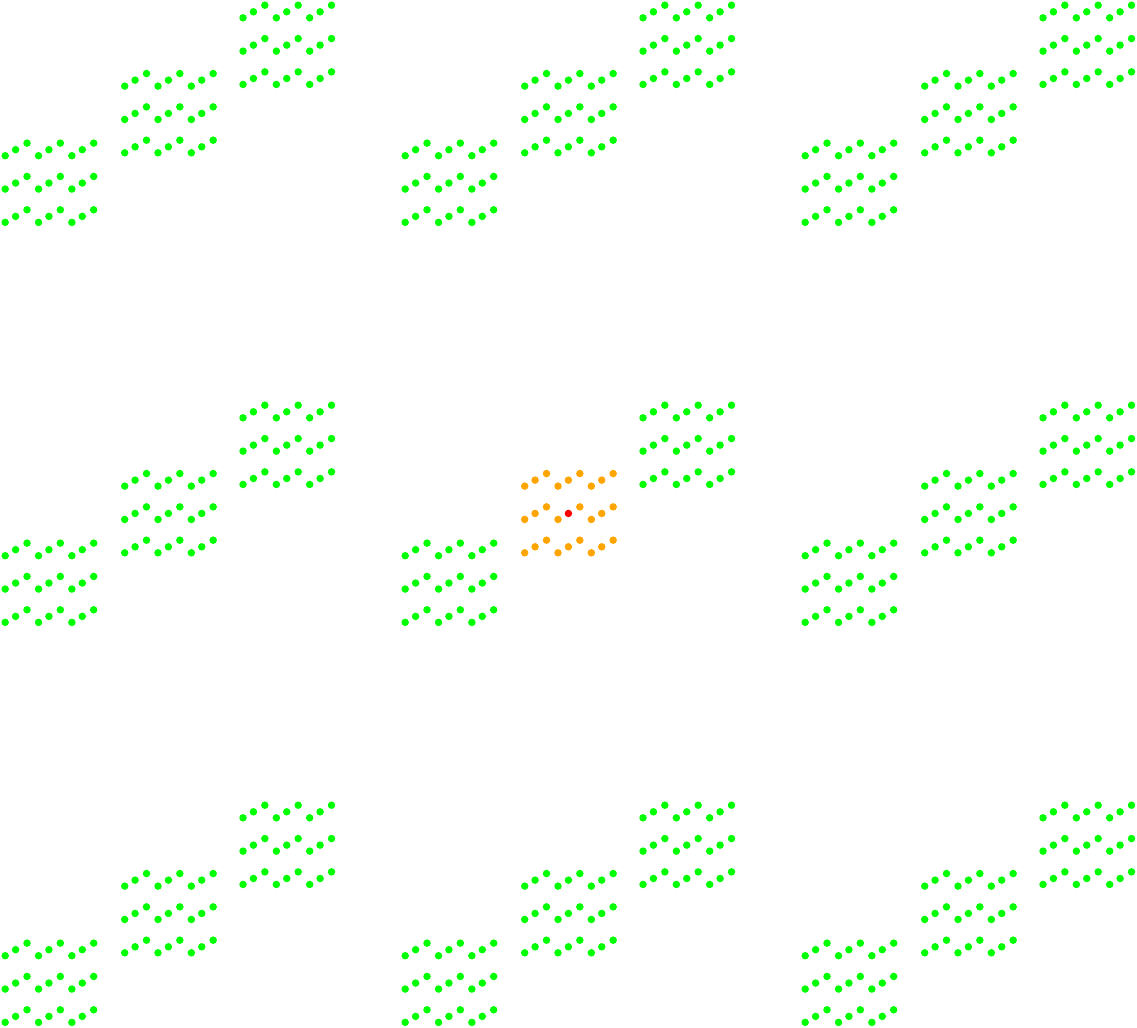}} & \fbox{\includegraphics[width=0.45\linewidth]{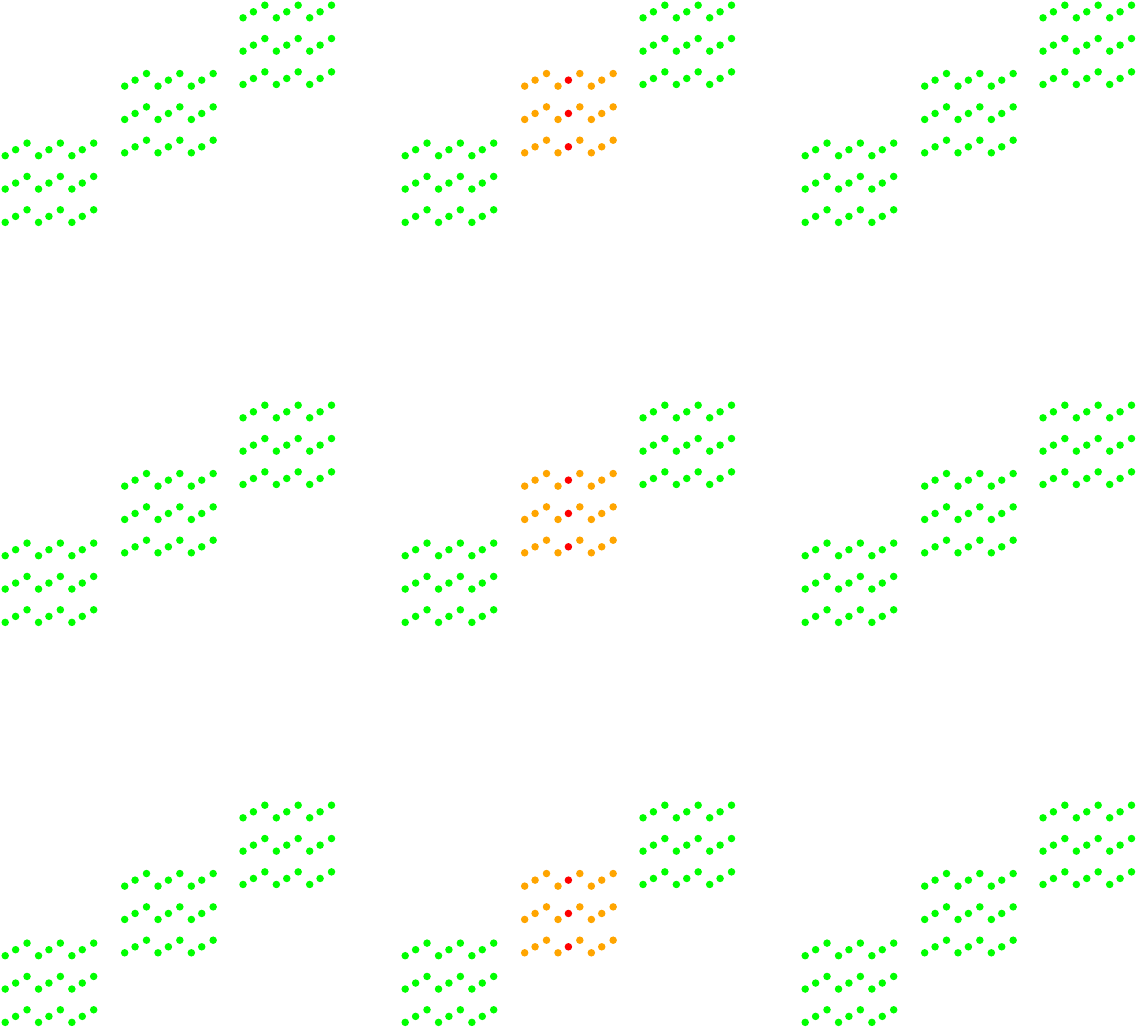}} \\
		$m=3,s=3,r_0=9$ & $m=3,s=2,r_0=9$
	\end{tabular}
	\caption{Each one of the four figures is a symbolic representation of \[\ball_3^m(r_0)=\Big\{(v_1,v_2,\ldots,v_m)\in(\Q_3)^{m}:\|(v_1,v_2,\ldots,v_m)\|_3<3r_0\Big\},\] as it appears in expression \eqref{eq:ball}, for some $m$ ($2$ in the upper figures and $3$ in the lower figures) and $r_0$ ($27$ and $9$ respectively). Each dot is a $3$-adic ball of radius $1$, and the $3$-adic balls that are closer in the representation are also $3$-adically closer. The dots of the same color are those $3$-adic balls that are contained in \[\ball_3^s(r)\times(\Q_3)^{m-s},\] where $s$ is $1$, $2$ or $3$ depending on the figure and $r$ is $1$ for the red dots, $3$ for orange, $9$ for green and $27$ for blue.}
	\label{fig:ball-cylinder-3}
\end{figure}

In the $p$-adic case, unlike the real case, a ball is the same if we change the ``center'' to any point of the ball: for any $v_0\in\ball_p^m(r)$,
\[\Big\{v+v_0:v\in\ball_p^m(r)\Big\}=\ball_p^m(r).\]

Any two $p$-adic balls are diffeomorphic: we can go from $\ball_p^m(r)$ to $\ball_p^m(r')$ by scaling by a factor $c\in\Qp$ such that $|c|_p=r'/r$ (for example $c=r/r'$). Similarly, any two $p$-adic cylinders are also diffeomorphic. Also, by definition, $\ball_p^m(r)\subset\cyl_p^m(r)$, hence we can embed any $p$-adic ball into any $p$-adic cylinder by means of the inclusion map.

In the $p$-adic case, like in the real case, it is possible to embed a $p$-adic ball into a $p$-adic cylinder of a smaller radius by means of a $p$-adic linear volume-preserving transformation:
\[\begin{array}{ccc}
	(\Qp)^m & \longrightarrow & (\Qp)^m \\
	(v_1,v_2,v_3,v_4,v_5,\ldots,v_m) & \mapsto & (cv_1,cv_2,\frac{1}{c}v_3,\frac{1}{c}v_4,v_5,\ldots,v_m)
\end{array}\]
for $c$ small enough. Here by ``volume preserving'' we mean that $f$ satisfies \[f^*(\dd v_1\wedge\ldots\wedge\dd v_m)=\dd x_1\wedge\ldots\wedge\dd x_m,\] where $(v_1,\ldots,v_m)$ are the standard coordinates on $(\Qp)^m$.

In the remaining part of the paper, $m$ is an even positive integer of the form $2n$, that is, we will only be concerned with even-dimensional cylinders and even-dimensional balls. We will use coordinates $(v_1,v_2,\ldots,v_{2n-1},v_{2n})=(x_1,y_1,\ldots,x_n,y_n)$ on $(\Qp)^{2n}$.

\section{$p$-adic affine Gromov's non-squeezing} \label{sec:affine}

In this section we prove a $p$-adic analog of Gromov's linear non-squeezing theorem (Theorem \ref{thm:linear}).

\begin{definition}\label{def:linear-symplectic}
	\letnpos. \letpprime. A \emph{$p$-adic linear symplectic form} $\omega:(\Qp)^{2n}\times(\Qp)^{2n}\to\Qp$ on $(\Qp)^{2n}$ is a non-degenerate antisymmetric bilinear form. Let $(x_1,y_1,\ldots,x_n,y_n)$ be the standard coordinates on $(\Qp)^{2n}$. The $p$-adic linear symplectic form $\omega_0=\sum_{i=1}^n\dd x_i\wedge\dd y_i$, which is given by the block-diagonal matrix with $n$ blocks
	\begin{equation}\label{eq:omega}
		\begin{pmatrix}
			0 & 1 & & & \\
			-1 & 0 & & & \\
			& & \ddots & & \\
			& & & 0 & 1 \\
			& & & -1 & 0
		\end{pmatrix},
	\end{equation}
	is called the \emph{standard $p$-adic symplectic form} on $(\Qp)^{2n}$.
\end{definition}

\begin{definition}[$p$-adic linear/affine symplectomorphism of $(\Qp)^{2n}$]
	\letnpos. \letpprime. Given a $p$-adic linear symplectic form $\omega$ on $(\Qp)^{2n}$, a \emph{$p$-adic linear symplectomorphism} $\varphi:(\Qp)^{2n}\overset{\cong}{\to}(\Qp)^{2n}$ is a linear isomorphism of vector spaces such that $\varphi^*\omega=\omega$, that is, $\varphi$ is given by a matrix $S$ such that $S\tr \Omega S=\Omega$, where $\Omega$ is the matrix of $\omega$. A map $\phi:(\Qp)^{2n}\to(\Qp)^{2n}$ is a \emph{$p$-adic affine symplectomorphism} if there exists a $p$-adic linear symplectomorphism $\varphi:(\Qp)^{2n}\to(\Qp)^{2n}$ and $v_0\in(\Qp)^{2n}$ such that $\phi=\varphi+v_0$, that is, $\phi$ is given by $v\mapsto Sv+v_0$, where $S$ is the matrix of $\varphi$. We denote by $\ASp((\Qp)^{2n})$ the group of $p$-adic affine symplectomorphisms of $(\Qp)^{2n}$ under composition.
\end{definition}

\begin{definition}[$p$-adic affine symplectic embedding between open subsets of $(\Qp)^{2n}$]
	\letnpos. \letpprime. Given a $p$-adic linear symplectic form $\omega$ and two open subsets $U$ and $V$ of $(\Qp)^{2n}$, a \emph{$p$-adic affine symplectic embedding} $f:U\hookrightarrow V$ is a $p$-adic affine embedding such that $f^*\omega=\omega$.
\end{definition}

Next we state the analog of Gromov's non-squeezing theorem for $p$-adic affine symplectomorphisms, where the analogy with the real case does hold (see for example \cite[Theorem 2.4.1]{McDSal} for the statement in the real case). For the following statement recall the definition of the $p$-adic ball given in \eqref{eq:ball} and of the $p$-adic cylinder given in \eqref{eq:cylinder}.

\begin{theorem}[$p$-adic analog of the affine Gromov's non-squeezing theorem]\label{thm:linear}
	Let $n$ be a positive integer. \letpprime. Let $r$ and $R$ be $p$-adic absolute values. Endow the $2n$-dimensional $p$-adic ball $\ball_p^{2n}(r)$ of radius $r$ and the $2n$-dimensional $p$-adic cylinder $\cyl_p^{2n}(R)$ of radius $R$ with the standard $p$-adic linear symplectic form $\sum_{i=1}^n\dd x_i\wedge\dd y_i$, where $(x_1,y_1,\ldots,x_n,y_n)$ are the standard coordinates on $(\Qp)^{2n}$. Then, there exists a $p$-adic affine symplectic embedding $f:\ball_p^{2n}(r)\hookrightarrow\cyl_p^{2n}(R)$ if and only if $r\le R$.
\end{theorem}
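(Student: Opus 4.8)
The plan is to prove the two directions separately. The ``if'' direction is immediate: if $r\le R$, then $\ball_p^{2n}(r)\subseteq\cyl_p^{2n}(R)$, since $\|v\|_p<pr$ forces $\|(v_1,v_2)\|_p\le\|v\|_p<pr\le pR$; hence the inclusion map, which is the identity linear map (with translation part $0$) and therefore a $p$-adic affine symplectomorphism, is the desired embedding. Everything of substance lies in the converse, which I would extract from a short valuation-theoretic computation involving only the symplectic part of $f$.

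So suppose $f:\ball_p^{2n}(r)\hookrightarrow\cyl_p^{2n}(R)$ is a $p$-adic affine symplectic embedding, and write $f(v)=Sv+v_0$ with $v_0\in(\Qp)^{2n}$ and $S$ a $p$-adic linear symplectomorphism, so $S\tr\Omega S=\Omega$ for $\Omega$ the matrix \eqref{eq:omega}; put $r=p^{-k}$, $R=p^{-l}$. As recalled in Section \ref{sec:ball-cylinder}, $\|v\|_p<pr$ is equivalent to $\|v\|_p\le r$, so $\ball_p^{2n}(r)$ is precisely the additive subgroup $(p^k\Zp)^{2n}$ of $(\Qp)^{2n}$, while $\cyl_p^{2n}(R)=\{v:|v_1|_p\le R,\ |v_2|_p\le R\}$. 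Evaluating $f$ at $v=0$ gives $(v_0)_1,(v_0)_2\in p^l\Zp$, and then for every $v\in(p^k\Zp)^{2n}$ the requirement that the first two coordinates of $Sv+v_0$ lie in $p^l\Zp$ forces $s_i\cdot v\in p^l\Zp$ for $i=1,2$, where $s_1,s_2$ denote the first two rows of $S$. Taking $v=p^ke_j$ for $j=1,\ldots,2n$ yields $\ord_p((s_i)_j)\ge l-k$ for all $j$, that is, $\|s_1\|_p\le R/r$ and $\|s_2\|_p\le R/r$.

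It remains to show that the first two rows of a symplectic matrix over $\Qp$ always satisfy $\|s_1\|_p\,\|s_2\|_p\ge 1$; granting this, $1\le\|s_1\|_p\,\|s_2\|_p\le(R/r)^2$, hence $r\le R$, as wanted. For the claim, rewrite $S\tr\Omega S=\Omega$ as $S\Omega^{-1}S\tr=\Omega^{-1}$ and note that $\Omega^{-1}=-\Omega=\Omega\tr$ has all entries in $\{-1,0,1\}\subseteq\Zp$ with $(\Omega^{-1})_{12}=-1$. Comparing the $(1,2)$ entries of $S\Omega^{-1}S\tr$ and $\Omega^{-1}$ gives $s_1\Omega^{-1}s_2\tr=-1$; expanding the left-hand side as a double sum and applying the ultrametric inequality together with $|(\Omega^{-1})_{ij}|_p\le 1$ yields $1=|{-1}|_p=|s_1\Omega^{-1}s_2\tr|_p\le\|s_1\|_p\,\|s_2\|_p$, which is the claim.

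I expect the only delicate point to be the bookkeeping with valuations — identifying $\ball_p^{2n}(r)$ with the lattice $(p^k\Zp)^{2n}$ and tracking exponents so that the final inequality comes out as $r\le R$ rather than its reverse — since the algebraic heart of the argument is merely the identity $s_1\Omega^{-1}s_2\tr=-1$ plus one application of the strong triangle inequality. It is worth stressing how different this is from the real case: no pseudoholomorphic curves or symplectic capacities are needed, the argument is uniform in $n\ge 1$, and the crucial structural feature exploited — that a $p$-adic ball is literally an additive subgroup, so that testing on $v=0$ and $v=p^ke_j$ suffices — will cease to be available once linearity is dropped in Section \ref{sec:nonlinear}, which is exactly why Theorem \ref{thm:total-embedding1} can hold.
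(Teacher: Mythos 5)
Your proof is correct and follows essentially the same route as the paper: both arguments pair the first two rows of $S$ under $\omega_0$ to get $\|s_1\|_p\,\|s_2\|_p\ge 1$ from the ultrametric inequality, and both test the embedding on the scaled basis vectors $e_j/r=p^ke_j$, handling the translation $v_0=f(0)$ via the strong triangle inequality, to bound the entries of those rows by $R/r$. The only cosmetic differences are that the paper bounds a single row after a without-loss-of-generality reduction while you bound both rows and multiply, and that your argument is uniform in $n$ whereas the paper treats $n=1$ separately by a volume count it does not actually need.
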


\begin{proof}
	For every positive integer $n$ and every $i\in\{1,\ldots,2n\}$, let $\e_i$ be the $i$-th vector of the canonical basis of $(\Qp)^{2n}$, given by $(0,\ldots,1,\ldots,0)$ with the $1$ in position $i$.
	
	If $r\le R$, then the inclusion of the ball in the cylinder gives the embedding we want. Suppose now that a $p$-adic affine symplectic embedding $f:\ball_p^{2n}(r)\hookrightarrow\cyl_p^{2n}(R)$ exists. If $n=1$ the cylinder of a given radius is equal to the ball $\ball_p^{2n}(R)$, and the embedding $\ball_p^{2n}(r)\hookrightarrow\ball_p^{2n}(R)$ must preserve the volume form $\dd x_1\wedge\dd y_1\wedge\ldots\wedge\dd x_n\wedge\dd y_n$ and hence the total $p$-adic volume, so $r^2\le R^2$ (the concept of $p$-adic volume can be defined in analogy with the real case, see for instance Popa's notes \cite{Popa}). Hence $r\le R$, since $r$ and $R$ are $p$-adic absolute values. Assume now that $n\ge 2$.
	
	Any $p$-adic linear symplectomorphism of $(\Qp)^{2n}$ is given as multiplication by a symplectic matrix $S$ and an affine one as multiplication by $S$ plus a translation by a fixed vector $v_0\in(\Qp)^{2n}$.
	
	Let $u_1=S\tr \e_1$ and $u_2=S\tr \e_2$. We have that \[\omega_0(u_1,u_2)=\omega_0(\e_1,\e_2)=1\] and
	\[1=|\omega_0(u_1,u_2)|_p\le\|u_1\|_p\|u_2\|_p,\]
	hence one of the norms must be at least $1$.
	
	Without loss of generality we may assume that $\|u_1\|_p\ge 1$. By definition of $p$-adic norm, this means that a coordinate of $u_1$ has $p$-adic absolute value at least $1$. Suppose that $|u_1\tr \e_i|_p\ge 1$ for some $i\in\{1,\ldots,2n\}$. Then the vector $\e_i/r$ is in $\ball_p^{2n}(r)$, hence \[\frac{S\e_i}{r}+v_0\in\cyl_p^{2n}(R),\] which implies that
	\[\left|\e_1\tr \frac{S\e_i}{r}+\e_1\tr v_0\right|_p\le R.\]
	Since $0\in\ball_p^{2n}(r)$, we have $v_0\in \cyl_p^{2n}(R)$, so \[|\e_1\tr v_0|_p\le R\] and
	\[\frac{|\e_1\tr S\e_i|_p}{|r|_p}=\left|\e_1\tr \frac{S\e_i}{r}\right|_p\le\max\left\{\left|\e_1\tr \frac{S\e_i}{r}+\e_1\tr v_0\right|_p,|\e_1\tr v_0|_p\right\}\le R.\]
	Since $r$ is a power of $p$, $|r|_p=1/r$ and
	\[R\ge r|\e_1\tr S\e_i|_p=r|u_1\tr \e_i|_p\ge r,\]
	as we wanted to prove.
\end{proof}

We refer to Figure \ref{fig:squeezing-linear} for an illustration of the proof of Theorem \ref{thm:linear}.

\begin{figure}
	\includegraphics{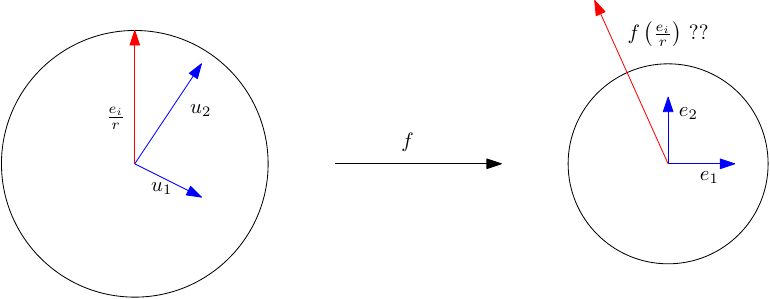}
	\caption{Illustration of the proof of Theorem \ref{thm:linear}, or more precisely of its real analog. If $u_2=S\tr \e_2$ is longer than $\e_2$ and the cylinder is narrower than the ball, $f$ would send out of the cylinder a vector as long as possible in any direction close to $u_2$.}
	\label{fig:squeezing-linear}
\end{figure}

\section{$p$-adic non-linear symplectic squeezing}\label{sec:nonlinear}

\letnpos. In the nonlinear case, the situation is genuinely different than in the affine situation treated in Section \ref{sec:affine}: not only one can embed any $p$-adic ball into any $p$-adic cylinder by means of a $p$-adic analytic symplectic embedding, but one can show that, for any $p$-adic absolute value $R$, the total $2n$-dimensional $p$-adic space \[(\Qp)^{2n}=(\Qp)^2\times\ldots\times(\Qp)^2\] is symplectomorphic to the $2n$-dimensional $p$-adic cylinder $\cyl_p^{2n}(R)$. We refer to \cite[Appendix B]{CrePel-JC} for a basic review of $p$-adic analytic symplectic manifolds.

\begin{definition}[$p$-adic analytic symplectic manifolds and embeddings]
	\letpprime. A \emph{$p$-adic analytic symplectic manifold} $(M,\omega)$ is a $p$-adic analytic manifold $M$ endowed with a closed non-degenerate $p$-adic $2$-form $\omega$. The form $\omega$ is called a \emph{$p$-adic analytic symplectic form on $M$.} (Sometimes we simply call it a \emph{$p$-adic symplectic form} for simplicity.) Let $(M,\omega)$ and $(N,\sigma)$ be $p$-adic analytic symplectic manifolds. A \emph{$p$-adic analytic symplectic embedding} $f:M\hookrightarrow N$ is a $p$-adic analytic embedding such that $f^*\sigma=\omega$. A \emph{$p$-adic analytic symplectomorphism} $\phi:M\overset{\cong}{\to} N$ is a $p$-adic analytic diffeomorphism such that $\phi^*\sigma=\omega$.
\end{definition}

In this paper, \emph{all $p$-adic analytic symplectic manifolds are open subsets of $(\Qp)^{2n}$} endowed with the standard $p$-adic symplectic form $\sum_{i=1}^n\dd x_i\wedge\dd y_i$, as in Definition \ref{def:linear-symplectic}. The following statement shows the striking difference between the behavior of symplectic embeddings in the real and $p$-adic cases (see \cite[section 12]{McDSal} for a discussion of the real case). Recall that the definition of a $p$-adic ball $\ball_p^{2n}(r)$ was given in \eqref{eq:ball}, and the definition of $p$-adic cylinder $\cyl_p^{2n}(R)$ was given by \eqref{eq:cylinder}.

\begin{figure}
	\begin{tikzpicture}[scale=1.5]
		\node at (0,0) {\includegraphics[trim=7cm 1cm 7cm 1cm,width=0.75\linewidth]{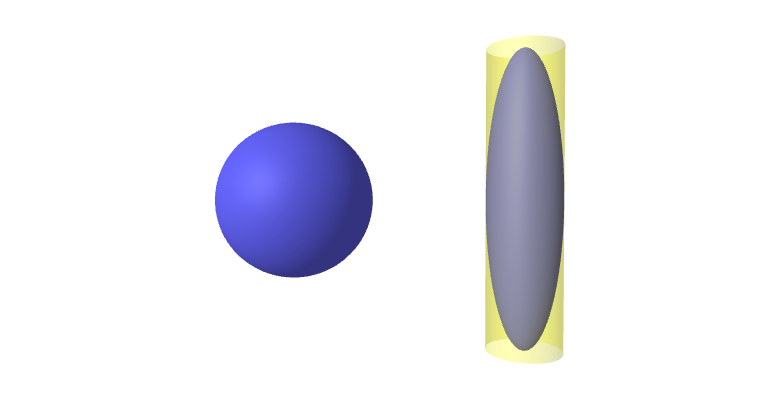}};
		\draw[->,thick] (0.25,0.5) arc (90:270:0.25)--(1.6,0);
		\draw[red,thick] (0.55,0.25)--(1.05,-0.25);
		\draw[red,thick] (0.55,-0.25)--(1.05,0.25);
	\end{tikzpicture}
	\caption{Gromov's non-squeezing theorem tells us that it is not possible to embed a ball inside a cylinder narrower than the ball while preserving the symplectic form. Theorem \ref{thm:total-embedding} shows that the situation is different for $p$-adic manifolds: the entire $p$-adic $2n$-dimensional space $(\Qp)^{2n}$ is symplectomorphic to any thin $p$-adic cylinder of the same dimension.}
	\label{fig:squeezing-real}
\end{figure}

\begin{theorem}[$p$-adic symplectic squeezing theorem]\label{thm:total-embedding}
	Let $n\ge 2$ be an integer. \letpprime. Endow both the $2n$-dimensional $p$-adic space $(\Qp)^{2n}$ and the $2n$-dimensional $p$-adic cylinder $\cyl_p^{2n}(1)$ of radius $1$ with the standard $p$-adic symplectic form $\sum_{i=1}^n\dd x_i\wedge\dd y_i$, where $(x_1,y_1,\ldots,x_n,y_n)$ are the standard coordinates on $(\Qp)^{2n}$. Then there exists a $p$-adic analytic symplectomorphism \[\phi:(\Qp)^{2n}\overset{\cong}{\longrightarrow}\cyl_p^{2n}(1).\]
	
	Moreover the $p$-adic analytic symplectomorphism $\phi:(\Qp)^{2n}\overset{\cong}{\to}\cyl_p^{2n}(1)$ may be chosen to have the following properties:
	\renewcommand{\theenumi}{\roman{enumi}}
	\begin{enumerate}
		\item $\phi(0,0,\ldots,0)=(0,0,\ldots,0)$;
		\item For every $p$-adic absolute value $r$,
		\[\phi(\ball_p^{2n}(r))=\begin{cases}
			\ball_p^{2n}(r) & \text{if }r\le 1, \\
			\ball_p^2(1)\times\ball_p^2(r^2)\times\ball_p^{2n-4}(r) & \text{if }r>1;
		\end{cases}\]
		\item For every integer $i\in\{1,\ldots,n\}$,
		\[\phi(\{(x_1,y_1,\ldots,x_n,y_n):x_i=0\})=\]
		\[\begin{cases}
			\{(x_1,y_1,\ldots,x_n,y_n):x_1=0,x_2\text{ has all digits at odd fractional places equal to }0\} & \text{if }i=1, \\
			\{(x_1,y_1,\ldots,x_n,y_n):x_2\text{ has all digits at even fractional places equal to }0\} & \text{if }i=2, \\
			\{(x_1,y_1,\ldots,x_n,y_n):x_i=0\} & \text{if }i>2;
		\end{cases}\]
		\item the same statement as in (iii) above holds when changing $x_i$ by $y_i$;
		\item For every $p$-adic absolute value $r$,
		\[\phi^{-1}(\ball_p^{2n}(r))=\begin{cases}
			\ball_p^{2n}(r) & \text{if }r\le 1, \\
			\ball_p^2(p^k)\times\ball_p^2(p^k)\times\ball_p^{2n-4}(r) & \text{if $r=p^{2k}$ for $k\ge 1$ integer}, \\
			\ball_p^2(p^{k+1})\times\ball_p^2(p^k)\times\ball_p^{2n-4}(r) & \text{if $r=p^{2k+1}$ for $k\ge 0$ integer;}
		\end{cases}\]
		\item For every integer $i\in\{1,\ldots,n\}$,
		\[\phi^{-1}(\{(x_1,y_1,\ldots,x_n,y_n):x_i=0\})=\begin{cases}
			\{(x_1,y_1,\ldots,x_n,y_n):\text{the integer part of $x_1$ is }0\} & \text{if }i=1, \\
			\{(x_1,y_1,\ldots,x_n,y_n):x_1\in\Zp,x_2=0\} & \text{if }i=2, \\
			\{(x_1,y_1,\ldots,x_n,y_n):x_i=0\} & \text{if }i>2;
		\end{cases}\]
		\item the same statement as in (vi) above holds when changing $x_i$ by $y_i$.
	\end{enumerate}
\end{theorem}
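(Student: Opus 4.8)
I would build $\phi$ as $\psi\times\mathrm{id}_{(\Qp)^{2n-4}}$, where $\psi\colon(\Qp)^4\to\ball_p^2(1)\times(\Qp)^2$ is a $p$-adic analytic symplectomorphism of the first two symplectic planes onto the base disc $\ball_p^2(1)$ times one plane. The hypothesis $n\ge2$ is used precisely here: the second plane serves as ``storage'' into which the unboundedness of the first plane is absorbed (for $n=1$ there is no room and the statement fails). Since $\psi$ moves only the coordinates $(x_1,y_1,x_2,y_2)$, properties (i)--(vii) for indices $i>2$ and for the last $2n-4$ coordinates are automatic, and everything reduces to constructing and analyzing $\psi$.

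For $\psi$ I would use the decomposition $t=\lfloor t\rfloor+\{t\}$ of $t\in\Qp$, where $\lfloor t\rfloor=\sum_{k\ge0}\digit_k(t)\,p^k\in\Zp$ is the integer part and $\{t\}=\sum_{k<0}\digit_k(t)\,p^k$ is a \emph{finite} sum of negative powers of $p$ --- the latter being exactly what obstructs $t\in\Zp$. The plan is to retain the integer parts and interleave the fractional parts of the first plane into the second: send $\{x_1\}$ (resp. $\{y_1\}$) to the odd negative digit-places and keep $\{x_2\}$ (resp. $\{y_2\}$) in the even negative digit-places, so that
\[\psi(x_1,y_1,x_2,y_2)=\bigl(\lfloor x_1\rfloor,\ \lfloor y_1\rfloor,\ \lfloor x_2\rfloor+\sigma(\{x_1\})+\tau(\{x_2\}),\ \lfloor y_2\rfloor+\sigma(\{y_1\})+\tau(\{y_2\})\bigr),\]
where $\sigma\bigl(\sum_{j\ge1}a_jp^{-j}\bigr)=\sum_{j\ge1}a_jp^{-(2j-1)}$ and $\tau\bigl(\sum_{j\ge1}b_jp^{-j}\bigr)=\sum_{j\ge1}b_jp^{-2j}$. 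Then $\lfloor x_1\rfloor,\lfloor y_1\rfloor\in\Zp$, so the image lies in $\cyl_p^4(1)$; and $\psi$ is a bijection onto $\Zp^2\times(\Qp)^2=\ball_p^2(1)\times(\Qp)^2$, the inverse recovering $\{x_1\},\{x_2\}$ from the odd, resp. even, fractional digits of the third coordinate and $\{y_1\},\{y_2\}$ similarly from the fourth. Also $\psi(0,0,0,0)=(0,0,0,0)$, which gives (i).

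The point that makes everything work --- and that has no real-analytic analogue --- is that on each clopen set where the four fractional parts $\{x_1\},\{y_1\},\{x_2\},\{y_2\}$ are fixed, every component of $\psi$ differs from the corresponding input coordinate by a constant; that is, $\psi$ restricted to that set is a \emph{translation}. These sets form a clopen partition of $(\Qp)^4$, translations preserve $\omega_0=\sum_i\dd x_i\wedge\dd y_i$, and locally affine maps are $p$-adic analytic; hence $\psi$ is an analytic symplectomorphism. So the obstacle is essentially conceptual: over $\R$ one cannot hope for such a map, and Theorem~\ref{thm:linear} already rules out a \emph{linear} one $p$-adically, yet $p$-adically the integer-part map $\lfloor\cdot\rfloor$ is continuous (indeed locally a translation), and exploiting this is what defeats the analogue of Gromov's theorem. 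Designing the interleaving so that the map is at once bijective onto all of $\cyl_p^4(1)$ and keeps the first plane inside $\Zp^2$ is the only genuinely delicate point --- that, together with the observation that $\lfloor\cdot\rfloor$ is analytic, is the real content; once it is in place the rest is bookkeeping.

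For that bookkeeping: (ii) follows because a point of $\ball_p^{2n}(p^k)$ with $k>0$ has at most $k$ fractional digits per coordinate, so $\sigma$ and $\tau$ together fill exactly the $2k$ places $-1,\dots,-2k$ of the third and fourth coordinates, yielding $\ball_p^2(1)\times\ball_p^2(p^{2k})\times\ball_p^{2n-4}(p^k)$, while for $r\le1$ the fractional parts vanish, $\psi$ is the identity, and one recovers $\ball_p^{2n}(r)$. For (v)--(vii) one runs $\psi^{-1}$, noting that $\phi^{-1}$ is implicitly applied to $\ball_p^{2n}(r)\cap\cyl_p^{2n}(1)$ (so the source and target volumes match): the $m$ fractional places of a coordinate of modulus $p^m$ de-interleave into $\lceil m/2\rceil$ places for the first plane and $\lfloor m/2\rfloor$ for the second, which is exactly the dichotomy $m=2k$ versus $m=2k+1$ in (v); and $\{X_1=0\}$ pulls back to $\{\lfloor x_1\rfloor=0\}$ while $\{X_2=0\}$ pulls back to $\{x_1\in\Zp,\ x_2=0\}$, giving (vi)--(vii). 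Finally (iii)--(iv) are the forward versions of the same digit-tracking, describing which digit-places of $x_2,y_2$ receive $\{x_1\},\{y_1\}$; I expect this last part to be purely notational.
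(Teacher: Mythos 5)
Your construction is exactly the paper's proof: write each of the first four coordinates as integer part plus fractional part, keep the integer parts of $(x_1,y_1)$, interleave the fractional digits of $x_1,x_2$ (resp.\ $y_1,y_2$) into the odd and even fractional places of the new third (resp.\ fourth) coordinate, observe that on each ball of radius $1$ the map is a translation and hence a $p$-adic analytic symplectomorphism onto $\ball_p^2(1)\times(\Qp)^2\times(\Qp)^{2n-4}=\cyl_p^{2n}(1)$, and verify (i)--(vii) by the same digit bookkeeping. No gaps; your displayed formula is in fact the corrected form of the one printed in the paper, whose exponents should be negative and whose $y_2'$ line should use the digits of $y_1,y_2$.
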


\begin{proof}
	We refer to Figure \ref{fig:embedding} for an intuitive idea of the $p$-adic analytic symplectomorphism given in this proof.

We start with a point $(x_1,y_1,\ldots,x_n,y_n)\in(\Qp)^{2n}$ and write
\begin{equation}\label{eq:descomp-before}
	\left\{
	\begin{aligned}
		x_1 & =a_0+\sum_{i=1}^\infty a_ip^{-i}; \\
		y_1 & =b_0+\sum_{i=1}^\infty b_ip^{-i}; \\
		x_2 & =c_0+\sum_{i=1}^\infty c_ip^{-i}; \\
		y_2 & =d_0+\sum_{i=1}^\infty d_ip^{-i},
	\end{aligned}
	\right.
\end{equation}
where $a_0,b_0,c_0,d_0\in\Zp$ and $a_i,b_i,c_i,d_i\in\{0,\ldots,p-1\}$ for $i\ge 1$. Now we define \[\phi(x_1,y_1,\ldots,x_n,y_n)=(x_1',y_1',x_2',y_2',x_3,y_3,\ldots,x_n,y_n),\] where
\begin{equation}\label{eq:descomp-after}
	\left\{
	\begin{aligned}
		x_1' & =a_0; \\
		y_1' & =b_0; \\
		x_2' & =c_0+\sum_{i=1}^\infty (a_ip^{2i-1}+c_ip^{2i}); \\
		y_2' & =d_0+\sum_{i=1}^\infty (a_ip^{2i-1}+c_ip^{2i}).
	\end{aligned}
	\right.
\end{equation}
The fact that $\phi$ is a bijection between $(\Qp)^{2n}$ and $\cyl_p^{2n}(1)$ follows from the fact that every element of $\Qp$ has a unique $p$-adic expansion (Proposition \ref{prop:expansion}), which means that the decomposition in \eqref{eq:descomp-before} exists and is unique, and a decomposition in the form \eqref{eq:descomp-after} exists if and only if the point $(x_1,\ldots,x_n)$ is in the $p$-adic cylinder $\cyl_p^{2n}(1)$ of radius $1$, and in that case it is unique. Moreover, $\phi$ becomes a translation when restricted to any $p$-adic ball of radius $1$, and any translation preserves the $p$-adic symplectic form $\omega_0$, hence $\phi$ is a $p$-adic analytic symplectomorphism.

Now we prove properties (i)--(vii):
\begin{enumerate}
	\renewcommand{\theenumi}{\roman{enumi}}
	\item If all the coordinates are $0$, all the $p$-adic digits are $0$ and the result is also $0$.
	\item Being in the ball of radius $p^k$ means that the digits are $0$ at the right of the position $-k$. If $k\le 0$, only $a_0$, $b_0$, $c_0$ and $d_0$ are nonzero and the point is unchanged. Otherwise, there are at most $k$ nonzero digits at the right of the decimal point. $x_1'$ and $y_1'$ will always be integers, and $x_2'$ and $y_2'$ have $2k$ digits at the right of the decimal point, hence they are in a ball of radius $r^2$. The rest of coordinates are unchanged.
	\item If a coordinate different from the first four is forced to be $0$, it will still be $0$ in the final point. If we make $x_1$ equal to $0$, the final $x_1'$ will also be $0$, and $a_i=0$ for all $i$, hence the digits at odd places at the right of the decimal point are $0$ in $x_2'$. If we force $x_2$ to be $0$ instead, $b_i=0$ for any $i$ and the digits at even places are $0$ in $x_2'$.
	\item Same as (iii) but with $y_i$ instead of $x_i$.
	\item If we want the result to be in a ball of radius less than $1$, only $a_0$, $b_0$, $c_0$ and $d_0$ can be nonzero and the point is in the same ball. If \[r=p^{2k}\] for $k\ge 1$, we need at most $2k$ digits at the right of the decimal point of all $x_i'$ and $y_i'$, which means the same for all $x_i$ and $y_i$ for $i\ge 3$, and $2k$ digits also for $x_2'$ and $y_2'$, which means that $x_1$, $y_1$, $x_2$ and $y_2$ must have $k$ digits. If \[r=p^{2k+1}\] for $k\ge 0$, $x_i$ and $y_i$ have $2k+1$ digits, and also $x_2'$ and $y_2'$, hence $x_1$ and $y_1$ have $k+1$ digits and $x_2$ and $y_2$ have $k$.
	\item If we want $x_i'=0$ for $i>2$, then we need $x_i=0$. If we want $x_1'=0$, we need $a_0=0$, that is, the integer part of $x_1$ is $0$. If we want $x_2'=0$, we need $b_0=0$ and also all fractional digits of $x_1$ and $x_2$ are $0$, hence $x_1\in\Zp$ and $x_2=0$.
	\item Same as (vi) but with $y_i$ instead of $x_i$.\qedhere
\end{enumerate}
\end{proof}

See Figure \ref{fig:embedding} for a representation of the $p$-adic analytic symplectomorphism $\phi$ in the case $p=3$. We will give an equivariant version of Theorem \ref{thm:total-embedding} in Section \ref{sec:equivariant}.

\begin{corollary}\label{cor:embedding}
	Let $n\ge 2$ be an integer. \letpprime. Let $N$ be an open subset of $(\Qp)^{2n}$ endowed with the standard $p$-adic symplectic form $\sum_{i=1}^n\dd x_i\wedge\dd y_i$, where $(x_1,y_1,\ldots,x_n,y_n)$ are the standard coordinates on $(\Qp)^{2n}$. Then, there exists a $p$-adic analytic symplectic embedding $f:N\hookrightarrow\cyl_p^{2n}(1).$
\end{corollary}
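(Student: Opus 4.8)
The plan is to deduce this immediately from Theorem \ref{thm:total-embedding}. First I would observe that the inclusion map $\iota : N \hookrightarrow (\Qp)^{2n}$ is itself a $p$-adic analytic symplectic embedding: it is the restriction of the identity to an open subset, hence $p$-adic analytic and injective, its image $N$ is open in $(\Qp)^{2n}$, and $\iota^*\bigl(\sum_{i=1}^n \dd x_i \wedge \dd y_i\bigr) = \sum_{i=1}^n \dd x_i \wedge \dd y_i$ by the very definition of the symplectic form induced on an open submanifold.

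Next I would invoke the $p$-adic analytic symplectomorphism $\phi : (\Qp)^{2n} \overset{\cong}{\to} \cyl_p^{2n}(1)$ furnished by Theorem \ref{thm:total-embedding}, and set $f = \phi \circ \iota : N \to \cyl_p^{2n}(1)$. Since $\phi$ is a $p$-adic analytic diffeomorphism onto $\cyl_p^{2n}(1)$, its restriction to the open subset $N$ is a $p$-adic analytic diffeomorphism onto the open subset $\phi(N) \subseteq \cyl_p^{2n}(1)$, so $f$ is a $p$-adic analytic embedding. Writing $\omega_0 = \sum_{i=1}^n \dd x_i \wedge \dd y_i$, we have $f^*\omega_0 = \iota^*\phi^*\omega_0 = \iota^*\omega_0 = \omega_0$, so $f$ is symplectic as well. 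Hence $f : N \hookrightarrow \cyl_p^{2n}(1)$ is the desired $p$-adic analytic symplectic embedding.

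There is essentially no obstacle to overcome here: the entire content of the statement is packaged inside Theorem \ref{thm:total-embedding}, and the only point to verify is the routine fact that a composition of $p$-adic analytic symplectic embeddings — in this case an open inclusion followed by a symplectomorphism — is again a $p$-adic analytic symplectic embedding.
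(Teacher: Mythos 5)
Your argument is correct and is exactly the one the paper intends (the corollary is stated without proof as an immediate consequence of Theorem \ref{thm:total-embedding}): restricting the global symplectomorphism $\phi$ to the open subset $N$ gives the desired embedding, and your verification that the composition with the inclusion is again a $p$-adic analytic symplectic embedding is routine but complete.
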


\begin{corollary}
	Let $n\ge 2$ be an integer. \letpprime. Let $R_1,R_2$ be $p$-adic absolute values. Endow the $2n$-dimensional $p$-adic cylinders $\cyl_p^{2n}(R_1)$ and $\cyl_p^{2n}(R_2)$ with the standard $p$-adic symplectic form $\sum_{i=1}^n\dd x_i\wedge\dd y_i$, where $(x_1,y_1,\ldots,x_n,y_n)$ are the standard coordinates on $(\Qp)^{2n}$. Then there exists a $p$-adic analytic symplectomorphism \[\phi:\cyl_p^{2n}(R_1)\overset{\cong}{\longrightarrow}\cyl_p^{2n}(R_2).\]
\end{corollary}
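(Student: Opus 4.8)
The plan is to reduce the statement to a single model, namely the cylinder of radius $1$, and then to adapt the digit-surgery construction from the proof of Theorem \ref{thm:total-embedding}. It suffices to build, for every $p$-adic absolute value $R$, a $p$-adic analytic symplectomorphism $\phi_R:(\Qp)^{2n}\overset{\cong}{\to}\cyl_p^{2n}(R)$: once we have these, $\phi_{R_2}\circ\phi_{R_1}^{-1}$ is the required symplectomorphism $\cyl_p^{2n}(R_1)\overset{\cong}{\to}\cyl_p^{2n}(R_2)$. For $R=1$ we may take $\phi_1$ to be the map $\phi$ of Theorem \ref{thm:total-embedding}, so the only new work is the construction of $\phi_R$ for $R\neq 1$.

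Write $R=p^k$ with $k\in\Z$. I would define $\phi_R$ exactly as in the proof of Theorem \ref{thm:total-embedding}, except that I keep more (or fewer) of the digits of $x_1$ and $y_1$ inside the first plane. Given $(x_1,y_1,x_2,y_2,x_3,y_3,\ldots,x_n,y_n)\in(\Qp)^{2n}$, split $x_1=x_1'+\ell$, where $x_1'$ collects the digits of $x_1$ at positions $\ge -k$ (so $x_1'\in p^{-k}\Zp$) and $\ell$ collects the digits of $x_1$ at positions $<-k$; split $y_1=y_1'+\ell'$ in the same way. Then set $\phi_R(x_1,y_1,x_2,y_2,x_3,\ldots)=(x_1',y_1',x_2',y_2',x_3,\ldots)$, where $x_2'$ is obtained from $x_2$ by placing the (re-indexed) digits of $\ell$ at the odd fractional places, keeping the digits of $x_2$ at the even fractional places and keeping its integer part, and $y_2'$ is built from $y_2$ and $\ell'$ in the same fashion; the remaining coordinates $x_3,y_3,\ldots,x_n,y_n$ are unchanged. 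When $k=0$ this is precisely $\phi$; when $k\ge 1$ fewer fractional digits of the first plane are moved out, and when $k\le -1$ one moves out all fractional digits of $x_1$ as well as its $|k|$ lowest integer digits, so that $x_1'\in p^{|k|}\Zp$.

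The verification is then identical in spirit to that of Theorem \ref{thm:total-embedding}. By uniqueness of $p$-adic expansions (Proposition \ref{prop:expansion}), $\phi_R$ is a bijection onto $(p^{-k}\Zp)^2\times(\Qp)^{2n-2}=\cyl_p^{2n}(R)$, since $x_1'$ and $y_1'$ range over all of $p^{-k}\Zp$ and the other coordinates over $\Qp$. Restricted to any coset of a sufficiently small ball $\ball_p^{2n}(\rho)$ with $\rho\le\min(1,R)$ --- which is small enough that all the digits being shuffled are locally constant --- the map $\phi_R$ is a translation; since these cosets form an open cover of $(\Qp)^{2n}$ by sets on which $\phi_R$ is affine with derivative the identity, $\phi_R$ is $p$-adic analytic and satisfies $\phi_R^*\omega_0=\omega_0$, hence is a $p$-adic analytic symplectomorphism. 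The hypothesis $n\ge 2$ enters precisely here: one needs a second plane $(\Qp)^2$ in which to store, or from which to draw, the reshuffled digits; for $n=1$ a cylinder is a ball and the statement is false, because symplectic maps in dimension two preserve area. I expect the only genuine difficulty to be the bookkeeping: choosing the interleaving so that the digit reassignment is a clean bijection onto the cylinder of the prescribed radius while remaining, locally, an honest translation.
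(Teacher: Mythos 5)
Your proposal is correct, and its outer skeleton coincides with the paper's: both proofs reduce the statement to producing, for each $p$-adic absolute value $R$, a symplectomorphism between $(\Qp)^{2n}$ and $\cyl_p^{2n}(R)$, and then compose the one for $R_1$ (inverted) with the one for $R_2$. Where you diverge is in how that symplectomorphism is obtained for $R\neq 1$. The paper does not reopen the digit construction at all: it takes the scaling $\phi_1(v)=R_1v$, which maps $\cyl_p^{2n}(R_1)$ onto $\cyl_p^{2n}(1)$ but rescales $\omega_0$ by $R_1^2$, and observes that the conjugate $\phi_1^{-1}\circ\phi_2\circ\phi_1$ (with $\phi_2$ the symplectomorphism of Theorem \ref{thm:total-embedding}) is again symplectic because the two factors of $R_1^{\pm 2}$ cancel in the pullback. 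This is a two-line argument that uses Theorem \ref{thm:total-embedding} as a black box and would work verbatim with any symplectomorphism $(\Qp)^{2n}\cong\cyl_p^{2n}(1)$ in place of the specific one constructed there. Your route instead generalizes the digit surgery itself, shifting the cutoff from position $0$ to position $-k$ where $R=p^k$; the verification (bijectivity via uniqueness of $p$-adic expansions, local translation on cosets of $\ball_p^{2n}(\min(1,R))$, hence analyticity and preservation of $\omega_0$) is sound, and the choice $\rho\le\min(1,R)$ is exactly what is needed so that both the exported digits of $x_1,y_1$ and the fractional digits of $x_2,y_2$ are locally constant. What your approach buys is an explicit formula and the possibility of reading off analogues of properties (i)--(vii) of Theorem \ref{thm:total-embedding} for arbitrary $R$; what it costs is redoing the bookkeeping that the conjugation trick sidesteps entirely.
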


\begin{proof}
	Let $\phi_1$ be the scaling with a factor of $R_1$. We have that $\phi_1(\cyl_p^{2n}(R_1))=\cyl_p^{2n}(1)$, because $|R_1|_p=1/R_1$. Let $\phi_2$ be the $p$-adic analytic symplectomorphism from $\cyl_p^{2n}(1)$ to $(\Qp)^{2n}$. Now
	\[\phi_1^{-1}\circ\phi_2\circ\phi_1:\cyl_p^{2n}(R_1)\overset{\cong}{\to}(\Qp)^{2n}\]
	is a $p$-adic analytic symplectomorphism, because
	\begin{align*}
		(\phi_1^{-1}\circ\phi_2\circ\phi_1)^*\omega_0 & =\phi_1^*\phi_2^*(\phi_1^{-1})^*\omega_0 \\
		& =\phi_1^*\phi_2^*\frac{\omega_0}{R_1^2} \\
		& =\phi_1^*\frac{\omega_0}{R_1^2} \\
		& =\omega_0.
	\end{align*}
	By using $R_2$ instead of $R_1$, we conclude that $\cyl_p^{2n}(R_2)$ is also symplectomorphic to $(\Qp)^{2n}$. Composing the two symplectomorphisms, the result is the $\phi$ that we want.
\end{proof}

\begin{figure}
	\includegraphics{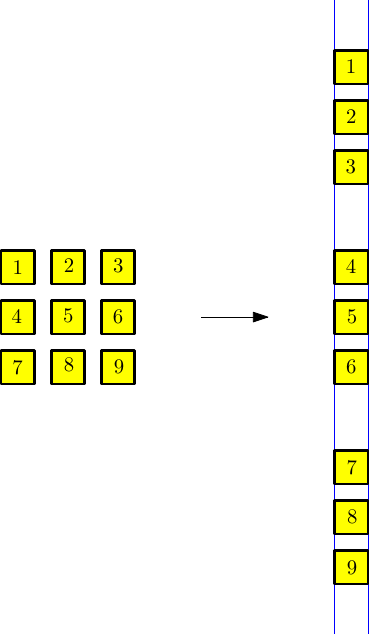}
	\caption{A representation of the $p$-adic analytic symplectomorphism of Theorem \ref{thm:total-embedding} for $p=3$. The horizontal and vertical directions represent $x_1$ and $x_2$. The embedding is continuous in the $p$-adic case because the balls of radius $1$ (represented here by squares) are at a fixed distance from each other. In the real case the balls would share common boundaries, and the embedding is discontinuous.}
	\label{fig:embedding}
\end{figure}

\section{Characterization of the $p$-adic linear squeezing property}\label{sec:char-squeezing}

In this section we study the concept of $p$-adic affine symplectic rigidity, which in the real case is stated in McDuff-Salamon \cite[Theorem 2.4.2]{McDSal}. In the $p$-adic case, it is possible to prove an analogous theorem, but the hypothesis is different.

\letnpos. Recall that a \emph{linear symplectic ball} in $\R^{2n}$ is the image of a ball $\ball^{2n}(r)$ of some radius $r>0$ by a linear symplectic embedding, and a \emph{linear symplectic cylinder} in $\R^{2n}$ is the image of a cylinder $\cyl^{2n}(R)$ by a linear symplectic embedding. A matrix in $\M_{2n}(\R)$ is \emph{squeezing} if it sends some linear symplectic ball into a linear symplectic cylinder of smaller radius, and \emph{non-squeezing} otherwise. A matrix $A$ is \emph{symplectic} if it preserves the standard symplectic form $\Omega_0$ in \eqref{eq:omega}, that is, $A^T\Omega_0A=\Omega_0$, and \emph{antisymplectic} if it inverts the form, that is, $A^T\Omega_0A=-\Omega_0$.

\begin{theorem}[{\cite[Theorem 2.4.2]{McDSal}}]\label{thm:rigidity-real}
	\letnpos. Let $A\in\M_{2n}(\R)$ be an invertible matrix such that $A$ and $A^{-1}$ are non-squeezing. Then $A$ is either symplectic or antisymplectic.
\end{theorem}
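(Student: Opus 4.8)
The plan is to translate ``non-squeezing'' into a spectral condition on the matrix $\Omega_A:=A^{T}\Omega_0A$ and then to exploit the rigidity of nontrivial linear representations of the noncompact group $\mathrm{Sp}(2n,\R)$. (For $n=1$ the statement is elementary, since the linear symplectic width is just area and $A,A^{-1}$ non-squeezing forces $|\det A|=1$, hence $A^{T}\Omega_0A=(\det A)\Omega_0=\pm\Omega_0$; so I would assume $n\ge2$.)

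First I would record the dictionary between squeezing and the symplectic spectra of ellipsoids. Every ellipsoid $E\subseteq\R^{2n}$ has a symplectic normal form $\{\sum_i(x_i^{2}+y_i^{2})/a_i^{2}<1\}$ with $0<a_1(E)\le\cdots\le a_n(E)$; combining this with Gromov's linear non-squeezing theorem (cf.\ \cite[Theorem 2.4.1]{McDSal}), $E$ is contained in some linear symplectic cylinder of radius $R$ precisely when $a_1(E)\le R$, and $E$ always contains a linear symplectic ball of radius $a_1(E)$. Hence an invertible matrix $B$ is non-squeezing if and only if $a_1(B(\mathcal E))\ge1$ for every linear symplectic ball $\mathcal E$ of radius $1$. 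Writing such an $\mathcal E$ as $\{x^{T}Px<1\}$ with $P$ a positive-definite matrix in $\mathrm{Sp}(2n,\R)$, a direct computation with the normal form (using that $a_1(\{x^{T}Qx<1\})^{-2}=\rho(\Omega_0Q)$ for the spectral radius $\rho$, and that the polar dual of a radius-$1$ symplectic ball is again one) identifies
\[\sup_{\mathcal E}\,a_1(B(\mathcal E))^{-2}\;=\;\sup_{\Psi\in\mathrm{Sp}(2n,\R)}\rho\!\left(\Psi^{T}\Omega_B^{-1}\Psi\right),\qquad\Omega_B:=B^{T}\Omega_0B,\]
the left-hand supremum taken over radius-$1$ linear symplectic balls. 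So $B$ is non-squeezing exactly when $\rho(\Psi^{T}\Omega_B^{-1}\Psi)\le1$ for every $\Psi\in\mathrm{Sp}(2n,\R)$; in particular $\linearwidth(B(\mathcal E))\ge\linearwidth(\mathcal E)$ for all $\mathcal E$, but the essential content is the bound \emph{uniform} in $\Psi$.

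The hard part, and the conceptual core, is the next step. The displayed condition says that the skew bilinear form with matrix $\Omega_B^{-1}$ has \emph{relatively compact} orbit under the natural action of $\mathrm{Sp}(2n,\R)$ on $\Lambda^{2}(\R^{2n})^{\ast}$ (each translate is skew-symmetric with spectral radius $\le1$, hence of bounded Frobenius norm). Now $\Lambda^{2}(\R^{2n})^{\ast}=\R\,\omega_0\oplus\Lambda^{2}_0$, where $\Lambda^{2}_0$ is a nontrivial irreducible representation of the simple noncompact group $\mathrm{Sp}(2n,\R)$, and in any nontrivial finite-dimensional representation of such a group no nonzero vector has relatively compact orbit (one sees this from the $KAK$ decomposition: boundedness forces $K\!\cdot\!v$ into the zero weight space of a maximal split torus, and then the Iwasawa decomposition traps the cyclic $\mathrm{Sp}$-submodule generated by $v$ inside the nonnegative weight spaces, which is impossible unless $v=0$). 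Therefore the primitive part of $\Omega_B^{-1}$ vanishes: $\Omega_B^{-1}=c_0\,\Omega_0$ for some $c_0\in\R\setminus\{0\}$, i.e.\ $B^{T}\Omega_0B=c\,\Omega_0$ with $c=-c_0^{-1}$; and the case $\Psi=I$ of the spectral bound gives $\rho(\Omega_B^{-1})=|c_0|\le1$, that is $|c|\ge1$.

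Finally I would apply this to $B=A$ and to $B=A^{-1}$: this produces $A^{T}\Omega_0A=c\,\Omega_0$ with $|c|\ge1$, together with $(A^{-1})^{T}\Omega_0A^{-1}=c^{-1}\Omega_0$ (forced by the first identity) and $|c^{-1}|\ge1$. Hence $|c|=1$, so either $c=1$ and $A$ is symplectic, or $c=-1$ and $A$ is antisymplectic. The main obstacle is thus not the concluding algebra but recognizing the invariant-theoretic reformulation in the middle step, i.e.\ spotting that non-squeezing is precisely a boundedness-of-orbit condition and then invoking representation rigidity of $\mathrm{Sp}(2n,\R)$; the normal-form bookkeeping of the first step is routine, though the most laborious part.
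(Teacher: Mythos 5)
Your proposal is correct, but it follows a genuinely different route from the one the paper relies on: the paper does not prove Theorem \ref{thm:rigidity-real} itself (it is quoted from McDuff--Salamon), and its closest in-house argument is the elementary proof of the $p$-adic analogue (Theorem \ref{thm:rigidity} and Corollary \ref{cor:rigidity}), which -- like the classical real proof -- works directly with pairs of vectors: squeezing is characterized by the existence of $u,v$ with $|\omega_0(A\tr u,A\tr v)|$ dropping below $|\omega_0(u,v)|$ by a definite factor, and non-squeezing of $A$ and $A^{-1}$ then forces $A\Omega_0A\tr$ to be proportional to $\Omega_0$ because the two skew forms must vanish simultaneously. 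Your argument instead encodes non-squeezing of $B$ as the uniform spectral bound $\rho(\Psi\tr\Omega_B^{-1}\Psi)\le 1$ over $\Psi\in\mathrm{Sp}(2n,\R)$ (via the symplectic spectrum of ellipsoids and the $\mathrm{Sp}$-invariant formula $a_1^{-2}=\rho(\Omega_0Q)$), interprets this as boundedness of the $\mathrm{Sp}(2n,\R)$-orbit of the $2$-form $\Omega_B^{-1}$ (legitimate, since real skew-symmetric matrices are normal, so the spectral bound controls the norm), and then kills the primitive component using that every nonzero vector of the nontrivial irreducible module $\Lambda^2_0$ has unbounded orbit under the noncompact simple group; your $KAK$/Iwasawa sketch of that unboundedness lemma is correct (bounded split-torus orbits force each $K$-translate into the zero weight space, and $G=NAK$ then traps the span of the orbit in the nonnegative weight spaces, contradicting Weyl symmetry of the weights), the case $n=1$ is rightly handled separately since $\Lambda^2_0$ is then trivial, and the endgame ($|c|\ge1$, $|c^{-1}|\ge1$, hence $c=\pm1$) is the same as in the standard proof. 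The trade-off: the elementary pairwise-vector argument is self-contained and, as the paper shows, adapts verbatim to $\Qp$ (where the quantization of radii weakens the conclusion to $A\Omega_0A\tr=c\Omega_0$ with $\ord_p(c)\in\{-1,0,1\}$), whereas your representation-theoretic argument is conceptually illuminating -- non-squeezing as an orbit-boundedness condition -- but leans on real Lie theory and on normal-form bookkeeping (the identification of $\sup_E a_1(B(E))^{-2}$ with $\sup_\Psi\rho(\Psi\tr\Omega_B^{-1}\Psi)$) that a full write-up would need to spell out.
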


Next we introduce the corresponding $p$-adic notions and state the main theorem of this section (Theorem \ref{thm:rigidity}), which is the $p$-adic analogue of Theorem \ref{thm:rigidity-real}.

\begin{definition}[$p$-adic linear/affine ball and $p$-adic linear/affine cylinder]\label{def:symplectic-ball}
	\letnpos. \letpprime. Let $r,R$ be $p$-adic absolute values. A \textit{$2n$-dimensional $p$-adic linear symplectic ball of radius $r$} is the image $(f(\ball_p^{2n}(r)),\omega_0)$ of the $2n$-dimensional $p$-adic ball $\ball_p^{2n}(r)$ of radius $r$, endowed with $\omega_0=\sum_{i=1}^n\dd x_i\wedge\dd y_i$, by a $p$-adic linear symplectic embedding $f:\ball_p^{2n}(r)\hookrightarrow(\Qp)^{2n}$. A \textit{$2n$-dimensional $p$-adic linear symplectic cylinder of radius $R$} is the image $(f(\cyl_p^{2n}(R)),\omega_0)$ of the $2n$-dimensional $p$-adic cylinder $\cyl_p^{2n}(R)$ of radius $R$, endowed with $\omega_0$, by a $p$-adic linear symplectic embedding $f:\cyl_p^{2n}(R)\hookrightarrow(\Qp)^{2n}$. One can analogously define a \textit{$2n$-dimensional $p$-adic affine symplectic ball of radius $r$} and a \textit{$2n$-dimensional $p$-adic affine symplectic cylinder of radius $R$}, where the embeddings are affine instead of linear.
\end{definition}

\begin{figure}
	\includegraphics[trim=15cm 2cm 15cm 2cm,width=0.5\linewidth]{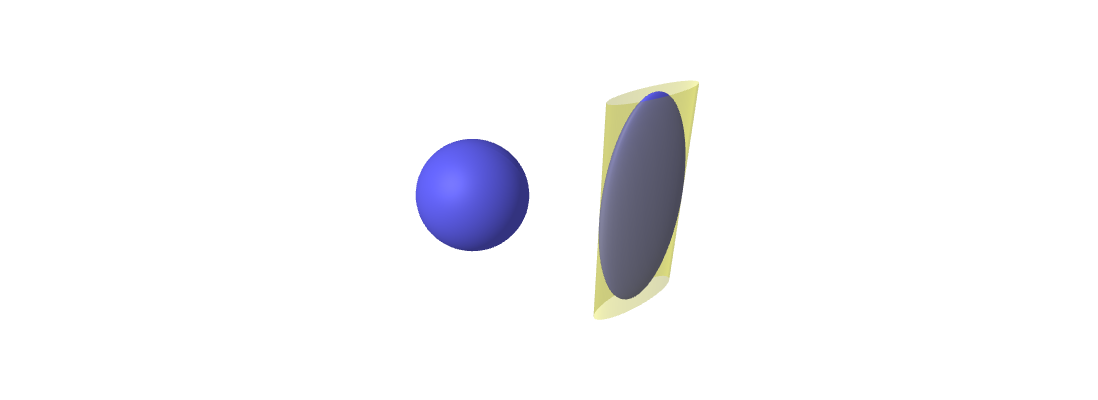}
	\caption{In Definition \ref{def:squeezing}, we allow any $p$-adic symplectic ball and $p$-adic symplectic cylinder to be used, not only the standard ball and cylinder, because otherwise it would exclude many cases of squeezing matrices such as the one shown in the figure, which squeezes the ball only in one direction. This is also the reason why we write $S_1AS_2v$ in the proof of Theorem \ref{thm:rigidity}.}
	\label{fig:squeezing-matrix-real}
\end{figure}

\begin{definition}[Squeezing matrix]\label{def:squeezing}
	\letnpos. \letpprime.
	We say that a matrix $A\in\M_{2n}(\Qp)$ is \emph{squeezing} if there exist $p$-adic absolute values $r,R$ such that $R<r$ and a $2n$-dimensional $p$-adic linear symplectic ball of radius $r$ whose image by $A$ is contained in a $2n$-dimensional $p$-adic linear symplectic cylinder of radius $R$. Otherwise we say that $A$ is \emph{non-squeezing}. See Figures \ref{fig:squeezing-matrix-real} and \ref{fig:squeezing-matrix-padic} for illustrations of this definition.
\end{definition}

\begin{theorem}[Characterization of squeezing matrices]\label{thm:rigidity}
	\letnpos. \letpprime. Let $A\in\M_{2n}(\Qp)$. Let $A\tr$ denote the transpose matrix of $A$. Let $\omega_0=\sum_{i=1}^n\dd x_i\wedge\dd y_i$ be the standard $p$-adic symplectic form on $(\Qp)^{2n}$, where $(x_1,y_1,\ldots,x_n,y_n)$ are the standard coordinates on $(\Qp)^{2n}$. Then the following statements are equivalent:
	\begin{enumerate}
		\renewcommand{\theenumi}{\roman{enumi}}
		\item $A$ is squeezing.
		\item There exist $u,v\in(\Qp)^{2n}$ such that
		\[|\omega_0(A\tr u,A\tr v)|_p\le\frac{|\omega_0(u,v)|_p}{p^2}.\]
	\end{enumerate}
\end{theorem}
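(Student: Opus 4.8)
The plan is to turn the geometric squeezing condition into a purely algebraic condition involving $A\tr$ and a single pair of vectors, and then to recognize that condition as (ii). First I would unpack Definition~\ref{def:squeezing}: a $2n$-dimensional $p$-adic linear symplectic ball of radius $r$ is $S_2(\ball_p^{2n}(r))$ and a $p$-adic linear symplectic cylinder of radius $R$ is $S_1(\cyl_p^{2n}(R))$ for suitable symplectic matrices $S_1,S_2$, so $A$ is squeezing iff there are symplectic $S_1,S_2$ and $p$-adic absolute values $R<r$ with $B\,\ball_p^{2n}(r)\subseteq\cyl_p^{2n}(R)$ for $B=S_1^{-1}AS_2$. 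Using $\cyl_p^{2n}(R)=\{v:|v_1|_p\le R,\ |v_2|_p\le R\}$ together with the elementary ultrametric identity $\sup_{\|v\|_p\le r}|c\tr v|_p=r\|c\|_p$ (applied to $c=B\tr\e_1$ and $c=B\tr\e_2$), this inclusion is equivalent to $\|B\tr\e_1\|_p\le R/r$ and $\|B\tr\e_2\|_p\le R/r$. Since $R/r$ ranges over exactly the powers of $p$ that are $<1$, i.e.\ that are $\le p^{-1}$, and a smaller ratio only makes the two inequalities harder, we get: $A$ is squeezing iff there exist symplectic $S_1,S_2$ with $\|B\tr\e_1\|_p\le p^{-1}$ and $\|B\tr\e_2\|_p\le p^{-1}$.

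Next I would absorb the symplectic matrices. Writing $B\tr=S_2\tr A\tr(S_1\tr)^{-1}$ and using that the symplectic group is stable under transpose and inverse, $S_2\tr$ and $(S_1\tr)^{-1}$ vary over all symplectic matrices as $S_1,S_2$ do; moreover the pair $w_1=(S_1\tr)^{-1}\e_1$, $w_2=(S_1\tr)^{-1}\e_2$ varies over exactly the pairs with $\omega_0(w_1,w_2)=1$ (one inclusion is $\omega_0((S_1\tr)^{-1}\e_1,(S_1\tr)^{-1}\e_2)=\omega_0(\e_1,\e_2)=1$, using $\Omega_0^{-1}=-\Omega_0$; the other holds because any such pair extends to a symplectic basis). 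So $A$ is squeezing iff there exist a pair $w_1,w_2$ with $\omega_0(w_1,w_2)=1$ and a symplectic matrix $Q$ with $\|QA\tr w_1\|_p\le p^{-1}$ and $\|QA\tr w_2\|_p\le p^{-1}$.

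The crux is then the following lemma about shrinking a pair of vectors by a symplectic map: for $a,b\in(\Qp)^{2n}$ there is a symplectic $Q$ with $\|Qa\|_p\le p^{-1}$ and $\|Qb\|_p\le p^{-1}$ if and only if $|\omega_0(a,b)|_p\le p^{-2}$. Necessity is immediate from $Q$-invariance of $\omega_0$ and $|\omega_0(a,b)|_p=|\omega_0(Qa,Qb)|_p\le\|Qa\|_p\|Qb\|_p$. For sufficiency I would use the orbit structure of a pair of vectors under $\mathrm{Sp}(2n,\Qp)$ (Witt's theorem plus the fact that the stabilizer of a symplectic, resp.\ Lagrangian, $2$-plane acts on it as $\mathrm{SL}_2$, resp.\ $\mathrm{GL}_2$): if $a,b$ are linearly dependent, a symplectic map sends them into the line $\Qp\e_1$ with images of arbitrarily small norm; if they are linearly independent with $\omega_0(a,b)=0$ — which forces $n\ge2$, while for $n=1$ this case is vacuous — a symplectic map sends them to $\alpha\e_1,\beta\e_3$ for arbitrary nonzero $\alpha,\beta$, hence to something small; and if $\omega_0(a,b)=\gamma\ne0$, a symplectic map sends them to $\alpha\e_1,\beta\e_2$ for any prescribed $\alpha,\beta\in\Qp^{\times}$ with $\alpha\beta=\gamma$, so it suffices to split the valuation: one can achieve $|\alpha|_p,|\beta|_p\le p^{-1}$ with $|\alpha|_p|\beta|_p=|\gamma|_p$ precisely when $|\gamma|_p\le p^{-2}$ (take $|\alpha|_p=p^{-1}$), possibly after composing with a symplectic dilation $(x_i,y_i)\mapsto(\lambda x_i,\lambda^{-1}y_i)$.

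Combining the last two steps, $A$ is squeezing iff there is a pair $w_1,w_2$ with $\omega_0(w_1,w_2)=1$ and $|\omega_0(A\tr w_1,A\tr w_2)|_p\le p^{-2}$, and this is exactly (ii): from such a pair take $u=w_1,v=w_2$, so $|\omega_0(u,v)|_p=1$; conversely, given $u,v$ as in (ii) with $\omega_0(u,v)\ne0$ (if $\omega_0(u,v)=0$ the inequality in (ii) holds trivially since both sides vanish, so one reads (ii) for pairs with $\omega_0(u,v)\ne0$), the scale-invariant ratio $|\omega_0(A\tr u,A\tr v)|_p/|\omega_0(u,v)|_p$ is unchanged upon replacing $u$ by $u/\omega_0(u,v)$, producing a pair with $\omega_0=1$ and the bound $\le p^{-2}$. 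I expect the genuinely hard part to be the sufficiency half of the key lemma — establishing the $\mathrm{Sp}(2n,\Qp)$-normal form of a pair of vectors (in particular that every $2$-dimensional isotropic subspace of $(\Qp)^{2n}$, $n\ge2$, extends to a symplectic basis and that a Lagrangian $2$-plane carries the full $\mathrm{GL}_2$) and then carrying out the $p$-adic valuation-splitting cleanly; Steps~1, 2 and 4 are routine bookkeeping.
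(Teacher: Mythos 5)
Your proposal is correct, and its skeleton coincides with the paper's: the implication (i)$\Rightarrow$(ii) is exactly the paper's computation ($\|C\tr\e_1\|_p,\|C\tr\e_2\|_p\le p^{-1}$ followed by the ultrametric bound $|\omega_0(a,b)|_p\le\|a\|_p\|b\|_p$), and your ``nonzero $\gamma$'' case of the key lemma is what the paper does explicitly for (ii)$\Rightarrow$(i) by setting $u'=A\tr u/p$ and $v'=pA\tr v/\omega_0(A\tr u,A\tr v)$, i.e.\ the same valuation-splitting $\alpha\beta=\gamma$ with $|\alpha|_p=p^{-1}$. Where you genuinely improve on the paper is the organization around a standalone lemma (a symplectic $Q$ shrinks the pair $a,b$ below $p^{-1}$ iff $|\omega_0(a,b)|_p\le p^{-2}$) proved via the $\mathrm{Sp}(2n,\Qp)$-orbit structure of pairs: this covers the degenerate case $\omega_0(A\tr u,A\tr v)=0$ (with $\omega_0(u,v)\ne 0$), where the paper's explicit formula for $v'$ divides by zero and its argument silently breaks down, even though the conclusion (squeezing, indeed by an arbitrary amount) is exactly what your dependent/isotropic cases deliver. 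Two small points to tighten: your parenthetical that for $\omega_0(u,v)=0$ ``the inequality in (ii) holds trivially since both sides vanish'' is imprecise --- the right-hand side vanishes, and the left-hand side vanishes only for special pairs such as $u=v$; the correct observation is that such degenerate witnesses (e.g.\ $u=v=0$) satisfy (ii) for every $A\in\M_{2n}(\Qp)$, so condition (ii) must indeed be read over pairs with $\omega_0(u,v)\ne 0$, a convention the paper also uses implicitly (compare the ratio formulation in Corollary~\ref{cor:rigidity}). Second, your appeal to Witt's theorem for alternating forms is legitimate but heavier than needed; the paper gets by with the elementary fact that any pair with $\omega_0(w_1,w_2)=1$ extends to a symplectic basis, which is all your generic case requires, and your two degenerate cases can likewise be done by hand.
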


\begin{figure}
	\includegraphics{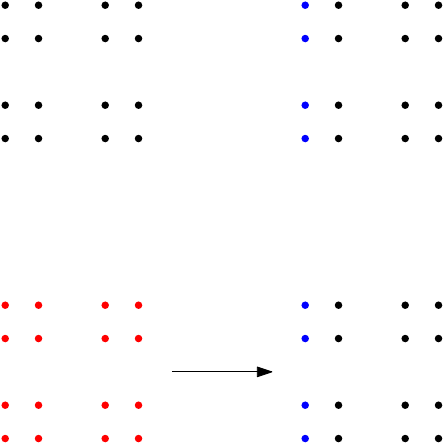}
	\caption{A $p$-adic example of the situation in Figure \ref{fig:squeezing-matrix-real}. The dots are balls of radius $1$ in a projection of $(\Q_2)^4$ to the coordinates $(x_1,y_1)$. A matrix that maps the red ball to the blue region can be squeezing in the sense of Definition \ref{def:squeezing}, because the blue region has smaller area, but it does not symplectically embed the $p$-adic ball into a $p$-adic cylinder of smaller radius.}
	\label{fig:squeezing-matrix-padic}
\end{figure}

\begin{proof}
	First we assume that $A$ is squeezing in the sense of Definition \ref{def:squeezing}. Then there exists a $2n$-dimensional $p$-adic linear symplectic ball whose image by $A$ is contained in a $2n$-dimensional $p$-adic linear symplectic cylinder of smaller radius. Let $r$ and $R$ be the radii of the ball and cylinder. Then we have that
	\[\Big\{S_1AS_2v:v\in \ball_p^{2n}(r)\Big\}\subset \cyl_p^{2n}(R)\]
	for some $p$-adic symplectic matrices $S_1$ and $S_2$. Also, $r>R$ implies that $r\ge pR$, because the radius of a $p$-adic ball (or cylinder) is always a power of $p$. Let \[C=S_1AS_2.\]
	
	For every $i\in\{1,\ldots,2n\}$, $\e_i/r$ is in $\ball_p^{2n}(r)$, hence \[\frac{C\e_i}{r}\in \cyl_p^{2n}(R)\] and
	\begin{equation}\label{eq:char-squeezing-1}
		|\e_1\tr C\e_i|_p =\frac{1}{r}|\e_1\tr \frac{C\e_i}{r}|_p\le \frac{R}{r}\le\frac{1}{p}.
	\end{equation}
	Since \eqref{eq:char-squeezing-1} happens for any $i\in\{1,\ldots,2n\}$, we have that
	\begin{equation}\label{eq:char-squeezing-2}
		\|C\tr \e_1\|_p\le \frac{1}{p}.
	\end{equation}
	The same as in \eqref{eq:char-squeezing-2} happens if we replace $\e_1$ by $\e_2$. This implies that
	\[|\omega_0(C\tr \e_1,C\tr \e_2)|_p\le\|C\tr \e_1\|_p\|C\tr \e_2\|_p\le\frac{1}{p^2}.\]
	Now we call $u=S_1\tr \e_1$ and $v=S_1\tr \e_2$. We have that $\omega_0(u,v)=1$ and
	\[|\omega_0(A\tr u,A\tr v)|_p=|\omega_0(S_2\tr A\tr S_1\tr \e_1,S_2\tr A\tr S_1\tr \e_2)|_p\le\frac{1}{p^2},\]
	as we wanted to prove.
	
	Now assume that there are $u$ and $v$ satisfying the condition (ii). Without loss of generality, we may assume that $\omega_0(u,v)=1$ (otherwise multiply $u$ by a constant). We define
	\[\left\{\begin{aligned}
		u' & =\frac{A\tr u}{p}; \\
		v' & =\frac{pA\tr v}{\omega_0(A\tr u,A\tr v)}.
	\end{aligned}\right.\]
	
	We have that $\omega_0(u,v)=1$ and
	\[\omega_0(u',v')=\frac{\omega_0(A\tr u,pA\tr v)}{p\omega_0(A\tr u,A\tr v)}=1,\]
	which means that there are $p$-adic symplectic matrices $S_1$ and $S_2$ such that
	\[S_1\tr \e_1=u,S_1\tr \e_2=v,S_2\tr u'=\e_1,S_2\tr v'=\e_2.\]
	Let \[C=S_1AS_2.\] We have that
	\[C\tr \e_1=S_2\tr A\tr S_1\tr \e_1=S_2\tr A\tr u=pS_2\tr u'=p\e_1\]
	and
	\begin{align*}
		C\tr \e_2 & =S_2\tr A\tr S_1\tr \e_2 \\
		& =S_2\tr A\tr v \\
		& =\frac{\omega_0(A\tr u,A\tr v)}{p}S_2\tr v' \\
		& =\frac{\omega_0(A\tr u,A\tr v)}{p}\e_2.
	\end{align*}
	Now we show that $C$ sends $\ball_p^{2n}(r)$ to $\cyl_p^{2n}(r/p)$, which will imply that $A$ is squeezing. Let $w\in \ball_p^{2n}(r)$. We want to show that $Cw\in \cyl_p^{2n}(r/p)$, that is,
	\begin{equation}\label{eq:char-squeezing-3}
		|\e_1\tr Cw|_p\le\frac{r}{p}\text{ and }|\e_2\tr Cw|_p\le\frac{r}{p}.
	\end{equation}
	The first inequality in \eqref{eq:char-squeezing-3} holds because
	\[|\e_1\tr Cw|_p=|p\e_1\tr w|_p\le\frac{r}{p}\]
	and the second one holds because
	\begin{align*}
		|\e_2\tr Cw|_p & =\left|\frac{\omega_0(A\tr u,A\tr v)}{p}\e_2\tr w\right|_p \\
		& =\frac{|\omega_0(A\tr u,A\tr v)|_p}{|p|_p}|\e_2\tr w|_p \\
		& \le\frac{p^{-2}}{p^{-1}}r \\
		& =\frac{r}{p}.\qedhere
	\end{align*}
\end{proof}

\begin{corollary}\label{cor:rigidity}
	\letnpos. \letpprime. Let $\Omega_0$ be the matrix \eqref{eq:omega} of the standard symplectic form $\sum_{i=1}^n\dd x_i\wedge\dd y_i$ on $(\Qp)^{2n}$, where $(x_1,y_1,\ldots,x_n,y_n)$ are the standard coordinates on $(\Qp)^{2n}$. Let $A\in\M_{2n}(\Qp)$ be an invertible matrix. Let $A\tr$ denote the transpose matrix of $A$. Then the following statements are equivalent:
	\begin{enumerate}
		\renewcommand{\theenumi}{\roman{enumi}}
		\item The matrices $A$ and $A^{-1}$ are both non-squeezing.
		\item There exists $c\in\Qp$ such that $\ord_p(c)\in\{-1,0,1\}$ and $A\Omega_0A\tr =c\Omega_0$.
	\end{enumerate}
\end{corollary}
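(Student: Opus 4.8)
The plan is to use Theorem \ref{thm:rigidity} to translate each of the two non-squeezing hypotheses into a condition on the invertible antisymmetric matrix $B:=A\Omega_0A\tr$, and then to show that together these conditions force $B$ to be a scalar multiple of $\Omega_0$ whose scalar has valuation in $\{-1,0,1\}$. Observe first that $\omega_0(A\tr u,A\tr v)=(A\tr u)\tr\Omega_0(A\tr v)=u\tr Bv$ for all $u,v\in(\Qp)^{2n}$, and that $B\tr=-B$ while $B$ is invertible since $A$ and $\Omega_0$ are. Taking the contrapositive of Theorem \ref{thm:rigidity} (whose condition (ii) is understood with $\omega_0(u,v)\ne0$, as in its proof), $A$ is non-squeezing precisely when there is no pair $u,v$ with $\omega_0(u,v)\ne0$ and $|u\tr Bv|_p\le|\omega_0(u,v)|_p/p^2$; in particular then $u\tr Bv\ne0$ whenever $\omega_0(u,v)\ne0$. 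Applying this with $A^{-1}$ in place of $A$ and substituting $u\mapsto A\tr u$, $v\mapsto A\tr v$ (a bijection of pairs, under which $\omega_0((A^{-1})\tr A\tr u,(A^{-1})\tr A\tr v)=\omega_0(u,v)$ while $\omega_0(A\tr u,A\tr v)=u\tr Bv$), we get that $A^{-1}$ is non-squeezing precisely when there is no pair $u,v$ with $u\tr Bv\ne0$ and $|\omega_0(u,v)|_p\le|u\tr Bv|_p/p^2$; in particular then $\omega_0(u,v)\ne0$ whenever $u\tr Bv\ne0$.

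To prove (i) $\Rightarrow$ (ii), combine the two ``in particular'' statements: under (i) the bilinear forms $(u,v)\mapsto u\tr Bv$ and $(u,v)\mapsto\omega_0(u,v)$ have the same vanishing locus. Fix $v\ne0$; the functionals $u\mapsto u\tr Bv$ and $u\mapsto u\tr\Omega_0v$ are both nonzero ($B$ and $\Omega_0$ invertible) and share the same kernel hyperplane, hence are proportional, so $Bv=\lambda(v)\,\Omega_0v$ for some $\lambda(v)\in\Qp^\times$. Thus every nonzero vector is an eigenvector of $M:=\Omega_0^{-1}B$; as $2n\ge2$, an endomorphism all of whose nonzero vectors are eigenvectors is scalar, so $M=cI$ and $B=c\Omega_0$, that is $A\Omega_0A\tr=c\Omega_0$ with $c\in\Qp^\times$. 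Evaluating at $u=\e_1$, $v=\e_2$, so $\omega_0(u,v)=1$ and $u\tr Bv=c$: non-squeezing of $A$ rules out $|c|_p\le1/p^2$, and non-squeezing of $A^{-1}$ rules out $|c|_p\ge p^2$; since $|c|_p\in p^{\Z}$ this leaves $|c|_p\in\{1/p,1,p\}$, i.e.\ $\ord_p(c)\in\{-1,0,1\}$.

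For (ii) $\Rightarrow$ (i), if $A\Omega_0A\tr=c\Omega_0$ with $\ord_p(c)\in\{-1,0,1\}$ then $\omega_0(A\tr u,A\tr v)=c\,\omega_0(u,v)$, so for any $u,v$ with $\omega_0(u,v)\ne0$ we have $|\omega_0(A\tr u,A\tr v)|_p=|c|_p|\omega_0(u,v)|_p\ge|\omega_0(u,v)|_p/p>|\omega_0(u,v)|_p/p^2$, and Theorem \ref{thm:rigidity} shows $A$ is non-squeezing. Rearranging $A\Omega_0A\tr=c\Omega_0$ yields $A^{-1}\Omega_0(A^{-1})\tr=c^{-1}\Omega_0$ with $\ord_p(c^{-1})=-\ord_p(c)\in\{-1,0,1\}$, and the same estimate gives that $A^{-1}$ is non-squeezing. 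Hence (i) holds.

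Most of the work is bookkeeping with $p$-adic absolute values and the substitution for $A^{-1}$. The one genuinely non-formal point is the linear-algebra step that the vanishing locus of $u\tr Bv$ pins down $B$ up to a scalar (which reduces to ``every vector an eigenvector implies scalar matrix''); and the delicate aspect there is that it requires the two vanishing loci to coincide \emph{exactly}, which is precisely why both $A$ and $A^{-1}$ must be assumed non-squeezing, each hypothesis supplying one of the two inclusions.
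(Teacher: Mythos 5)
Your proposal is correct and follows essentially the same route as the paper: apply Theorem \ref{thm:rigidity} to both $A$ and $A^{-1}$, deduce that the forms $u\tr(A\Omega_0A\tr)v$ and $u\tr\Omega_0v$ have the same vanishing locus, conclude proportionality, and then read off $\ord_p(c)\in\{-1,0,1\}$ from the two-sided bound. You additionally supply two details the paper glosses over --- the linear-algebra step that coinciding vanishing loci force $A\Omega_0A\tr=c\Omega_0$ (via ``every nonzero vector an eigenvector of $\Omega_0^{-1}B$''), and the need to read condition (ii) of Theorem \ref{thm:rigidity} with $\omega_0(u,v)\ne0$ --- both of which are welcome.
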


\begin{proof}
	If $c$ exists with the required condition, then
	\[\omega_0(A\tr u,A\tr v)=c\omega_0(u,v).\]
	By Theorem \ref{thm:rigidity} for $A$ and $A^{-1}$, since the order of $c$ is in $\{-1,0,1\}$, both matrices are non-squeezing.
	
	Now suppose that $A$ and $A^{-1}$ are non-squeezing. By Theorem \ref{thm:rigidity},
	\begin{equation}\label{eq:rigidity}
		\frac{1}{p^2}<\frac{|\omega_0(A\tr u,A\tr v)|_p}{|\omega_0(u,v)|_p}<p^2
	\end{equation}
	for all $u,v\in(\Qp)^{2n}$. Let $\Omega_1=A\Omega_0A\tr $. Equation \eqref{eq:rigidity} can be written
	\[\frac{1}{p^2}<\frac{|u\tr \Omega_1 v|_p}{|u\tr \Omega_0 v|_p}<p^2,\]
	which is equivalent to
	\[-1\le\ord_p(u\tr \Omega_1 v)-\ord_p(u\tr \Omega_0 v)\le 1.\]
	In particular this implies that $u\tr \Omega_0 v$ and $u\tr \Omega_1 v$ are $0$ exactly for the same values of $u$ and $v$, which is only possible if $\Omega_1=c\Omega_0$ for some constant $c$. In turn, this $c$ must have order between $-1$ and $1$.
\end{proof}

\begin{remark}
	The relation $A\Omega_0A\tr =c\Omega_0$ is equivalent to $A\tr \Omega_0A=c\Omega_0$, because
	\begin{align*}
		A\Omega_0A\tr=c\Omega_0 & \Leftrightarrow (A\tr )^{-1}(-\Omega_0)A^{-1}=c^{-1}(-\Omega_0) \\
		& \Leftrightarrow c\Omega_0=A\tr \Omega_0 A.
	\end{align*}
\end{remark}

\section{$p$-adic linear symplectic width and the maps which preserve it}\label{sec:width}

In this section we study the concept of $p$-adic linear symplectic width. \letnpos. We characterize which matrices preserve the $p$-adic linear symplectic width of any subset of the $p$-adic space $(\Qp)^{2n}$ and compute the $p$-adic linear symplectic width of ellipsoids.

\subsection{General definitions and results}

The following definition is analogous to McDuff-Salamon \cite[page 56]{McDSal}. In what follows we denote by $\ASp((\Qp)^{2n})$ the group of affine symplectomorphisms of $(\Qp)^{2n}$ under composition.

\begin{definition}[$p$-adic linear symplectic width]\label{def:width}
	\letnpos. \letpprime. Let $X$ be a subset of $(\Qp)^{2n}$. We define the \emph{$2n$-dimensional $p$-adic linear symplectic width} $\linearwidth(X)$ of $X$ as the square of the radius of the $2n$-dimensional $p$-adic affine symplectic ball (as in Definition \ref{def:symplectic-ball}) of smallest radius which is contained in $X$, that is,
	\begin{equation}\label{eq:width}
		\linearwidth(X)=\sup\Big\{r^2:\psi(\ball_p^{2n}(r))\subset X\text{ for some }\psi\in\ASp((\Qp)^{2n})\Big\}.
	\end{equation}
\end{definition}

Note that, in Definition \ref{def:width}, $X$ is any subset of $(\Qp)^{2n}$, and no additional structure on $X$ is required, since $\linearwidth(X)$ is defined using global $p$-adic analytic symplectomorphisms of $(\Qp)^{2n}$.

A difference between Definition \ref{def:width} and the analogous definition in the real case is that here the supremum is a maximum whenever it is finite.

\begin{proposition}
	If the supremum in expression \eqref{eq:width} is finite, then it is a maximum.
\end{proposition}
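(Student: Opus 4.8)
The plan is to show that the set of achievable values $\{r^2 : \psi(\ball_p^{2n}(r))\subset X \text{ for some }\psi\in\ASp((\Qp)^{2n})\}$ is, when bounded above, realized at its top. The crucial observation is that the radii $r$ of $p$-adic balls are not a continuum: by definition a $p$-adic absolute value is a power of $p$, so $r$ ranges over $\{p^k : k\in\Z\}$ and hence $r^2$ ranges over the discrete set $\{p^{2k}: k\in\Z\}$. Therefore if the supremum in \eqref{eq:width} is finite, say equal to $M$, then $M$ is a supremum of a subset of the discrete set $\{p^{2k}\}$ which is bounded above; such a set has a largest element $p^{2k_0}$ unless it is empty (in which case $\linearwidth(X)=\sup\emptyset$, which we may take to be $0$ or handle separately), and that largest element equals $M$. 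So the first and essentially only substantive step is to verify that the supremum is taken over a set contained in $\{p^{2k} : k\in\Z\}$ together with possibly $0$, and that such a set, if bounded, attains its supremum.

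Concretely, I would argue as follows. First, recall from the definition that any $p$-adic ball $\ball_p^{2n}(r)$ has $r$ equal to $|c|_p$ for some nonzero $c\in\Qp$, hence $r=p^{-\ord_p(c)}=p^k$ for some $k\in\Z$; thus $r^2\in\{p^{2k}:k\in\Z\}$. Next, let $M$ denote the supremum in \eqref{eq:width} and assume $M<\infty$. If no ball embeds affine-symplectically into $X$ the supremum is $-\infty$ (or $0$ by convention) and there is nothing to prove, so assume the index set is nonempty. Pick a sequence $r_j=p^{k_j}$ with $\psi_j(\ball_p^{2n}(r_j))\subset X$ and $r_j^2\to M$. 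Since $M<\infty$, the exponents $k_j$ are bounded above, so the set $\{k_j\}$ of integers is bounded above and therefore has a maximum $k_0$, attained by some $j_0$. Then $r_{j_0}^2=p^{2k_0}$ is an achievable value, and since $p^{2k}\le p^{2k_0}$ for every achievable $p^{2k}$ (else $k_j$ would exceed $k_0$ for some $j$ — more precisely every achievable exponent is $\le k_0$, as otherwise the supremum of $r_j^2$ would exceed $p^{2k_0}$ contradicting that $p^{2k_0}$ is already in the set and $M$ is the sup of a set whose elements are all $\le p^{2k_0}$... let me just say: every achievable exponent $k$ satisfies $p^{2k}\le M=\sup$, and since $p^{2k_0}$ is achievable, $M\ge p^{2k_0}$; conversely $M$ is the supremum of a subset of $\{p^{2k}\}$ all of whose elements are $\le p^{2k_0}$ once we note $k_0$ is the top exponent), we get $M=p^{2k_0}$, a maximum.

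The only delicate point — and the part I would write most carefully — is the bookkeeping that $k_0=\max\{k : p^{2k}\text{ is achievable}\}$ really exists: this is just the statement that a nonempty subset of $\Z$ bounded above has a maximum, applied to the set of exponents $k$ for which some affine-symplectic ball of radius $p^k$ fits in $X$. Boundedness above of this set of exponents is exactly the hypothesis $M<\infty$. So I do not expect a genuine obstacle here; the proposition is really just the remark that discreteness of the value set of $|\cdot|_p$ turns the supremum into a maximum, and the proof is three or four lines once one unwinds the definitions. I would phrase it so as to make clear it uses nothing beyond the fact that $p$-adic absolute values form the discrete group $p^{\Z}$.
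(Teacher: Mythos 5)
Your proof is correct and rests on exactly the same observation as the paper's: the achievable values lie in the discrete set $p^{2\Z}$, so a bounded-above, nonempty set of exponents in $\Z$ attains its maximum. The paper phrases this as a contradiction argument using $\lceil\log_p s\rceil$, but the substance is identical, so no further comment is needed.
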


\begin{proof}
	Let $S$ be the set
	\[\Big\{r^2:\psi(\ball_p^{2n}(r))\subset X\text{ for some }\psi\in\ASp((\Qp)^{2n})\Big\}.\]
	Since every $p$-adic absolute value is a power of $p$, we have $S\subset\Q$. By the supremum axiom in $\R$, $S$ has a supremum, which may be infinity or a real number.
	
	Now suppose this supremum is finite and not a maximum. Let $s\in\R$ be the supremum. Then $s\notin S$. By definition, $s>0$. Let \[k=\lceil\log_p s\rceil-1,\] where $\lceil x\rceil$ denotes the smallest integer greater or equal to $x$. We have $k<\log_p s$, that is, $p^k<s$, and $s$ is the supremum of $S$, so there is an element $s'\in S$ with $p^k<s'<s$. Since all the elements in $S$ are powers of $p$, we have $s'=p^l$ for some $l\in\Z$. Now $s'=p^l>p^k$ implies $l>k$, hence $l\ge k+1$ and
	\[s=p^{\log_p s}\le p^{\lceil\log_p s\rceil}=p^{k+1}\le p^l=s'<s,\]
	which is a contradiction.
\end{proof}

\begin{proposition}\label{prop:width}
	Let $n$ be a positive integer. \letpprime. The $2n$-dimensional $p$-adic linear symplectic width given by \eqref{eq:width} has the following properties:
	\begin{enumerate}
		\renewcommand{\theenumi}{\roman{enumi}}
		\item \emph{Monotonicity}: if $X, Y$ are subsets of $(\Qp)^{2n}$ and there exists $\psi\in\ASp((\Qp)^{2n})$ such that $\psi(X)\subset Y$, then $\linearwidth(X)\le \linearwidth(Y)$.
		\item \emph{Conformality}: if $X$ is a subset of $(\Qp)^{2n}$ and $c\in\Qp$, then $\linearwidth(cX)=|c|_p^2\linearwidth(X)$.
		\item \emph{Non-triviality}: \[\linearwidth(\ball_p^{2n}(1))=\linearwidth(\cyl_p^{2n}(1))=1.\]
	\end{enumerate}
\end{proposition}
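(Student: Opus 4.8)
The plan is to verify the three properties (i), (ii), (iii) directly from the defining formula \eqref{eq:width}, using only that $\ASp((\Qp)^{2n})$ is a group under composition and that the affine non-squeezing theorem (Theorem \ref{thm:linear}) is already available.

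For monotonicity I would fix $\psi\in\ASp((\Qp)^{2n})$ with $\psi(X)\subset Y$ and observe that whenever $\varphi\in\ASp((\Qp)^{2n})$ and a $p$-adic absolute value $r$ satisfy $\varphi(\ball_p^{2n}(r))\subset X$, the composite $\psi\circ\varphi$ again lies in $\ASp((\Qp)^{2n})$ and sends $\ball_p^{2n}(r)$ into $Y$; hence every $r^2$ admissible for $X$ is admissible for $Y$, and taking suprema gives $\linearwidth(X)\le\linearwidth(Y)$. For conformality the key point, which I would isolate first, is that the scaling $M_c\colon v\mapsto cv$ conjugates $\ASp((\Qp)^{2n})$ into itself: if $\varphi(v)=Sv+v_0$ with $S$ symplectic then $M_c\circ\varphi\circ M_c^{-1}$ is the map $v\mapsto Sv+cv_0$, whose linear part is still $S$, so it is again an affine symplectomorphism. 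Since also $M_c(\ball_p^{2n}(r))=\ball_p^{2n}(|c|_p r)$, the inclusion $\varphi(\ball_p^{2n}(r))\subset X$ yields $(M_c\varphi M_c^{-1})(\ball_p^{2n}(|c|_p r))\subset cX$, so $|c|_p^2 r^2$ is admissible for $cX$; taking suprema gives $\linearwidth(cX)\ge|c|_p^2\linearwidth(X)$ for $c\neq 0$, and running the same argument with $c^{-1}$ in place of $c$ and $cX$ in place of $X$ yields the opposite inequality. The degenerate case $c=0$ is trivial. Finally, for non-triviality I would use the identity map together with the inclusion $\ball_p^{2n}(1)\subset\cyl_p^{2n}(1)$ from \eqref{eq:cylinder} to see that $r=1$ is admissible for both $\ball_p^{2n}(1)$ and $\cyl_p^{2n}(1)$, so both widths are at least $1$; for the reverse bound, if $\psi\in\ASp((\Qp)^{2n})$ sends $\ball_p^{2n}(r)$ into $\cyl_p^{2n}(1)$ (in particular into $\ball_p^{2n}(1)$), then its restriction is a $p$-adic affine symplectic embedding of $\ball_p^{2n}(r)$ into $\cyl_p^{2n}(1)$, so Theorem \ref{thm:linear} forces $r\le 1$ and hence $r^2\le 1$, giving $\linearwidth(\ball_p^{2n}(1))=\linearwidth(\cyl_p^{2n}(1))=1$.

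I do not expect any genuine obstacle: the proposition is essentially a bookkeeping exercise around \eqref{eq:width}. The two steps needing a little care are the conjugation identity $M_c\circ\varphi\circ M_c^{-1}\in\ASp((\Qp)^{2n})$ in (ii) — this is what lets a non-symplectic operation (scaling by $c$) interact cleanly with an affine-symplectic invariant, replacing the radius $r$ by $|c|_p r$ — and the appeal to Theorem \ref{thm:linear} for the upper bounds in (iii), which is where the actual rigidity content of the proposition resides.
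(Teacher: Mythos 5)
Your proposal is correct and matches the paper's (much terser) proof: properties (i) and (ii) are direct consequences of the definition — your conjugation $M_c\circ\varphi\circ M_c^{-1}$ is exactly the right way to make (ii) precise — and the upper bounds in (iii) come from Theorem \ref{thm:linear}, which is where the paper also locates the only real content. The one phrase to polish is the parenthetical ``(in particular into $\ball_p^{2n}(1)$)'', which reads backwards since $\ball_p^{2n}(1)\subset\cyl_p^{2n}(1)$; you mean that an embedding into the ball is \emph{a fortiori} an embedding into the cylinder, so a single appeal to Theorem \ref{thm:linear} handles both upper bounds.
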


\begin{proof}
	The first two properties and the first part of the third are immediate consequences of the definition. The second part of the third property follows from Theorem \ref{thm:linear}.
\end{proof}

The following result is the $p$-adic analog of McDuff-Salamon \cite[Theorem 2.4.4]{McDSal}.

\begin{theorem}\label{thm:width}
	Let $n$ be a positive integer. \letpprime. Let $\Omega_0$ be the matrix \eqref{eq:omega} of the standard symplectic form $\sum_{i=1}^n\dd x_i\wedge\dd y_i$ on $(\Qp)^{2n}$, where $(x_1,y_1,\ldots,x_n,y_n)$ are the coordinates on $(\Qp)^{2n}$. Let $A\in\M_{2n}(\Qp)$. Then the following statements are equivalent:
	\begin{enumerate}
		\item For every subset $X$ of $(\Qp)^{2n}$ we have that \[\linearwidth(\{Av:v\in X\})=\linearwidth(X).\]
		\item There exists $c\in\Zp$ with $\ord_p(c)=0$ and $A\tr \Omega_0A=c\Omega_0$.
	\end{enumerate}
\end{theorem}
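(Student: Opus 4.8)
\textbf{Proof plan for Theorem \ref{thm:width}.}

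The plan is to reduce the statement to the width of the standard ball together with the conformality property from Proposition \ref{prop:width}, and then to identify exactly which matrices rescale the symplectic form by a unit. First I would prove the implication (ii) $\Rightarrow$ (i). Assume $A\tr\Omega_0 A=c\Omega_0$ with $\ord_p(c)=0$. Then $A$ is an invertible linear map (since $c\neq 0$ forces $\det A\neq 0$), so for any $\psi\in\ASp((\Qp)^{2n})$ the composite $A\circ\psi$ is affine and satisfies $(A\circ\psi)^*\omega_0=c\,\omega_0$. The key point is that scaling the symplectic form by a $p$-adic unit does not change which radii of balls embed: if $\psi(\ball_p^{2n}(r))\subset X$, I want $A\psi(\ball_p^{2n}(r))\subset AX$ to still be (the image of) a $p$-adic affine symplectic ball of the same radius $r$. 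This follows because $c=u^2$ for some $p$-adic unit $u$ is not needed; rather one observes that if $c$ is a unit then the linear map $A$ composed with a suitable symplectic correction $S$ (with $S\tr\Omega_0 S=c^{-1}\Omega_0$, which exists precisely because $\ord_p(c)=0$ — e.g. scale one coordinate pair appropriately using that a unit is a square times a bounded factor, or more directly use that $\mathrm{diag}(c,1,\ldots)$ type matrices realize any unit scaling while preserving $\Zp$-structure) gives a genuine symplectomorphism. So $\linearwidth(AX)\ge\linearwidth(X)$, and applying the same to $A^{-1}$ (which also satisfies the analogous relation with constant $c^{-1}$, again a unit) gives the reverse inequality, hence equality.

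Next I would prove (i) $\Rightarrow$ (ii). Suppose $A$ preserves $\linearwidth$ of every subset. Taking $X=\ball_p^{2n}(1)$ and using Proposition \ref{prop:width}(iii) gives $\linearwidth(A\,\ball_p^{2n}(1))=1$, and more generally $\linearwidth(A\,\ball_p^{2n}(r))=r^2$ for every $p$-adic absolute value $r$ by conformality. The plan is to extract from this the two-sided bound $\tfrac1{p^2}<|\omega_0(A\tr u,A\tr v)|_p/|\omega_0(u,v)|_p<p^2$ so that Corollary \ref{cor:rigidity} applies and gives $A\Omega_0 A\tr=c\Omega_0$ with $\ord_p(c)\in\{-1,0,1\}$; then I must rule out $\ord_p(c)=\pm 1$ using that the width is preserved \emph{exactly}, not just up to a bounded factor. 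For the first part: if $A$ (or $A^{-1}$) were squeezing, Definition \ref{def:squeezing} and a short argument show that some $p$-adic affine symplectic ball of radius $r$ maps into one of radius $R<r$, and by monotonicity one deduces that the width is \emph{not} preserved for an appropriate $X$ — one takes $X$ to be a linear symplectic cylinder of radius $R$, whose preimage under $A$ contains a ball of radius $r>R$, contradicting $\linearwidth(A^{-1}X)=\linearwidth(X)$. Hence both $A$ and $A^{-1}$ are non-squeezing, and Corollary \ref{cor:rigidity} yields $A\Omega_0 A\tr=c\Omega_0$ with $\ord_p(c)\in\{-1,0,1\}$.

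It remains to exclude $\ord_p(c)=\pm 1$. The relation $A\Omega_0 A\tr=c\Omega_0$ gives $\omega_0(A\tr u,A\tr v)=c\,\omega_0(u,v)$, so for $u',v'$ with $A\tr u'=u$, $A\tr v'=v$ we get $\omega_0(u,v)=c^{-1}\omega_0(u',v')$ — that is, $A$ rescales the symplectic volume on each symplectic plane by $|c|_p$. Concretely I would compute $\linearwidth(A\,\ball_p^{2n}(1))$ directly: conjugating $A$ by symplectic matrices (as in the proof of Theorem \ref{thm:rigidity}) one may assume $A$ sends the standard symplectic $(x_1,y_1)$-plane to a symplectic plane with area form scaled by $c$, so $A\,\ball_p^{2n}(1)$ contains and is contained in explicit products of balls whose symplectic width is $\max(1,|c|_p)$ or $\min(1,|c|_p)$ respectively; in either case, if $|c|_p=p^{\pm1}\neq 1$ the width of $A\,\ball_p^{2n}(1)$ is $p^{\pm1}\neq 1=\linearwidth(\ball_p^{2n}(1))$, a contradiction. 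Therefore $\ord_p(c)=0$, and by the Remark following Corollary \ref{cor:rigidity} the relation $A\Omega_0 A\tr=c\Omega_0$ is equivalent to $A\tr\Omega_0 A=c\Omega_0$, giving (ii). I expect the main obstacle to be this last step — making the computation of $\linearwidth(A\,\ball_p^{2n}(1))$ when $|c|_p\neq 1$ fully rigorous, since it requires producing an \emph{affine symplectic} ball of radius strictly larger than expected inside $A\,\ball_p^{2n}(1)$ (the ``easy'' direction, non-squeezing, only controls the width from one side), and this is precisely where the $p$-adic phenomenon that all the nearby radii are powers of $p$ has to be used carefully.
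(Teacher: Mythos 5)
Your implication (2)\,$\Rightarrow$\,(1) is essentially the paper's argument: correct the conformal factor $c$ by the ball-preserving matrix $\mathrm{diag}(1,c,1,c,\ldots)$ and apply the same reasoning to $A^{-1}$ with $c^{-1}$. The first part of (1)\,$\Rightarrow$\,(2) — deducing that $A$ and $A^{-1}$ are non-squeezing from monotonicity and the width of the cylinder, then invoking Corollary \ref{cor:rigidity} to get $A\tr\Omega_0A=c\Omega_0$ with $\ord_p(c)\in\{-1,0,1\}$ — also matches the paper.

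The gap is in your final step, excluding $\ord_p(c)=\pm 1$, and it is exactly where you predicted trouble. Your plan is to compute $\linearwidth(A\,\ball_p^{2n}(1))$ and find it equal to $p^{\pm1}\ne 1$. But a $p$-adic linear symplectic width is by definition the \emph{square} of a radius, hence always a power of $p^{2}$, so it can never equal $p^{\pm1}$; the claimed value is impossible. Worse, the test set genuinely fails in one of the two cases: take $A=\mathrm{diag}(1,1/p,\ldots,1,1/p)$, so that $A\tr\Omega_0A=p^{-1}\Omega_0$ and $\ord_p(c)=-1$. Then $A\,\ball_p^{2n}(1)$ is the ellipsoid with defining matrix $A^{-1}=\mathrm{diag}(1,p,\ldots)$, and $A^{-1}\Omega_0(A^{-1})\tr=p\,\Omega_0$ has entries of order $1$; the largest power of $p^2$ dividing these is $p^0$, so by Corollary \ref{cor:width-ellipsoid} the width of the image is still $1$. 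By conformality every ball gives the same non-information, so no standard ball detects the failure — only a genuinely non-round ellipsoid (e.g.\ $\Zp\times p\Zp\times\cdots$) does. Your argument can be repaired: when $\ord_p(c)=+1$ the same ellipsoid computation does give width $p^{-2}\ne 1$ for $A\,\ball_p^{2n}(1)$, and when $\ord_p(c)=-1$ you must instead run it on $A^{-1}$ (whose constant is $c^{-1}$, of order $+1$); but this asymmetry and the reliance on the ellipsoid-width formula are absent from your sketch. The paper avoids all of this with a one-line trick: since $A^k$ preserves widths for every $k$, Corollary \ref{cor:rigidity} applies to $A^2$ as well, giving $c_2=c_1^2$ with $\ord_p(c_1),\,2\ord_p(c_1)\in\{-1,0,1\}$, which forces $\ord_p(c_1)=0$. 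You should adopt that argument or carry out the ellipsoid computation carefully for the correct one of $A,A^{-1}$.
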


\begin{proof}
	Assume first that (2) holds and let $X\subset(\Qp)^{2n}$ and \[Y=\{Av:v\in X\}.\] $A$ must be invertible, because otherwise there would be $v\ne 0$ such that $Av=0$ and (2) would not hold because $\omega_0$ is non-degenerate. We want to see that $\linearwidth(X)=\linearwidth(Y)$. It is enough to prove that $\linearwidth(X)\le \linearwidth(Y)$, because for the other direction one only needs to change $c$ by $1/c$, which also has order $0$.
	
	Let $r\in\Q$ be such that $r^2=\linearwidth(X)$ and let $\psi\in\ASp(2n,\Qp)$ be such that $\psi(\ball_p^{2n}(r))\subset X$. Let
	\[C=\begin{pmatrix}
		1 & & & & & & \\
		& c & & & & & \\
		& & 1 & & & & \\
		& & & c & & & \\
		& & & & \ddots & & \\
		& & & & & 1 & \\
		& & & & & & c
	\end{pmatrix}\]
	This matrix keeps the ball invariant and satisfies
	\[\omega_0(Cu,Cv)=c\omega_0(u,v)\]
	for all $u,v\in(\Qp)^{2n}$. Hence the $p$-adic affine map $\psi'$ given by \[\psi'(v)=A\psi(C^{-1}v)\] is a $p$-adic affine symplectomorphism, and $\psi'(\ball_p^{2n}(r))\subset Y$. This proves that \[\linearwidth(X)\le \linearwidth(Y).\]
	
	Now assume that (1) holds. Then $A$ is invertible (otherwise it would make the ball into a set of zero width) and $A^k$ also preserves the $p$-adic linear symplectic width for any $k\in\Z$. We claim that $A^k$ is non-squeezing.
	
	In order to prove this, let $B$ be a $p$-adic symplectic ball of radius $r$ as in Definition \ref{def:symplectic-ball} and \[B'=\{A^kv:v\in B\}.\] Since $A^k$ preserves the $p$-adic linear symplectic width, \[\linearwidth(B')=\linearwidth(B)=r^2.\] If $B'$ is contained in a $p$-adic symplectic cylinder $Z$ as in Definition \ref{def:symplectic-ball}, \[\linearwidth(Z)\ge r^2,\] which implies by Proposition \ref{prop:width}(3) that the radius of the cylinder is at least $r$. The claim is proved.
	
	By Corollary \ref{cor:rigidity}, we have that, for each $k\in\Z$, there exists $c_k\in\Qp$ such that $(A^k)\tr \Omega_0A^k=c_k\Omega_0$ and $\ord_p(c_k)\in\{-1,0,1\}$. Now we have
	\[c_2\Omega_0=(A^2)\tr \Omega_0A^2=c_1A\tr \Omega_0A=c_1^2\Omega_0,\]
	that is, $c_2=c_1^2$. Together with $\ord_p(c_1)\in\{-1,0,1\}$ and $\ord_p(c_2)\in\{-1,0,1\}$, this implies \[\ord_p(c_1)=0,\] and we are done.
\end{proof}

\subsection{$p$-adic linear symplectic width of $p$-adic ellipsoids}

Next we compute the $p$-adic linear symplectic width of $p$-adic ellipsoids. Here by a $p$-adic ellipsoid we mean the image by a $p$-adic affine map of the $p$-adic ball. See Figure \ref{fig:ellipsoids} for two examples. The following definition generalizes the one given in \eqref{eq:ellipsoid}.

\begin{definition}\label{def:ellipsoid}
	Let $m$ be a positive integer. \letpprime.
	An \emph{$m$-dimensional $p$-adic ellipsoid} is any subset of $(\Qp)^m$ of the form
	\begin{equation}\label{eq:ellipsoid}
		E=\Big\{v\in(\Qp)^m:\|A(v-v_0)\|_p\le 1\Big\}=\Big\{v\in(\Qp)^m:\|A(v-v_0)\|_p<p\Big\},
	\end{equation}
	where $A\in\M_m(\Qp)$ is an invertible matrix and $v_0\in(\Qp)^m$. We call $A$ \emph{a matrix which defines $E$}, or a \emph{defining matrix of $E$}.
\end{definition}

\begin{remark}
	The matrix $A$ which defines an ellipsoid as in \eqref{eq:ellipsoid} is not unique. For example, adding to a row an integer linear combination of the rest of rows does not change the ellipsoid.
\end{remark}

\begin{remark}
	For any prime number $p$, a $p$-adic affine symplectic ball (Definition \ref{def:symplectic-ball}) is a particular example of a $p$-adic ellipsoid (Definition \ref{def:symplectic-ball}) when $A$ is a multiple of a symplectic matrix. In turn, a $p$-adic linear symplectic ball is an example of a $p$-adic affine symplectic ball when $v_0=0$.
\end{remark}

\begin{remark}
	As it happens with $p$-adic balls, changing $v_0$ by any other point in expression \eqref{eq:ellipsoid} gives the same $p$-adic ellipsoid.
\end{remark}

\begin{figure}
	\includegraphics[scale=2.6]{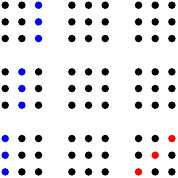}
	\caption{Two $2$-dimensional $3$-adic ellipsoids, represented as subsets of $\ball_3^2(9)$, where each dot represents a $3$-adic ball of radius $1$. The red one has width $1$, because a $3$-adic ball of radius $1$ fits in the $3$-adic ellipsoid, but one of radius $3$ (which is the next possible radius) has bigger volume. The blue $3$-adic ellipsoid has width $9$, because we can transform it to a $3$-adic ball of radius $3$ by means of a $p$-adic affine symplectomorphism, for example the nine dots in the upper left, leaving the $x$ coordinate unchanged and adding $1/3$ of it to $y$.}
	\label{fig:ellipsoids}
\end{figure}

Recall that $\Zp$ is the set of $p$-adic numbers of absolute value at most $1$:
\[\Zp=\Big\{x\in\Qp:|x|_p\le 1\Big\}=\Big\{x\in\Qp:\ord_p(x)\ge 0\Big\}.\]

We start by proving a $p$-adic version of Farkas' Lemma \cite{Farkas}, which is also of independent interest. This result will allow us to find a necessary and sufficient condition for a $p$-adic ellipsoid to be contained in another one.

\begin{lemma}[$p$-adic analog of Farkas' Lemma]\label{lemma:farkas}
	Let $m$ amd $k$ be positive integers. \letpprime. Let $A\in\M_{m\times k}(\Qp)$ and $b\in(\Qp)^m$. Then one and only one of the following two vectors exists:
	\begin{itemize}
		\item $x\in(\Zp)^k$ such that $Ax=b$.
		\item $y\in(\Qp)^m$ such that $y\tr A\in(\Zp)^k$ and $y\tr b\notin\Zp$.
	\end{itemize}
\end{lemma}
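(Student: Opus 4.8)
The plan is to mimic the classical proof of Farkas' Lemma, replacing the separating-hyperplane argument from convex geometry with a combination of linear algebra over $\Qp$ and the ultrametric structure of $\Zp$. First I would note that the two alternatives cannot hold simultaneously: if $x\in(\Zp)^k$ satisfies $Ax=b$ and $y\in(\Qp)^m$ satisfies $y\tr A\in(\Zp)^k$, then $y\tr b=y\tr A x=(y\tr A)x$ is a $\Zp$-linear combination of the entries of the vector $x\in(\Zp)^k$, hence lies in $\Zp$; so the second vector cannot have $y\tr b\notin\Zp$. This is the easy direction.

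For the substantive direction, suppose no $x\in(\Zp)^k$ with $Ax=b$ exists; I must produce the certificate $y$. I would argue by induction on $k$, the number of columns of $A$. In the base case $k=0$ the condition ``$Ax=b$ for some $x\in(\Zp)^0$'' just says $b=0$; if $b\neq 0$ pick a coordinate $b_j\neq 0$ and take $y$ to be a large negative power of $p$ times $\e_j$, making $y\tr b\notin\Zp$ while $y\tr A$ is the empty (hence trivially in $(\Zp)^0$) vector. For the inductive step, write $A=(A'\mid a_k)$ where $a_k$ is the last column. Consider the image subspace $W=A'((\Zp)^{k-1})+\Zp\cdot a_k\subseteq(\Qp)^m$; actually it is cleaner to work with the $\Zp$-submodule $M=A((\Zp)^k)$ of $(\Qp)^m$. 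The hypothesis says $b\notin M$. Since $M$ is a finitely generated $\Zp$-submodule of the finite-dimensional $\Qp$-vector space $(\Qp)^m$, it is a free $\Zp$-module of some rank, and one can choose a $\Zp$-basis $e_1,\dots,e_s$ of $M$ that extends (after clearing denominators) to a $\Qp$-basis of the $\Qp$-span $V=\Qp\otimes_{\Zp} M$. Complete to a $\Qp$-basis $e_1,\dots,e_s,f_1,\dots,f_t$ of $(\Qp)^m$ and let $\lambda_1,\dots,\lambda_s,\mu_1,\dots,\mu_t$ be the dual coordinate functionals. If $b$ has a nonzero $f$-component, say $\mu_j(b)\neq 0$, then $y$ corresponding to a suitably large negative power of $p$ times $\mu_j$ kills all of $M$ (so $y\tr A\in(\Zp)^k$) but has $y\tr b\notin\Zp$. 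Otherwise $b\in V$ but $b\notin M$; then writing $b=\sum \beta_i e_i$ with some $\beta_i\notin\Zp$ (this is where $b\notin M$ is used, $M$ being exactly the $\Zp$-span of the $e_i$), we pick $y$ corresponding to $p^{-N}$ times the functional $\lambda_i$ for an $i$ with $\ord_p(\beta_i)$ most negative and $N$ chosen so that $y\tr e_\ell\in\Zp$ for all $\ell$ (possible since $\lambda_i(e_\ell)=\delta_{i\ell}$, so in fact $N=0$ works: $\lambda_i$ itself already satisfies $\lambda_i(M)\subseteq\Zp$) while $y\tr b=\beta_i\notin\Zp$. Since every column of $A$ lies in $M$, $y\tr A\in(\Zp)^k$, completing the construction.

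The technical heart — and the step I expect to be the main obstacle — is the structure theory of the $\Zp$-module $M=A((\Zp)^k)$: one needs that a finitely generated $\Zp$-submodule of $(\Qp)^m$ is free and admits a $\Zp$-basis that is simultaneously a $\Qp$-basis of its span, so that the ``coordinate functional'' $\lambda_i$ genuinely maps all of $M$ into $\Zp$ and extracts the offending coordinate $\beta_i$ of $b$. This is the $p$-adic substitute for choosing a separating hyperplane with the right sign in the real Farkas Lemma, and it is where ultrametricity (principal ideal domain structure of $\Zp$, elementary divisors / Smith normal form over $\Zp$) does the work that convexity does classically. Once this normal form is in hand, the rest is bookkeeping with valuations, and the induction on $k$ can actually be bypassed — the argument above via the Smith normal form of $A$ over $\Zp$ handles all cases at once: writing $A=U D V$ with $U\in\mathrm{GL}_m(\Zp)$, $V\in\mathrm{GL}_k(\Zp)$ and $D$ diagonal with entries that are powers of $p$ (or $0$), the equation $Ax=b$ with $x\in(\Zp)^k$ becomes, after the change of variables $x'=Vx$, $y'=U^{-1}b$, a diagonal system whose solvability in $\Zp$ is transparent, and the failure of solvability immediately exhibits a coordinate of $y'$ that is not divisible by the corresponding diagonal entry, yielding $y$.
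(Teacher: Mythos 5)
Your proof is correct, but it takes a genuinely different route from the paper's. The paper argues by induction on $m$ (the number of rows), with an explicit case analysis on the first row of $A$ and elementary column operations; it is self-contained but somewhat laborious. You instead invoke the module structure of $M=A((\Zp)^k)$: as a finitely generated torsion-free module over the discrete valuation ring $\Zp$ it is free, its $\Zp$-basis is automatically $\Qp$-linearly independent (clear denominators), and the dual coordinate functionals then produce the certificate $y$ directly --- either a functional annihilating the span of $M$ when $b$ lies outside that span, or a coordinate functional extracting a non-integral coefficient $\beta_i$ when $b$ is in the span but not in $M$. Your closing reformulation via the Smith normal form $A=UDV$ with $U,V$ invertible over $\Zp$ is equivalent and makes the dichotomy completely transparent. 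The trade-off is the usual one: your argument is shorter and more conceptual but imports the structure theory of modules over a PID (or Smith normal form over $\Zp$), whereas the paper's induction uses nothing beyond the ultrametric inequality and row/column manipulations. One cosmetic remark: the inductive set-up on $k$ at the start of your substantive direction is never actually used, since the module-theoretic argument handles all cases at once; it could simply be deleted. The parenthetical ``after clearing denominators'' when asserting that the $\Zp$-basis of $M$ is a $\Qp$-basis of its span is also slightly misplaced --- the basis elements themselves already work, and clearing denominators is only the mechanism in the proof of their $\Qp$-linear independence.
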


\begin{proof}
	First we show that $x$ and $y$ cannot exist at the same time: if they exist,
	\[y\tr Ax\in\Zp\text{ and }y\tr Ax=y\tr b\notin\Zp,\]
	which is a contradiction.
	
	Now we prove that $x$ or $y$ exists by induction on $m$. For $m=1$, there are two possibilities:
	\begin{itemize}
		\item If $|b|_p\le\|A\|_p$, there is an integer combination of the entries in $A$ which gives $b$, and $x$ exists.
		\item If $|b|_p>\|A\|_p$, we can take $y=\|A\|_p$, and
		\[\|(\|A\|_pA)\|_p=\frac{\|A\|_p}{\|A\|_p}=1;\]
		\[|(\|A\|_pb)|_p=\frac{|b|_p}{\|A\|_p}>1.\]
	\end{itemize}
	
	Now suppose that it holds for $m\ge 1$ and we prove it for $m+1$. We distinguish four cases.
	
	\begin{itemize}
		\item \emph{Case 1: The first row of $A$ is full of zeros and the first entry of $b$ is zero.} We apply induction to $A$ and $b$ with the first row removed. If $x$ exists, this $x$ is still a solution of $Ax=b$ after adding a row of zeros; if $y$ exists, we add a zero coordinate at the beginning of $y$.
		\item \emph{Case 2: The first row of $A$ is full of zeros and the first entry of $b$ is nonzero.} We take $y=p^{-\ell}\e_1$ for $\ell$ big enough.
		\item \emph{Case 3: $k=1$ and the first entry of $A$ is nonzero.} Let $\alpha$ and $\beta$ be the first entries of $A$ and $b$, and \[x=\frac{\beta}{\alpha}.\] If $x\in\Zp$ and $Ax=b$, we are done. If $x\notin\Zp$, we take \[y=\frac{\e_1}{\alpha},\] and now $y\tr A=1$ and $y\tr b=x\notin\Zp$. If $Ax\ne b$, let $i$ be a position different from zero in $Ax-b$. We take $\ell$ big enough so that \[p^{-\ell}\alpha \e_i\tr (Ax-b)\notin\Zp\] and
		\[y=p^{-\ell}(\e_1\e_i\tr A-\e_i\e_1\tr A),\]
		and obtain
		\[y\tr A=p^{-\ell}(\e_1\tr A\e_i\tr A-\e_i\tr A\e_1\tr A)=0,\]
		\begin{align*}
			y\tr b & =p^{-\ell}(\e_1\tr b\e_i\tr A-\e_i\tr b\e_1\tr A) \\
			& =p^{-\ell}(\beta \e_i\tr A-\alpha \e_i\tr b) \\
			& =p^{-\ell}\alpha \e_i\tr (Ax-b)\notin\Zp.
		\end{align*}
		\item \emph{Case 4: $k\ge 2$ and there is at least a nonzero entry in the first row of $A$.} Let $(1,j)$ be the position in $A$ of the entry with smallest order in the first row and let $A_1$ be the result of swapping the columns $1$ and $j$ in $A$.
		
		Now we add an integer multiple of the first column of $A_1$ to the rest of columns to make $0$ the rest of the first row; let $A_2$ be the resulting matrix. By construction, $A_2=AC$ for some integer matrix $C$ whose inverse is also integer. There exist $\alpha,\beta\in\Qp$, $a\in(\Qp)^m$, $b'\in(\Qp)^m$ and $A'\in\M_{m\times(k-1)}(\Qp)$ such that
		\[A_2=\begin{pmatrix}
			\alpha & 0 \\
			a & A'
		\end{pmatrix},
		b=\begin{pmatrix}
			\beta \\ b'
		\end{pmatrix}.\]
		Let \[x_1=\frac{\beta}{\alpha}.\] If $x_1\notin\Zp$, we take \[y=\frac{\e_1}{\alpha},\] which gives \[y\tr A=y\tr A_2C^{-1}=\e_1C^{-1}\in(\Zp)^n\] and $y\tr b=x_1\notin\Zp$. If $x_1\in\Zp$, we apply the inductive hypothesis to the matrix $A'$, which has $m$ rows, and the vector $b'-x_1a$. There are two possibilities:
		\begin{itemize}
			\item There is $x'\in(\Zp)^{k-1}$ such that $A'x'=b'-x_1a$. Then $x_2=(x_1,x')\in(\Zp)^k$ satisfies $A_2x_2=b$ and $ACx_2=b$, hence we can take $x=Cx_2$.
			\item There is $y'\in(\Qp)^m$ such that ${y'}\tr A'\in(\Zp)^{k-1}$ and ${y'}\tr (b'-x_1a)\notin\Zp$. We take $y=(-{y'}\tr a/\alpha,y')$ and have
			\begin{align*}
				y\tr A & =y\tr A_2C^{-1} \\
				& =(-{y'}\tr a+{y'}\tr a,{y'}\tr A')C^{-1}\in(\Zp)^k
			\end{align*}
			and
			\begin{align*}
				y\tr b & =-x_1{y'}\tr a+{y'}\tr b' \\
				& ={y'}\tr (b'-x_1a)\notin\Zp,
			\end{align*}
			as we wanted. \qedhere
		\end{itemize}
	\end{itemize}
\end{proof}

With the help of Lemma \ref{lemma:farkas} we can now prove the following.

\begin{proposition}\label{prop:contained}
	Let $m$ be a positive integer. \letpprime. Let $E_1$ and $E_2$ be two $m$-dimensional $p$-adic ellipsoids in $(\Qp)^m$ as in Definition \ref{def:ellipsoid} and let $E_i$ be defined by \[\|A_i(v-v_i)\|\le 1,\text{ for }i\in\{1,2\}.\] Then the following statements are equivalent:
	\begin{enumerate}
		\item $E_1\subset E_2$.
		\item $v_1\in E_2$ and there exists a matrix $C\in\M_m(\Zp)$ such that $CA_1=A_2$.
	\end{enumerate}
\end{proposition}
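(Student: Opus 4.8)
The plan is to prove the two implications separately; the direction (2) $\Rightarrow$ (1) is a short ultrametric computation, while (1) $\Rightarrow$ (2) is where Lemma \ref{lemma:farkas} enters. For (2) $\Rightarrow$ (1): assume $v_1\in E_2$ and $CA_1=A_2$ with $C\in\M_m(\Zp)$, and for $v\in E_1$ write
\[A_2(v-v_2)=CA_1(v-v_1)+A_2(v_1-v_2).\]
Multiplication by a matrix with entries in $\Zp$ does not increase the $p$-adic norm, so $\|CA_1(v-v_1)\|_p\le\|A_1(v-v_1)\|_p\le 1$, while $\|A_2(v_1-v_2)\|_p\le 1$ because $v_1\in E_2$; the ultrametric inequality then gives $\|A_2(v-v_2)\|_p\le 1$, i.e.\ $v\in E_2$.

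For (1) $\Rightarrow$ (2): since $\|A_1(v_1-v_1)\|_p=0\le 1$ we have $v_1\in E_1\subset E_2$, which is the first half of (2). To construct $C$ it suffices to realize each row of $A_2$ as a $\Zp$-linear combination of the rows of $A_1$, i.e.\ to solve $A_1\tr x=w_i$ with $x\in(\Zp)^m$ for each $i$, where $w_i$ denotes the transpose of the $i$-th row of $A_2$. By Lemma \ref{lemma:farkas} (applied to $A_1\tr$ and $w_i$, and using $y\tr A_1\tr=(A_1y)\tr$) such an $x$ fails to exist only if there is $y\in(\Qp)^m$ with $A_1y\in(\Zp)^m$ and $y\tr w_i\notin\Zp$. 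I would exclude this last possibility: for such a $y$ and any $t\in\Zp$ the point $v_1+ty$ satisfies $\|A_1(ty)\|_p=|t|_p\|A_1y\|_p\le 1$, hence $v_1+ty\in E_1\subset E_2$, i.e.\ $\|A_2(v_1-v_2)+tA_2y\|_p\le 1$; taking $t=0$ and subtracting shows $\|tA_2y\|_p\le 1$ for all $t\in\Zp$, so $A_2y\in(\Zp)^m$ and therefore $y\tr w_i=(A_2y)_i\in\Zp$, a contradiction. Thus the Farkas alternative forces a solution $x^{(i)}\in(\Zp)^m$ for each $i$, and assembling these vectors as the rows of $C$ produces $C\in\M_m(\Zp)$ with $CA_1=A_2$.

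I do not anticipate a serious obstacle: the proof is elementary once the Farkas application is set up, the only point needing care being the affine shift $v_1-v_2$, which is why one isolates the clause ``$v_1\in E_2$'' and treats it through the $t=0$ specialization. One could in fact bypass Lemma \ref{lemma:farkas} altogether and argue in all directions simultaneously --- for $w\in(\Zp)^m$ the point $v_1+A_1^{-1}w$ lies in $E_1\subset E_2$, so $A_2A_1^{-1}w+A_2(v_1-v_2)\in(\Zp)^m$, and comparing $w=0$ with general $w$ yields $A_2A_1^{-1}\in\M_m(\Zp)$ directly --- but routing the argument through the $p$-adic Farkas lemma makes the reformulation ``each row of $A_2$ is an integral combination of rows of $A_1$'' transparent and keeps the section self-contained.
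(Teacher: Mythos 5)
Your proof is correct and follows essentially the same route as the paper: the same ultrametric decomposition $A_2(v-v_2)=CA_1(v-v_1)+A_2(v_1-v_2)$ for (2)\,$\Rightarrow$\,(1), and the same application of Lemma \ref{lemma:farkas} to $A_1\tr$ and each row of $A_2$, excluding the $y$-alternative by testing $v_1+y\in E_1\subset E_2$, for (1)\,$\Rightarrow$\,(2). Your closing remark that one can bypass the Farkas lemma entirely via $C=A_2A_1^{-1}$ (using invertibility of $A_1$ and testing $v_1+A_1^{-1}\e_j$) is also valid and slightly more direct than what the paper does.
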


\begin{proof}
	Assume that (2) holds and let $v\in E_1$. This means that $A_1(v-v_1)\in(\Zp)^m$. From $v_1\in E_2$, we get that $A_2(v_1-v_2)\in(\Zp)^m$ and
	\begin{align*}
		A_2(v-v_2) & =A_2(v-v_1)+A_2(v_1-v_2) \\
		& =CA_1(v-v_1)+A_2(v_1-v_2)\in(\Zp)^m,
	\end{align*}
	that is, $v\in E_2$, and $E_1\subset E_2$.
	
	Assume now that $E_1\subset E_2$. Then $v_1\in E_2$. To prove the other part of (2), let $b$ be a row of $A_2$. We apply Lemma \ref{lemma:farkas} to $A_1\tr $ and $b$. There are two possibilities: it gives $x\in(\Zp)^m$ such that $x\tr A_1=b$, or it gives $y\in(\Qp)^m$ such that $A_1y\in(\Zp)^m$ and $by\notin\Zp$.
	
	Suppose for a contradiction that $y$ exists. Let $v=v_1+y$. Then $A_1(v-v_1)=A_1y\in(\Zp)^m$ and $v\in E_1$, which implies $v\in E_2$. But now \[by=b(v-v_1)=b(v-v_2)-b(v_1-v_2)\] and both are in $\Zp$ because $v$ and $v_1$ are both in $E_2$, so $by\in\Zp$, which is a contradiction. Hence it must be $x$ which exists.
	
	This means that, for every $i\in\{1,\ldots,m\}$, there is $x_i\in(\Zp)^m$ such that $x_i\tr A_1$ gives the $i$-th row if $A_2$. Then the matrix $C$ with the $x_i$'s as rows satisfies \[CA_1=A_2,\] and we are done.
\end{proof}

Now we apply Proposition \ref{prop:contained} to the case where one of the $p$-adic ellipsoids is a $p$-adic symplectic ball.

\begin{proposition}\label{prop:ball-contained}
	\letnpos. \letpprime. Let $\Omega_0$ be the matrix \eqref{eq:omega} of the standard $p$-adic symplectic form $\sum_{i=1}^n\dd x_i\wedge\dd y_i$, where $(x_1,y_1,\ldots,x_n,y_n)$ are the standard coordinates on $(\Qp)^{2n}$. A $2n$-dimensional $p$-adic ellipsoid $E\subset(\Qp)^{2n}$, with defining matrix $A$ as in Definition \ref{def:ellipsoid}, contains a $2n$-dimensional $p$-adic affine symplectic ball of radius $1$, as in Definition \ref{def:symplectic-ball}, if and only if every entry of $A\Omega_0A\tr $ is in $\Zp$.
\end{proposition}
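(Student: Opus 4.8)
The plan is to reduce the proposition to Proposition \ref{prop:contained} (the $p$-adic containment criterion for ellipsoids) together with the structure theory of symplectic lattices over $\Zp$. First I would observe that a $p$-adic affine symplectic ball of radius $1$ is just a special $p$-adic ellipsoid: if $f(v)=Sv+v_0$ with $S$ a $p$-adic symplectic matrix (so $S\tr\Omega_0S=\Omega_0$), then $f(\ball_p^{2n}(1))=\{w:\|S^{-1}(w-v_0)\|_p\le 1\}$ is, by Definition \ref{def:ellipsoid}, the ellipsoid with defining matrix $S^{-1}$ and center $v_0$. Writing $A$ and $v_0^E$ for a defining matrix and the center of $E$, Proposition \ref{prop:contained} then says that the affine symplectic ball $\{Sv+v_0^E:\|v\|_p\le 1\}$ is contained in $E$ if and only if $v_0^E\in E$ (automatic) and there is $C\in\M_{2n}(\Zp)$ with $CS^{-1}=A$, i.e. $AS\in\M_{2n}(\Zp)$; conversely every affine symplectic ball of radius $1$ arising inside $E$ has this form. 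Hence the proposition is equivalent to: \emph{there exists a $p$-adic symplectic matrix $S$ with $AS\in\M_{2n}(\Zp)$ if and only if every entry of $A\Omega_0A\tr$ lies in $\Zp$.}

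For the ``only if'' direction of this reformulation, if $S$ is symplectic with $C:=AS\in\M_{2n}(\Zp)$, then since $S$ (hence $S\tr$) is symplectic we have $S\Omega_0S\tr=\Omega_0$, so $A\Omega_0A\tr=(AS)\Omega_0(AS)\tr=C\Omega_0C\tr$, whose entries lie in $\Zp$ because $C$ and $\Omega_0$ have entries in $\Zp$.

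For the ``if'' direction, let $a^{(1)},\dots,a^{(2n)}$ be the rows of $A$ regarded as column vectors, and note $(A\Omega_0A\tr)_{ij}=\omega_0(a^{(i)},a^{(j)})$; thus the hypothesis says exactly that the full-rank $\Zp$-lattice $L=\sum_i\Zp a^{(i)}$ is $\omega_0$-integral, i.e. $\omega_0(L,L)\subseteq\Zp$. I would then apply the symplectic elementary divisor theorem over $\Zp$: there exist a symplectic basis $e_1,g_1,\dots,e_n,g_n$ of $((\Qp)^{2n},\omega_0)$ and integers $c_1,\dots,c_n\ge 0$ with $L=\bigoplus_i(\Zp e_i\oplus p^{c_i}\Zp g_i)$. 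If this is not quotable, I would prove it by induction on $n$: choose $e_1,f_1\in L$ minimizing $\ord_p\omega_0(\cdot,\cdot)$, set $c_1=\ord_p\omega_0(e_1,f_1)\ge 0$ and rescale $f_1$ so $\omega_0(e_1,f_1)=p^{c_1}$; minimality forces $\omega_0(L,L)=p^{c_1}\Zp$, so the $\Zp$-linear map $w\mapsto w-p^{-c_1}\omega_0(w,f_1)e_1+p^{-c_1}\omega_0(w,e_1)f_1$ maps $L$ onto a complement $L'$ of $\Zp e_1\oplus\Zp f_1$ in $L$ consisting of vectors $\omega_0$-orthogonal to $e_1$ and $f_1$; one iterates on $L'$ (which is again $\omega_0$-integral in a nondegenerate subspace) and sets $g_i=p^{-c_i}f_i$. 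Now let $T$ be the matrix with $Te_i=\e_{2i-1}$ and $Tg_i=\e_{2i}$ for all $i$; it is automatically symplectic, as it carries one symplectic basis to another, and $TL=\bigoplus_i(\Zp\e_{2i-1}\oplus p^{c_i}\Zp\e_{2i})\subseteq(\Zp)^{2n}$ since each $c_i\ge 0$. Taking $S=T\tr$ (symplectic), the $i$-th row of $AS$ is $(S\tr a^{(i)})\tr=(Ta^{(i)})\tr$, and $TL\subseteq(\Zp)^{2n}$ says precisely that all these rows are integral, i.e. $AS\in\M_{2n}(\Zp)$. By the first paragraph the affine symplectic ball $\{Sv+v_0^E:\|v\|_p\le 1\}$ then lies in $E$, finishing the proof.

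The main obstacle is this last implication, and concretely the use (or from-scratch proof) of the symplectic elementary divisor theorem over $\Zp$; everything else is bookkeeping with Proposition \ref{prop:contained} and the identity $S\Omega_0S\tr=\Omega_0$. The one point requiring care is the role of transposes: the hypothesis $A\Omega_0A\tr$ concerns the lattice spanned by the \emph{rows} of $A$, whereas the conclusion $AS\in\M_{2n}(\Zp)$ concerns $S\tr$ acting on those rows; these match up precisely because $S\mapsto S\tr$ preserves the symplectic group.
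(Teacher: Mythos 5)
Your proposal is correct. It shares the paper's skeleton up to the key step: you identify an affine symplectic ball of radius $1$ with an ellipsoid whose defining matrix is (the inverse of) a symplectic matrix, use Proposition \ref{prop:contained} to reduce the statement to the existence of a symplectic $S$ with $AS\in\M_{2n}(\Zp)$, and settle the ``only if'' direction via $A\Omega_0A\tr=(AS)\Omega_0(AS)\tr$, exactly as the paper does. Where you genuinely diverge is the ``if'' direction. The paper works with the antisymmetric Gram matrix $D=A\Omega_0A\tr\in\M_{2n}(\Zp)$ and performs an integral Gaussian congruence reduction to get $D=C\Omega_0C\tr$ with $C\in\M_{2n}(\Zp)$, then sets $S=C^{-1}A$; you instead pass to the $\Zp$-lattice $L$ spanned by the rows of $A$ and invoke (or prove by hyperbolic-pair induction) the symplectic elementary divisor theorem, obtaining a symplectic $T$ with $TL\subseteq(\Zp)^{2n}$ and taking $S=T\tr$. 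These are the coordinate-free and matrix incarnations of the same normal form, and your induction (split off a hyperbolic pair, project onto its $\omega_0$-orthogonal complement, recurse) is the invariant version of the paper's row/column reduction. Your variant buys a real advantage in rigor: by choosing $e_1,f_1$ to minimize $\ord_p\omega_0(\cdot,\cdot)$ you ensure all coefficients in the splitting lie in $\Zp$, whereas the paper pivots on an arbitrary nonzero entry of $D$, and as literally written the multipliers clearing the remaining entries need not be in $\Zp$ unless that pivot has minimal valuation. What the paper's route buys is self-containedness: it is pure matrix manipulation with no appeal to lattice structure theory. Your transpose bookkeeping ($S=T\tr$, the rows of $AS$ being the vectors $Ta^{(i)}$) checks out.
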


\begin{proof}
	Assume that there is a $p$-adic affine symplectic ball $B$ of radius $1$ such that $B\subset E$. Let $S$ be the matrix of $B$, which is $p$-adic symplectic. By Proposition \ref{prop:contained}, there exists a matrix $C\in\M_{2n}(\Zp)$ such that $A=CS$ and
	\[A\Omega_0A\tr =CS\Omega_0S\tr C\tr =C\Omega_0C\tr \in\M_{2n}(\Zp).\]
	
	Now assume that $A\Omega_0A\tr \in\M_{2n}(\Zp).$ Let $D$ be this matrix, which is antisymmetric. We apply a Gaussian reduction to $D$ based on the following three transformations:
	\begin{enumerate}
		\item Dividing a row and a column with the same index by a constant in $\Zp$.
		\item Adding a row multiplied by a constant in $\Zp$ to another row, and do the same operation to the columns.
		\item Exchanging two rows and the corresponding columns.
	\end{enumerate}
	We can apply a sequence of transformations of these three types from $D$ to $\Omega_0$. Indeed, we first exchange rows and columns so that the element in position $(1,2)$ is nonzero. Next we divide the first row and column by that element, so that it becomes $1$ and that in position $(2,1)$ becomes $-1$. After this, we add multiples of the first and second rows to the rest of rows, and the same with the columns, to make $0$ the rest of these rows and columns. The result is
	\[\begin{pmatrix}
		0 & 1 & 0 \\
		-1 & 0 & 0 \\
		0 & 0 & D'
	\end{pmatrix}\]
	for an antisymmetric matrix $D'$. Now we apply iteratively the process to $D'$ until we end up with $\Omega_0$.
	
	If we reverse the sequence, we go from $\Omega_0$ to $D$ by a sequence of transformations of the same three types, except that in type (1) we multiply instead of dividing. Each type corresponds to changing a $p$-adic matrix $M\in\M_{2n}(\Zp)$ to the $p$-adic matrix $NMN\tr $ for some $N\in\M_{2n}(\Zp)$. Hence, the whole sequence corresponds to going from $\Omega_0$ to $C\Omega_0C\tr $ for some $C\in\M_{2n}(\Zp)$. This means that
	\[A\Omega_0A\tr =D=C\Omega_0C\tr \Longrightarrow C^{-1}A\Omega_0(C^{-1}A)\tr =\Omega_0.\]
	Let \[S=C^{-1}A.\] We have that $S$ is $p$-adic symplectic. We take an arbitrary $v_0\in E$ and define
	\[B=\{v\in(\Qp)^n:\|S(v-v_0)\|_p\le 1\}.\]
	Since $v_0\in E$ and $CS=A$ for $C\in\M_{2n}(\Zp)$, Proposition \ref{prop:contained} implies that $B\subset E$.
\end{proof}

Proposition \ref{prop:ball-contained} has as a consequence the characterization of the $p$-adic linear symplectic width of ellipsoids. The real equivalent is \cite[Theorem 2.4.8]{McDSal}.

\begin{corollary}\label{cor:width-ellipsoid}
	\letnpos. \letpprime. Let $\Omega_0$ be the matrix \eqref{eq:omega} of the standard $p$-adic symplectic form $\sum_{i=1}^n\dd x_i\wedge\dd y_i$, where $(x_1,y_1,\ldots,x_n,y_n)$ are the standard coordinates on $(\Qp)^{2n}$. Let $E\subset(\Qp)^{2n}$ be a $2n$-dimensional $p$-adic ellipsoid with defining matrix $A$, as in Definition \ref{def:ellipsoid}. The $2n$-dimensional $p$-adic linear symplectic width of $E$ is the highest power of $p^2$ which simultaneously divides all entries of $A\Omega_0A\tr $.
\end{corollary}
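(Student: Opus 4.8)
Write $D=A\Omega_0A\tr$. Since $A$ is invertible (Definition~\ref{def:ellipsoid}) and $\Omega_0$ is invertible, $D$ is invertible and hence nonzero, so $m_0:=\min_{i,j}\ord_p(D_{ij})$ is a well-defined integer, and the highest power of $p^2$ dividing every entry of $D$ is exactly $p^{2\lfloor m_0/2\rfloor}$. The goal is thus to prove $\linearwidth(E)=p^{2\lfloor m_0/2\rfloor}$, and the plan is to reduce everything to the radius-$1$ statement already established in Proposition~\ref{prop:ball-contained} by rescaling the ellipsoid. First I would record how a $p$-adic ellipsoid behaves under dilation by a $p$-adic scalar: for $k\in\Z$ the set $p^kE=\{p^kv:v\in E\}$ is again a $p$-adic ellipsoid, with defining matrix $p^{-k}A$ and centre $p^kv_0$; this is an elementary manipulation with $|p^k|_p=p^{-k}$, but note that the matrix gets \emph{divided} (not multiplied) by $p^k$, and that $p^kE$ \emph{shrinks} when $k>0$. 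Consequently $(p^{-k}A)\Omega_0(p^{-k}A)\tr=p^{-2k}D$, and all of its entries lie in $\Zp$ precisely when $2k\le m_0$.

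Next I would apply Proposition~\ref{prop:ball-contained} to the ellipsoid $p^kE$: it contains a $2n$-dimensional $p$-adic affine symplectic ball of radius $1$ if and only if $2k\le m_0$. Then I would transport this back to $E$: conjugating an affine symplectomorphism $\psi=S(\cdot)+v_0$ by the dilation $v\mapsto p^{-k}v$ produces the affine symplectomorphism $w\mapsto Sw+p^kv_0$ (same linear part $S$, so symplecticity is preserved), whence $\psi$ maps $\ball_p^{2n}(p^k)$ into $E$ exactly when the conjugate maps $\ball_p^{2n}(1)$ into $p^kE$. Therefore $E$ contains a $p$-adic affine symplectic ball of radius $p^k$ if and only if $2k\le m_0$. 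Since, by Definition~\ref{def:width}, $\linearwidth(E)$ is the supremum of $r^2=p^{2k}$ over all such $k$, this yields $\linearwidth(E)=\sup\{p^{2k}:k\le m_0/2\}=p^{2\lfloor m_0/2\rfloor}$, a genuine maximum, which is exactly the highest power of $p^2$ dividing all entries of $D=A\Omega_0A\tr$.

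Alternatively one can phrase the same argument through the conformality property $\linearwidth(p^kE)=|p^k|_p^2\linearwidth(E)=p^{-2k}\linearwidth(E)$ from Proposition~\ref{prop:width}(ii), combined with the remark that an open set has linear symplectic width $\ge 1$ exactly when it contains an affine symplectic ball of radius $1$ (because $\ball_p^{2n}(1)\subset\ball_p^{2n}(p^\ell)$ for $\ell\ge 0$, and because the width of a bounded set is a finite power of $p^2$ by the maximum property proven above). The step I expect to be the main obstacle is not conceptual but the careful bookkeeping just flagged: getting the direction of the dilation right (the defining matrix scales by $p^{-k}$, so that containing a \emph{larger} symplectic ball corresponds to the entries of $p^{-2k}D$ still being integral for a \emph{larger} $k$), and noticing that although the valuations of the entries of $D$ may be odd, $\linearwidth(E)$ ranges only over powers of $p^2$ — which is precisely what forces the floor in $\lfloor m_0/2\rfloor$. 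No ingredients beyond Propositions~\ref{prop:ball-contained} and~\ref{prop:width} are needed.
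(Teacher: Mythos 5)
Your proposal is correct and follows essentially the same route as the paper: rescale the ellipsoid by powers of $p$ so that Proposition \ref{prop:ball-contained} (used in both directions) decides whether a radius-$1$ affine symplectic ball fits, and transfer back via the scaling behaviour of balls and widths (the paper invokes conformality from Proposition \ref{prop:width} and argues the upper bound by contradiction at scale $k+1$, while you characterize all scales $k$ at once, which is only a cosmetic difference). Your bookkeeping of the dilation (defining matrix $p^{-k}A$, hence $p^{-2k}D$, and the floor $\lfloor m_0/2\rfloor$) is accurate.
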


\begin{proof}
	We may assume that $v_0=0$, since translating the $p$-adic ellipsoid will not change the $p$-adic linear symplectic width.
	
	Let $k$ be the highest integer such that $p^{2k}$ divides all entries of $A\Omega_0 A\tr$. Let $E'$ be the $p$-adic ellipsoid given by \[\|p^{-k}A\|_p\le 1.\] By our choice of $k$, the matrix \[p^{-k}A \Omega_0 p^{-k}A\tr=p^{-2k}A\Omega_0 A\tr\] is an integer matrix, and by Proposition \ref{prop:ball-contained}, $E'$ contains a $p$-adic symplectic ball of radius $1$ and
	\begin{equation}\label{eq:width-ellipsoid}
		\linearwidth(E')\ge 1.
	\end{equation}
	
	If \eqref{eq:width-ellipsoid} is an equality, then we can apply property (2) in Proposition \ref{prop:width} and conclude that \[\linearwidth(E)=p^{2k},\] as we wanted. Suppose otherwise that the width of $E'$ is not $1$. Then it must be at least $p^2$. Let $E''$ be the ellipsoid given by \[\|p^{-k-1}A\|_p\le 1.\] Again by property (2) in Proposition \ref{prop:width}, \[\linearwidth(E'')\ge 1,\] that is, a $p$-adic ball of radius $1$ is contained in $E''$. By Proposition \ref{prop:ball-contained}, \[p^{-k-1}A\Omega_0p^{-k-1}A\tr\] is an integer matrix and all entries of $A\Omega_0A\tr$ are divisible by $p^{2k+2}$, contradicting our choice of $k$.
\end{proof}

\section{A construction of $p$-adic polar coordinates}\label{sec:polar}

In order to prove the equivariant version of the $p$-adic symplectic squeezing theorem, Theorem \ref{thm:total-embedding}, which we do in the following section, we need a $p$-adic version of the polar coordinates on the plane $(\Qp)^2$. We start proving some preparatory results concerning modular arithmetic.

\begin{proposition}[{see for example \cite[Corollary A.4]{CrePel-williamson}}]
	\letpprime.
	\begin{enumerate}
		\item If $p\ne 2$, a $p$-adic number is a square in $\Qp$ if and only if it has even order and its leading digit is a square modulo $p$.
		\item If $p=2$, a $p$-adic number is a square in $\Qp$ if and only if it has even order and its three rightmost digits are $001$.
	\end{enumerate}
\end{proposition}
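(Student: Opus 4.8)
The plan is to reduce the statement to a single iterative step -- Hensel-type lifting of solutions to $t^2 = u$ -- and handle the two primes separately, tracking only the relevant digits.

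First I would observe that writing $x = p^m u_0$ with $u_0$ a unit, we have $x = t^2$ with $t = p^k v$, $v$ a unit, exactly when $m = 2k$ is even and $u_0 = v^2$ for a unit $v$. So the whole statement reduces to: a unit $u \in \Zp^\times$ is a square iff $u \equiv$ (square) in the appropriate finite quotient -- modulo $p$ when $p \neq 2$, modulo $8$ when $p = 2$. The forward implication is immediate from the fact that squaring is a ring homomorphism onto the image, so I would only sketch it. For the converse I would use the following iteration: suppose $u \equiv v_j^2 \pmod{p^{j}}$ (resp.\ $\pmod{2^{j}}$ for $p=2$, starting from $j=3$); I want to upgrade $v_j$ to $v_{j+1}$ with $u \equiv v_{j+1}^2 \pmod{p^{j+1}}$. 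Setting $v_{j+1} = v_j + p^j c$ (resp.\ $2^{j-1}c$ when $p=2$, to exploit the extra factor of $2$ in $2 v_j \cdot 2^{j-1}c$), expanding $v_{j+1}^2$ and discarding higher-order terms, the congruence becomes linear in $c$ modulo $p$, with coefficient $2 v_j$ (resp.\ $v_j$, a unit, once the $2$ has been absorbed). Since $2 v_j$ is a unit for $p$ odd, this linear congruence is solvable for $c$; the resulting Cauchy sequence $(v_j)$ converges $p$-adically to some $v$ with $v^2 = u$. For $p=2$ the same works once one checks that $u \equiv 1 \pmod 8$ forces the correction to be solvable: the point is that $2 v_{j+1}^2 = 2 v_j^2 + 2^j v_j c + 2^{2j-1}c^2$ gains the needed power of $2$.

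To finish, I would translate "unit that is a square mod $p$'' and "three rightmost digits are $001$'' into the statements above: for $p$ odd, $u \equiv v_1^2 \pmod p$ with $v_1$ a unit is precisely the hypothesis that the leading digit of $x$ (which equals $u \bmod p$) is a nonzero square mod $p$, and the base case $j=1$ of the iteration is exactly this; for $p=2$, the condition that $x$ has its three rightmost nonzero digits equal to $1,0,0$ (reading from the $p^m$ place) says exactly $u \equiv 1 \pmod 8$, and $1 = 1^2$ furnishes the base case $j=3$. One should also note, for the "only if'' direction when $p=2$, that every odd square is $\equiv 1 \pmod 8$, which pins down all three digits, not just parity; this is the one place where $p=2$ genuinely differs from the odd case and is why the statement is phrased with three digits.

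The main obstacle is the $p=2$ bookkeeping: the naive Hensel step fails because $2 v_j$ is not a unit, and one must be careful about \emph{which} power of $2$ is gained at each stage so that the iteration still converges and the linear congruence in $c$ stays solvable. Everything else (the odd-prime Hensel step, the reduction to units, the forward direction) is routine. Since the paper cites \cite[Corollary A.4]{CrePel-williamson} for this proposition, I would in fact only give the reduction to the unit case and the statement of the Hensel step, referring to that source -- or to any standard reference on $\Qp$, such as the appendix of the present paper -- for the lifting itself, rather than reproving it in detail here.
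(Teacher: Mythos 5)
Your proposal is correct: the reduction to the unit case followed by Hensel lifting (with the modified increment $2^{j-1}c$ for $p=2$ so that the cross term $2\cdot v_j\cdot 2^{j-1}c=2^jv_jc$ supplies the needed power of $2$) is the standard proof of this classical description of $(\Qp^\times)^2$, and your translation of ``leading digit a square mod $p$'' and ``three rightmost digits $001$'' into $u\equiv v^2\pmod p$ and $u\equiv 1\pmod 8$ is exactly right. The paper itself gives no proof here, deferring entirely to the cited Corollary A.4 of \cite{CrePel-williamson}, so your sketch is if anything more self-contained than the source; the only blemishes are cosmetic (squaring is multiplicative but not a ring homomorphism, and the displayed expansion of $v_{j+1}^2$ in the $p=2$ step carries a stray factor of $2$), neither of which affects the argument.
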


\begin{definition}\label{def:D}
	\begin{enumerate}
		\item \letpprime\ such that $p\ne 2$. We define the set \[\mathrm{Z}_p=\F_p^*\] and
		\[\mathrm{D}_p(c)=\{(a,b)\in(\F_p)^2:a^2+b^2=c\}\]
		for $c\in \mathrm{Z}_p$.
		\item We define the set $\mathrm{Z}_2=\{1,2,5\}$ and
		\[\mathrm{D}_2(c)=\{(a,b)\in(\Z/4\Z)^2:a^2+b^2\equiv c\mod 8\},\]
		for $c\in \mathrm{Z}_2$, that is,
		\[\mathrm{D}_2(1)=\{(0,1),(0,3),(1,0),(3,0)\},\]
		\[\mathrm{D}_2(2)=\{(1,1),(1,3),(3,1),(3,3)\},\]
		\[\mathrm{D}_2(5)=\{(2,1),(2,3),(1,2),(3,2)\}.\]
	\end{enumerate}
	We also define
	\[\mathrm{D}_p=\bigcup_{c\in \mathrm{Z}_p}\mathrm{D}_p(c).\]
	If $p\equiv 1\mod 4$, let $\ii_0\in\F_p$ be such that $\ii_0^2=-1$ and define
	\begin{align*}
		\mathrm{D}_p^+(0) & =\{(a,\ii_0a):a\in\F_p^*\}, \\
		\mathrm{D}_p^-(0) & =\{(a,-\ii_0a):a\in\F_p^*\}.
	\end{align*}
	Otherwise we consider $\mathrm{D}_p^+(0)=\mathrm{D}_p^-(0)=\varnothing$.
	
	We define on $\mathrm{D}_p\cup \mathrm{D}_p^+(0)\cup \mathrm{D}_p^-(0)$ the binary operation given by
	\[(a,b)\cdot(a',b')=(aa'-bb',ab'+a'b).\]
\end{definition}

\begin{proposition}\label{prop:D}
	\letpprime. Let $\mathrm{Z}_p$ and $\mathrm{D}_p(c)$ be the sets in Definition \ref{def:D}. Then the following statements hold.
	\begin{enumerate}
		\item For any $c\in \mathrm{Z}_p$, multiplying an element of $\mathrm{D}_p(c)$ by all elements of $\mathrm{D}_p(1)$ results in all elements of $\mathrm{D}_p(c)$.
		\item If $p\equiv 1\mod 4$, the same happens with $\mathrm{D}_p^+(0)$ and $\mathrm{D}_p^-(0)$ instead of $\mathrm{D}_p(c)$.
	\end{enumerate}
\end{proposition}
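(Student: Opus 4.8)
The plan is to recognize the binary operation of Definition~\ref{def:D} as multiplication in the commutative ring $R_p := \F_p[x]/(x^2+1)$ when $p\neq 2$, and $R_p := (\Z/4\Z)[x]/(x^2+1)$ when $p=2$, under the identification of a pair $(a,b)$ with $a+b\ii$, where $\ii$ denotes the class of $x$ (so $\ii^2=-1$ in $R_p$); indeed $(a+b\ii)(a'+b'\ii)=(aa'-bb')+(ab'+a'b)\ii$. Define the norm $N(a+b\ii):=a^2+b^2$, valued in $\F_p$ if $p\neq 2$ and in $\Z/8\Z$ if $p=2$; it is well defined in the latter case because $t^2\bmod 8$ depends only on $t\bmod 4$. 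The crucial input is the polynomial identity $(aa'-bb')^2+(ab'+a'b)^2=(a^2+b^2)(a'^2+b'^2)$, which shows $N$ is multiplicative: $N(zw)=N(z)N(w)$ for all $z,w\in R_p$. Together with the conjugation $\overline{a+b\ii}=a-b\ii$, which satisfies $z\bar z=N(z)$ and $N(\bar z)=N(z)$, this gives at once that $\mathrm{D}_p(1)=\{z\in R_p:N(z)=1\}$ is a subgroup of $R_p^{\times}$ (it contains $1$, is closed under multiplication, and $z^{-1}=\bar z\in\mathrm{D}_p(1)$ whenever $N(z)=1$), and that for every value $c$ multiplication by any element of $\mathrm{D}_p(1)$ carries $\mathrm{D}_p(c)=\{z:N(z)=c\}$ into itself. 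This already proves the ``$\subseteq$'' direction in both parts; the content is the reverse inclusion, i.e. that for fixed $z\in\mathrm{D}_p(c)$ the map $\mathrm{D}_p(1)\to\mathrm{D}_p(c)$, $w\mapsto zw$, is onto.

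Whenever $z$ is a unit of $R_p$ this is immediate: given any target $z'\in\mathrm{D}_p(c)$, the element $w:=z^{-1}z'$ lies in $\mathrm{D}_p(1)$ since $N(w)=N(z)^{-1}N(z')=c^{-1}c=1$ (the inverse computed in $\F_p$, resp. in $\Z/8\Z$), and $zw=z'$. For $p\neq 2$ every $c\in\mathrm{Z}_p=\F_p^{*}$ is invertible, and since $z\bar z=N(z)=c$ is invertible, $z\in R_p^{\times}$; this disposes of part (i) for all $p\neq 2$. For $p=2$ and $c\in\{1,5\}$ one has $c\equiv 1\pmod 4$, hence $z\bar z$, which is the image of $N(z)=c$ in $\Z/4\Z\subseteq R_2$, equals $1$, so again $z$ is a unit and the same argument works. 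The same reasoning, applied verbatim, proves part (ii): when $p\equiv 1\pmod 4$ the element $\ii_0$ splits $x^2+1$, so $R_p\cong\F_p\times\F_p$ via $a+b\ii\mapsto(a+\ii_0b,\,a-\ii_0b)$, under which $N$ becomes the product of the two coordinates, $\mathrm{D}_p(1)$ becomes $\{(u,u^{-1}):u\in\F_p^{*}\}$, and $\mathrm{D}_p^{+}(0)$ and $\mathrm{D}_p^{-}(0)$ become the two punctured coordinate axes; in these coordinates it is transparent that each of $\mathrm{D}_p^{\pm}(0)$ is preserved by $\mathrm{D}_p(1)$ and that the action on it is transitive.

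The one genuinely delicate case is $p=2$ with $c=2$: here $R_2$ is not a domain and an element of norm $2$ is a zero divisor, so one cannot invert $z$, and I expect this to be the main obstacle. It is handled by an explicit description: $\mathrm{D}_2(1)=\{\pm 1,\pm\ii\}$ and $(1+\ii)\cdot\mathrm{D}_2(1)=\{\pm 1\pm\ii\}=\mathrm{D}_2(2)$, so every $z\in\mathrm{D}_2(2)$ has the form $z=v(1+\ii)$ with $v\in\mathrm{D}_2(1)$. Then $z\cdot\mathrm{D}_2(1)=v\,(1+\ii)\,\mathrm{D}_2(1)=v\cdot\mathrm{D}_2(2)$, and since $v$ is a unit of $R_2$ this equals $\mathrm{D}_2(2)$; equivalently, writing a target as $z'=v'(1+\ii)$, the equation $zw=z'$ becomes $(1+\ii)(vw-v')=0$, and since $\mathrm{Ann}_{R_2}(1+\ii)=\{0,\,2+2\ii\}$ while no two distinct elements of $\{\pm 1,\pm\ii\}$ differ by $2+2\ii$, one gets $w=v^{-1}v'\in\mathrm{D}_2(1)$. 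Apart from this degenerate case and the short verification of the coset identity $\mathrm{D}_2(2)=(1+\ii)\mathrm{D}_2(1)$ (a four-element computation), the whole proposition is a formal consequence of the multiplicativity of $N$.
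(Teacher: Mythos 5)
Your proof is correct, and its backbone---viewing $(a,b)$ as $a+b\ii$ in $\F_p[x]/(x^2+1)$ (resp.\ $(\Z/4\Z)[x]/(x^2+1)$) and exploiting multiplicativity of the norm $a^2+b^2$---is the same device the paper uses, there phrased via the matrices $\left(\begin{smallmatrix}a&-b\\ b&a\end{smallmatrix}\right)$ and their determinant; your treatment of part (ii) through the splitting $a+b\ii\mapsto(a+\ii_0b,\,a-\ii_0b)$ likewise matches the paper's use of the multiplicative quantity $a+\ii_0b$. The substantive difference is that you actually prove the surjectivity half of the statement. The paper asserts that the elements of $\mathrm{D}_p$ ``form a group'' and deduces everything from multiplicativity of the determinant, but that assertion fails for $p=2$: an element of $\mathrm{D}_2(2)$ such as $1+\ii$ has determinant $2$, a zero divisor in $\Z/4\Z$, and indeed $(1+\ii)^2=2\ii$ has norm $4\notin\mathrm{Z}_2$, so $\mathrm{D}_2$ is not even closed under the operation. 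Your explicit handling of exactly this case---the coset identity $\mathrm{D}_2(2)=(1+\ii)\mathrm{D}_2(1)$ together with the annihilator computation---is the one genuinely new ingredient, and it closes a gap that the paper's one-line argument leaves open; the remaining steps (inverting $z$ when $N(z)$ is a unit, $z^{-1}=\bar z$ for norm-one elements) are a careful expansion of what the paper takes for granted.
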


\begin{proof}
	We can consider elements of $\mathrm{D}_p$ as matrices of the form
	\[\begin{pmatrix}
	a & -b \\
	b & a
	\end{pmatrix}.\]
	As such, these $p$-adic matrices form a group whose product is exactly that of $\mathrm{D}_p$, and $\mathrm{D}_p(1)$ is the subgroup of matrices with determinant $1$. Since the determinant is multiplicative, these matrices multiplied by one of $\mathrm{D}_p(1)$ gives the whole $\mathrm{D}_p(c)$. If $p\equiv 1\mod 4$, we can associate to each element of $\mathrm{D}_p$ the quantity $a+\ii_0 b$. This quantity is multiplicative, and it is $0$ if and only if $(a,b)\in \mathrm{D}_p^+(0)$, hence the product of something in $\mathrm{D}_p^+(0)$ times anything must give something in $\mathrm{D}_p^+(0)$. For $\mathrm{D}_p^-(0)$, we can make the same argument with $a-\ii_0 b$.
\end{proof}

\begin{definition}
	\letpprime\ and $c\in \mathrm{Z}_p$. We denote by \[(a_p(c),b_p(c))\] the lowest element of $\mathrm{D}_p(c)$ in lexicographic order. If $p\equiv 1\mod 4$, we also denote \[(a_p^+(0),b_p^+(0))\] and \[(a_p^-(0),b_p^-(0))\] the lowest elements of $\mathrm{D}_p^+(0)$ and $\mathrm{D}_p^-(0)$ in lexicographic order, respectively.
\end{definition}

\begin{definition}\label{def:T}
	\letnpos. \letpprime. We define the open subset of $(\Qp)^2$:
	\[\mathrm{T}_p^{2n}=\Big\{(x,y)\in(\Qp)^2:(x,y)\ne(0,0)\Big\}^n.\]
	The set $\mathrm{T}_p^{2n}$ endowed with the standard $p$-adic symplectic form $\sum_{i=1}^n\dd x_i\wedge\dd y_i$, where $(x_1,y_1,\ldots,x_n,y_n)$ are the standard coordinates on $(\Qp)^{2n}$, is a $p$-adic analytic symplectic manifold of dimension $2n$.
\end{definition}

\begin{example}
	For $n=1$, the set $\mathrm{T}_p^2$ is the plane $(\Qp)^2$ minus one point. For $n=2$, $\mathrm{T}_p^4$ is the space $(\Qp)^4$ minus two $2$-dimensional planes which meet at a point.
\end{example}

\begin{proposition}\label{prop:D2}
	\letpprime. Let $d=2$ if $p=2$, and otherwise $d=1$. Let $\mathrm{T}_p^2$ be the set in Definition \ref{def:T}. Let $(x,y)\in \mathrm{T}_p^2$. Let \[k=\min\{\ord_p(x),\ord_p(y)\}\] and let $x_0$ and $y_0$ be the digits at order $k$ of $x$ and $y$, if $p\ne 2$, and the digits of order $k$ and $k+1$ taken together as a number in $\{0,1,2,3\}$ if $p=2$. Let $z_0$ be the digit of order $2k$ of $x^2+y^2$, if $p\ne 2$, or the three digits of order $2k$, $2k+1$ and $2k+2$ taken together if $p=2$.
	\begin{enumerate}
		\item If $z_0\ne 0$, then $z_0\in \mathrm{Z}_p$ and $(x_0,y_0)\in \mathrm{D}_p(z_0)$.
		\item If $z_0=0$, then $(x_0,y_0)\in \mathrm{D}_p^+(0)\cup \mathrm{D}_p^-(0)$.
	\end{enumerate}
\end{proposition}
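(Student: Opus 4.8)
The plan is to reduce everything to an elementary computation with leading digits, using that squaring is compatible with truncation modulo a sufficiently high power of $p$. First I would expand $x$ and $y$ starting at order $k$: if $p\ne 2$, write $x=p^k(x_0+p\,\xi)$ and $y=p^k(y_0+p\,\eta)$ with $\xi,\eta\in\Zp$ and $x_0,y_0\in\{0,\dots,p-1\}$; if $p=2$, write $x=2^k(x_0+4\,\xi)$ and $y=2^k(y_0+4\,\eta)$ with $\xi,\eta\in\Zp$ and $x_0,y_0\in\{0,1,2,3\}$ (the first two binary digits packed into one integer). In both cases the definition $k=\min\{\ord_p(x),\ord_p(y)\}$ forces at least one of $x_0,y_0$ to be nonzero, and when $p=2$ forces at least one of them to be odd. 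Squaring and adding, the cross and higher terms have order at least $2k+1$ when $p\ne 2$, respectively at least $2k+3$ when $p=2$, where one uses $(x_0+4\xi)^2=x_0^2+8x_0\xi+16\xi^2\equiv x_0^2\pmod 8$. Hence $x^2+y^2=p^{2k}\bigl((x_0^2+y_0^2)+p\,(\cdots)\bigr)$ when $p\ne 2$ and $x^2+y^2=2^{2k}\bigl((x_0^2+y_0^2)+8\,(\cdots)\bigr)$ when $p=2$, so in all cases $z_0\equiv x_0^2+y_0^2$, read modulo $p$ if $p\ne 2$ and modulo $8$ if $p=2$; equivalently $x_0^2+y_0^2=z_0$ in $\F_p$ when $p\ne 2$.

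Then I would split into the two cases. If $z_0\ne 0$ and $p\ne 2$, then $z_0\in\F_p^*=\mathrm{Z}_p$, and the relation $x_0^2+y_0^2=z_0$ in $\F_p$ together with the definition of $\mathrm{D}_p(z_0)$ in Definition \ref{def:D} gives $(x_0,y_0)\in\mathrm{D}_p(z_0)$. If $z_0\ne 0$ and $p=2$, note that the squares modulo $8$ of $0,1,2,3$ are $0,1,4,1$; since some $x_0$ or $y_0$ is odd, one of the two squares is $1$, so $z_0=x_0^2+y_0^2\bmod 8\in\{1,2,5\}=\mathrm{Z}_2$ — the remaining possibility $4=0+4$ would force both $x_0,y_0$ even — and again $(x_0,y_0)\in\mathrm{D}_2(z_0)$ straight from the definition. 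If $z_0=0$ and $p\ne 2$, then $x_0^2=-y_0^2$ in $\F_p$ with at least one of $x_0,y_0$ nonzero; if one vanished so would the other, so both lie in $\F_p^*$, whence $-1=(x_0y_0^{-1})^2$ is a square in $\F_p$, forcing $p\equiv 1\bmod 4$; then $y_0^2=\ii_0^2x_0^2$ gives $y_0=\pm\ii_0x_0$, i.e.\ $(x_0,y_0)\in\mathrm{D}_p^+(0)\cup\mathrm{D}_p^-(0)$. Finally, if $p=2$ the case $z_0=0$ cannot occur: $x_0^2+y_0^2\equiv 0\pmod 8$ with squares in $\{0,1,4\}$ forces both squares to be $0$ or both to be $4$, hence both $x_0,y_0$ even, contradicting the minimality of $k$; this is exactly consistent with $\mathrm{D}_2^+(0)=\mathrm{D}_2^-(0)=\varnothing$.

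The individual computations are routine. The one place demanding genuine care is the prime $p=2$: one must choose the truncation level — two binary digits — precisely so that squaring is well defined modulo $8$, and then use the minimality of $k$ to exclude the degenerate leading sums $z_0\in\{0,4\}$, which is what pins $z_0$ down to $\mathrm{Z}_2=\{1,2,5\}$ and makes the statement match Definition \ref{def:D}. I expect this bookkeeping (rather than any conceptual point) to be the main, though modest, obstacle.
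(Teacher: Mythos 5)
Your proof is correct and follows essentially the same route as the paper: the central point in both is the congruence $x^2+y^2\equiv p^{2k}(x_0^2+y_0^2)\bmod p^{2k+2d-1}$, followed by a case analysis on $z_0$. Your explicit tabulation of squares modulo $8$ for $p=2$ and the parity argument excluding $z_0\in\{0,4\}$ merely spell out what the paper asserts more briefly via the ``squares end in $001$'' criterion, so there is no substantive difference.
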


\begin{proof}
	Assume first that $z_0\ne 0$. The fact that $z_0\in \mathrm{Z}_p$ is true by definition if $p\ne 2$, and if $p=2$, the one between $x^2$ and $y^2$ with smaller order ends in $001$ and the other can have $000$, $001$ or $100$ at that position, hence the possible values for $z_0$ are $1$, $2$ and $5$. The fact that $(x_0,y_0)\in \mathrm{D}_p(z_0)$ follows from
	\[z_0\equiv x^2+y^2\equiv p^{2k}(x_0^2+y_0^2)\mod p^{2k+2d-1}.\]
	
	Now assume that $z_0=0$. Then we have that
	\[p^{2k}(x_0^2+y_0^2)\equiv 0\mod p^{2k+2d-1}\Longrightarrow x_0^2+y_0^2\equiv 0\mod p^{2d-1}\]
	which is only possible if $p\equiv 1\mod 4$ and $y_0\equiv\pm\ii x_0$, hence $(x_0,y_0)\in \mathrm{D}_p^+(0)\cup \mathrm{D}_p^-(0)$.
\end{proof}

We also need some results involving a $p$-adic circle action on $(\Qp)^2$. The elementary functions are defined in the usual way on $\Qp$ as a power series:
\[\exp(x)=\sum_{i=0}^{\infty}\frac{x^i}{i!},\]
\[\cos(x)=\sum_{i=0}^{\infty}\frac{(-1)^ix^{2i}}{(2i)!},\]
\[\sin(x)=\sum_{i=0}^{\infty}\frac{(-1)^ix^{2i+1}}{(2i+1)!}.\]

\begin{proposition}[{\cite[Proposition A.11]{CrePel-JC}}]
	\letpprime.
	\begin{enumerate}
		\item The convergence domain of the exponential, cosine and sine series is $p^d\Zp$, where $d=2$ if $p=2$ and otherwise $d=1$.
		\item The image of the exponential series is $1+p^d\Zp$.
	\end{enumerate}
\end{proposition}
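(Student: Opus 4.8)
The plan is to reduce everything to the basic convergence criterion in $\Qp$ — a series $\sum_{i\ge 0}a_i$ converges if and only if $\ord_p(a_i)\to\infty$ — together with Legendre's formula $\ord_p(m!)=\frac{m-s_p(m)}{p-1}$, where $s_p(m)$ denotes the sum of the base-$p$ digits of $m$. After that, the whole statement becomes bookkeeping with orders.

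For part (1), consider first the exponential. Its general term satisfies $\ord_p(x^i/i!)=i\,\ord_p(x)-\frac{i-s_p(i)}{p-1}$, and since $s_p(i)\ge 1$ we get $\ord_p(x^i/i!)\ge i\,\ord_p(x)-\frac{i-1}{p-1}$, which tends to $+\infty$ as soon as $\ord_p(x)>\frac{1}{p-1}$; because $\ord_p(x)\in\Z$, this inequality is equivalent to $\ord_p(x)\ge d$, i.e.\ $x\in p^d\Zp$. Conversely, if $\ord_p(x)\le d-1$, then evaluating along $i=p^k$, where $s_p(i)=1$, one sees that $\ord_p(x^i/i!)$ stays bounded above, so the series diverges; hence the convergence domain of $\exp$ is exactly $p^d\Zp$. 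For $\cos$ and $\sin$ the argument is identical: the factorials become $(2i)!$ and $(2i+1)!$ and the powers $x^{2i}$ and $x^{2i+1}$, but substituting $j=2i$ (resp.\ $j=2i+1$) reduces the estimate to $j\,\ord_p(x)-\frac{j-s_p(j)}{p-1}$, with $s_p(j)$ again small for infinitely many $j$, so the threshold, and hence the domain $p^d\Zp$, is unchanged.

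For part (2), I would first show $\exp(p^d\Zp)\subseteq 1+p^d\Zp$: in $\exp(x)-1=\sum_{i\ge 1}x^i/i!$ the $i=1$ term has order $\ge d$, and the estimate $\ord_p(x^i/i!)\ge id-\frac{i-1}{p-1}$ shows every term with $i\ge 2$ has order $>d$ (checking the two cases $p$ odd with $d=1$ and $p=2$ with $d=2$), so $\ord_p(\exp(x)-1)\ge d$. For the reverse inclusion I would bring in $\log(1+y)=\sum_{i\ge 1}(-1)^{i-1}y^i/i$: the bound $\ord_p(y^i/i)\ge i\,\ord_p(y)-\ord_p(i)$ together with $\ord_p(i)\le\log_p i$ shows this series converges for $y\in p^d\Zp$ and that $\log(1+p^d\Zp)\subseteq p^d\Zp$. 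Finally, the formal power series identities $\log(\exp(x))=x$ and $\exp(\log(1+y))=1+y$ hold wherever both sides converge, so $\exp$ and $\log$ are mutually inverse bijections between $p^d\Zp$ and $1+p^d\Zp$; in particular $\exp(p^d\Zp)=1+p^d\Zp$.

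The step I expect to be the most delicate is the sharpness in part (1): showing the domain is not merely contained in but equal to $p^d\Zp$, ruling out the next-larger annulus. This rests precisely on the fact that the digit sum $s_p(m)$ does not go to infinity — it equals $1$ at every power of $p$ — which is exactly what forces $\ord_p(x)\ge d$ rather than $\ord_p(x)\ge d-1$; the borderline case is $p=2$, where $\frac{1}{p-1}=1$ is itself an integer and one must take $d=2$. For part (2) the only genuinely non-formal point is checking that $\log$ maps $1+p^d\Zp$ into $p^d\Zp$ and not merely into $\Zp$, which is again a small order estimate rather than a manipulation of power series.
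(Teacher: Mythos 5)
The paper itself offers no proof of this proposition: it is imported verbatim from \cite[Proposition A.11]{CrePel-JC}, so there is no in-paper argument to compare against, and your proposal has to be judged on its own. On its own it is the standard argument and is essentially correct: Legendre's formula gives $\ord_p(x^i/i!)=i\,\ord_p(x)-\frac{i-s_p(i)}{p-1}$, from which both the containment of the domain in $p^d\Zp$ and its sharpness follow (for $\cos$ and $\sin$ the divergence at $\ord_p(x)=d-1$ needs even, respectively odd, indices of bounded digit sum, e.g.\ $2p^k$ and $p^k$ for odd $p$, $2^k$ and $2^k+1$ for $p=2$, which is what your ``$s_p(j)$ small for infinitely many $j$'' is doing), and the inclusions $\exp(p^d\Zp)\subseteq 1+p^d\Zp$ and $\log(1+p^d\Zp)\subseteq p^d\Zp$ follow from the term-by-term estimates you state. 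The one step to tighten is the appeal to ``the formal power series identities $\log(\exp(x))=x$ and $\exp(\log(1+y))=1+y$ hold wherever both sides converge'': as a blanket principle this is false in the $p$-adic setting --- for $p=2$ and $y=-2$ both $\log(1+y)$ and $\exp(\log(1+y))$ converge, yet $\exp(\log(-1))=\exp(0)=1\ne -1$. The correct tool is the composition theorem for power series over a complete non-archimedean field, which requires every term of the inner series at the given point to lie in the closed disc on which the outer series converges. Your own estimates, namely $\ord_p(x^i/i!)\ge d$ for all $i\ge 1$ when $\ord_p(x)\ge d$, and $\ord_p(y^i/i)\ge d$ for all $i\ge 1$ when $\ord_p(y)\ge d$ (the latter needing only $(i-1)d\ge \ord_p(i)$, which follows from $\ord_p(i)\le \log_p i\le i-1$), supply exactly that hypothesis on $p^d\Zp$ and $1+p^d\Zp$, so the proof closes once you invoke the composition lemma in place of the general principle; with that substitution the bijectivity of $\exp\colon p^d\Zp\to 1+p^d\Zp$ and hence part (2) follow as you claim.
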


See Figure \ref{fig:functions} for a graphical representation of these functions.

\begin{figure}
	\includegraphics[width=.3\linewidth]{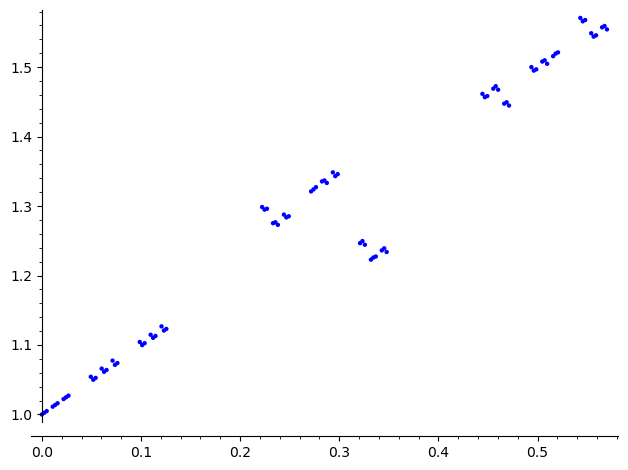}
	\includegraphics[width=.3\linewidth]{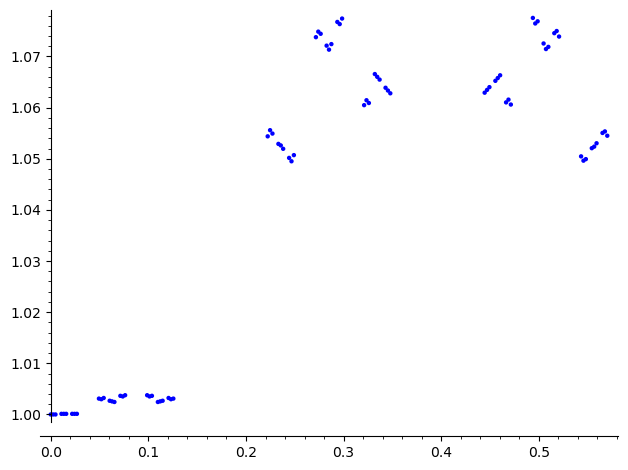}
	\includegraphics[width=.3\linewidth]{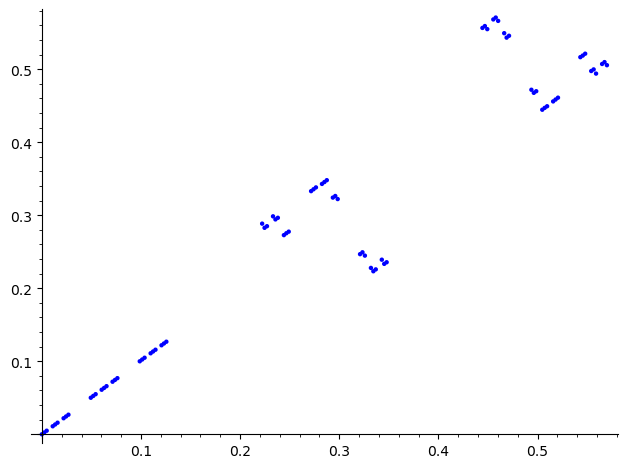}
	\caption{A representation of the $p$-adic exponential, cosine and sine functions for $p=3$.}
	\label{fig:functions}
\end{figure}

\begin{definition}[{\cite[section 2]{CrePel-JC}}]
	\letpprime. The \emph{$p$-adic circle} $\Circle$ is given by
	\[\Circle=\Big\{(x,y)\in(\Qp)^2:x^2+y^2=1\Big\}.\]
\end{definition}

The set $\Circle$ is a group under the binary operation given by
\[(a,b)\cdot(a',b')=(aa'-bb',ab'+a'b).\]
We recall the definition of the $\Circle$-action on $(\Qp)^2$: an element $(a,b)\in\Circle$ acts on $(\Qp)^2$ by multiplication by
\[\begin{pmatrix}
	a & -b \\
	b & a
\end{pmatrix}.\]
The group $\Circle$ contains $p^d\Zp$ as a subgroup, where $d=2$ if $p=2$ and otherwise $d=1$. An element $t\in p^d\Zp$ is identified with $(\cos t,\sin t)$.

\begin{proposition}[{\cite[Proposition 4.2]{CrePel-JC}}]\label{prop:rotation}
	\letpprime. Let \[\mathrm{C}_z=\Big\{(x,y)\in(\Qp)^2:x^2+y^2=z\Big\}\] and $\mathrm{C}_z^*=\mathrm{C}_z\setminus\{0\}$.
	\begin{enumerate}
		\item Any two points in $\mathrm{C}_z^*$ are related by the action of $\Circle$, except if $z=0$, in which case only proportional points are related.
		\item Two points in $\mathrm{C}_z^*$ are related by the action of $p^d\Zp$ if and only if the order $k$ digits of their two coordinates coincide, where $k$ is the lowest order of the coordinates, except if $p=2$, in which case the digits of order $k$ and $k+1$ must coincide.
	\end{enumerate}
\end{proposition}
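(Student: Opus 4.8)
The plan is to reduce both parts to solving $2\times 2$ linear systems and, for the finer statement in part (2), to the arithmetic of the quadratic form $x^2+y^2$, that is, of the quadratic \'etale $\Qp$-algebra $E=\Qp[t]/(t^2+1)$ via $(x,y)\mapsto x+\ii y$.

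\emph{Part (1).} Given $(x,y),(x',y')\in\mathrm{C}_z^*$, I look for $(a,b)\in\Circle$ with $ax-by=x'$ and $bx+ay=y'$; this is a linear system in $(a,b)$ with determinant $x^2+y^2=z$. If $z\ne 0$ it has the unique solution $a=(xx'+yy')/z$, $b=(xy'-x'y)/z$, and the two-squares identity
\[(xx'+yy')^2+(xy'-x'y)^2=(x^2+y^2)(x'^2+y'^2)=z^2\]
gives $a^2+b^2=1$, so $(a,b)\in\Circle$; uniqueness also shows the $\Circle$-action on $\mathrm{C}_z^*$ is free when $z\ne0$ (the stabilizer of any point forces $(a,b)=(1,0)$), which I keep in mind for Part (2). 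If $z=0$, then $\mathrm{C}_0^*=\varnothing$ unless $-1$ is a square in $\Qp$, i.e.\ unless $p\equiv1\bmod4$; when it is, $x^2+y^2=(x+\ii y)(x-\ii y)$ shows $\mathrm{C}_0^*$ is the union of the two isotropic lines $y=\pm\ii x$, and a direct computation (using $p\ne2$ to divide by $2$) shows $\Circle$ preserves each line and is transitive on each, so two points of $\mathrm{C}_0^*$ are related by $\Circle$ iff they lie on the same line iff they are proportional.

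\emph{Part (2).} Multiplying both points by $\lambda\in\Qp$ with $\ord_p(\lambda)=-k$ is a $\Circle$-equivariant bijection $\mathrm{C}_z^*\to\mathrm{C}_{\lambda^2z}^*$ that shifts the valuation of every coordinate by $\ord_p(\lambda)$, so I may assume $k=0$: that is, $(x,y),(x',y')\in(\Zp)^2$ with $\|\cdot\|_p=1$ and $z:=x^2+y^2=x'^2+y'^2$, and the claim becomes that the two points are related by $p^d\Zp$ iff $x'\equiv x$ and $y'\equiv y\pmod{p^d}$. The forward implication is immediate from the quoted convergence properties of the $p$-adic exponential, cosine and sine: for $t\in p^d\Zp$ the power series give $\cos t\equiv1$ and $\sin t\equiv0\pmod{p^d}$ (the terms past the first have valuation $>d$), so a rotation by such a $t$ does not change the class of $(x,y)$ modulo $p^d$. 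For the converse I would pass to $E$: there $x^2+y^2$ is the norm $N_{E/\Qp}$, $\mathrm{C}_z^*$ is the fibre $N^{-1}(z)$, the $\Circle$-action is multiplication by norm-one elements, and---using that $\exp$ maps $p^d\Zp$ onto $1+p^d\Zp$---the subgroup $p^d\Zp$ corresponds to the norm-one elements that are $\equiv1\pmod{p^d\ocal_E}$. If $p\not\equiv1\bmod4$ then $E$ is a field (unramified for $p\equiv3\bmod4$, ramified for $p=2$); here $x+\ii y$ and $x'+\ii y'$ are nonzero of the same norm $z$, and from $x'\equiv x$, $y'\equiv y\pmod{p^d}$ their quotient is a norm-one element $\equiv1\pmod{p^d\ocal_E}$ (in the ramified case $p=2$ a short valuation-and-trace count, using that the two points have the same norm, is needed to get the precision right), hence lies in $p^d\Zp$. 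If $p\equiv1\bmod4$ then $E\cong\Qp\times\Qp$ and $\mathrm{C}_z^*\leftrightarrow\{(u,z/u)\}$; since $k=0$, at least one of $u=x+\ii y$, $v=x-\ii y$ is a $\Zp$-unit, and from $z=uv=u'v'$ together with $u'\equiv u$, $v'\equiv v\pmod{p^d}$ one gets $u'/u,v'/v\in1+p^d\Zp$, so again the two points differ by a rotation in $p^d\Zp$. It remains to translate ``$x'\equiv x$, $y'\equiv y\pmod{p^d}$'' into the digit statement; for $p=2$, where $d=2$ and one works modulo $4$ (and modulo $8$ for $x^2+y^2$), this is exactly the bookkeeping that Definition \ref{def:D} and Propositions \ref{prop:D} and \ref{prop:D2} are designed to handle.

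\emph{Expected main obstacle.} Part (1) is pure linear algebra. The real work is the converse half of Part (2): identifying the image of the $(\cos,\sin)$ parametrization inside $\Circle$ with the principal congruence subgroup of level $d$, uniformly in $p$. Passing to $E=\Qp[\ii]$ turns this into the standard structure theory of units of a possibly split quadratic $\Qp$-algebra, and the one genuine subtlety is the split case $p\equiv1\bmod4$, where $\mathrm{C}_z^*$ degenerates precisely along the isotropic lines $\mathrm{D}_p^\pm(0)$ of Definition \ref{def:D}---the same phenomenon behind the $z=0$ exception in Part (1)---and must be isolated as above. A purely elementary alternative for the non-degenerate bulk is to set $\gamma=(xx'+yy')/z$, $\sigma=(xy'-x'y)/z$, observe $\gamma^2+\sigma^2=1$ and $\gamma\equiv1$, $\sigma\equiv0\pmod{p^d}$, and take $t=\arcsin\sigma\in p^d\Zp$; but this needs $z$ to be a $\Zp$-unit, so the degenerate case still has to be treated by the algebra picture or by hand.
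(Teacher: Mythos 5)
You should first note that the paper itself contains no proof of Proposition \ref{prop:rotation}: it is imported verbatim from \cite[Proposition 4.2]{CrePel-JC}, so there is no in-paper argument to compare yours against line by line. Judged on its own, your proof is correct in substance and its key steps check out. Part (1) via Cramer's rule and the two-squares identity is exactly right, including the observation that $\mathrm{C}_0^*$ is empty unless $p\equiv 1\bmod 4$ (this covers $p=2$, since $-1$ is not a square in $\Q_2$) and that $\Circle$ acts on each isotropic line by multiplication by an arbitrary element of $\Qp^*$. For Part (2), the normalization to $k=0$ and the identification of the image of $t\mapsto(\cos t,\sin t)$, $t\in p^d\Zp$, with the norm-one elements of $1+p^d\ocal_E$ in $E=\Qp[\ii]$ is a sound route, and the three places you flag as deferred do all close: (i) for $p=2$ the needed precision gain is that a norm-one element of $1+\pi^3\ocal_E$ (with $\pi=1+\ii$) automatically lies in $1+4\ocal_E$ --- writing $v=1+2c+2e\,\ii$ with $c\equiv e\bmod 2$, the norm-one equation $c+c^2+e^2=0$ forces $c,e$ even; (ii) in the split case your step ``$z=uv=u'v'$ gives $v'/v=u/u'$'' needs $z\neq 0$, and the excluded case $z=0$ (one of $u,v$ vanishes) must be treated by the one-line remark that the rotation then acts on the surviving unit component by an arbitrary element of $1+p\Zp$; (iii) the surjectivity of the $(\cos,\sin)$ parametrization onto the congruence subgroup follows from $\exp$ and a trace-zero count as you indicate. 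One simplification worth recording: after normalizing $k=0$, the ``elementary'' $\gamma=(xx'+yy')/z$, $\sigma=(xy'-x'y)/z$, $t=\arcsin\sigma$ argument in fact covers all $p\not\equiv 1\bmod 4$, since then $\ord_p z\in\{0\}$ for $p\equiv 3\bmod4$ and $\ord_2 z\in\{0,1\}$ for $p=2$ (the latter needing only the parity count above), so the quadratic-algebra machinery is genuinely required only in the split case $p\equiv 1\bmod 4$, where $\ord_p z$ can be arbitrarily large near the isotropic lines --- precisely the case your $E\cong\Qp\times\Qp$ analysis handles.
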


\begin{proposition}\label{prop:t}
	\letpprime. Let $d=2$ if $p=2$, and otherwise $d=1$. Let $\mathrm{T}_p^2$ be the set in Definition \ref{def:T}. Let $(x,y)\in \mathrm{T}_p^2$ and define $k$, $x_0$ and $y_0$ as in Proposition \ref{prop:D2}. Let $z=x^2+y^2$. Then there exist $(x',y')\in \mathrm{T}_p^2$ and $t\in p^d\Zp$ such that
	\begin{itemize}
		\item $x'^2+y'^2=z$;
		\item $\min\{\ord_p(x'),\ord_p(y')\}=k$;
		\item the digits of order $k$ of $x'$ and $y'$ are $x_0$ and $y_0$;
		\item if $x_0=0$, $x'=0$; if $p=2$ and $x_0=2$, $x'=2^{k+1}$; otherwise $y'=p^ky_0$;
		\item the following holds:
		\begin{equation}\label{eq:t}
		\begin{pmatrix}
		x \\ y
		\end{pmatrix}
		=\begin{pmatrix}
		\cos t & -\sin t \\
		\sin t & \cos t
		\end{pmatrix}
		\begin{pmatrix}
		x' \\ y'
		\end{pmatrix}.
		\end{equation}
	\end{itemize}
	Furthermore, $x'$, $y'$ and $t$ are unique for a given $(x,y)$, and $x'$ and $y'$ only depend in $z$, $x_0$ and $y_0$.
\end{proposition}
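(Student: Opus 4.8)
The plan is to realize the triple $(x',y',t)$ as follows: $(x',y')$ will be the unique normal form of $(x,y)$ inside the level set $\mathrm{C}_z=\{u^2+v^2=z\}$ for the rotation action of $p^d\Zp$, and $t$ will be the (unique) angle carrying that normal form back to $(x,y)$. Here a rotation by $t\in p^d\Zp$ acts by the matrix appearing in \eqref{eq:t}; since $\cos^2 t+\sin^2 t=1$ it preserves each $\mathrm{C}_z$, and since its matrix is the identity plus entries of strictly positive order (one checks $\sin t$ has order $\ord_p t\ge d$ and $\cos t-1$ has order $2\ord_p t-\ord_p 2\ge 2d-\ord_p2$), it fixes the minimal order $k$ and the leading digit(s) of any nonzero vector it is applied to.

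First I would record the constraints on the leading data. By Proposition \ref{prop:D2}, $(x_0,y_0)\in \mathrm{D}_p(z_0)$ when $z_0\ne 0$, and $(x_0,y_0)\in \mathrm{D}_p^+(0)\cup \mathrm{D}_p^-(0)$ (which forces $p\equiv 1\bmod 4$) when $z_0=0$; in particular $(x_0,y_0)\ne(0,0)$. By Proposition \ref{prop:rotation}, the $p^d\Zp$-orbit of any point of $\mathrm{C}_z\setminus\{0\}$ is exactly the set of points of $\mathrm{C}_z\setminus\{0\}$ whose coordinates have minimal order $k$ and whose leading digits are $(x_0,y_0)$. When $z=0$ (so $\mathrm{C}_0\setminus\{0\}$ is the union of the lines $v=\pm\ii u$, with $p\equiv 1\bmod 4$) this is checked by hand: a rotation acts on $v=\pm\ii u$ by multiplication by $\exp(\mp\ii t)$, and $\{\exp(\mp\ii t):t\in p^d\Zp\}=1+p^d\Zp$ since $\mp\ii$ is a unit and the image of the exponential on $p^d\Zp$ is $1+p^d\Zp$, so the orbit of $(u,\pm\ii u)$ is $\{(wu,\pm\ii wu):w\in 1+p^d\Zp\}$, which is precisely the set described.

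Next I would construct the normal form $(x',y')$. Following the prescribed normalization, set $x'=0$ if $x_0=0$; set $x'=2^{k+1}$ if $p=2$ and $x_0=2$; and otherwise set $y'=p^ky_0$. In each of the three cases the remaining coordinate must be a square root of $z$, of $z-2^{2k+2}$, or of $z-p^{2k}y_0^2$, respectively. The core computation is that in each case this quantity has even order $2k$ and a leading digit that is a square in $\F_p$ (for $p\ne 2$) or satisfies the analogous condition modulo $8$ (for $p=2$) --- so that, by the description of $p$-adic squares recalled above, it is a square --- and that exactly one of its two square roots has the prescribed leading digit(s); here the compatibility of Proposition \ref{prop:D2} is what is used (for instance, for $p\ne 2$ and in the third case the order-$2k$ digit of $z-p^{2k}y_0^2$ equals $z_0-y_0^2=x_0^2\ne 0$). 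This produces a point $(x',y')\in \mathrm{T}_p^2$ with minimal order $k$, leading digits $(x_0,y_0)$, and the required normalization, and the explicit formulas exhibit it as a function of $z$ and the leading data $(x_0,y_0)$ alone. Uniqueness of $(x',y')$ is then immediate: any valid choice lies on $\mathrm{C}_z$, has minimal order $k$ and leading digits $(x_0,y_0)$, and obeys the same normalization, hence coincides with the one just built.

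Finally, since $(x,y)$ and $(x',y')$ both lie in $\mathrm{C}_z\setminus\{0\}$ with minimal order $k$ and leading digits $(x_0,y_0)$, they lie in a single $p^d\Zp$-orbit, giving some $t\in p^d\Zp$ with \eqref{eq:t}; and $t$ is unique because the $p^d\Zp$-rotation action on $(\Qp)^2\setminus\{0\}$ is free: if the rotation $R_s$ fixes a nonzero vector then $\det(R_s-\mathrm{Id})=2-2\cos s=0$, but $\cos s-1$ has finite $p$-adic valuation $2\ord_p(s)-\ord_p(2)$ for $s\ne 0$, a contradiction. The remaining assertions of the proposition are read off from the construction. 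I expect the main obstacle to be the square-root bookkeeping in the third paragraph, above all the $p=2$ case with its three-digit residues: one must check that the quantity whose root defines the free coordinate always lands in the set of squares of $\Q_2$ and that its two roots are distinguished by their low digits, and it is precisely the compatibility built into Proposition \ref{prop:D2} --- which, e.g., rules out $x_0=y_0=2$ when $p=2$ --- that makes the case analysis close up.
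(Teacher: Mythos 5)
Your proposal is correct and follows essentially the same route as the paper: fix one coordinate by the prescribed normalization, recover the other as the unique square root of $z$ minus its square having the prescribed leading digit(s) (via the congruence $z-y'^2\equiv x^2\equiv p^{2k}x_0^2$ modulo $p^{2k+2d-1}$), and obtain existence of $t$ from Proposition \ref{prop:rotation}. The only minor difference is the uniqueness of $t$: you argue freeness of the rotation action via $\det(R_s-\mathrm{Id})=2-2\cos s\ne 0$ for $s\ne 0$, while the paper invokes injectivity of the sine on $p^d\Zp$; both are valid.
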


\begin{proof}
	First we show that $x'$ and $y'$ exist. Suppose first that $x_0\ne 0$ (if $p\ne 2$) or $x_0$ is odd (if $p=2$). Then $y'=p^ky_0$ and
	\begin{align*}
		z-y'^2 & =x^2+y^2-y'^2 \\
		& \equiv x^2\mod p^{2k+2d-1} \\
		& \equiv p^{2k}x_0^2\mod p^{2k+2d-1}.
	\end{align*}
	Since this is not $0$, $z-y'^2$ is a square in $\Qp$, it has order exactly $2k$, and it has a root whose leading digit (or two leading digits if $p=2$) is $x_0$. This is the $x'$ which we are looking for. If $x_0=0$ (if $p\ne 2$) or $x_0$ is even (if $p=2$), the same happens but with $x'$ and $y'$ swapped. In this process, we have only used $z$, $x_0$ and $y_0$, but not the exact values of $x$ and $y$.
	
	The elements $x'$ and $y'$ are unique because one of them is fixed by the fourth condition (suppose it is $y'$), and the other must be a square root of $z-y'^2$ with a fixed leading digit.
	
	The existence of $t$ is consequence of Proposition \ref{prop:rotation}, and its uniqueness follows from the fact that the sine is injective as a function from $p^d\Zp$ to $p^d\Zp$.
\end{proof}

These definitions and results allow us to define polar coordinates in $(\Qp)^2$:

\begin{definition}\label{def:polar}
	\letpprime. Let $(x,y)\in \mathrm{T}_p^2$. Let $k$, $x_0$, $y_0$ and $z_0$ be defined as in Proposition \ref{prop:D}. We define the \emph{$p$-adic polar coordinates} of $(x,y)$ as $(z,k',k'',a,b,t)$, where
		\begin{itemize}
			\item $z=x^2+y^2$;
			\item $k'=\ord_p(x+\ii y)$ if $p\equiv 1\mod 4$ and $k'=k$ otherwise;
			\item $k''=\ord_p(x-\ii y)$ if $p\equiv 1\mod 4$ and $k''=\ord_p(z)-k$ otherwise;
			\item $(a,b)$ is such that $(a_0,b_0)\times(a,b)=(x_0,y_0)$, where $(a_0,b_0)=(a_p^+(0),b_p^+(0))$ if $(x_0,y_0)\in \mathrm{D}_p^+(0)$, $(a_0,b_0)=(a_p^-(0),b_p^-(0))$ if $(x_0,y_0)\in \mathrm{D}_p^-(0)$, and otherwise $(a_0,b_0)=(a_p(z_0),b_p(z_0))$.
			\item $t$ is as in Proposition \ref{prop:t}.
		\end{itemize}
\end{definition}

See Figure \ref{fig:polar} for an illustration of Definition \ref{def:polar}. This definition is involved and it is not clear that it is well defined a priori. Next we prove that these $p$-adic polar coordinates are indeed well defined.

\begin{figure}
	\includegraphics{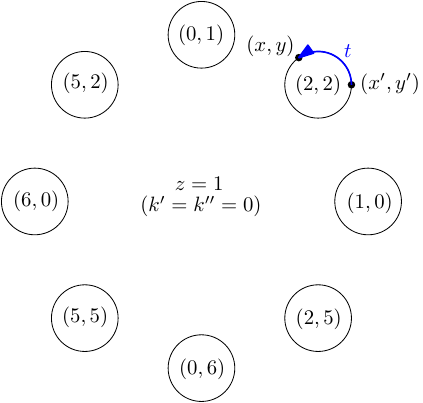}
	\caption{A representation of $p$-adic polar coordinates of Definition \ref{def:polar} for $p=7$. For $z=1$, $k'$ and $k''$ are fixed at $0$ and there are eight possibilities for $(x_0,y_0)$, which in this case coincides with $(a,b)$. These possibilities are represented here as eight circles. For each circle, the position $(x',y')$ is fixed, and $(x,y)$ is the position which results from rotating it an angle $t$.}
	\label{fig:polar}
\end{figure}

\begin{lemma}\label{lemma:orders}
	\letpprime\ such that $p\equiv 1\mod 4$. Let $x,y\in\Qp$. Then
	\[\min\{\ord_p(x+\ii y),\ord_p(x-\ii y)\}=\min\{\ord_p(x),\ord_p(y)\}.\]
\end{lemma}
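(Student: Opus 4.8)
The plan is to exploit the fact that, since $p\equiv 1\mod 4$, the element $\ii\in\Qp$ is a $p$-adic unit (its square is $-1$, so $\ord_p(\ii)=0$) and $2$ is also a unit (as $p$ is odd). Consequently the invertible change of basis $(x,y)\mapsto(x+\ii y,\,x-\ii y)$, whose matrix $\left(\begin{smallmatrix}1&\ii\\1&-\ii\end{smallmatrix}\right)$ has determinant $-2\ii$ a unit, has entries and inverse entries in $\Zp$; hence the two pairs generate the same $\Zp$-submodule of $\Qp$, and the minimum of the orders of a generating pair is an invariant of that submodule.

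Concretely, I would prove the two inequalities separately using only the ultrametric inequality. For ``$\ge$'' in the form $\min\{\ord_p(x),\ord_p(y)\}\ge\min\{\ord_p(x+\ii y),\ord_p(x-\ii y)\}$: write $x=\tfrac12\big((x+\ii y)+(x-\ii y)\big)$ and $y=\tfrac{1}{2\ii}\big((x+\ii y)-(x-\ii y)\big)$, and use $\ord_p(2)=\ord_p(\ii)=0$ together with $\ord_p(u+v)\ge\min\{\ord_p(u),\ord_p(v)\}$ to bound both $\ord_p(x)$ and $\ord_p(y)$ from below by $\min\{\ord_p(x+\ii y),\ord_p(x-\ii y)\}$. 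For the reverse inequality, apply the ultrametric inequality directly: $\ord_p(x\pm\ii y)\ge\min\{\ord_p(x),\ord_p(\ii y)\}=\min\{\ord_p(x),\ord_p(y)\}$, again because $\ord_p(\ii)=0$; taking the minimum over the two signs gives $\min\{\ord_p(x+\ii y),\ord_p(x-\ii y)\}\ge\min\{\ord_p(x),\ord_p(y)\}$. Combining the two inequalities yields the claimed equality.

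There is essentially no obstacle here; the only point requiring care is the verification that $\ii$ and $2$ are $p$-adic units, which is exactly where the hypotheses $p\equiv 1\mod 4$ (so that $\ii\in\Qp$ exists and is a unit) and $p$ odd are used. One should also note the degenerate case $x=y=0$, where both sides are $+\infty$ and the statement holds trivially.
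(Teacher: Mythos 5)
Your proof is correct and is essentially the same argument as the paper's: one inequality by the ultrametric inequality applied to $x\pm\ii y$ (using $\ord_p(\ii)=0$), and the other by recovering $x$ and $y$ from $(x+\ii y)\pm(x-\ii y)$, where the paper writes this as $\ord_p(x)=\ord_p(2x)$ and $\ord_p(y)=\ord_p(2\ii y)$ rather than dividing by the units $2$ and $2\ii$. Your explicit remarks that $2$ and $\ii$ are $p$-adic units just make precise what the paper uses implicitly.
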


\begin{proof}
	Each of the orders in the left-hand side is greater or equal than the right-hand side by the triangle inequality, hence the minimum of the two orders is itself greater. Analogously, we have that
	\begin{align*}
		\ord_p(x) & =\ord_p(2x) \\
		& =\ord_p(x+\ii y+x-\ii y) \\
		& \ge\min\{\ord_p(x+\ii y),\ord_p(x-\ii y)\}
	\end{align*}
	and
	\begin{align*}
		\ord_p(y) & =\ord_p(2\ii y) \\
		& =\ord_p(x+\ii y-x+\ii y) \\
		& \ge\min\{\ord_p(x+\ii y),\ord_p(x-\ii y)\},
	\end{align*}
	which proves the other direction.
\end{proof}

\begin{proposition}\label{prop:polar}
	\letpprime. The following statements hold:
	\begin{enumerate}
		\renewcommand{\theenumi}{\roman{enumi}}
		\renewcommand{\theenumii}{\alph{enumii}}
		\item The $p$-adic polar coordinates in Definition \ref{def:polar} are well-defined.
		\item The pair $(a,b)$ and $t$ can take any value in $\mathrm{D}_p(1)$ and $p\Zp$ respectively.
		\item The range of $k'$ and $k''$ is as follows:
		\begin{enumerate}
			\item If $p\equiv 1\mod 4$, $k'$ and $k''$ can take all values in $\Z\cup\{\infty\}$, not both $\infty$ at the same time.
			\item If $p\equiv 3\mod 4$, $k''=k'$.
			\item If $p=2$, either $k''=k'$ or $k''=k'+1$.
		\end{enumerate}
		\item The range of $z$ is all the $p$-adic numbers of order $k'+k''$, except if $p=2$, when it must also end in $01$.
		\item Furthermore, if two points $(x_1,y_1)$ and $(x_2,y_2)$ have the same $p$-adic polar coordinates, then $(x_1,y_1)=(x_2,y_2)$.
	\end{enumerate}
\end{proposition}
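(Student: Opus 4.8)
The plan is to prove the five assertions in turn, each time reducing to the preparatory results already established in this section --- Propositions~\ref{prop:D}, \ref{prop:D2}, \ref{prop:rotation}, \ref{prop:t} and Lemma~\ref{lemma:orders} --- so that only elementary bookkeeping with $p$-adic orders and leading digits remains. For well-definedness (i) I would examine the components of the tuple $(z,k',k'',a,b,t)$ separately. The quantities $z=x^2+y^2$, the integers $k,k',k''$, and the digit data $x_0,y_0,z_0$ are manifestly functions of $(x,y)$, so the only genuine points are: that $z_0\in\mathrm{Z}_p$ whenever $z_0\ne 0$ and that $z_0=0$ forces $(x_0,y_0)\in\mathrm{D}_p^+(0)\cup\mathrm{D}_p^-(0)$ (this is exactly Proposition~\ref{prop:D2}, which also shows that the base point $(a_0,b_0)$ selected in Definition~\ref{def:polar} lies in the same $\mathrm{D}_p$-class as $(x_0,y_0)$); that there is then a \emph{unique} $(a,b)\in\mathrm{D}_p(1)$ with $(a_0,b_0)\cdot(a,b)=(x_0,y_0)$, which is Proposition~\ref{prop:D} applied to the group structure on $\mathrm{D}_p(z_0)$ (or on $\mathrm{D}_p^{\pm}(0)$ when $z_0=0$); and the existence and uniqueness of $t$, which is Proposition~\ref{prop:t}.

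For the ranges (ii)--(iv) I would argue by explicit realization: given admissible target data I reverse the recipe of Definition~\ref{def:polar} to exhibit a point of $\mathrm{T}_p^2$ with precisely those polar coordinates --- first choosing $k'$, $k''$ and a $z$ of order $k'+k''$ satisfying the digit constraint, then the representative $(x',y')$ is pinned down by $z$ and $(x_0,y_0)=(a_0,b_0)\cdot(a,b)$ via the uniqueness clause of Proposition~\ref{prop:t}, and finally rotating $(x',y')$ through $t$. The admissibility conditions come from two short order computations applied to a pair $(x',y')$ with $\min\{\ord_p x',\ord_p y'\}=k$: when $p\equiv 1\mod 4$, the factorization $z=(x+\ii y)(x-\ii y)$ together with Lemma~\ref{lemma:orders} gives $\ord_p z=k'+k''$ and $k=\min\{k',k''\}$, with $k',k''$ otherwise arbitrary in $\Z\cup\{\infty\}$ but not both equal to $\infty$; when $p\equiv 3\mod 4$ one checks $x'^2+y'^2\not\equiv 0\mod p$, forcing $\ord_p z=2k$ and hence $k''=k'$, while every element of $\F_p^*=\mathrm{Z}_p$ is a sum of two squares so the leading digit of $z$ is unrestricted; when $p=2$ a computation modulo $8$ gives $\ord_2(x'^2+y'^2)\in\{2k,2k+1\}$, whence $k''\in\{k',k'+1\}$, and pins the low digits of $z$ down to the stated condition. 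That $(a,b)$ and $t$ attain all of their claimed values is then immediate from Propositions~\ref{prop:D} and \ref{prop:t}.

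For injectivity (v) I would reconstruct $(x,y)$ from its polar coordinates. From $z$ and $k=\min\{k',k''\}$ (recovered via Lemma~\ref{lemma:orders} when $p\equiv 1\mod 4$, and directly from $\ord_p z$ otherwise) one reads off $z_0$, hence the base point $(a_0,b_0)$ --- in the degenerate case $z_0=0$ the sign $\pm$ is recovered from which of $k',k''$ equals $k$ --- and therefore $(x_0,y_0)=(a_0,b_0)\cdot(a,b)$; then $(x',y')$ is uniquely determined by $z$, $x_0$, $y_0$ via Proposition~\ref{prop:t}, and $(x,y)$ is recovered from $(x',y')$ by rotation through $t$, again uniquely. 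Hence equal polar coordinates force equal points.

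The step I expect to be the main obstacle is the simultaneous handling of the prime $p=2$ and of the degenerate stratum where one of $k',k''$ is infinite (equivalently $z_0=0$, which can occur only when $p\equiv 1\mod 4$): there the relevant ``circle'' $\mathrm{C}_z^*$ consists only of proportional points, so Proposition~\ref{prop:rotation}(1) must be used in its weaker form, and one has to check that the $\mathrm{D}_p^{\pm}(0)$ branch of Definition~\ref{def:polar} together with the identity $k'+k''=\ord_p z$ still makes the reconstruction in (v) and the order statements in (iii)--(iv) go through without losing either injectivity or surjectivity.
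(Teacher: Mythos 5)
Your proposal is correct and follows essentially the same route as the paper: well-definedness via Propositions \ref{prop:D}, \ref{prop:D2} and \ref{prop:t}, the range constraints on $k'$, $k''$ and $z$ via the order identity $k'+k''=\ord_p z$ (using Lemma \ref{lemma:orders} and the mod-$p$, mod-$8$ computations), and both surjectivity onto the stated range and injectivity by the same reconstruction recipe (recover $k$, $z_0$, the base point $(a_0,b_0)$ with the $\pm$ branch read off from which of $k',k''$ equals $k$, then $(x',y')$ via Proposition \ref{prop:t}, then rotate by $t$). The obstacle you flag is exactly where the paper also spends its extra case analysis, so no substantive difference remains.
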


\begin{proof}
	That $(a,b)$ is well-defined follows from Propositions \ref{prop:D} and \ref{prop:D2}, because in the three cases $(a_0,b_0)$ is chosen to be in the same orbit modulo multiplication by $\mathrm{D}_p(1)$ as $(x_0,y_0)$. The other coordinates are clearly well-defined.
	
	It is immediate that $(a,b)$ and $t$ take values in $\mathrm{D}_p(1)$ and $p\Zp$ respectively. Respect to $z$, we have that
	\begin{align*}
		k'+k'' & =\ord_p(x+\ii y)+\ord_p(x-\ii y) \\
		& =\ord_p(x^2+y^2) \\
		& =\ord_p(z),
	\end{align*}
	and if $p=2$, it must end in $01$. Respect to $k'$ and $k''$, we have that:
	\begin{itemize}
		\item If $p\equiv 1\mod 4$, there is nothing to prove here.
		\item If $p\equiv 3\mod 4$, we must have $\ord_p(z)=2k$, hence $k'=k''=k$.
		\item If $p=2$, we must have $\ord_p(z)\in\{2k,2k+1\}$, hence $k'=k$ and $k''\in\{k',k'+1\}$.
	\end{itemize}
	
	Now we prove that, given $(z,k',a,b,t)$ in the required range, we can find $(x,y)$ with these values as polar coordinates:
	\begin{enumerate}
		\item First we calculate $k$. If $p\not\equiv 1\mod 4$, this is immediate because $k=k'$. Otherwise, by Lemma \ref{lemma:orders},
		\begin{align*}
			k & =\min\{\ord_p(x),\ord_p(y)\} \\
			& =\min\{\ord_p(x+\ii y),\ord_p(x-\ii y)\} \\
			& =\min\{k',k''\}.
		\end{align*}
		\item Now $z_0$ is the digit of $z$ of order $2k$, or three digits of order $2k$ to $2k+2$.
		\item With this we can deduce $(a_0,b_0)$. If $z_0\ne 0$, this is just applying the definition. If $z_0=0$, which implies $p\equiv 1\mod 4$, we also need to know whether $(x_0,y_0)$ are in $\mathrm{D}_p^+(0)$ or $\mathrm{D}_p^-(0)$. In order to decide this, first note that
		\begin{align*}
			k+\max\{k',k''\} & =\min\{k',k''\}+\max\{k',k''\} \\
			& =\ord_p(z) \\
			& \ge 2k+1
		\end{align*}
		which implies that either $k'\ge k+1$ or $k''\ge k+1$. If it is the former, \[\ord_p(x+\ii y)\ge k+1\] implies that $x_0+\ii y_0\equiv 0\mod p$ and $(x_0,y_0)\in \mathrm{D}_p^+(0)$. If it is the latter, \[\ord_p(x-\ii y)\ge k+1\] and $(x_0,y_0)\in \mathrm{D}_p^-(0)$.
		\item Now we compute $(x_0,y_0)=(a_0,b_0)\cdot(a,b)$.
		\item At this point, $z$ and $(x_0,y_0)$ give us enough information to obtain $(x',y')$ in Proposition \ref{prop:t}.
		\item Finally, we compute $(x,y)$ with equation \eqref{eq:t}.
	\end{enumerate}
	
	We now check that this point $(x,y)$ has $(z,k',k'',a,b,t)$ as polar coordinates. Let $\tilde{k}$, $\tilde{x}_0$, $\tilde{y}_0$, $\tilde{z}_0$, $\tilde{a}_0$ and $\tilde{b}_0$ be the values corresponding to $(x,y)$, $(\tilde{z},\tilde{k}',\tilde{k}'',\tilde{a},\tilde{b},\tilde{t})$ be the actual polar coordinates of this point and $(\tilde{x}',\tilde{y}')$ the result of Proposition \ref{prop:t} applied to $(x,y)$.
	
	The $x'$ and $y'$ obtained in step (5) satisfy the first four conditions of Proposition \ref{prop:t}. Concretely, $x'^2+y'^2=z$, and equation \eqref{eq:t} implies that \[\tilde{z}=x^2+y^2=x'^2+y'^2=z.\] By Proposition \ref{prop:rotation},
	\begin{align*}
		\tilde{k} & =\min\{\ord_p(x),\ord_p(y)\} \\
		& =\min\{\ord_p(x'),\ord_p(y')\} \\
		& =k.
	\end{align*}
	Also by Proposition \ref{prop:rotation}, the digits at order $\tilde{k}=k$ of $x$ and $y$, which are $\tilde{x}_0$ and $\tilde{y}_0$, coincide with those of $x'$ and $y'$, which are $x_0$ and $y_0$. The result of Proposition \ref{prop:t}, $(\tilde{x}',\tilde{y}')$, is calculated with $\tilde{x}_0=x_0$, $\tilde{y}_0=y_0$ and $\tilde{z}=z$, hence \[(\tilde{x}',\tilde{y}')=(x',y'),\] which in turn implies, by step (6), that $\tilde{t}=t$.
	
	Since $\tilde{z}=z$ and $\tilde{k}=k$, by step (2) we also have $\tilde{z}_0=z_0$. In step (3), $(a_0,b_0)$ is calculated applying the definition to $(x_0,y_0)$ and $z_0$, which implies that \[(\tilde{a}_0,\tilde{b}_0)=(a_0,b_0),\] and by step (4) $(\tilde{a},\tilde{b})=(a,b)$.
	
	It is only left to show that $\tilde{k}'=k'$ and $\tilde{k}''=k''$. This happens automatically if $p\not\equiv 1\mod 4$. Otherwise, we need to check that $\ord_p(x+\ii y)=k'$ and $\ord_p(x-\ii y)=k''$. We distinguish three cases.
	\begin{itemize}
		\item $y_0=\ii_0x_0$, that is, $(x_0,y_0)\in \mathrm{D}_p^+(0)$. Then step (3) had $k'\ge k+1$ and $k''=k$. In this case, \[\ord_p(x-\ii y)=k=k''\] and
		\begin{align*}
			\ord_p(x+\ii y) & =\ord_p(z)-\ord_p(x-\ii y) \\
			& =\ord_p(z)-k'' \\
			& =k'.
		\end{align*}
		\item $y_0=-\ii_0x_0$, that is, $(x_0,y_0)\in \mathrm{D}_p^+(0)$. This case is symmetric to the previous one.
		\item The rest of cases. Now $k'=k''=k$ and \[\ord_p(x+\ii y)=\ord_p(x-\ii y)=k=k'=k''.\]
	\end{itemize}
	
	Finally, if we have two points $(x_1,y_1)$ and $(x_2,y_2)$ with the same polar coordinates, we can apply the six steps to these coordinates, obtaining that the points have the same $k$, then the same $z_0$, then the same $(a_0,b_0)$, then the same $(x_0,y_0)$, then the same $(x',y')$, and finally that they are the same point.
\end{proof}

\begin{remark}\label{rem:dimension}
	There are six polar coordinates in Definition \ref{def:polar}, but only $z$ and $t$ are continuous; the other four take discrete values. This corresponds to what we would expect for coordinates on $(\Qp)^2$.
\end{remark}

\begin{proposition}\label{prop:area}
	\letpprime. The $p$-adic area form on the plane $(\Qp)^2$ with standard coordinates $(x,y)$ is expressed in the $p$-adic polar coordinates of Definition \ref{def:polar} as
	\[\dd x\wedge\dd y=\frac{1}{2}\dd z\wedge\dd t.\]
\end{proposition}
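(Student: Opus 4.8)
The plan is to compute the pullback of $\dd x\wedge\dd y$ under the change of variables $(z,k',k'',a,b,t)\mapsto(x,y)$ described in Definition \ref{def:polar}. Since $k',k'',a,b$ are locally constant (see Remark \ref{rem:dimension}), on each piece where they are fixed the map $(z,t)\mapsto(x,y)$ is $p$-adic analytic, and it suffices to compute the Jacobian determinant $\frac{\partial(x,y)}{\partial(z,t)}$ on that piece and check that it equals $\tfrac12$.

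First I would record the two relations that pin down $(x,y)$ once $(k',k'',a,b)$ are fixed. From Proposition \ref{prop:t}, equation \eqref{eq:t}, we have
\[
\begin{pmatrix} x \\ y \end{pmatrix}
= \begin{pmatrix} \cos t & -\sin t \\ \sin t & \cos t \end{pmatrix}
\begin{pmatrix} x' \\ y' \end{pmatrix},
\]
where $(x',y')$ depends only on $z$ (and on the fixed discrete data $k',k'',a,b$), together with the identity $x'^2+y'^2=z$, which holds for all $(x,y)$ on the piece since the rotation preserves $x^2+y^2$. Differentiating $x^2+y^2=z$ gives $x\,\dd x+y\,\dd y=\tfrac12\,\dd z$. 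For the second relation I would use the angular variable: since $(x',y')$ is constant in $t$, differentiating \eqref{eq:t} with respect to $t$ yields $\partial_t x=-y$ and $\partial_t y=x$ (because $\partial_t(\cos t)=-\sin t$, $\partial_t(\sin t)=\cos t$, which is the infinitesimal form of the rotation; one should note the $p$-adic sine and cosine satisfy the usual derivative identities, being defined by the standard power series). Hence along the $t$-direction the tangent vector is $(-y,x)$.

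Now I would assemble the Jacobian. Write $x=x(z,t)$, $y=y(z,t)$. We have $\partial_t x=-y$, $\partial_t y=x$, and from $x\,\partial_z x+y\,\partial_z y=\tfrac12$ together with the fact that $(\partial_z x,\partial_z y)$ is proportional to $(x',y')$ rotated by $t$, i.e.\ proportional to $(x,y)$ itself — more precisely $(\partial_z x,\partial_z y)=\lambda(x,y)$ for some scalar $\lambda=\lambda(z)$ — the constraint gives $\lambda(x^2+y^2)=\tfrac12$, so $\lambda=\tfrac1{2z}$. Therefore
\[
\frac{\partial(x,y)}{\partial(z,t)}
= \det\begin{pmatrix} \partial_z x & \partial_t x \\ \partial_z y & \partial_t y \end{pmatrix}
= \det\begin{pmatrix} \tfrac{x}{2z} & -y \\ \tfrac{y}{2z} & x \end{pmatrix}
= \frac{x^2+y^2}{2z} = \frac12,
\]
which gives $\dd x\wedge\dd y=\tfrac12\,\dd z\wedge\dd t$ on each piece, hence everywhere on $\mathrm{T}_p^2$.

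The main obstacle I anticipate is justifying the claim that $(\partial_z x,\partial_z y)$ is parallel to $(x,y)$, i.e.\ that varying $z$ (with the discrete data and $t$ fixed) moves the point radially. This follows because, for fixed discrete data, $(x',y')$ as produced in Proposition \ref{prop:t} is obtained by taking a square root of $z-y'^2$ (or $z-x'^2$) with a prescribed leading digit, so one coordinate of $(x',y')$ is constant in $z$ and the other scales like $\sqrt{z-\text{const}}$; differentiating and then rotating by the fixed angle $t$ shows the derivative vector is a $z$-dependent multiple of $(x,y)$ only up to the contribution of the fixed coordinate — so I would instead argue more robustly by working directly with the two functionally independent relations $x^2+y^2=z$ and "$t=$ fixed", avoiding an explicit formula for $(x',y')$: these two relations, differentiated, already determine both columns of the Jacobian up to the computation above, since $\partial_t(x,y)=(-y,x)$ is nonvanishing on $\mathrm{T}_p^2$ and the level sets of $(z,t)$ are transverse. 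A secondary point to check is that the power series for $\sin$ and $\cos$ may be differentiated termwise on their convergence domain $p^d\Zp$ and satisfy $\cos^2+\sin^2=1$, so that the rotation matrix in \eqref{eq:t} is genuinely a one-parameter group with the stated derivative; this is standard and can be cited or dispatched in a line.
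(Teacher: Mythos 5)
Your overall route is the same as the paper's: fix the discrete data, so that on each piece $(x,y)$ is the rotation by $t$ of a point $(x',y')$ with one coordinate locally constant and $x'^2+y'^2=z$, and then check that the $(z,t)\to(x,y)$ change of variables has Jacobian $\tfrac12$. But the displayed computation rests on a false intermediate claim: $(\partial_z x,\partial_z y)$ is \emph{not} proportional to $(x,y)$ in general. On a piece with, say, $y'=p^k y_0$ fixed and $x'=\sqrt{z-y'^2}$, at $t=0$ one has $(\partial_z x,\partial_z y)=\bigl(\tfrac{1}{2x'},0\bigr)$, which is a multiple of $(x,y)=(x',y')$ only if $y'=0$; so the matrix $\begin{pmatrix}x/2z & -y\\ y/2z & x\end{pmatrix}$ is not the Jacobian (and $\lambda=\tfrac1{2z}$ is meaningless on the locus $z=0$, which lies in $\mathrm{T}_p^2$ when $p\equiv1\bmod4$). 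You noticed this yourself, and your fallback is indeed the correct fix, though it is only sketched: you do not need the first column at all, because expanding the determinant along the $t$-column gives
\[
\det\begin{pmatrix}\partial_z x & -y\\ \partial_z y & x\end{pmatrix}
= x\,\partial_z x+y\,\partial_z y=\tfrac12\,\partial_z(x^2+y^2)=\tfrac12 ,
\]
which is exactly your relation $x\,\dd x+y\,\dd y=\tfrac12\,\dd z$ restricted to the piece. Written this way the argument is correct and is essentially the paper's computation, which substitutes the rotation formulas into $\dd x\wedge\dd y$ and uses $x'\,\dd x'\wedge\dd t=\tfrac12\,\dd(z-y'^2)\wedge\dd t=\tfrac12\,\dd z\wedge\dd t$ with $y'$ (or $x'$) locally constant.

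Two smaller points. First, the locally constant data you should invoke is $k$, $x_0$, $y_0$ (the input to Proposition \ref{prop:t}), not $k',k'',a,b$: when $p\equiv1\bmod 4$ the coordinates $k',k''$ are not locally constant at points with $x\pm\ii y=0$, but they never enter the computation, so phrase the piecewise decomposition in terms of the leading-digit data. Second, your closing remark about termwise differentiation of the $p$-adic $\sin$ and $\cos$ and $\cos^2+\sin^2=1$ is fine and is all that is needed to get $\partial_t(x,y)=(-y,x)$.
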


\begin{proof}
	Suppose that $x_0\ne 0$. Then $y'=p^ky_0$ is discrete, and
	\begin{align*}
	\dd x\wedge\dd y & =\dd(x'\cos t-y'\sin t)\wedge\dd(y'\cos t+x'\sin t) \\
	& =(\dd x'\cos t-x'\sin t\dd t-y'\cos t\dd t)\wedge(-y'\sin t\dd t+\dd x'\sin t+x'\cos t\dd t) \\
	& =(\cos t(x'\cos t-y'\sin t)+\sin t(x'\sin t+y'\cos t))\dd x'\wedge\dd t \\
	& =x'\dd x'\wedge\dd t \\
	& =\frac{1}{2}\dd x'^2\wedge\dd t \\
	& =\frac{1}{2}\dd (z-y'^2)\wedge\dd t \\
	& =\frac{1}{2}\dd z\wedge\dd t.
	\end{align*}
	The other case is essentially the same, but we include it for completeness: if $x_0=0$, $x'$ is discrete (it is always $0$), and
	\begin{align*}
	\dd x\wedge\dd y & =\dd(-y'\sin t)\wedge\dd(y'\cos t) \\
	& =(-\dd y'\sin t-y'\cos t\dd t)\wedge(\dd y'\cos t-y'\sin t\dd t) \\
	& =(y'\sin^2t+y'\cos^2t)\dd y'\wedge \dd t \\
	& =y'\dd y'\wedge\dd t \\
	& =\frac{1}{2}\dd y'^2\wedge\dd t \\
	& =\frac{1}{2}\dd (z-x'^2)\wedge\dd t \\
	& =\frac{1}{2}\dd z\wedge\dd t.\qedhere
	\end{align*}
\end{proof}

\begin{remark}
	Note that the form given in Proposition \ref{prop:area} is the same form as in the reals, where it is usually expressed in terms of the coordinate \[r=\sqrt{x^2+y^2}:\] then we have $z=r^2$ and $\dd z=2r\dd r$, and the form results in $r\dd r\wedge\dd t$.
\end{remark}

\begin{proposition}\label{prop:polar-actions}
	\letpprime.
	\begin{enumerate}
		\item The rotational action of $p^d\Zp$ on $(\Qp)^2\setminus\{(0,0)\}$ transforms only the polar coordinate $t$ in Definition \ref{def:polar}, and preserves the remaining ones.
		\item The rotational action of $\Circle$ on $(\Qp)^2\setminus\{(0,0)\}$ transforms the polar coordinates $a$, $b$ and $t$, and also $k'$ and $k''$ if $p\equiv 1\mod 4$, and preserves $z$. More precisely, the action of a fixed element of $\Circle$, in polar coordinates, consists of adding a constant to $k'$, $k''$ and $t$, and multiplying $(a,b)$ by a constant in $\mathrm{D}_p(1)$; these constants are independent of $z$.
	\end{enumerate}
\end{proposition}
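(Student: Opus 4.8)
The plan is to expand both group actions in coordinates and match the outcome, entry by entry, against the definition of the $p$-adic polar coordinates (Definition~\ref{def:polar}), using the structural results already in place: Proposition~\ref{prop:rotation} (how $p^d\Zp$ moves a point and how the order-$k$ digits pin down a $p^d\Zp$-orbit), Proposition~\ref{prop:D} (the action of $\mathrm{D}_p(1)$ on digit patterns), Proposition~\ref{prop:D2}, Lemma~\ref{lemma:orders}, and the uniqueness/injectivity statements in Propositions~\ref{prop:t} and~\ref{prop:polar}. The argument is organized around the fact that the last five polar coordinates $(k',k'',a,b,t)$ coordinatize each level set $\{x^2+y^2=z\}$ of the first one: the rotation matrix $\begin{pmatrix}a&-b\\ b&a\end{pmatrix}$ has norm form $a^2+b^2$, so every element of $\Circle$ preserves $z$ and hence maps each such level set to itself; and any two of these matrices commute, which is exactly what lets one transport the angular coordinate $t$ through a rotation.

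For part~(1), fix $s\in p^d\Zp$ acting as $R_s:=\begin{pmatrix}\cos s&-\sin s\\ \sin s&\cos s\end{pmatrix}$. Since $\cos^2 s+\sin^2 s=1$, $z$ is fixed; by Proposition~\ref{prop:rotation}(2) the points $(x,y)$ and $R_s(x,y)$ lie in the same $\mathrm{C}_z^*$ and share their order-$k$ digits, so $k$, $x_0$, $y_0$, and hence $z_0$, $(a_0,b_0)$ and $(a,b)$, are unchanged. If $p\not\equiv 1\mod 4$ then $k'$ and $k''$ are functions of $z$ and $k$ and are fixed; if $p\equiv 1\mod 4$ then $x\pm\ii y$ gets multiplied by $\cos s\pm\ii\sin s=\exp(\pm\ii s)\in 1+p^d\Zp$, which has order $0$, so $\ord_p(x\pm\ii y)$, i.e.\ $k'$ and $k''$, are fixed. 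Finally, writing $(x,y)=R_t(x',y')$ as in~\eqref{eq:t} and using the addition formulas for the $p$-adic sine and cosine one gets $R_s(x,y)=R_{s+t}(x',y')$ with $s+t\in p^d\Zp$; since $(x',y')$ depends only on $(z,x_0,y_0)$ it is still the standard representative, so the uniqueness in Proposition~\ref{prop:t} forces the new angle to be $t+s$. Hence only $t$ moves, by $t\mapsto t+s$.

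For part~(2), fix $g=(a_g,b_g)\in\Circle$, acting by $M_g=\begin{pmatrix}a_g&-b_g\\ b_g&a_g\end{pmatrix}$. As before $z$ is preserved because $a_g^2+b_g^2=1$. Writing $(x'',y'')=M_g(x,y)$, one has $x''\pm\ii y''=(a_g\pm\ii b_g)(x\pm\ii y)$, which gives $k'\mapsto k'+\kappa$, $k''\mapsto k''-\kappa$ with $\kappa=\ord_p(a_g+\ii b_g)$ (and $\kappa=0$ unless $p\equiv 1\mod 4$, the case $p\not\equiv1\mod4$ also following since there $k',k''$ are functions of $z$). For $(a,b)$ one computes the leading digit pattern of $(x'',y'')$ and identifies it as a fixed $\mathrm{D}_p(1)$-translate of $(x_0,y_0)$ depending only on $g$; since $z$ — hence $z_0$ and the canonical representative $(a_0,b_0)$ — does not change (this uses Proposition~\ref{prop:D}, including the $\mathrm{D}_p^{\pm}(0)$ branch when $z_0=0$), the polar pair $(a,b)$ gets right-multiplied by that fixed element of $\mathrm{D}_p(1)$. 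For $t$: commutativity gives $M_g R_t(x',y')=R_t\big(M_g(x',y')\big)$, and $M_g(x',y')$ lies in $\mathrm{C}_z^*$ with the same order-$k$ digits as $(x'',y'')$, so by Proposition~\ref{prop:rotation}(2) it differs from the new standard representative by a rotation $R_\rho$ with $\rho\in p^d\Zp$; thus $t\mapsto t+\rho$. That these prescriptions are the whole action — that the point carrying the transformed polar data really is $g\cdot(x,y)$ — is then immediate from the injectivity in Proposition~\ref{prop:polar}(v). One still has to check that the shift amounts for $k',k''$ and the $\mathrm{D}_p(1)$-factor depend only on $g$ and not on $z$; this falls out of the explicit description above.

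The step I expect to be the genuine obstacle is the case $p\equiv 1\mod 4$ with $\kappa\neq 0$: there $M_g$ has entries of negative order, the value of $k=\min\{\ord_p x,\ord_p y\}$ really does change, and it is delicate to keep track of which coordinate plays the pinned role in Proposition~\ref{prop:t} and of whether $z_0$ passes through $0$. The clean way around this is to diagonalize: in the coordinates $X=x+\ii y$, $Y=x-\ii y$ the element $g$ acts by $(X,Y)\mapsto(uX,\,u^{-1}Y)$ with $u=a_g+\ii b_g$, where the effect on order, leading digit and $1$-unit part of $X$ is transparent, and one transports the conclusion back through Lemma~\ref{lemma:orders} and the split $k'<k''$, $k'=k''$, $k'>k''$. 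The residual annoyances — $p=2$ (digits taken two, or three, at a time) and the $z_0=0$ branch — are handled exactly as in the proofs of Propositions~\ref{prop:D2} and~\ref{prop:polar}, and introduce nothing conceptually new.
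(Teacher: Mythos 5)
Your route is the paper's own: part~(1), the preservation of $z$, and the shifts of $k'$ and $k''$ read off from $x\pm\ii y$ all coincide with what the authors do, and that portion of your argument is complete. The problem is in part~(2), and it is precisely the material you set aside. When $p\equiv 1\bmod 4$ and $\kappa=\ord_p(a_g+\ii b_g)\neq 0$, your assertion that ``$z_0$ and the canonical representative $(a_0,b_0)$ do not change'' is false: $k=\min\{\ord_p x,\ord_p y\}$ changes, hence $z_0$ (the digit of $z$ at order $2k$, Proposition~\ref{prop:D2}) changes, and the leading-digit pair migrates between $\mathrm{D}_p(z_0)$ and $\mathrm{D}_p^{\pm}(0)$, so the base point of Definition~\ref{def:polar} switches between $(a_p(z_0),b_p(z_0))$ and $(a_p^{\pm}(0),b_p^{\pm}(0))$. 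If you carry out the diagonalization you sketch, the factor acting on $a-\ii_0 b$ comes out as the leading digit of $a_g-\ii b_g$ times the ratio $(a_0-\ii_0 b_0)/(\tilde a_0-\ii_0\tilde b_0)$ of the old and new canonical representatives; since those representatives are chosen lexicographically and separately on each $\mathrm{D}_p(c)$, this factor varies with $z_0$ (already for $p=5$, a fixed $g$ with $\kappa=1$ produces different factors at the points $(1,1)$ and $(1,0)$). So the $z$-independence that you say ``falls out of the explicit description'' does not fall out.

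The same difficulty also hits the $t$-shift, in a case you classified as introducing nothing new. The shift is the $\rho\in p^d\Zp$ for which $g\cdot(x',y')$ is the rotation by $\rho$ of the new representative $(\tilde x',\tilde y')$, and both representatives depend on $z$ through the square root of $z-y'^2$ prescribed by Proposition~\ref{prop:t}. Take $p=3$ and $g=(0,1)\in\Circle=\mathrm{G}_3$: at $(1,1)$ (where $z=2$) the point and its image $(-1,1)$ are each their own representatives, so $t$ shifts by $0$; at $(1,4)$ (where $z=17$) the representative is $(4,1)$, so $t\neq 0$, while the image $(-4,1)$ is its own representative, so the new $t$ is $0$ and the shift is $-t\neq 0$. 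These two points share $(k',k'',a,b)$ and differ only in $z$ and $t$, so the shift genuinely depends on $z$. Consequently the quantitative ``more precisely'' clause cannot be obtained by the bookkeeping in your proposal --- and, to be fair, not by the paper's own one-line justification either, which multiplies $(a,b)$ by ``the class of $(A,B)$ in $\mathrm{D}_p(1)$'', a class that is not even defined when the reduction of $(A,B)$ lands in $\mathrm{D}_p^{\pm}(0)$. Your instinct about where the delicate point sits is correct, but the proposal leaves that point open, and closing it requires either a genuinely different argument or a reformulation of the final clause of the statement.
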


\begin{proof}
	This is a consequence of Proposition \ref{prop:rotation}. The points which are related by the action of $p^d\Zp$ are those with the same $z$, $k'$, $k''$, $a$ and $b$, and by definition the rotation modifies $t$. The action of $\Circle$ relates all points with the same $z$, except if $z=0$, in which case there are two orbits which differ by \[x_0^{-1}y_0\equiv a^{-1}b\mod p.\] These points have different $a$, $b$ and $t$, and also different $k'$ and $k''$ if $p\equiv 1\mod 4$; otherwise $k$ is fixed by $z$.
	
	Given $(A,B)\in\Circle$, if
	\[\begin{pmatrix}
	x \\ y
	\end{pmatrix}
	=\begin{pmatrix}
	A & -B \\
	B & A
	\end{pmatrix}
	\begin{pmatrix}
	x' \\ y'
	\end{pmatrix},\]
	then
	\[x+\ii y=(A+\ii B)(x_0+\ii y_0),\]
	so the change in $k'$ induced by $(A,B)$ is given by the order of $A+\ii B$. Analogously, the change in $k''$ is given by the order of $A-\ii B$, and $(a,b)$ is being multiplied by the class of $(A,B)$ in $\mathrm{D}_p(1)$. By definition of $t$, a variation from $t_0$ to $t$ corresponds to a rotation by the angle $t-t_0$ in $p\Zp$, which is a multiplication by $(\cos(t-t_0),\sin(t-t_0))\in\Circle$.
\end{proof}

\section{Nonlinear $p$-adic equivariant squeezing and nonlinear $p$-adic equivariant non-squeezing}\label{sec:equivariant}

In this section we prove a $p$-adic version of the \textit{equivariant} Gromov's non-squeezing theorem proved, in the real case, by Figalli-Palmer-Pelayo \cite[Proposition 1.1]{FPP} (generalizing related ideas from Figalli-Pelayo \cite{FigPel} and Pelayo \cite{Pelayo-packing,Pelayo-embeddings}). In the equivariant case the embeddings and symplectomorphisms are required to preserve the standard action of a torus on the ball.

A fundamental ingredient needed to prove the results in this section is the definition of the $p$-adic polar coordinates in Section \ref{sec:polar}.

\letnpos. \letpprime. The action of the group $\Circle$ on $(\Qp)^2$ induces an action of $(\Circle)^n$ on $(\Qp)^{2n}$ coordinatewise:
\[(g_1,\ldots,g_n)\cdot(x_1,y_1,\ldots,x_n,y_n)=(g_1\cdot(x_1,y_1),\ldots,g_n\cdot(x_n,y_n)),\]
or of $(\Circle)^s$ on $(\Qp)^{2n}$ for $1\le s\le n-1$:
\[(g_1,\ldots,g_s)\cdot(x_1,y_1,\ldots,x_n,y_n)=(g_1\cdot(x_1,y_1),\ldots,g_s\cdot(x_s,y_s),x_{s+1},y_{s+1},\ldots,x_n,y_n).\]
See Figure \ref{fig:action} for an illustration of this action in the real case.

\begin{figure}
	\includegraphics[width=0.95\linewidth]{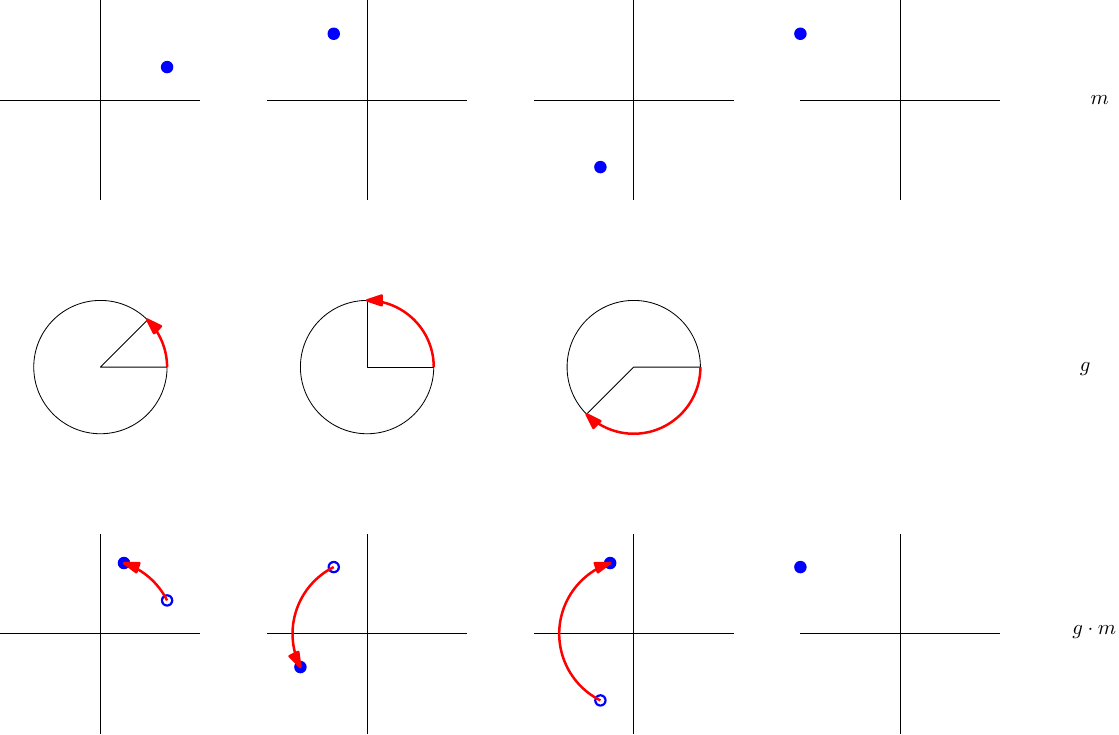}
	\caption{The action of $(\mathrm{S}^1)^3$ on $\R^8$. The four blue points represent a point $m\in\R^8$ and the circles below them represent a point $g\in(\mathrm{S}^1)^3$. Below them there is the result $g\cdot m$ of the rotational action.}
	\label{fig:action}
\end{figure}

The $p$-adic analytic symplectomorphism in the proof of Theorem \ref{thm:total-embedding} is not invariant with respect to the rotational action of $(p^d\Zp)^n$ on $(\Qp)^{2n}$, not even the action of $p^d\Zp$ on the first two coordinates (much less the action of the entire $(\Circle)^n$ or $\Circle$). If we require that the action of the torus must be preserved, then the non-squeezing theorem holds.

For $(x,y)\in(\Qp)^2,$ with polar coordinates $(z,k',k'',a,b,t),$
\begin{align*}
	\|(x,y)\|_p & =p^{-\min\{\ord_p(x),\ord_p(y)\}} \\
	& =p^{-\min\{k',k''\}}.
\end{align*}
The points in $\ball_p^{2n}(r)$ are exactly those with $\min\{k',k''\}\ge\ord(r)$. If $p\equiv 1\mod 4$, then $\ball_p^{2n}(r)$ is not invariant by the action of $\Circle$, because by Proposition \ref{prop:polar-actions} this action changes the polar coordinates $k'$ and $k''$. Hence, we must restrict the action to the subgroup of $\Circle$ which does not vary them.

\begin{definition} \label{def:G}
	\letnpos. \letpprime. Let $\mathrm{G}_p$ be the largest subgroup of $\Circle$ that leaves invariant $\ball_p^2(r)$ as a subset of $(\Qp)^2$ for any radius $r$ by the action in \eqref{eq:action}, or equivalently $\ball_p^{2n}(r)$ as a subset of $(\Qp)^{2n}$ for any radius $r$ and any positive integer $n$.
\end{definition}

\begin{remark}
	The group $\mathrm{G}_p$ in Definition \ref{def:G} can be explicitly described by: $\mathrm{G}_p=\Circle$, if $p\not\equiv 1\mod 4$, and
	\[\mathrm{G}_p=\Big\{(a,b)\in\Circle:\ord_p(a+\ii b)=0\Big\},\]
	where $\ii$ is any number in $\Qp$ such that $\ii^2=-1$, if $p\equiv 1\mod 4$.
	
	By definition, the largest subgroup of $(\Circle)^n$ that acts on the cylinder $\cyl_p(R)$ is \[\mathrm{G}_p\times(\Circle)^{n-1}.\]
\end{remark}

In the coming results we use the following notion of equivariance.

\begin{definition}\label{def:equivariant}
	Let $G$ be a group. Let $X,Y$ be sets, and let $f:X\to Y$ be a map. Let $G$ act on $X$ and on $Y$. We say that $f$ is \emph{generalized $G$-equivariant} if there exists a group isomorphism $h:G\to G$ such that $f(g\cdot x)=h(g)\cdot f(x)$ for every $g\in G$ and every $x\in X$. If $h:G\to G$ is the identity map we say that $f$ is \emph{$G$-equivariant}.
\end{definition}

Sometimes a generalized $G$-equivariant map as in Definition \ref{def:equivariant} is simply called ``$G$-equivariant''. But for the purposes of this paper we only use ``$G$-equivariant'' when $h:G\to G$ is the identity map.

\begin{theorem}[$p$-adic equivariant analog of Gromov's non-squeezing theorem]\label{thm:analytic-actions-complete}
	\letnpos. \letpprime. Let $r,R$ be $p$-adic absolute values. Endow both the $2n$-dimensional $p$-adic ball $\ball_p^{2n}(r)$ of radius $r$ and the $2n$-dimensional $p$-adic cylinder $\cyl_p^{2n}(R)$ of radius $R$ with the standard symplectic form $\sum_{i=1}^n\dd x_i\wedge\dd y_i$, where $(x_1,y_1,\ldots,x_n,y_n)$ are the standard symplectic coordinates on $(\Qp)^{2n}$. Let $\mathrm{G}_p$ be the group given in Definition \ref{def:G}. Then the following statements are equivalent:
	\begin{enumerate}
		\item There exists a $(\mathrm{G}_p)^n$-equivariant $p$-adic analytic symplectic embedding \[f:\ball_p^{2n}(r)\hookrightarrow \cyl_p^{2n}(R).\]
		\item There exists a generalized $(\mathrm{G}_p)^n$-equivariant $p$-adic analytic symplectic embedding \[f:\ball_p^{2n}(r)\hookrightarrow \cyl_p^{2n}(R).\]
		\item $r\le R$.
	\end{enumerate}
\end{theorem}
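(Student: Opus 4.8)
The plan is to prove the cycle $(1)\Rightarrow(2)\Rightarrow(3)\Rightarrow(1)$. The implication $(1)\Rightarrow(2)$ is trivial: take the intertwining automorphism $h$ of $(\mathrm{G}_p)^n$ to be the identity. For $(3)\Rightarrow(1)$: if $r\le R$ then $\ball_p^2(r)\subseteq\ball_p^2(R)$, so the inclusion $\ball_p^{2n}(r)\hookrightarrow\cyl_p^{2n}(R)$ is a $p$-adic analytic symplectic embedding; letting $(\mathrm{G}_p)^n$ act on the cylinder through the natural inclusion $(\mathrm{G}_p)^n\hookrightarrow\mathrm{G}_p\times(\Circle)^{n-1}$, this inclusion is tautologically $(\mathrm{G}_p)^n$-equivariant. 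All the content therefore lies in $(2)\Rightarrow(3)$, which I would prove by contraposition: given a generalized $(\mathrm{G}_p)^n$-equivariant $p$-adic analytic symplectic embedding $f\colon\ball_p^{2n}(r)\hookrightarrow\cyl_p^{2n}(R)$ with intertwining automorphism $h$, I must derive $r\le R$.

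First I would locate $f(0)$. A nontrivial element of $\mathrm{G}_p$ acting on $(\Qp)^2$ by \eqref{eq:action} fixes only $(0,0)$ (a nonzero fixed vector would force the rotation matrix to be the identity; alternatively one may invoke Proposition \ref{prop:rotation}), so $(\mathrm{G}_p)^n$ has the origin as its unique fixed point on each of $\ball_p^{2n}(r)$ and $\cyl_p^{2n}(R)$; since $h$ is an automorphism, $f$ must carry fixed points to fixed points, hence $f(0)=0$. Next comes the core of the argument. Put $z_i:=x_i^2+y_i^2$; by Proposition \ref{prop:area} the generator of the rotation of the $i$-th factor is the Hamiltonian vector field of $-\tfrac12 z_i$, so $z_1,\dots,z_n$ play the role of a moment map. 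Since $f$ is symplectic it sends Hamiltonian vector fields to Hamiltonian vector fields, and since $f$ intertwines the two $(\mathrm{G}_p)^n$-actions through $h$, comparing $f$-related infinitesimal generators yields $d\bigl(z_k\circ f-\sum_j C_{kj}z_j\bigr)=0$ with $C=\bigl((dh)^{-1}\bigr)\tr$; combining this with the polar-coordinate description of the $(\mathrm{G}_p)^n$-action from Section \ref{sec:polar} (under which $z_i,k'_i,k''_i$ are the invariant coordinates) and with $f(0)=0$, one upgrades it to the global identity
\[z_k\circ f=\sum_{j=1}^n C_{kj}\,z_j\qquad\text{on }\ball_p^{2n}(r).\]
A short analysis of $\mathrm{Aut}\bigl((\mathrm{G}_p)^n\bigr)$ --- $\mathrm{G}_p$ is $\Zp$ times a finite abelian group, so any such automorphism has differential in $\mathrm{GL}_n(\Zp)$ --- gives $dh\in\mathrm{GL}_n(\Zp)$ and hence $C\in\mathrm{GL}_n(\Zp)$; in particular $C\bmod p$ is invertible over $\F_p$, so some entry $C_{1j}$ of the first row of $C$ is a $p$-adic unit, $|C_{1j}|_p=1$.

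Then I would test the identity on an extremal point. Write $r=p^\sigma$ and let $w\in(\Qp)^{2n}$ have $j$-th coordinate pair $(p^{-\sigma},0)$ and all other coordinate pairs equal to $(0,0)$; then $\|w\|_p=r$, so $w\in\ball_p^{2n}(r)$, while $z_j(w)=p^{-2\sigma}$ and $z_l(w)=0$ for $l\neq j$. Hence $z_1(f(w))=C_{1j}\,z_j(w)$ and $|z_1(f(w))|_p=|C_{1j}|_p\,r^2=r^2$. But $f(w)\in\cyl_p^{2n}(R)$ forces the first two coordinates of $f(w)$ to have $p$-adic absolute value at most $R$, so $|z_1(f(w))|_p=\bigl|x_1(f(w))^2+y_1(f(w))^2\bigr|_p\le R^2$. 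Therefore $r^2\le R^2$, that is $r\le R$, as required.

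The main obstacle is the core step: promoting the infinitesimal relation $d\bigl(z_k\circ f-\sum_j C_{kj}z_j\bigr)=0$ to the global identity displayed above. Over the totally disconnected field $\Qp$ a $p$-adic analytic function with vanishing differential is merely locally constant, so this passage is not automatic, and it is precisely here that one must exploit the structure of the $(\mathrm{G}_p)^n$-action and the polar coordinates of Section \ref{sec:polar}; one also has to check separately that $dh\in\mathrm{GL}_n(\Zp)$. Once these points are settled the remaining steps are routine. Note that, unlike the real argument of \cite{FPP}, no sign or positivity of the coefficients $C_{1j}$ is needed: the first row of $C$ is automatically $p$-adically primitive, so a single test point already forces $r\le R$.
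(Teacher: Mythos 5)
Your cycle $(1)\Rightarrow(2)\Rightarrow(3)\Rightarrow(1)$ and the easy implications are fine, but the implication $(2)\Rightarrow(3)$ --- the only one with content --- rests on the global identity $z_k\circ f=\sum_j C_{kj}z_j$, and this is a genuine gap, not a routine verification you can defer. You correctly observe that over $\Qp$ the relation $\dd\bigl(z_k\circ f-\sum_j C_{kj}z_j\bigr)=0$ only yields local constancy; but the problem is worse than ``not automatic'': the global identity is simply \emph{false} for general generalized equivariant analytic symplectic embeddings, so no amount of exploiting invariance plus $f(0)=0$ can upgrade it. The mechanism of Theorem \ref{thm:total-actions-nofixed} is exactly a counterexample template: in the polar coordinates of Section \ref{sec:polar} one can define, on a clopen $(\mathrm{G}_p)^n$-invariant region of the ball away from the planes $\{x_i=y_i=0\}$, a map that translates the invariant coordinate $z_1$ by a small constant while keeping $(k_1',k_1'',a_1,b_1,t_1)$ and the other factors fixed, and is the identity elsewhere. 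Such a map is analytic, preserves $\tfrac12\dd z_1\wedge\dd t_1=\dd x_1\wedge\dd y_1$ (Proposition \ref{prop:area}), commutes with the action, fixes the origin, and maps a ball into a cylinder of the same radius, yet $z_1\circ f$ is locally constant-plus-$z_1$ and not of the form $\sum_j C_{1j}z_j$. Since your contrapositive argument must apply to \emph{all} embeddings as in (2), including non-squeezing ones of this kind, the key step of your proof cannot be repaired in the form stated, and the final evaluation at the extremal point $w$ has nothing to stand on. (A secondary, fixable point: Definition \ref{def:equivariant} only makes $h$ an abstract group isomorphism, so the existence of $\dd h$ and the claim $\dd h\in\mathrm{GL}_n(\Zp)$ need an argument --- automatic continuity of abstract endomorphisms of $(\mathrm{G}_p)^n$ via preservation of the subgroups of $p^k$-th powers --- which you assert but do not give.)

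The paper's proof avoids any global moment-map identity precisely because of this flexibility: it uses only the \emph{fixed-point strata} and a two-dimensional area computation. Concretely, one first shows (by comparing dimensions of fixed-point sets of $g$ and $h(g)$, $g\in(\mathrm{G}_p)^n$) that $h$ permutes the factors in the sense that $h$ of a one-factor element is a one-factor element, picks the index $j$ sent to the first factor, and considers the $2$-disk $D=\{x_i=y_i=0\text{ for }i\ne j\}\cap\ball_p^{2n}(r)$. Equivariance forces $f(D)$ into the corresponding $2$-plane of $\cyl_p^{2n}(R)$, i.e.\ into a disk of radius $R$, and since $\omega_0$ restricts to the area form on these planes, $f|_D$ is area-preserving, giving $r^2\le R^2$ directly. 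If you want to keep the spirit of your Hamiltonian/moment-map picture, the fix is to localize it to these fixed-point planes (where the locally constant ambiguity is killed by the area argument), rather than asserting a linear relation between the $z$-coordinates on the whole ball.
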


\begin{proof}
	$(1)\Rightarrow(2)$ is true by definition. $(3)\Rightarrow(1)$ is immediate by taking the inclusion map $i:\ball_p^{2n}(r)\hookrightarrow\cyl_p^{2n}(R)$. Next we prove the implication $(2)\Rightarrow(3)$.
	
	Given a point $g=(g_1,\ldots,g_n)\in(\mathrm{G}_p)^n,$ the fixed points of its action are those with $x_j=y_j=0$, where $j$ runs over all coordinates for which $g_j\ne 1$ (this $1$ stands for the identity element in $\Circle$). The map $f$ must send fixed points of $g$ to fixed points of $h(g)$, which means that the set of fixed points of $h(g)$ has at least the dimension of that of $g$, for any $g$. This in turn means that, for any $g\in(\mathrm{G}_p)^n$, $h(g)$ has at least the same number of coordinates equal to $1$ as $g$. Since $h$ is a group isomorphism, the two numbers are the same for any $g$.
	
	We choose a $g_0\in \mathrm{G}_p$ with $g_0\ne 1$. Let \[g_i=h(1,\ldots,g_0,\ldots,1),\] with the $g_0$ in position $i$. This $g_i$ must have exactly one coordinate different from $1$. Let $k_i$ be its index. Since $h$ is a group isomorphism, $(k_1,\ldots,k_n)$ is a permutation of $(1,\ldots,n)$. Let $j$ be such that $k_j=1$.
	
	We consider the $2$-dimensional disk of radius $r$ given by
	\[D=\Big\{(x_1,y_1,\ldots,x_n,y_n)\in \ball_p^{2n}(r): x_i=y_i=0 \text{ if } i\ne j\Big\}.\]
	All points of $D$ are fixed points of $(g_0,\ldots,1,\ldots,g_0)$, with $1$ in the position $j$, hence their images by $f$ are fixed points of
	\[h(g_0,\ldots,1,\ldots,g_0)=g_1g_2\ldots g_{j-1}g_{j+1}\ldots g_n.\]
	Each factor in the right has one and only one coordinate different from $1$, and their product has all coordinates different from $1$ except the first, which implies that
	\[f(D)\subset\Big\{(x_1,y_1,\ldots,x_n,y_n)\in \cyl_p^{2n}(R): x_i=y_i=0 \text{ if } i\ne 1\Big\}.\]
	This means that $f(D)$ is contained in a disk $D'$ of radius $R$, so \[\mu(f(D))\le R^2,\] where $\mu$ is the $p$-adic area measure. But $f$ must preserve the $p$-adic symplectic form $\omega_0$, which on the disk $D$ reduces to $\dd x_j\wedge\dd y_j$, that is, the area measure on $D$, and on the disk $D'$ it reduces to $\dd x_1\wedge\dd y_1$, the area measure on $D'$. Hence $f$ preserves the area of $D$, that is, \[\mu(f(D))=\mu(D)=r^2,\] and $r\le R$.
\end{proof}

\begin{remark}
	The condition of being symplectic in parts (1) and (2) of Theorem \ref{thm:analytic-actions-complete} is needed to have non-squeezing, because multiplying the coordinates by arbitrary constants is an equivariant transformation if $x_i$ and $y_i$ are multiplied by the same constant, and it may squeeze the ball into a cylinder, but in general it does not preserve the symplectic form.
\end{remark}

However, there is another, more complicated, $p$-adic analytic symplectic embedding that preserves this action if we remove the fixed points of the action of all elements of the torus, that is, if we restrict to $\mathrm{T}_p^{2n}$ instead of $(\Qp)^{2n}$.
It is also possible to achieve squeezing if the action is not toric, but semitoric, that is, if we consider the action of $\mathrm{G}_p\times(\Circle)^{s-1}$, or $(\mathrm{G}_p)^s$, on $(\Qp)^{2n}$, on the first $2s$ coordinates.

\begin{theorem}[$p$-adic equivariant symplectic squeezing theorem, Version I]\label{thm:total-actions-nofixed}
	Let $n$ be an integer with $n\ge 2$. \letpprime. Let $\mathrm{T}_p^{2n}$ be the $p$-adic analytic manifold given in Definition \ref{def:T}. Endow $\mathrm{T}_p^{2n}$ and the $2n$-dimensional $p$-adic cylinder $\cyl_p^{2n}(1)$ of radius $1$ with the standard $p$-adic symplectic form $\sum_{i=1}^n\dd x_i\wedge\dd y_i$, where $(x_1,y_1,\ldots,x_n,y_n)$ are the standard symplectic coordinates on $(\Qp)^{2n}$. Let $\mathrm{G}_p$ be the group given in Definition \ref{def:G}. Then there exists a $(\mathrm{G}_p\times(\Circle)^{n-1})$-equivariant $p$-adic analytic symplectomorphism \[\phi:\mathrm{T}_p^{2n}\overset{\cong}{\longrightarrow} \mathrm{T}_p^{2n}\cap\cyl_p^{2n}(1).\]
\end{theorem}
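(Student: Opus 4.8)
The plan is to write $\phi$ down explicitly in the $p$-adic polar coordinates constructed in Section~\ref{sec:polar}, imitating the digit-shuffling symplectomorphism from the proof of Theorem~\ref{thm:total-embedding} but transported to the radial coordinate $z$ so that it commutes with the rotational actions. First I would reformulate everything in polar coordinates: writing the polar coordinates of the $i$-th plane as $(z_i,k_i',k_i'',a_i,b_i,t_i)$ as in Definition~\ref{def:polar}, by Proposition~\ref{prop:area} the form is $\tfrac12\sum_i\dd z_i\wedge\dd t_i$; by Proposition~\ref{prop:polar-actions} the group $\mathrm{G}_p$ acts on the first plane only through $(a_1,b_1,t_1)$ (fixing $z_1,k_1',k_1''$) and each $\Circle$ on the planes $i\ge 2$ acts through $(k_i',k_i'',a_i,b_i,t_i)$ while fixing $z_i$, the induced shift of $(k_i',k_i'')$ having the form $(c,-c)$; and, since $\|(x_1,y_1)\|_p=p^{-\min\{k_1',k_1''\}}$, the target $\mathrm{T}_p^{2n}\cap\cyl_p^{2n}(1)$ is cut out by the single condition $\min\{k_1',k_1''\}\ge 0$.

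Next I would construct $\phi$ to be the identity on the planes $3,\dots,n$, and on the first two planes to fix all the angular coordinates $a_1,b_1,t_1,a_2,b_2,t_2$ and to act on the remaining data as follows: write $z_1$ as its integral part in $\Zp$ plus a finite ``tail'' of negative powers of $p$; keep the integral part in $\tilde z_1$ and interleave the tail digits with the digits of $z_2$ (tail digits at odd fractional positions of $\tilde z_2$, the digits of $z_2$ at even positions), exactly as in \eqref{eq:descomp-after}; then, only in the case $p\equiv 1\bmod 4$ (where $\ord(z_1)$ does not determine $k_1'$), replace $(k_1',k_1'')$ by the canonical non-negative pair summing to $\ord(\tilde z_1)$ and record the lost imbalance additively in $(k_2',k_2'')$, which is compatible with the $(c,-c)$-shape of the $\Circle$-action. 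When $p\equiv 3\bmod 4$ or $p=2$ this last step is vacuous, since there $\min\{k_1',k_1''\}\ge 0$ is already equivalent to $z_1\in\Zp$.

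I would then check the four required properties. Analyticity and $\phi^*\omega_0=\omega_0$ follow because on each sufficiently small ball the digit-rearrangement is locally constant, so $\dd\tilde z_i=\dd z_i$ and $\dd\tilde t_i=\dd t_i$ there, i.e.\ $\phi$ is locally a translation. The image lies in $\cyl_p^{2n}(1)$ because $\min\{\tilde k_1',\tilde k_1''\}\ge 0$ by construction, and bijectivity onto $\mathrm{T}_p^{2n}\cap\cyl_p^{2n}(1)$ is checked by reversing the interleaving digit-by-digit, using the uniqueness and the description of the range of polar coordinates in Proposition~\ref{prop:polar}. Finally $\phi(g\cdot m)=g\cdot\phi(m)$ for $g\in\mathrm{G}_p\times(\Circle)^{n-1}$: the coordinates $t_1,\dots,t_n$ are preserved, the plane-$1$ output depends only on $\mathrm{G}_p$-invariant input, and the data transplanted into plane~$2$ depends only on $\mathrm{G}_p$-invariants of plane~$1$ and enters $(k_2',k_2'')$ additively; Proposition~\ref{prop:polar-actions} then lets one evaluate both sides and see they agree.

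The main obstacle I expect is staying inside $\mathrm{T}_p^{2n}$: because the polar coordinates only live on the punctured planes, $\phi$ must never send the first plane to the origin, yet the naive transplant above sends a point with purely fractional $z_1$ to $\tilde z_1=0$ (this was harmless in Theorem~\ref{thm:total-embedding}, where the target cylinder contains the origin, but is fatal here). One must therefore modify the shuffling so that a nonzero ``seed'' always survives in plane~$1$ while still landing in $\ball_p^2(1)$, done compatibly with the symplectic condition and invertibility. The case $p\equiv 1\bmod 4$ is the hardest: there $\mathrm{G}_p\subsetneq\Circle$, being in the cylinder genuinely requires balancing $k_1'$ and $k_1''$ rather than merely controlling $\ord(z_1)$, and the locus $\{x^2+y^2=0\}\subset\mathrm{T}_p^2$ (together with the sets $\mathrm{D}_p^{+}(0)$, $\mathrm{D}_p^{-}(0)$) is nonempty, so transplanting nonzero data onto that locus must be shown to remain well defined and reversible; the $p=2$ parity and ``$\cdots 01$'' constraints on admissible values of $z$ add further bookkeeping. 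Everything else is modelled on the proof of Theorem~\ref{thm:total-embedding}.
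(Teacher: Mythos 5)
There is a genuine gap, and it is exactly at the point your final paragraph concedes: the map you write down is neither well defined into $\mathrm{T}_p^{2n}\cap\cyl_p^{2n}(1)$ nor bijective onto it, and the proposal does not supply the mechanism that repairs this. Keeping the ``integral part'' of $z_1$ can produce $\tilde z_1=0$ (fatal, since $z=0$ is not an admissible polar value unless $p\equiv 1\bmod 4$, and even then it forces you onto the locus $x^2+y^2=0$), and more generally it can produce an inadmissible $\tilde z_1$: for $p\equiv 3\bmod 4$ the coordinate $z$ ranges only over elements of even order, and for $p=2$ it must moreover end in $01$, so a truncation of $z_1$ need not be a value of $x^2+y^2$ at all. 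The same constraints afflict $\tilde z_2$: interleaving digits shifts where the leading nonzero digit can sit, so the parity/ending conditions on admissible $z$-values obstruct the claim that ``bijectivity \ldots is checked by reversing the interleaving digit-by-digit.'' The paper's proof confronts precisely these issues and resolves them differently: it splits $z_i$ at order $2$ (so $C\in p^2\Zp$) and sets $z_1'=C+1$, which is automatically nonzero, of order $0$, and admissible for every $p$ including $p=2$; it inserts marker digits into $z_2'$ (and, when $p\equiv 1\bmod 4$, encodes the old $k_1',k_1''$ into prescribed digits of $z_2'$ before resetting them to $0$) so that the map $f$ is injective and explicitly reversible.

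The decisive missing idea is that even after these repairs $f$ is only an \emph{injection} of $\mathrm{T}_p^{2n}$ into $\mathrm{T}_p^{2n}\cap\cyl_p^{2n}(1)$, not a surjection, so no direct digit shuffle of this kind gives the symplectomorphism. The paper then introduces the equivalence relation $m_1\sim m_2\Leftrightarrow |z_i(m_1)-z_i(m_2)|_p\le p^{-2}$ ($i=1,2$), observes that $f$ descends to an injection of the quotient $Q=(\mathrm{T}_p^4/\sim)/(\mathrm{G}_p\times\Circle)$ into the subset $Q'$ of classes lying in the cylinder, and applies the Schr\"oder--Bernstein back-and-forth to get a bijection $F:Q\to Q'$ with $F(X)\in\{X,f(X)\}$; since each class is open and group-invariant, and both the identity and $f$ restricted to a class are equivariant analytic symplectic embeddings (locally translations in $(z_1,z_2)$, so they preserve $\frac12(\dd z_1\wedge\dd t_1+\dd z_2\wedge\dd t_2)=\omega_0$ by Proposition \ref{prop:area}), defining $\phi$ classwise as either the identity or $f$ yields the required equivariant symplectomorphism. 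Your sketch has the right coordinates, the right form, and the right equivariance bookkeeping, but without either this quotient-plus-Schr\"oder--Bernstein step or a genuinely surjective explicit construction (which you have not given, and which the admissibility constraints on $z$ make nontrivial), the proof does not go through.
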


\begin{proof}
	We use the same idea as in the proof of Theorem \ref{thm:total-embedding}, but applying it to the $p$-adic equivalent of polar coordinates given in Definition \ref{def:polar}. Without loss of generality we assume that $n=2$, because the group acts independently on each pair of coordinates, and the conclusion for every $n$ follows by multiplying by the identity.
	
	We will construct the $p$-adic analytic symplectomorphism in several steps: first we construct a symplectic embedding from the total space to the cylinder, and then we show how to make that into a symplectomorphism.
	
	\medskip
	
	\textit{First step: construction of the embedding.} Let $m=(x_1,y_1,x_2,y_2)\in \mathrm{T}_p^4$ with polar coordinates \[(z_1,k'_1,k''_1,a_1,b_1,t_1,z_2,k'_2,k''_2,a_2,b_2,t_2).\] This point is in the cylinder if and only if $k'_1\ge 0$ and $k''_1\ge 0$ (recall that these can be $\infty$ if $p\equiv 1\mod 4$). By Proposition \ref{prop:polar-actions}, its orbit by $\mathrm{G}_p\times\Circle$ is determined by the coordinates $(z_1,k_1',k_1'',z_2,k_2',k_2'')$.
	
	We separate the parts of $z_1$ and $z_2$ before and after two places left of the decimal point:
	\[\left\{\begin{aligned}
		z_1 & =C+\sum_{i=-1}^\infty c_ip^{-i}; \\
		z_2 & =D+\sum_{i=-1}^\infty d_ip^{-i},
	\end{aligned}\right.\]
	where $C,D\in p^2\Zp$ and $c_i,d_i\in\{0,\ldots,p-1\}$ for $i\ge -1$.
	
	If the point is already in the cylinder, we define
	\[\left\{\begin{aligned}
		z_1' & =z_1; \\
		z_2' & =D+\sum_{i=-1}^\infty d_ip^{-i-2}.
	\end{aligned}\right.\]
	Otherwise, we define
	\[\left\{\begin{aligned}
		z_1' & =C+1; \\
		u & =D+p+\sum_{i=-1}^\infty (c_ip^{-2i-2}+d_ip^{-2i-3}),
	\end{aligned}\right.\]
	and construct $z_2'$ from $u$ by changing the digit at the first even place at the right of $\ord_p(u)-1$ from $0$ to $1$.
	
	If $p\not\equiv 1\mod 4$, after replacing $z_1$ and $z_2$ by $z_1'$ and $z_2'$ and adjusting $k_1'$, $k_1''$, $k_2'$ and $k_2''$ (for these primes, the $k'$ and $k''$ coordinates are determined by $z$), we get the valid polar coordinates of a point by Proposition \ref{prop:polar}. Indeed, both $z_1'$ and $z_2'$ have even order if $z_1$ and $z_2$ have even order, they end in $01$ if $z_1$ and $z_2$ end in $01$, the coordinates $a$, $b$ and $t$ are unchanged, and their range does not depend in the other coordinates. The result is a map $f$ sending $\mathrm{T}_p^2$ to the cylinder. We can see that it is injective, because from the digit of order $1$ of $z_2'$ we can deduce which of the two cases we are in; after that, reconstructing the original $z_1$ and $z_2$ (and with them the original $k'_i$ and $k''_i$) is just a matter of rearranging the digits to their original place. See Figures \ref{fig:embedding2} and \ref{fig:embedding3} for symbolic representations of the embedding for $p=2$ and $p=3$.
	
	\begin{figure}
		\includegraphics[height=0.9\textheight]{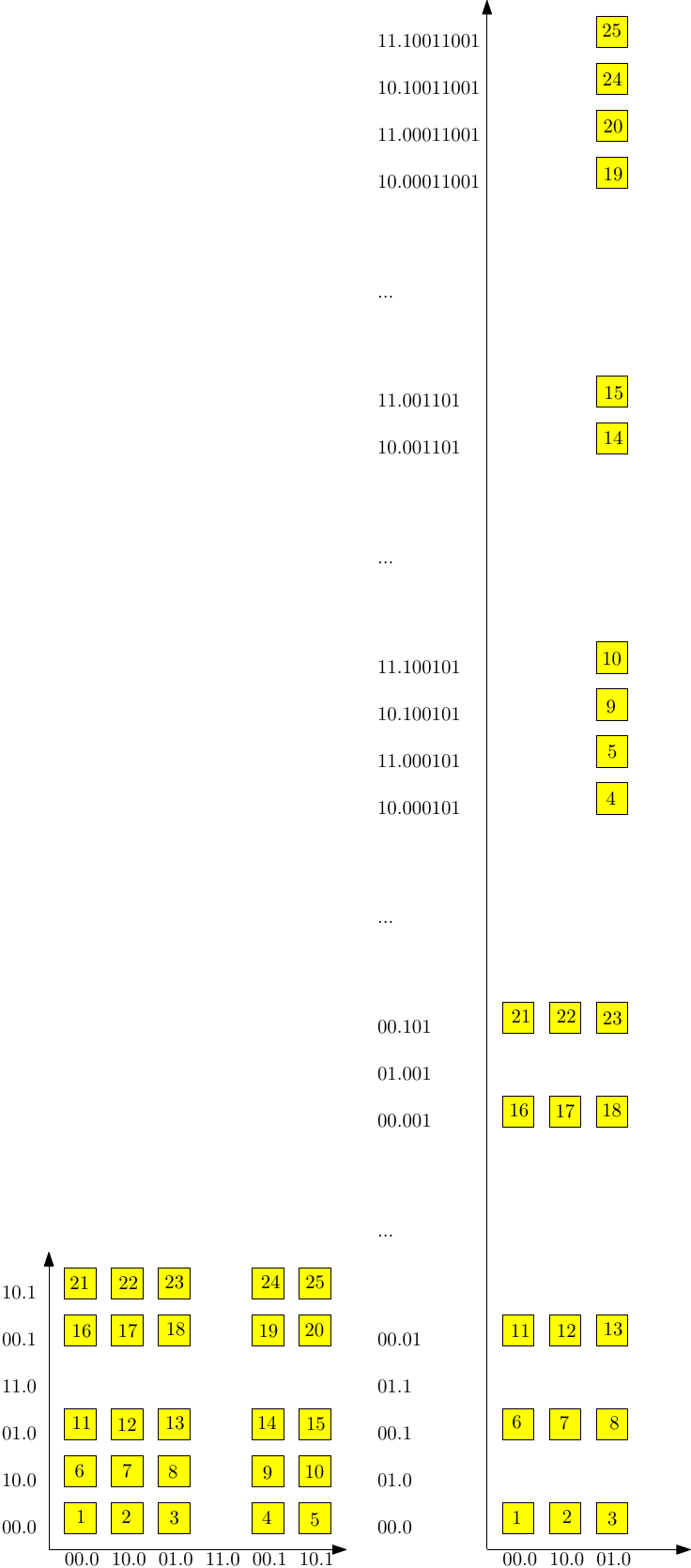}
		\caption{A symbolic representation of the embedding $f$ in the proof of Theorem \ref{thm:total-actions-nofixed} for $p=2$. The squares are balls of radius $1/4$ which correspond to the values that $z_1$ and $z_2$ can take in a ball of radius $2$. We can see that $z_1$ always ends up being integer, while $z_2$ gets dispersed.}
		\label{fig:embedding2}
	\end{figure}

	\begin{figure}
		\begin{tabular}{cc}
			& \includegraphics[width=0.4\linewidth]{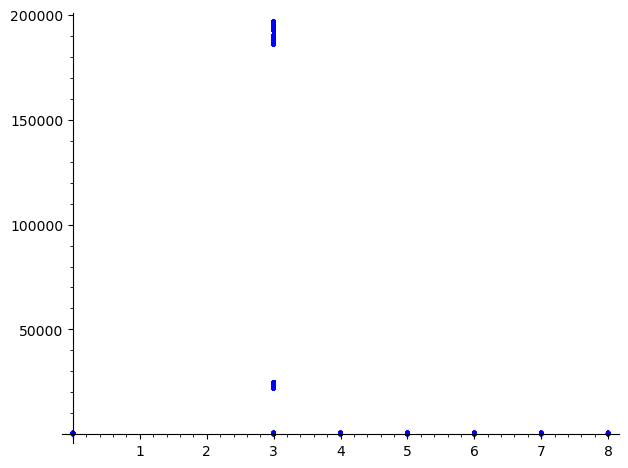} \\
			\includegraphics[width=0.4\linewidth]{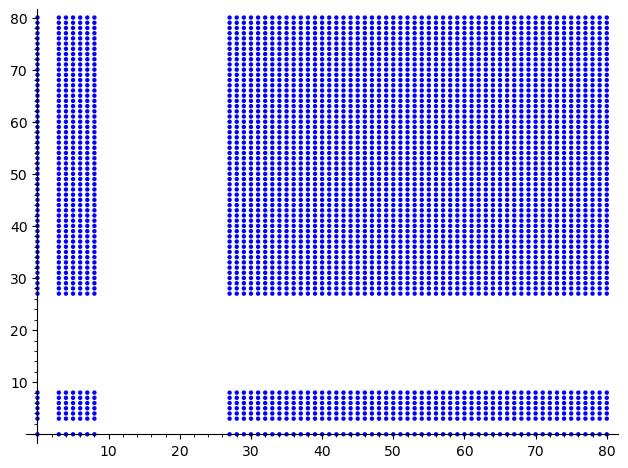} & \includegraphics[width=0.4\linewidth]{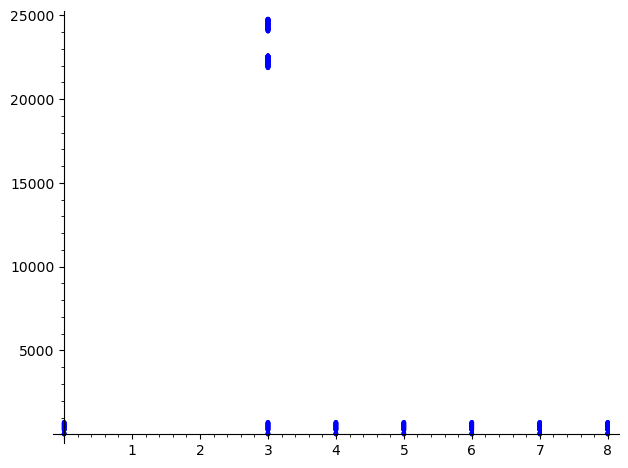} \\
			& \includegraphics[width=0.4\linewidth]{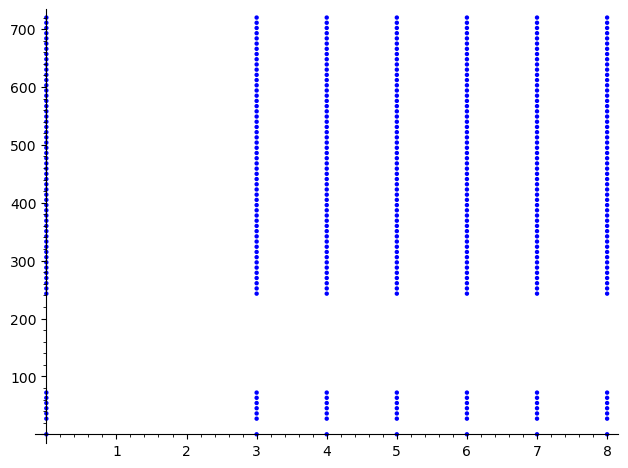}
		\end{tabular}
	\caption{A symbolic representation of the embedding $f$ in the proof of Theorem \ref{thm:total-actions-nofixed} for $p=3$. The dots are balls of radius $1/9$ which correspond to the values that $z_1$ and $z_2$ can take in a ball of radius $9$ (all the possible values in a ball of radius $3$ are inside the cylinder). Left: the original $z_1$ and $z_2$ for $3721$ possible balls. Right: three views of the balls embedded in the cylinder. We can see that $z_1$ is integer (the three horizontal axes are equal) while $z_2$ gets dispersed, and the balls which are seen in the lowest figure are precisely those which were originally inside the cylinder, though in a different position.}
	\label{fig:embedding3}
	\end{figure}
	
	If $p\equiv 1\mod 4$ (note that this implies $p\ge 5$) and the original point was not in the cylinder, we further modify $z_2'$ in the following way:
	\begin{enumerate}
		\item If $k'_2=\infty$, we change the leading digit from $1$ to $2$.
		\item If $k'_1\ge 0$ is finite, we change the digit $k_1'+1$ places at the right of the leading digit to a $1$.
		\item If $k_1'<0$, we change the digit $-k_1'$ places at the right of the leading digit to a $2$.
		\item If $k_1'=\infty$, we change the digit at the right of the leading digit to a $3$.
		\item We repeat the three previous steps with $k_1''$ instead of $k_1'$.
	\end{enumerate}
	After that, we set $k_1'$ and $k_1''$ to $0$. If $k_2'=\infty$, we correct its value to \[\ord_p(z_2')-k_2'';\] otherwise we correct the value of $k_2''$ to \[\ord_p(z_2')-k_2'.\] By Proposition \ref{prop:polar}, the coordinates are valid and are inside the cylinder; moreover, it is straightforward to recover the original $k_1'$ and $k_1''$ from $z_2'$. This means that, defining $f$ in this way, it is also injective.
	
	\medskip
	
	\textit{Second step: passing to the quotient.} At this point we have constructed an injective map $f$ from the total space $\mathrm{T}_p^{2n}$ to the $p$-adic symplectic cylinder $\cyl_p^{2n}(1)$. We now show that it preserves some relations between the points.
	
	We consider on $\mathrm{T}_p^4$ the relation given by
	\[m_1\sim m_2\Longleftrightarrow |z_i(m_1)-z_i(m_2)|_p\le p^{-2}\text{ for }i=1,2,\]
	where $z_i(m_j)$ denotes the polar coordinate $z_i$ of $m_j$. This relation is clearly reflexive and symmetric. Furthermore it is transitive because of the strong triangle inequality for the $p$-adic absolute value. By the construction of $f$,
	\[m_1\sim m_2\Longleftrightarrow f(m_1)\sim f(m_2).\]
	Hence we can quotient $\mathrm{T}_p^4$ by this equivalence relation and consider $f$ as a map on the quotient \[\mathrm{T}_p^4/\sim.\] Moreover, $f$ preserves the values of the coordinates $a_1$, $a_2$, $b_1$, $b_2$, $t_1$ and $t_2$, and shifts $k'_2$ and $k''_2$ by a quantity depending only on $z_1$ and $z_2$. It changes $z_1$ and $z_2$ in a way that depends only on $k_1'$ and $k_1''$, and also on whether $k_2'$ was $\infty$ or not, all of which the action of $\mathrm{G}_p\times\Circle$ cannot change.
	
	The above implies that $f$ commutes with the action of $\mathrm{G}_p\times\Circle$, and we can consider $f$ as a map on
	\[(\mathrm{T}_p^4/\sim)/(\mathrm{G}_p\times\Circle).\]
	Let $Q$ be this quotient and let $Q'$ be the subset of $Q$ contained in $\cyl_p^4(1)$. We know that $f$ is an injective map from $Q$ to $Q'$. By the Schl\"oder-Bernstein Theorem, there exists a bijection \[F:Q\to Q'\] such that
	\begin{equation}\label{eq:bijection}
		F(X)\in\{X,f(X)\}
	\end{equation}
	for any $X\in Q$.
	
	\medskip
	
	\textit{Third step: $f$ is symplectic.} Now we prove that $f$ is a $p$-adic analytic symplectic embedding when restricted to an element $X\in Q$.
	
	Consider $f$ restricted to an element $X\in Q$. Again by the construction of $f$, it is a bijection between $X$ and $f(X)$ which preserves the digits in the $C$ and $D$ parts of $z_1$ and $z_2$. This implies that $f$ is a $p$-adic analytic embedding and preserves the differential forms $\dd z_1$ and $\dd z_2$, and it trivially preserves $\dd t_1$ and $\dd t_2$, hence it also preserves
	\[\frac{1}{2}(\dd z_1\wedge\dd t_1+\dd z_2\wedge \dd t_2),\]
	which is equal to \[\dd x_1\wedge\dd y_1+\dd x_2\wedge \dd y_2\] by Proposition \ref{prop:area}. Hence, $f$ is a $p$-adic analytic symplectomorphism between $X$ and $f(X)$.
	
	\medskip
	
	\textit{Fourth step: construction of the symplectomorphism.} Now we turn $f$ into the desired symplectomorphism.
	
	Let $m\in \mathrm{T}_p^4$ and let $[m]$ be the class of $m$ in $Q$. By \eqref{eq:bijection}, $F([m])$ is either $[m]$ or $f([m])$. If it is the former, we define $\phi(m)=m$; if it is the latter, we define $\phi(m)=f(m)$. In either case, we have that \[[\phi(m)]=F([m]).\]
	
	Since $F$ is a bijection between $Q$ and $Q'$, and $f$ is bijective between $[m]$ and $f([m])$, $\phi$ is also bijective, and $\phi$ either coincides with the identity or with $f$ at all the points of $[m]$. Since $[m]$ is an open set which contains the orbit of $m$ by the group $\mathrm{G}_p\times\Circle$, and both the identity and $f$, when restricted to $[m]$, are $p$-adic analytic symplectic embeddings which commute with the action of $\mathrm{G}_p\times\Circle$, $\phi$ is a $p$-adic analytic symplectomorphism which also commutes with this action, and we are done.
\end{proof}

Now we prove another version of $p$-adic squeezing.

\begin{theorem}[$p$-adic equivariant symplectic squeezing theorem, Version II]\label{thm:total-actions-semi}
	Let $n$ and $s$ be integers with $n\ge 2$ and $1\le s\le n-1$. \letpprime. Endow the $2n$-dimensional $p$-adic cylinder $\cyl_p^{2n}(1)$ of radius $1$ with the standard symplectic form $\sum_{i=1}^n\dd x_i\wedge\dd y_i$, where $(x_1,y_1,\ldots,x_n,y_n)$ are the standard symplectic coordinates on $(\Qp)^{2n}$. Let $\mathrm{G}_p$ be the group given in Definition \ref{def:G}. Then there exists a $(\mathrm{G}_p\times(\Circle)^{s-1})$-equivariant $p$-adic analytic symplectomorphism \[\varphi:(\Qp)^{2n}\overset{\cong}{\longrightarrow}\cyl_p^{2n}(1).\]
\end{theorem}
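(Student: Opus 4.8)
The plan is to follow the scheme of the proof of Theorem~\ref{thm:total-actions-nofixed}, taking advantage of the fact that for $s\le n-1$ at least one coordinate pair carries no group action, and adding the bookkeeping needed to cover the fixed-point set (we now work on all of $(\Qp)^{2n}$, not on $\mathrm{T}_p^{2n}$). First I would reduce the statement. A map equivariant for a group is equivariant for every subgroup, and the group $\mathrm{G}_p\times(\Circle)^{s-1}$ acting on the first $2s$ coordinates sits inside $\mathrm{G}_p\times(\Circle)^{n-2}$ acting on the first $2n-2$ coordinates (the extra $\Circle$ factors acting trivially on coordinates $s+1,\dots,n-1$); so it is enough to treat $s=n-1$. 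Since $\cyl_p^{2n}(1)=\ball_p^2(1)\times(\Qp)^{2n-2}$ constrains only the pair $(x_1,y_1)$ and the pair $(x_n,y_n)$ carries no group action, I would take $\varphi$ to be the identity on coordinates $2,\dots,n-1$ and on $(x_1,y_1,x_n,y_n)$ a $\mathrm{G}_p$-equivariant $p$-adic analytic symplectomorphism $(\Qp)^4\to\ball_p^2(1)\times(\Qp)^2$, where $\mathrm{G}_p$ acts on $(x_1,y_1)$ by \eqref{eq:action} and trivially on $(x_2,y_2)$. This reduces everything to the case $n=2$, $s=1$.

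For that case I would pass to the $p$-adic polar coordinates of Definition~\ref{def:polar} on $(x_1,y_1)$, keeping the standard coordinates on the free plane $(x_2,y_2)$. By Proposition~\ref{prop:polar-actions} and the description of $\mathrm{G}_p$ in Definition~\ref{def:G}, the action of $\mathrm{G}_p$ fixes $z_1,k_1',k_1''$ and only translates $t_1$ and multiplies $(a_1,b_1)$ by a constant of $\mathrm{D}_p(1)$ (and for $p\not\equiv 1\bmod 4$ the integers $k_1',k_1''$ are themselves functions of $z_1$); so the $\mathrm{G}_p$-orbit of a point with $(x_1,y_1)\ne(0,0)$ is recorded by $(z_1,k_1',k_1'',x_2,y_2)$, while points with $(x_1,y_1)=(0,0)$ are fixed. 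By Proposition~\ref{prop:area} the symplectic form reads $\tfrac12\,\dd z_1\wedge\dd t_1+\dd x_2\wedge\dd y_2$, and a point lies in the target cylinder precisely when $\min\{k_1',k_1''\}\ge 0$.

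Then, exactly as in the proof of Theorem~\ref{thm:total-actions-nofixed}, I would build an injective $p$-adic analytic symplectic embedding $f$ that fixes $a_1,b_1,t_1$, is the identity on the fibre $\{(x_1,y_1)=(0,0)\}$, and on the rest replaces $(z_1,k_1',k_1'')$ by values with $k_1',k_1''\ge 0$ while dumping the surplus digits into the free plane. Concretely: if the point already lies in the cylinder, leave $z_1$ essentially alone; otherwise interleave the negative-order digits of $z_1$ among the fractional digits of $x_2$ and set the new $z_1$ equal to a $p$-adic unit plus the order-$\ge 0$ part of the old one, so that the new $z_1$ is nonzero off the origin fibre --- this is the digit-shuffling of the proof of Theorem~\ref{thm:total-embedding} transplanted to the pair $(z_1,x_2)$. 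When $p\equiv 1\bmod 4$ one must in addition push the negative parts of $k_1'$ and $k_1''$ into further digits of $x_2$ (or of $y_2$) and reset $k_1',k_1''$ to nonnegative integers with $k_1'+k_1''=\ord_p z_1$, just as in the $p\equiv 1\bmod 4$ portion of that proof; since $(x_2,y_2)$ is a plane with no group action, there is ample room. That $f$ is $p$-adic analytic and symplectic follows as before: on each ball of radius $1$ it is a translation in these coordinates and it only permutes $p$-adic digits while fixing the order-$\ge 0$ parts, hence it preserves $\tfrac12\,\dd z_1\wedge\dd t_1+\dd x_2\wedge\dd y_2$. Since $f$ depends only on the $\mathrm{G}_p$-orbit data it is $\mathrm{G}_p$-equivariant; quotienting by $\mathrm{G}_p$ and by the analogue of the equivalence relation used in the proof of Theorem~\ref{thm:total-actions-nofixed} (``$z_1$, $x_2$ and $y_2$ agree on the digits $f$ does not move'') and invoking the Schr\"oder--Bernstein argument from that proof promotes $f$ to a genuine $\mathrm{G}_p$-equivariant $p$-adic analytic symplectomorphism onto $\ball_p^2(1)\times(\Qp)^2$. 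Reinstating coordinates $2,\dots,n-1$ with the identity then yields the required $\varphi$ for all admissible $n$ and $s$.

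The hard part is the discrete bookkeeping, not the analysis. One must arrange the encoding so that the new $z_1$ vanishes only on the image of the origin fibre --- otherwise $f$ is not injective, since the angle $t_1$ would be lost on an entire $\mathrm{G}_p$-orbit --- yet still attains every admissible order-$\ge 0$ value; and, when $p\equiv 1\bmod 4$, so that the relocated triple $(z_1,k_1',k_1'')$ always lands in the admissible range of Proposition~\ref{prop:polar}, with $k_1',k_1''\ge 0$ and $k_1'+k_1''=\ord_p z_1$. Once that is set up, checking that $\varphi$ is a $p$-adic analytic symplectomorphism is again the ``translation on balls of radius $1$'' verification of Theorems~\ref{thm:total-embedding} and~\ref{thm:total-actions-nofixed}.
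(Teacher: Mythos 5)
Your plan is essentially the paper's own proof: Theorem \ref{thm:total-actions-semi} is proved there by rerunning the proof of Theorem \ref{thm:total-actions-nofixed} with polar coordinates only on the first pair, writing the form as $\tfrac12\,\dd z_1\wedge\dd t_1+\dd x_2\wedge\dd y_2$ and dumping the surplus digits (and, for $p\equiv 1\bmod 4$, the encoded values of $k_1',k_1''$) into the free coordinate $x_2$, exactly as you propose; your reduction to $n=2$, $s=1$ via the identity on the intermediate pairs is the same ``multiply by the identity'' reduction used there, just made explicit.

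One statement in your sketch should be corrected, because taken literally it conflicts with injectivity. You cannot take $f$ to be the identity on the fibre $\{(x_1,y_1)=(0,0)\}$, nor more generally leave in-cylinder points untouched: the cylinder is contained in the domain, so fixing all of its points would leave no room for an injective image of the out-of-cylinder points. In the Version I construction the in-cylinder branch still shifts the low-order digits of $z_2$ (this is exactly how the order-one digit of $z_2'$ records which branch occurred), and the same must be done here to $x_2$, including over the fibre. What is true --- and is how the paper handles the fixed-point set --- is that only the first pair $(x_1,y_1)$ is left unchanged whenever the point already lies in the cylinder; since a whole neighbourhood of the fibre consists of such points, no polar coordinates are needed there and $\varphi$ is analytic across $\{(x_1,y_1)=(0,0)\}$. (Your requirement that the new $z_1$ vanish only over the origin fibre is also stronger than needed: in-cylinder points with $z_1=0$ but $(x_1,y_1)\ne(0,0)$, which occur when $p\equiv 1\bmod 4$, are simply left alone in the first pair, and the polar coordinates remain faithful there via $k_1',k_1''$.) With that adjustment, the rest of your argument --- equivariance from dependence on the orbit data $(z_1,k_1',k_1'',x_2,y_2)$ only, the quotient relation, the Schr\"oder--Bernstein step, and the ``translation on balls of radius $1$'' verification --- goes through as in the paper.
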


\begin{proof}
	This has essentially the same proof than Theorem \ref{thm:total-actions-nofixed}. Now we use the polar coordinates of Section \ref{sec:polar} only for the first pair if it is possible:
	\[(z_1,k_1',k_1'',a_1,b_1,t_1,x_2,y_2),\]
	and use $x_2$ instead of $z_2$ in the proof. In these coordinates, again by Proposition \ref{prop:area}, the $p$-adic symplectic form is
	\[\omega_0=\frac{1}{2}\dd z_1\wedge\dd t_1+\dd x_2\wedge \dd y_2.\]
	The space from which we start is now $(\Qp)^4$ instead of $\mathrm{T}_p^4$, and the first pair of coordinates can only be changed to polar coordinates if $(x_1,y_1)\ne(0,0)$. Otherwise, if $x_1=y_1=0$, we leave these two coordinates unchanged. This does not present a problem, because $f$ changes this first pair only if the point is not in the $p$-adic symplectic cylinder $\cyl_p^{2n}(1)$, which is not the case around a point with $(x_1,y_1)=(0,0)$.
\end{proof}

\begin{corollary}[$p$-adic non-linear equivariant squeezing theorem with fixed points removed]\label{cor:analytic-actions}
	Let $n$ and $s$ be integers with $n\ge 2$ and $1\le s\le n-1$. \letpprime. Let $r,R$ be $p$-adic absolute values. let $\mathrm{T}_p^{2n}$ be the $p$-adic analytic manifold in Definition \ref{def:T}. Endow $\mathrm{T}_p^{2n}$, the $2n$-dimensional $p$-adic ball $\ball_p^{2n}(r)$ of radius $r$ and the $2n$-dimensional $p$-adic cylinder $\cyl_p^{2n}(R)$ of radius $R$ with the standard $p$-adic symplectic form $\sum_{i=1}^n\dd x_i\wedge\dd y_i$, where $(x_1,y_1,\ldots,x_n,y_n)$ are the standard coordinates on $(\Qp)^{2n}$. Let $\mathrm{G}_p$ be the group given in Definition \ref{def:G}. Then the following statements hold.
	\begin{enumerate}
		\item There exists a $(\mathrm{G}_p)^n$-equivariant $p$-adic analytic symplectic embedding \[\phi:\ball_p^{2n}(r)\cap \mathrm{T}_p^{2n}\hookrightarrow \cyl_p^{2n}(R).\]
		\item There exists a $(\mathrm{G}_p)^s$-equivariant $p$-adic analytic symplectic embedding \[\varphi:\ball_p^{2n}(r)\hookrightarrow \cyl_p^{2n}(R).\]
	\end{enumerate}
\end{corollary}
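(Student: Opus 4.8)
The plan is to deduce both statements from the global symplectomorphisms already constructed in Theorems \ref{thm:total-actions-nofixed} and \ref{thm:total-actions-semi}, by first conjugating them with a dilation so that the target cylinder has radius $R$ instead of $1$, and then restricting to the ball $\ball_p^{2n}(r)$. For nonzero $c\in\Qp$ write $\sigma_c$ for the linear map $v\mapsto cv$ on $(\Qp)^{2n}$. We will use that $\sigma_c^*\omega_0=c^2\omega_0$, that $\sigma_c$ sends $\ball_p^{2n}(\rho)$, $\cyl_p^{2n}(\rho)$ and $\mathrm{T}_p^{2n}$ to $\ball_p^{2n}(|c|_p\rho)$, $\cyl_p^{2n}(|c|_p\rho)$ and $\mathrm{T}_p^{2n}$ respectively, and that $\sigma_c$ commutes with the coordinatewise rotational action of $(\Circle)^n$, since scalar multiplication commutes with the $2\times 2$ rotation matrices. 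Fix $c\in\Qp$ with $|c|_p=R$, and note also the inclusions $(\mathrm{G}_p)^n\subset\mathrm{G}_p\times(\Circle)^{n-1}$ and $(\mathrm{G}_p)^s\subset\mathrm{G}_p\times(\Circle)^{s-1}$, so that equivariance under the torus appearing in each theorem automatically implies equivariance under the subgroup appearing in the corollary.

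For part (1), let $\psi:\mathrm{T}_p^{2n}\to\mathrm{T}_p^{2n}\cap\cyl_p^{2n}(1)$ be the $(\mathrm{G}_p\times(\Circle)^{n-1})$-equivariant $p$-adic analytic symplectomorphism of Theorem \ref{thm:total-actions-nofixed}, and put $\Psi=\sigma_c\circ\psi\circ\sigma_c^{-1}$. Then $\Psi$ is defined on $\sigma_c(\mathrm{T}_p^{2n})=\mathrm{T}_p^{2n}$, its image is $\sigma_c(\mathrm{T}_p^{2n}\cap\cyl_p^{2n}(1))=\mathrm{T}_p^{2n}\cap\cyl_p^{2n}(R)$, and it is symplectic because the two dilations contribute the reciprocal factors $c^2$ and $c^{-2}$: $\Psi^*\omega_0=(\sigma_c^{-1})^*\psi^*\sigma_c^*\omega_0=(\sigma_c^{-1})^*(c^2\omega_0)=\omega_0$. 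Since $\sigma_c$ commutes with the rotational action and $\psi$ is in particular $(\mathrm{G}_p)^n$-equivariant, $\Psi$ is $(\mathrm{G}_p)^n$-equivariant. Restricting $\Psi$ to the $(\mathrm{G}_p)^n$-invariant set $\ball_p^{2n}(r)\cap\mathrm{T}_p^{2n}$ yields the desired $(\mathrm{G}_p)^n$-equivariant $p$-adic analytic symplectic embedding into $\cyl_p^{2n}(R)$.

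For part (2), let $\varphi_0:(\Qp)^{2n}\to\cyl_p^{2n}(1)$ be the $(\mathrm{G}_p\times(\Circle)^{s-1})$-equivariant $p$-adic analytic symplectomorphism of Theorem \ref{thm:total-actions-semi}, and put $\Phi=\sigma_c\circ\varphi_0\circ\sigma_c^{-1}$. By the identical computation, $\Phi$ is a $(\mathrm{G}_p)^s$-equivariant $p$-adic analytic symplectomorphism from $(\Qp)^{2n}$ onto $\cyl_p^{2n}(R)$, and its restriction to the $(\mathrm{G}_p)^s$-invariant ball $\ball_p^{2n}(r)$ is the required embedding. No relation between $r$ and $R$ is needed, precisely because one starts from a symplectomorphism of the entire space onto the cylinder.

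The argument is essentially bookkeeping; the only slightly delicate point is that one must \emph{conjugate} by $\sigma_c$ rather than merely post-compose with it, so that the competing powers $c^{\pm2}$ cancel and $\omega_0$ is preserved, while simultaneously using that $\sigma_c$ is central for the torus action so that equivariance survives. Everything else is the passage, via the inclusions $(\mathrm{G}_p)^n\subset\mathrm{G}_p\times(\Circle)^{n-1}$ and $(\mathrm{G}_p)^s\subset\mathrm{G}_p\times(\Circle)^{s-1}$, from the equivariance statements of Theorems \ref{thm:total-actions-nofixed} and \ref{thm:total-actions-semi} to those claimed in the corollary, followed by restriction of the resulting maps to $\ball_p^{2n}(r)$ (intersected with $\mathrm{T}_p^{2n}$ in case (1)).
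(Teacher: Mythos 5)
Your proposal is correct and follows essentially the same route as the paper, whose proof consists of restricting the symplectomorphisms of Theorems \ref{thm:total-actions-nofixed} and \ref{thm:total-actions-semi} to the ball and combining them with a scaling. Your conjugation $\sigma_c\circ\psi\circ\sigma_c^{-1}$ with $|c|_p=R$ just makes explicit the scaling step (exactly as the paper does when comparing cylinders of radii $R_1$ and $R_2$), together with the routine checks of symplecticity, equivariance under the subgroups $(\mathrm{G}_p)^n$ and $(\mathrm{G}_p)^s$, and invariance of the domains.
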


\begin{proof}
	The two parts follow directly from Theorems \ref{thm:total-actions-nofixed} and \ref{thm:total-actions-semi}, by restricting the $p$-adic analytic symplectomorphisms to the ball and combining them with a scaling.
\end{proof}

\section{$p$-adic analytic symplectic capacities and $p$-adic analytic symplectic $G$-capacities}\label{sec:capacities}

\letnpos. In the real case, a symplectic capacity $c$ on $\R^{2n}$ (see \cite[page 460]{McDSal} for example) assigns to each open set $A\subset \R^{2n}$ a number $c(A)\in[0,\infty]$ such that the following properties hold (these properties generalize the properties (i), (ii), (iii) of $p$-adic linear symplectic width in Proposition \ref{prop:width}):

\begin{enumerate}
	\item \emph{Monotonicity}: if $X, Y$ are open subsets of $\R^{2n}$ and there exists a symplectic embedding $f:X\hookrightarrow Y$, then $c(X)\le c(Y)$.
	\item \emph{Conformality}: if $X$ is an open subset of $\R^{2n}$ and $\lambda\in\R$, then $c(\lambda X)=\lambda^2c(X)$.
	\item \emph{Non-triviality}: $c(\ball^{2n}(1))>0$ and $c(\cyl^{2n}(1))<\infty$.
\end{enumerate}

As it is explained for example in \cite[page 459]{McDSal}, Gromov's nonsqueezing theorem is equivalent to the existence of a symplectic capacity $c$ on $\R^{2n}$ such that
\[c(\ball^{2n}(1))=c(\cyl^{2n}(1))=\pi.\]

If there exists such a capacity, then by the conformality condition we have \[c(\ball^{2n}(r))=c(\cyl^{2n}(r))\] for any radius $r$. If there is an embedding from $\ball^{2n}(r)$ to $\cyl^{2n}(R)$, then by the monotonicity condition $r\le R$, and Gromov's nonsqueezing holds. Conversely, if Gromov's nonsqueezing holds, the capacity defined as
\[\sup\Big\{\pi r^2:\exists\text{ a symplectic embedding }f:\ball^{2n}(r)\hookrightarrow X\Big\}\]
satisfies by definition the first two properties, and Gromov's nonsqueezing implies the third.

If we make the analogous definition to this one in the $p$-adic case, by replacing everywhere $\R$ by $\Qp$ and
\[\lambda^2c(X)\text{ by }|\lambda|_p^2c(X),\]
because in the $p$-adic case it does not hold that $|\lambda|_p^2=\lambda^2$ for any $\lambda$, we would have the concept of \emph{$p$-adic analytic symplectic capacity}. The definition makes sense formally, but Theorem \ref{thm:total-embedding} implies that there does not exist any such capacity in higher dimensions because no such $c$ satisfies the non-triviality assumption.

\begin{theorem}[Non-existence of $p$-adic analytic symplectic capacity]\label{thm:no-capacity}
	Let $n$ be a positive integer. Let $p$ be a prime number. There exists a $p$-adic analytic symplectic capacity on $(\Qp)^{2n}$ if and only if $n=1$.
\end{theorem}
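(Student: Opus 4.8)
The plan is to prove the two implications separately, with essentially all of the content of the $n\ge 2$ case packaged inside Theorem \ref{thm:total-embedding}.

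For the ``if'' direction, suppose $n=1$. I would take as candidate capacity the $p$-adic analog of the Gromov width: for an open set $X\subset(\Qp)^2$,
\[
c(X)=\sup\Big\{r^2:\text{there is a }p\text{-adic analytic symplectic embedding }\ball_p^2(r)\hookrightarrow X\Big\}.
\]
Monotonicity is immediate by composing embeddings. For conformality I would conjugate a given embedding $f:\ball_p^2(r)\hookrightarrow X$ by the scaling $m_\lambda:v\mapsto\lambda v$: this scaling carries $\ball_p^2(r)$ onto $\ball_p^2(|\lambda|_p r)$ and pulls $\dd x\wedge\dd y$ back to $\lambda^2\,\dd x\wedge\dd y$, so the conjugate $m_\lambda\circ f\circ m_{\lambda^{-1}}$ is again a $p$-adic analytic symplectic embedding and embeds $\ball_p^2(|\lambda|_p r)$ into $\lambda X$; running this for $\lambda$ and for $\lambda^{-1}$ yields $c(\lambda X)=|\lambda|_p^2 c(X)$. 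For non-triviality, the identity gives $c(\ball_p^2(1))\ge 1>0$, and since $\cyl_p^2(1)=\ball_p^2(1)$ it remains only to show $c(\ball_p^2(1))<\infty$. Here I would use that in dimension $2$ the form $\dd x\wedge\dd y$ is a volume form, so any $p$-adic analytic symplectic embedding preserves the $p$-adic Haar measure $\mu$; since $\mu(\ball_p^2(\rho))=\rho^2$, an embedding $\ball_p^2(r)\hookrightarrow\ball_p^2(1)$ forces $r^2\le 1$, whence $c(\ball_p^2(1))=1<\infty$. Thus $c$ satisfies all three axioms and is a $p$-adic analytic symplectic capacity on $(\Qp)^2$.

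For the ``only if'' direction, let $n\ge 2$ and suppose for contradiction that $c$ is a $p$-adic analytic symplectic capacity on $(\Qp)^{2n}$. By Theorem \ref{thm:total-embedding} there is a $p$-adic analytic symplectomorphism $\phi:(\Qp)^{2n}\overset{\cong}{\to}\cyl_p^{2n}(1)$; applying monotonicity to $\phi$ and to $\phi^{-1}$ gives $c((\Qp)^{2n})=c(\cyl_p^{2n}(1))$, which is finite by non-triviality. On the other hand, for every $p$-adic absolute value $r$ the inclusion $\ball_p^{2n}(r)\hookrightarrow(\Qp)^{2n}$ is a $p$-adic analytic symplectic embedding, so monotonicity gives $c(\ball_p^{2n}(r))\le c((\Qp)^{2n})$, while conformality applied to a scalar $\lambda$ with $|\lambda|_p=r$ (note $\ball_p^{2n}(r)=\lambda\,\ball_p^{2n}(1)$) gives $c(\ball_p^{2n}(r))=r^2 c(\ball_p^{2n}(1))$ with $c(\ball_p^{2n}(1))>0$. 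Letting $r$ run through powers of $p$ tending to infinity makes the right-hand side unbounded, contradicting $c((\Qp)^{2n})<\infty$. Hence no such capacity exists when $n\ge 2$.

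The only genuinely non-formal ingredient is the finiteness $c(\cyl_p^2(1))<\infty$ in the case $n=1$, which is where one uses that two-dimensional $p$-adic analytic symplectomorphisms are volume-preserving (the $n=1$ instance of non-squeezing, trivially true since the symplectic form is a volume form on a surface); the rest is a bookkeeping argument combining Theorem \ref{thm:total-embedding} with the three capacity axioms, so I do not anticipate a serious obstacle.
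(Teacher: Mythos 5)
Your proposal is correct and follows essentially the same route as the paper: the $n\ge 2$ impossibility is exactly the paper's argument (the symplectomorphism of Theorem \ref{thm:total-embedding} combined with monotonicity, conformality and non-triviality). The only difference is in the $n=1$ case, where the paper simply declares the $p$-adic area to be a capacity, while you use the Gromov width and verify its axioms via area preservation --- both hinge on the same fact that in dimension $2$ the symplectic form is a volume form.
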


\begin{proof}
	For $n=1$ the $p$-adic area is a $p$-adic analytic symplectic capacity.
	
	Suppose it exists for some $n>1$. Then, by Theorem \ref{thm:total-embedding}, the capacity of $(\Qp)^{2n}$ must equal that of $\cyl_p^{2n}(1)$. But $c(\ball_p^{2n}(r))$ is different from $0$ by non-triviality, and by conformality it tends to infinity when $r$ tends to infinity, hence \[c((\Qp)^{2n})=\infty\] while \[c(\cyl_p^{2n}(1))<\infty\] again by non-triviality. This is a contradiction.
\end{proof}

On the other hand, it is possible to define a $p$-adic analytic symplectic $(\mathrm{G}_p)^n$-capacity, that is, a similar kind of measure but with the monotonicity condition restricted to those embeddings which preserve the action of $(\mathrm{G}_p)^n$:

\begin{definition}\label{def:capacity}
	Let $n$ be a positive integer. \letpprime. Let $\mathrm{G}_p$ be the group in Definition \ref{def:G}. A \emph{$p$-adic analytic symplectic $(\mathrm{G}_p)^n$-capacity $c$ on $(\Qp)^{2n}$} is an assignment of a $p$-adic absolute value $c(X)$ to each $(\mathrm{G}_p)^n$-invariant open subset $X$ of $(\Qp)^{2n}$ such that the following three conditions hold:
	\begin{enumerate}
		\item \emph{Monotonicity}: if $X, Y$ are open subsets of $(\Qp)^{2n}$ and there exists a generalized $(\mathrm{G}_p)^n$-equivariant $p$-adic analytic symplectic embedding $f:X\hookrightarrow Y$, then $c(X)\le c(Y)$.
		\item \emph{Conformality}: if $X$ is an open subset of $\Qp^{2n}$ and $\lambda\in\Qp$, then $c(\lambda X)=|\lambda|_p^2c(X)$.
		\item \emph{Non-triviality}: $c(\ball^{2n}(1))>0$ and $c(\cyl^{2n}(1))<\infty$.
	\end{enumerate}
\end{definition}

The equivariant capacities in Definition \ref{def:capacity} do exist, in contrast with Theorem \ref{thm:no-capacity}.

\begin{theorem}[Existence of $p$-adic analytic symplectic $(\mathrm{G}_p)^n$-capacities]\label{thm:capacity}
	Let $n$ be a positive integer. Let $p$ be a prime number. Let $\mathrm{G}_p$ be the group in Definition \ref{def:G}. Let $X$ be an open subset of $(\Qp)^{2n}$. Then there exist $p$-adic analytic symplectic $(\mathrm{G}_p)^n$-capacities on $(\Qp)^{2n}$. More explicitly, the $(\mathrm{G}_p)^n$-Gromov width $\mathrm{w}^{(\mathrm{G}_p)^n}(X)$ of $X$ is defined by
	\begin{equation}\label{eq:capacity}
		\sup\Big\{r^2:\exists\text{ a $(\mathrm{G}_p)^n$-equivariant $p$-adic analytic symplectic embedding } f:\ball_p^{2n}(r)\hookrightarrow X\Big\}
	\end{equation}
	is a $p$-adic analytic symplectic $(\mathrm{G}_p)^n$-capacity on $\Qp^{2n}$.
\end{theorem}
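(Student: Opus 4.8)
I want to verify that $c:=\mathrm{w}^{(\mathrm{G}_p)^n}$ obeys the three axioms of Definition \ref{def:capacity}; recall first that, by Definition \ref{def:G}, $(\mathrm{G}_p)^n$ acts coordinatewise on every ball $\ball_p^{2n}(r)$, so the supremum \eqref{eq:capacity} is meaningful (with the convention $\sup\varnothing=0$). \emph{Conformality} is a scaling argument: for $\lambda\in\Qp\setminus\{0\}$, if $f:\ball_p^{2n}(r)\hookrightarrow X$ is a $(\mathrm{G}_p)^n$-equivariant $p$-adic analytic symplectic embedding, then $\tilde f(v):=\lambda f(v/\lambda)$ is one of $\ball_p^{2n}(|\lambda|_p r)$ into $\lambda X$ — equivariant because scaling is linear and hence commutes with the rotational action, and symplectic because the two scalings contribute factors $\lambda^{2}$ and $\lambda^{-2}$ to $\tilde f^{*}\omega_0$ that cancel — so $c(\lambda X)\ge|\lambda|_p^{2}c(X)$; running the same argument with $\lambda^{-1}$ and $\lambda X$ gives equality. \emph{Non-triviality} is immediate: the identity gives $c(\ball_p^{2n}(1))\ge 1>0$, and by Theorem \ref{thm:analytic-actions-complete} a $(\mathrm{G}_p)^n$-equivariant $p$-adic analytic symplectic embedding $\ball_p^{2n}(r)\hookrightarrow\cyl_p^{2n}(1)$ exists if and only if $r\le 1$, whence $c(\cyl_p^{2n}(1))=1<\infty$.

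\emph{Monotonicity} is the substantial axiom, because it is stated for \emph{generalized} equivariant embeddings whereas \eqref{eq:capacity} uses honest ones. Given a generalized $(\mathrm{G}_p)^n$-equivariant symplectic embedding $\Psi:X\hookrightarrow Y$ with associated isomorphism $h$ and a $(\mathrm{G}_p)^n$-equivariant symplectic embedding $f:\ball_p^{2n}(r)\hookrightarrow X$, the composite $\Psi\circ f$ is a generalized equivariant symplectic embedding of $\ball_p^{2n}(r)$ into $Y$ with the same isomorphism $h$; I must upgrade it to an honest equivariant embedding of the same radius, after which $c(Y)\ge r^2$ and hence $c(X)\le c(Y)$. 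First I would constrain $h$ by the fixed-point bookkeeping from the proof of Theorem \ref{thm:analytic-actions-complete}: for $\theta\in(\mathrm{G}_p)^n$ the fixed locus in $\ball_p^{2n}(r)$ is the coordinate subspace $\{x_j=y_j=0:\theta_j\ne 1\}$ cut down by the ball, of dimension $2(n-s(\theta))$, where $s(\theta)$ counts the nontrivial components of $\theta$; an equivariant embedding carries it bijectively onto $\mathrm{Fix}(h(\theta))\cap(\Psi\circ f)(\ball_p^{2n}(r))$, a nonempty open subset of the coordinate subspace $\mathrm{Fix}(h(\theta))$ of dimension $2(n-s(h(\theta)))$, so comparing dimensions forces $s(h(\theta))=s(\theta)$ for all $\theta$, which in turn forces $h=\sigma\circ(\varphi_1,\dots,\varphi_n)$ for a permutation $\sigma$ of the $n$ coordinate pairs and automorphisms $\varphi_i\in\mathrm{Aut}(\mathrm{G}_p)$. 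Precomposing $\Psi\circ f$ with the coordinate-permuting symplectomorphism realizing $\sigma^{-1}$ reduces to $\sigma=\mathrm{id}$, so it remains to construct, for each $i$, a symplectomorphism $\beta_i:\ball_p^{2}(r)\to\ball_p^{2}(r)$ with $\beta_i(\theta\cdot v)=\varphi_i^{-1}(\theta)\cdot\beta_i(v)$; then $(\Psi\circ f)\circ(\beta_1\times\cdots\times\beta_n)$ is honestly $(\mathrm{G}_p)^n$-equivariant.

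The construction of the $\beta_i$ is where I expect the real difficulty to lie. In the $p$-adic polar coordinates $(z,k',k'',a,b,t)$ of Section \ref{sec:polar} the $\mathrm{G}_p$-action fixes $z,k',k''$ and acts on the angular data by a translation of $t$ and a multiplication of $(a,b)$ inside $\mathrm{D}_p(1)$ (Proposition \ref{prop:polar-actions}), so an automorphism of $\mathrm{G}_p$ amounts to rescaling $t$ by a unit together with an automorphism of the finite group $\mathrm{D}_p(1)$; I would take $\beta_i$ to rescale $t$ by that unit, rescale $z$ by its inverse so as to preserve $\frac{1}{2}\,\dd z\wedge\dd t=\dd x\wedge\dd y$ (Proposition \ref{prop:area}), act on the discrete coordinate $(a,b)$ by the prescribed automorphism, and fix the origin, and then verify — using the digit-rearrangement arguments of the proof of Theorem \ref{thm:total-actions-nofixed} together with the classification of admissible polar coordinates in Proposition \ref{prop:polar} — that these moves glue into a well-defined $p$-adic analytic symplectomorphism of $\ball_p^{2}(r)$ intertwining the two $\mathrm{G}_p$-actions. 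The subtle point is the compatibility between which units are allowed to rescale $z$ and which automorphisms of $\mathrm{G}_p$ can in fact arise from a symplectic $\Psi$ (the rotation being Hamiltonian with Hamiltonian $\frac12 z$, so that $\Psi$ rescales it by the corresponding unit); everything else is formal. Alternatively, one may define \eqref{eq:capacity} using \emph{generalized} equivariant embeddings of balls, in which case monotonicity follows immediately from composition and only the routine conformality and non-triviality above remain to be checked.
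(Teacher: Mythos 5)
Your verification of conformality and non-triviality is correct and coincides with what the paper actually does: the paper's entire proof consists of the scaling identity $\{\lambda x:x\in\ball_p^{2n}(r)\}=\ball_p^{2n}(|\lambda|_pr)$ for conformality, the value $1$ for the unit ball by definition, and the value $1$ for $\cyl_p^{2n}(1)$ via Theorem \ref{thm:analytic-actions-complete}; monotonicity is treated there as an immediate consequence of the definition (composition of embeddings), with no discussion of the honest-versus-generalized issue you raise. So the substance of your proposal is the attempted upgrade of a generalized $(\mathrm{G}_p)^n$-equivariant embedding to an honest one, and that is exactly where your argument is not a proof. The reduction of $h$ to a permutation composed with factor automorphisms is plausible (though note the fixed-point dimension count only gives $s(h(\theta))\le s(\theta)$ directly; equality needs the extra group-theoretic step, as in the proof of Theorem \ref{thm:analytic-actions-complete}), but the crucial ingredient --- for each automorphism $\varphi_i$ of $\mathrm{G}_p$ that can actually occur, a symplectomorphism $\beta_i$ of $\ball_p^2(r)$ intertwining the standard action with its $\varphi_i$-twist --- is only sketched, and you yourself defer both halves of it: which automorphisms arise from a symplectic $\Psi$, and whether the polar-coordinate recipe glues to a well-defined $p$-adic analytic symplectomorphism. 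These are not routine checks. For instance, rescaling $z$ by an arbitrary unit is not admissible when $p=2$ (Proposition \ref{prop:polar} forces $z$ to end in $01$); the conjugation automorphism $(a,b)\mapsto(a,-b)$ of $\mathrm{G}_p$ is realized linearly only by an antisymplectic map of $(\Qp)^2$, so whether it can occur as some $\varphi_i$ and, if so, how to correct for it symplectically is precisely the open point; and $\mathrm{Aut}(\mathrm{G}_p)$ is large (for $p\equiv 1\bmod 4$ it contains $\mathrm{Aut}(\Zp^*)$), so "everything else is formal" is not justified. As it stands, the monotonicity axiom of Definition \ref{def:capacity} is therefore not verified by your argument.

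Your fallback --- defining the width in \eqref{eq:capacity} via generalized equivariant embeddings --- would indeed make monotonicity a one-line composition argument and, combined with Theorem \ref{thm:analytic-actions-complete} (whose equivalence of (2) and (3) gives non-triviality for the cylinder), would prove the existence assertion of the theorem; but it proves a different statement from the one given, since the theorem asserts that the specific width \eqref{eq:capacity}, defined with honest equivariant embeddings, is a capacity. To match the stated result you must either complete the construction of the $\beta_i$ (or otherwise show that a generalized equivariant embedding of a ball of radius $r$ into $Y$ yields an honest one of the same radius), or argue, as the paper implicitly does, that monotonicity for the honest width against generalized equivariant embeddings follows directly --- which, as your own analysis shows, is not automatic and needs to be addressed rather than asserted.
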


\begin{proof}
	The first two conditions are consequences of the definition; for the second we are using that
	\[\Big\{\lambda x:x\in\ball_p^{2n}(r)\Big\}=\ball_p^{2n}(|\lambda|_pr).\]
	The width of $\ball^{2n}(1)$ is $1$ by definition, and that of $\cyl^{2n}(1)$ is $1$ by Theorem \ref{thm:analytic-actions-complete}.
\end{proof}

See Figure \ref{fig:padic-capacity} for an illustration of the $(\mathrm{G}_p)^n$-Gromov width.

\begin{figure}
	\includegraphics{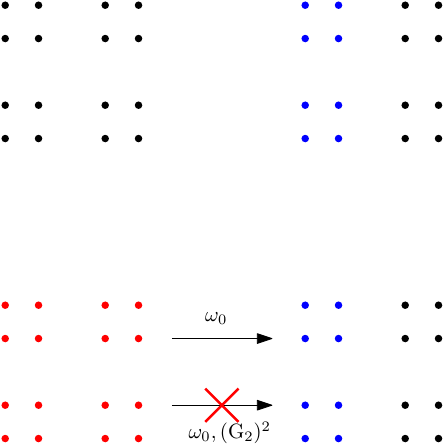}
	\caption{Projections from $(\Q_2)^4$ to $(\Q_2)^2$ of a ball of radius $4$ and a cylinder of radius $2$. It is possible to $p$-adically embed the ball in the cylinder preserving the symplectic form (Corollary \ref{cor:embedding}), but not preserving at the same time the symplectic form and the rotational action (Theorem \ref{thm:analytic-actions-complete}): the ball has width $16$ according to the definition in Theorem \ref{thm:capacity} while the cylinder has width $4$.}
	\label{fig:padic-capacity}
\end{figure}

As far as symplectic capacities are concerned, Theorem \ref{thm:total-actions-nofixed} implies a rather counterintuitive fact (note that almost all points of $(\Qp)^{2n}$ are in $\mathrm{T}_p^{2n}$, and those that are not have only $2n-2$ degrees of freedom):

\begin{corollary}\label{cor:capacity-T}
	Let $n\ge 2$ be an integer. \letpprime. Then
	\[\mathrm{w}^{(\mathrm{G}_p)^n}(\mathrm{T}_p^{2n})=0.\]
\end{corollary}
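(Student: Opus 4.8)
The plan is to prove something slightly stronger than the stated identity: that \emph{no} $p$-adic ball $\ball_p^{2n}(r)$, for any $p$-adic absolute value $r$, admits a $(\mathrm{G}_p)^n$-equivariant $p$-adic analytic symplectic embedding into $\mathrm{T}_p^{2n}$. Once this is shown, the set over which the supremum in \eqref{eq:capacity} is formed is empty, so that supremum is $0$ and hence $\mathrm{w}^{(\mathrm{G}_p)^n}(\mathrm{T}_p^{2n})=0$. The whole argument is a fixed-point obstruction located at the origin, and it uses only the strict notion of equivariance (identity intertwiner), which is the one appearing in the definition of the $(\mathrm{G}_p)^n$-Gromov width in Theorem \ref{thm:capacity}.

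First I would exhibit a convenient element of $(\mathrm{G}_p)^n$. The point $(-1,0)$ lies on $\Circle$ since $(-1)^2+0^2=1$, and $\ord_p(-1+\ii\cdot 0)=\ord_p(-1)=0$, so by the explicit description of $\mathrm{G}_p$ in the remark following Definition \ref{def:G} we have $(-1,0)\in\mathrm{G}_p$ for every prime $p$; moreover $(-1,0)$ is not the identity element of $\Circle$. Put $g=\big((-1,0),\ldots,(-1,0)\big)\in(\mathrm{G}_p)^n$. By \eqref{eq:action} the element $(-1,0)$ acts on $(\Qp)^2$ by $(x,y)\mapsto(-x,-y)$, hence $g$ acts on $(\Qp)^{2n}$ by $v\mapsto -v$; since $2$ is invertible in $\Qp$, the unique fixed point of $g$ is the origin $(0,\ldots,0)$.

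Then I would argue by contradiction. Suppose $f:\ball_p^{2n}(r)\hookrightarrow\mathrm{T}_p^{2n}$ is a $(\mathrm{G}_p)^n$-equivariant $p$-adic analytic symplectic embedding. The origin lies in $\ball_p^{2n}(r)$ because $\|(0,\ldots,0)\|_p=0<pr$, and $g\cdot(0,\ldots,0)=(0,\ldots,0)$, so equivariance gives $g\cdot f(0,\ldots,0)=f\big(g\cdot(0,\ldots,0)\big)=f(0,\ldots,0)$; thus $f(0,\ldots,0)$ is a fixed point of $g$ lying in $\mathrm{T}_p^{2n}$. But the only fixed point of $g$ in $(\Qp)^{2n}$ is the origin, and $(0,\ldots,0)\notin\mathrm{T}_p^{2n}$ because its first coordinate pair equals $(0,0)$. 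This contradiction shows no such $f$ exists, and therefore $\mathrm{w}^{(\mathrm{G}_p)^n}(\mathrm{T}_p^{2n})=0$.

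I do not anticipate any genuine difficulty here; the argument is short and essentially group-theoretic. The only points that deserve an explicit sentence are the membership $(-1,0)\in\mathrm{G}_p$ in the case $p\equiv 1\bmod 4$ (immediate from $\ord_p(-1)=0$) and the fact that the $(\mathrm{G}_p)^n$-Gromov width is defined through strictly equivariant embeddings, so that the fixed point of $g$ must be sent to a fixed point of the \emph{same} element $g$ rather than of some automorphic image of it.
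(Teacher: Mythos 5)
Your argument is correct, but it takes a genuinely different route from the paper. The paper deduces the vanishing from the equivariant squeezing theorem: by Theorem \ref{thm:total-actions-nofixed} and rescaling, $\mathrm{T}_p^{2n}$ admits a $(\mathrm{G}_p\times(\Circle)^{n-1})$-equivariant (hence in particular $(\mathrm{G}_p)^n$-equivariant) $p$-adic analytic symplectic embedding into $\cyl_p^{2n}(R)$ for every $p$-adic absolute value $R$, so monotonicity together with $\mathrm{w}^{(\mathrm{G}_p)^n}(\cyl_p^{2n}(R))=R^2$ (which rests on Theorem \ref{thm:analytic-actions-complete}) gives $\mathrm{w}^{(\mathrm{G}_p)^n}(\mathrm{T}_p^{2n})\le R^2$ for all $R$, hence $0$. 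You instead prove the stronger statement that no ball $\ball_p^{2n}(r)$ admits any $(\mathrm{G}_p)^n$-equivariant embedding into $\mathrm{T}_p^{2n}$ at all: the element $g=\big((-1,0),\ldots,(-1,0)\big)$ lies in $(\mathrm{G}_p)^n$ (for $p\equiv 1\bmod 4$ because $\ord_p(-1)=0$, or simply because $-\mathrm{id}$ preserves every ball, cf.\ Definition \ref{def:G}), acts as $v\mapsto -v$, fixes the centre $0\in\ball_p^{2n}(r)$, and has no fixed point in $\mathrm{T}_p^{2n}$, so equivariance forces $f(0)=g\cdot f(0)$, i.e.\ $f(0)=0\notin\mathrm{T}_p^{2n}$, a contradiction. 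This elementary fixed-point obstruction bypasses Theorems \ref{thm:total-actions-nofixed} and \ref{thm:analytic-actions-complete} entirely, works for every $n\ge 1$, and in fact shows that any $(\mathrm{G}_p)^n$-invariant open set avoiding the origin has zero $(\mathrm{G}_p)^n$-Gromov width. Its only costs are mild: you rely on the (standard, and implicit in the paper) convention that the supremum in \eqref{eq:capacity} over an empty set equals $0$, and the argument somewhat demystifies the corollary, locating the vanishing in the absence of a fixed point rather than in the equivariant squeezing phenomenon that the paper's proof is meant to illustrate.
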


\begin{proof}
	By Theorem \ref{thm:total-actions-nofixed}, for any $p$-adic absolute value $R$, we can $p$-adically and symplectically embed $\mathrm{T}_p^{2n}$ into the cylinder of radius $R$. Hence its Gromov width is less than $R^2$ for any such $R$, which implies that it is $0$.
\end{proof}

It would be theoretically possible to formulate a definition similar to Definition \ref{def:capacity} but considering the action of $(\mathrm{G}_p)^s$ for $s<n$ instead of the action of $(\mathrm{G}_p)^n$. However, such a capacity does not exist by Theorem \ref{thm:total-actions-semi}:

\begin{corollary}\label{cor:no-capacity}
	Let $n\ge 2$ and $1\le s\le n-1$ be two integers. Let $p$ be a prime number. There does not exist any $p$-adic analytic symplectic $(\mathrm{G}_p)^s$-capacity on $(\Qp)^{2n}$.
\end{corollary}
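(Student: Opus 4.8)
The plan is to mimic the proof of Theorem~\ref{thm:no-capacity}, replacing the symplectomorphism of Theorem~\ref{thm:total-embedding} by the equivariant one of Theorem~\ref{thm:total-actions-semi}. The first observation is purely group-theoretic: since $\mathrm{G}_p$ is a subgroup of $\Circle$ (Definition~\ref{def:G}), the group $(\mathrm{G}_p)^s$ embeds as a subgroup of $\mathrm{G}_p\times(\Circle)^{s-1}$, and under this embedding the rotational action of $(\mathrm{G}_p)^s$ on $(\Qp)^{2n}$ (acting on the first $s$ pairs of coordinates) is simply the restriction of the action of $\mathrm{G}_p\times(\Circle)^{s-1}$. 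Hence the $(\mathrm{G}_p\times(\Circle)^{s-1})$-equivariant $p$-adic analytic symplectomorphism $\varphi:(\Qp)^{2n}\overset{\cong}{\to}\cyl_p^{2n}(1)$ provided by Theorem~\ref{thm:total-actions-semi} is in particular $(\mathrm{G}_p)^s$-equivariant, and therefore also generalized $(\mathrm{G}_p)^s$-equivariant (take $h$ to be the identity automorphism).

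Next I would suppose, for contradiction, that a $p$-adic analytic symplectic $(\mathrm{G}_p)^s$-capacity $c$ on $(\Qp)^{2n}$ exists, in the sense of the obvious analogue of Definition~\ref{def:capacity} with $(\mathrm{G}_p)^n$ replaced by $(\mathrm{G}_p)^s$. All the open sets involved --- namely $(\Qp)^{2n}$, the cylinder $\cyl_p^{2n}(1)=\ball_p^2(1)\times(\Qp)^{2n-2}$, and the balls $\ball_p^{2n}(r)=\ball_p^2(r)^n$ --- are $(\mathrm{G}_p)^s$-invariant, because $\mathrm{G}_p$ leaves each factor $\ball_p^2(r)$ invariant by the very definition of $\mathrm{G}_p$ and the last $2n-2s$ coordinates are untouched by the action; so $c$ is defined on all of them. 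Applying monotonicity to $\varphi$ gives $c((\Qp)^{2n})\le c(\cyl_p^{2n}(1))$, which is finite by the non-triviality axiom.

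On the other hand, the inclusion $\ball_p^{2n}(p^k)\hookrightarrow(\Qp)^{2n}$ is a $(\mathrm{G}_p)^s$-equivariant $p$-adic analytic symplectic embedding for every integer $k$, so monotonicity yields $c(\ball_p^{2n}(p^k))\le c((\Qp)^{2n})$; and by conformality together with non-triviality, $c(\ball_p^{2n}(p^k))=p^{2k}c(\ball_p^{2n}(1))$ with $c(\ball_p^{2n}(1))>0$, a quantity which tends to infinity with $k$. Hence $c((\Qp)^{2n})=\infty$, contradicting $c((\Qp)^{2n})\le c(\cyl_p^{2n}(1))<\infty$. This contradiction shows that no such capacity exists.

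I do not expect a genuine obstacle here: the argument is the same finite-versus-infinite contradiction that underlies Theorem~\ref{thm:no-capacity}, and the only points requiring a moment of care are the group inclusion $(\mathrm{G}_p)^s\le\mathrm{G}_p\times(\Circle)^{s-1}$ and the verification that the sets in play are $(\mathrm{G}_p)^s$-invariant, both of which follow immediately from Definition~\ref{def:G}. If anything, the mild subtlety is purely notational, namely pinning down precisely which $(\mathrm{G}_p)^s$-capacity definition is meant, so that the monotonicity axiom can be invoked for the generalized-equivariant map $\varphi$.
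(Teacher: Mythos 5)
Your proposal is correct and follows essentially the same route as the paper: the paper's proof of Corollary~\ref{cor:no-capacity} is exactly ``repeat the argument of Theorem~\ref{thm:no-capacity}, replacing Theorem~\ref{thm:total-embedding} by Theorem~\ref{thm:total-actions-semi}'', which is what you do, deriving $c((\Qp)^{2n})\le c(\cyl_p^{2n}(1))<\infty$ from the equivariant symplectomorphism while conformality and non-triviality force $c((\Qp)^{2n})=\infty$. Your additional checks (that $(\mathrm{G}_p)^s\le\mathrm{G}_p\times(\Circle)^{n-1}$ restricted to the relevant action makes $\varphi$ in particular $(\mathrm{G}_p)^s$-equivariant, and that the sets involved are $(\mathrm{G}_p)^s$-invariant) are correct details the paper leaves implicit.
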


\begin{proof}
	This has the same proof as Theorem \ref{thm:no-capacity}, but using Theorem \ref{thm:total-actions-semi}, which implies now that $(\Qp)^{2n}$ must have the same capacity than $\cyl_p^{2n}(1)$; the former must be $\infty$ and the latter must be finite.
\end{proof}

\section{Examples}\label{sec:examples}

\begin{example}
	If we construct a map with the same expression as the one in the proof of Theorem \ref{thm:total-embedding} but using the real numbers instead of the $p$-adics, it is discontinuous because all the points in its image have the first two coordinates integer. If we want instead to send $\R^4$ to $[0,1]^2\times\R^2$, we should invert the formulas, and take as $a_0$, $b_0$, $c_0$ and $d_0$ the fractional parts (instead of integer parts) of the coordinates. But this map is discontinuous at every point where those integer parts change. Such a map would look like the one in Figure \ref{fig:embedding}, but with the squares touching, which makes it discontinuous.
\end{example}

\begin{example}
	Preserving the symplectic form when going from the total space to the cylinder is stronger than preserving the volume form. In the real case, the former is impossible by Gromov's non-squeezing theorem, while the latter is possible. For example, if we take polar coordinates at a point of $\R^4$ to be $(r_1,\theta_1,r_2,\theta_2)$, we can define it as
	\[f(r_1,\theta_1,r_2,\theta_2)=\left(\frac{r_1}{r_1+1},\theta_1,r_2(r_1+1)^{\frac{3}{2}},\theta_2\right),\]
	and the volume form \[\dd V=r_1r_2\dd r_1\wedge\dd\theta_1\wedge\dd r_2\wedge\dd\theta_2\] becomes
	\[r_1r_2\sqrt{r_1+1}\frac{\dd r_1}{(r_1+1)^2}\wedge\dd\theta_1\wedge(r_1+1)^{\frac{3}{2}}\dd r_2\wedge\dd\theta_2=\dd V.\]
	In the $p$-adic case, there are many ways to construct a $p$-adic analytic diffeomorphism of $(\Qp)^{2n}$ which preserves the volume form but not the symplectic form, for example scaling the coordinates by constants which multiply to $1$, hence this is also possible from $(\Qp)^{2n}$ to $\cyl_p^{2n}(1)$.
\end{example}

\begin{example}
	By Theorem \ref{thm:rigidity}, the following matrix is squeezing for $p=3$:
	\[A=\begin{pmatrix}
		5 & 4 & 4 & 6 \\
		6 & 5 & 2 & 5 \\
		6 & 2 & 1 & 4 \\
		3 & 2 & 0 & 3
	\end{pmatrix}\]
	Indeed, $\omega_0(\e_1,\e_2)=\omega_0(\e_3,\e_4)=1$, but
	\[|\omega_0(A\tr \e_1,A\tr\e_2)|_3=|\omega_0(A\tr\e_3,A\tr\e_4)|_3=|9|_3=\frac{1}{9}.\]
\end{example}

\begin{example}
	If we apply Corollary \ref{cor:rigidity} to the same matrix, we get
	\[A\Omega_0A\tr=\begin{pmatrix}
		0 & 9 & -4 & 10 \\
		-9 & 0 & -20 & 3 \\
		4 & 20 & 0 & 9 \\
		-10 & -3 & -9 & 0
	\end{pmatrix},\]
	which is not a multiple of $\Omega_0$. On the other hand, if we start with
	\[B=\begin{pmatrix}
		4 & 3 & 2 & 1 \\
		1 & 2 & 3 & 4 \\
		10 & 5 & 0 & 5 \\
		1 & 1 & -1 & -1
	\end{pmatrix}\]
	we have that \[B\Omega_0B\tr=10\Omega_0.\] Hence, the matrix is neither symplectic nor antisymplectic, but Corollary \ref{cor:rigidity} still implies that both $B$ and $B^{-1}$ are non-squeezing. Moreover, they will be non-squeezing for every prime $p$, because $10$ is not divisible by the square of any prime, and its order will always be $0$ or $1$.
\end{example}

\begin{example}
	Two easy examples of $p$-adic linear symplectic width of ellipsoids are shown in Figure \ref{fig:ellipsoids}. They can be given by the matrices
	\[A_1=\begin{pmatrix}
		3 & 0 \\
		1 & -1
	\end{pmatrix},
	A_2=\begin{pmatrix}
		3 & 0 \\
		1 & -3
	\end{pmatrix}.\]
	Applying Theorem \ref{thm:width}, we calculate
	\[A_1\Omega_0A_1\tr=\begin{pmatrix}
		0 & -3 \\
		3 & 0
	\end{pmatrix},
	A_2\Omega_0A_2\tr=\begin{pmatrix}
		0 & -9 \\
		9 & 0
	\end{pmatrix}.\]
	The highest powers of $p^2=9$ which divide all entries of $A_1\Omega_0A_1\tr$ and $A_2\Omega_0A_2\tr$ are $1$ and $9$, respectively.
	 
	For two more complicated examples, consider the $4$-dimensional $2$-adic ellipsoid given by the matrix
	\[C=\begin{pmatrix}
		4 & 3 & 2 & 1 \\
		2 & 4 & 6 & 8 \\
		0 & 2 & 0 & 6 \\
		1 & 1 & 3 & 5
	\end{pmatrix}\]
	and the $8$-dimensional one given by
	\[D=\begin{pmatrix}
		4 & 0 & 0 & 0 & 0 & 0 & 0 & 0 \\
		1 & 4 & 0 & 0 & 0 & 0 & 0 & 0 \\
		1 & 4 & 4 & 0 & 0 & 0 & 0 & 0 \\
		1 & 4 & 1 & 4 & 0 & 0 & 0 & 0 \\
		1 & 4 & 1 & 4 & 4 & 0 & 0 & 0 \\
		1 & 4 & 1 & 4 & 1 & 4 & 0 & 0 \\
		1 & 4 & 1 & 4 & 1 & 4 & 4 & 0 \\
		1 & 4 & 1 & 4 & 1 & 4 & 1 & 4
	\end{pmatrix}\]
	We have that
	\[C\Omega_0C\tr=\begin{pmatrix}
		0 & 20 & 20 & 8 \\
		-20 & 0 & 40 & 4 \\
		-20 & -40 & 0 & -20 \\
		-8 & -4 & 20 & 0
	\end{pmatrix}\]
	and
	\[D\Omega_0D\tr=\begin{pmatrix}
		0 & 16 & 16 & 16 & 16 & 16 & 16 & 16 \\
		-16 & 0 & 0 & 0 & 0 & 0 & 0 & 0 \\
		-16 & 0 & 0 & 16 & 16 & 16 & 16 & 16 \\
		-16 & 0 & -16 & 0 & 0 & 0 & 0 & 0 \\
		-16 & 0 & -16 & 0 & 0 & 16 & 16 & 16 \\
		-16 & 0 & -16 & 0 & -16 & 0 & 0 & 0 \\
		-16 & 0 & -16 & 0 & -16 & 0 & 0 & 16 \\
		-16 & 0 & -16 & 0 & -16 & 0 & -16 & 0
	\end{pmatrix}\]
	The highest power of $p^2=4$ that divides all the entries of $C\Omega_0C\tr$ is $4$ and for $D\Omega_0D\tr$ it is $16$, hence these are the symplectic widths of the ellipsoids.
\end{example}

\begin{example}\label{ex:embedding}
	Consider $p=5$ and $n=2$. A point $m\in(\Qp)^4$ may have as polar coordinates
	\[\left(\frac{194}{25},7,-9,0,1,440,\frac{586}{25},-3,1,4,0,35\right)\]
	\[=(12.34_5,7,-9,0,1,3430_5,43.21_5,-3,1,4,0,120_5),\]
	where the subindex $5$ indicates that the numbers are written in base $5$. The image of this point by the $p$-adic analytic symplectic embedding $f$ described in the proof of Theorem \ref{thm:total-actions-nofixed} is calculated as follows.
	\renewcommand{\theenumi}{\alph{enumi}}
	\renewcommand{\theenumii}{\arabic{enumii}}
	\begin{enumerate}
		\item We see that the point is not in the cylinder because $k_1''$ is negative.
		\item We compute
		\[C=0,c_{-1}=1,c_0=2,c_1=3,c_2=4,D=0,d_{-1}=4,d_0=3,d_1=2,d_2=1,\]
		\[z_1'=0+1=1,\]
		\[u=0+5+1.4233241_5=11.4233241_5.\]
		\item We construct $z_2'$ from $u$ changing the digit at the first even place at the right of $-8$ to $1$:
		\[z_2'=11.4233241001_5.\]
		(Strictly speaking, for $p=5$ we could omit this step, but we go this way for consistency with other primes which require it.)
		\item Since $p\equiv 1\mod 4$, we need to correct $z_2'$:
		\begin{enumerate}
			\item Nothing to do here, $k_2'$ is finite. As $k_1'>0$, we go to step (2).
			\item We change the digit $7+1$ places at the right of the leading digit to $1$:
			\[z_2'=11.42334100100000001_5.\]
			\item[(5)] We repeat with $k_1''$. Since $k_1''<0$, we go to step (3) and change a digit $9$ places at the right of the leading digit:
			\[z_2'=11.42334100100000001000000002_5.\]
		\end{enumerate}
		\item We now set $k_1'=k_1''=0$ and correct $k_2''$ to $\ord(z_2')-k_2'=-26+3=-23$, arriving at our final coordinates:
		\[(1,0,0,0,1,3430_5,11.42334100100000001000000002_5,-3,-23,4,0,120_5).\]
	\end{enumerate}
	If we want to recover the original point, we proceed as follows:
	\begin{enumerate}
		\item We check the digit of order $1$ of $z_2$. It is $1$, which means that the point was not in the cylinder.
		\item We strip the leading digit $2$ from $z_2$. Being $2$ means that $k_1''$ was negative, and the distance from it to the next nonzero digit ($9$) is the opposite of $k_1''$. Hence, $k_1''=-9$.
		\item We strip the next nonzero digit $1$ from $z_2$. Being $1$ means that $k_1'$ was nonnegative, and the distance from it to the next nonzero digit ($8$) is one more than $k_1'$. Hence, $k_1'=7$.
		\item We strip the next nonzero digit $1$ from $z_2$. Being $1$ means that $k_2'$ was finite.
		\item At this point we have recovered $u=11.4233241_5$. If there were any digits at the left of order $1$, those would pass unchanged to $z_2$, and those in $z_1'$ would pass to $z_1$.
		\item Skipping the leftmost $1$, we now assign the rest of digits $14233241$ alternatively to $z_1$ and $z_2$ starting at order $1$ and going right. This yields $z_1=12.34_5$ and $z_2=43.21_5$.
		\item Since in step (d) we deduced that $k_2'$ was finite (which must be anyway because $z_2\ne 0$), we recover $k_2''$ subtracting $k_2'$ from the order of $z_2$, that is, $k_2''=-2+3=1$. $k_2'=-3$ is unchanged and we have recovered the original polar coordinates.
	\end{enumerate}
	The same process would be done for Corollary \ref{cor:analytic-actions}(1), which is just a particular case of Theorem \ref{thm:total-actions-nofixed}. See Figure \ref{fig:example-embedding} for representations of the digit movements we are doing in this case and another three cases for $p=13$.
\end{example}

\begin{figure}
	\[\begin{array}{ccc}
		& \text{Before} & \text{After} \\
		z_1 & \textcolor{red}{12.34}_5 & 1 \\
		k_1' & \colorbox{orange}{7} & 0 \\
		k_1'' & \colorbox{yellow}{$-9$} & 0 \\
		z_2 & \textcolor{blue}{43.21}_5 & 1 \textcolor{red}{1}. \textcolor{blue}{4} \textcolor{red}{2} \textcolor{blue}{3} \textcolor{red}{3} \textcolor{blue}{2} \textcolor{red}{4} \textcolor{blue}{1} 001 \colorbox{orange}{0000000}1 \colorbox{yellow}{000000002}_5 \\
		k_2' & \colorbox{green}{$-3$} & \colorbox{green}{$-3$} \\
		k_2'' & 1 & -23
	\end{array}\]
	\[\begin{array}{ccc}
		& \text{Before} & \text{After} \\
		z_1 & \textcolor{red}{\text{56789A.BC0123}}_{13} & \textcolor{red}{5678}01_{13} \\
		k_1' & \colorbox{orange}{$-4$} & 0 \\
		k_1'' & \colorbox{yellow}{$-2$} & 0 \\
		z_2 & \textcolor{blue}{\text{543210.CBA987}}_{13} & \textcolor{blue}{5432}1 \textcolor{red}{9}. \textcolor{blue}{1} \textcolor{red}{\text{A}} \textcolor{blue}{0} \textcolor{red}{\text{B}} \textcolor{blue}{\text{C}} \textcolor{red}{\text{C}} \textcolor{blue}{\text{B}} \textcolor{red}{0} \textcolor{blue}{\text{A}} \textcolor{red}{1} \textcolor{blue}{9} \textcolor{red}{2} \textcolor{blue}{8} \textcolor{red}{3} \textcolor{blue}{7} 001 \colorbox{orange}{0002} \colorbox{yellow}{02}_{13} \\
		k_2' & \colorbox{green}{$-10$} & \colorbox{green}{$-10$} \\
		k_2'' & 4 & -14
	\end{array}\]
	\[\begin{array}{ccc}
		& \text{Before} & \text{After} \\
		z_1 & \textcolor{red}{\text{00.000000}}_{13} & 1 \\
		k_1' & \colorbox{orange}{$\infty$} & 0 \\
		k_1'' & \colorbox{yellow}{$-2$} & 0 \\
		z_2 & \textcolor{blue}{\text{543210.CBA987}}_{13} & \textcolor{blue}{5432}1 \textcolor{red}{0}. \textcolor{blue}{1} \textcolor{red}{0} \textcolor{blue}{0} \textcolor{red}{0} \textcolor{blue}{\text{C}} \textcolor{red}{0} \textcolor{blue}{\text{B}} \textcolor{red}{0} \textcolor{blue}{\text{A}} \textcolor{red}{0} \textcolor{blue}{9} \textcolor{red}{0} \textcolor{blue}{8} \textcolor{red}{0} \textcolor{blue}{7} 001 \colorbox{orange}{3} \colorbox{yellow}{02}_{13} \\
		k_2' & \colorbox{green}{$-10$} & \colorbox{green}{$-10$} \\
		k_2'' & 4 & -11
	\end{array}\]
	\[\begin{array}{ccc}
		& \text{Before} & \text{After} \\
		z_1 & \textcolor{red}{\text{56789A.BC0123}}_{13} & \textcolor{red}{5678}01_{13} \\
		k_1' & \colorbox{orange}{$-4$} & 0 \\
		k_1'' & \colorbox{yellow}{$-2$} & 0 \\
		z_2 & \textcolor{blue}{\text{00.000000}}_{13} & 1 \textcolor{red}{9}. \textcolor{blue}{0} \textcolor{red}{\text{A}} \textcolor{blue}{0} \textcolor{red}{\text{B}} \textcolor{blue}{0} \textcolor{red}{\text{C}} \textcolor{blue}{0} \textcolor{red}{0} \textcolor{blue}{0} \textcolor{red}{1} \textcolor{blue}{0} \textcolor{red}{2} \textcolor{blue}{0} \textcolor{red}{3} \textcolor{blue}{0} \colorbox{green}{2} \colorbox{orange}{0002} \colorbox{yellow}{02}_{13} \\
		k_2' & \colorbox{green}{$\infty$} & -26 \\
		k_2'' & \colorbox{cyan}{4} & \colorbox{cyan}{4}
	\end{array}\]
	\caption{The construction in Example \ref{ex:embedding} and three similar constructions with $p=13$ (the letters A, B and C correspond to the digits $10$, $11$ and $12$).}
	\label{fig:example-embedding}
\end{figure}

\begin{example}
	The computation in Theorem \ref{thm:total-actions-semi} is the same as in Theorem \ref{thm:total-actions-nofixed}, but only the first two coordinates are polar. For example, we start with $p=7$ and the point
	\[\left(\frac{1266}{49},-1,-1,2,2,14,9,16\right)=(34.56_7,-1,-1,2,2,20_7,12_7,22_7).\]
	We apply the same changes of the previous example, but with $x_2$ instead of $z_2$, and only (a), (b) and (c), because $p$ is not $1\mod 4$. The result is
	\[(1,0,0,2,2,20_7,13.14250601_7,22_7).\]
	Note that $k_1'$ and $k_1''$ have been readjusted to $0$ to match the order of $z_1$. (In this particular case, again, step (c) would not be needed.)
	
	This same process will work for Corollary \ref{cor:analytic-actions}(2), which is a particular case of Theorem \ref{thm:total-actions-semi}.
\end{example}

\begin{example}
	Let $a_i,\ldots,a_n\in\Qp$ and
	\[A=\begin{pmatrix}
		a_1 & & & & & & \\
		& a_1 & & & & & \\
		& & a_2 & & & & \\
		& & & a_2 & & & \\
		& & & & \ddots & & \\
		& & & & & a_n & \\
		& & & & & & a_n
	\end{pmatrix}.\]
	Let $E$ be the ellipsoid given by
	\[\|Av\|_p\le 1.\]
	This ellipsoid is invariant by the action of $(\mathrm{G}_p)^n$ because each $\mathrm{G}_p$ (given in Definition \ref{def:G}) changes two coordinates with the same coefficient, and $p$-adic balls are invariant by $\mathrm{G}_p$. The $(\mathrm{G}_p)^n$-capacity of $E$ is
	\[\frac{1}{\max\Big\{|a_i|_p^2:1\le i\le n\Big\}}.\]
	In order to prove this, let $j$ be the index which attains the maximum and consider the $p$-adic ball
	\[B=\Big\{(x_1,y_1,\ldots,x_n,y_n)\in(\Qp)^{2n}:|a_jx_i|_p\le 1,|a_jy_i|_p\le 1\text{ for every }i\in\{1,\ldots,n\}\Big\}\]
	and the $p$-adic symplectic cylinder
	\[Z=\Big\{(x_1,y_1,\ldots,x_n,y_n)\in(\Qp)^{2n}:|a_jx_j|_p\le 1,|a_jy_j|_p\le 1\Big\}.\]
	Both $B$ and $Z$ have radius $1/|a_j|_p$ and $B\subset E\subset Z$, hence by Theorem \ref{thm:analytic-actions-complete}
	\[\frac{1}{|a_j|_p^2}=c(B)\le c(E)\le c(Z)=\frac{1}{|a_j|_p^2},\]
	as we wanted to prove.
\end{example}

\section{Final remarks}\label{sec:remarks}

\subsection{Non-squeezing in physics and de Gosson's work}

As explained in \cite[page 458]{McDSal}, Weinstein pointed out that Gromov's non-squeezing theorem can be considered a geometric expression of the uncertainty principle. Gromov's non-squeezing theorem has important implications in physics, see de Gosson's articles \cite{Gosson-quantum,Gosson-egg,GosLuef}.

In the $p$-adic case the fact that there is linear non-squeezing (Theorem \ref{thm:linear}) should also have a physical interpretation along similar lines. Similarly the non-squeezing result Theorem \ref{thm:analytic-actions-complete} should have such an interpretation. We do not yet know how to interpret the fact that in general there is no symplectic non-squeezing in the $p$-adic category (Theorems \ref{thm:total-embedding}, \ref{thm:total-actions-nofixed}, \ref{thm:total-actions-semi}).

\subsection{Real versus $p$-adic symplectic topology}

Since non-squeezing is one of the foundational results of modern symplectic topology, the $p$-adic Theorems \ref{thm:total-embedding}, \ref{thm:total-actions-nofixed} and \ref{thm:total-actions-semi} indicate that $p$-adic symplectic topology, in the non-linear case, is going to have a different flavor than real symplectic topology. However, in the affine case the situation is analogous, as seen in Theorem \ref{thm:linear}. This is because the linear case is a purely algebraic problem and the nonlinear case is more of a topological problem. We could say that, as far as symplectic squeezing is concerned, the linear approximations of both real and $p$-adic symplectic topology are analogous.

\subsection{A $p$-adic symplectic camel theorem?}

In the real case, a consequence of Gromov's non-squeezing theorem known as the symplectic camel theorem says that it is not possible to move a $2n$-dimensional ball through a hole in a hyperplane with less radius than the ball.

There is no direct way to generalize this to the $p$-adic case. To begin with, the statement as such makes no sense: though the concept of a hyperplane can be defined in a $p$-adic vector space, the word ``through'' implies the existence of two sides of the hyperplane, which is possible in the topology of $\R$ but not in that of $\Qp$. Also, even if we define arbitrarily the two sides, for example saying that the value of a linear functional which is zero in the hyperplane is square at one side and non-square at the other side, it is possible to move continuously from one to the other side without touching the hyperplane, in the $p$-adic sense of continuity.

\subsection{$p$-adic analytic symplectic embeddings of products}

In the real case, there exist symplectic embeddings \[\ball^2(1)\times\R^{2n-2}\hookrightarrow\ball^{2n-2}(R)\times \R^2\] if $R\ge \sqrt{2^{n-1}+2^{n-2}-2}$, by the work of Guth \cite{Guth} and Pelayo-V\~u Ng\d oc \cite{PelVuN}, or more generally, there exist symplectic embeddings \[\ball^2(r)\times\R^{2n-2}\hookrightarrow\ball^{2n-2}(R)\times \R^2\] if $R\ge r\sqrt{2^{n-1}+2^{n-2}-2}$. In the $p$-adic case, as expected, the analogous symplectic embeddings exist for any radius.

\begin{proposition}\label{prop:embeddings}
	\letnpos\ with $n\ge 2$. \letpprime. Then, for any $p$-adic absolute value $R$, there exists a $p$-adic analytic symplectomorphism
	\[f:(\Qp)^{2n}\overset{\cong}{\longrightarrow}\ball_p^{2n-2}(R)\times (\Qp)^2,\]
	where both the domain and codomain of $f$ are endowed with the standard $p$-adic symplectic form $\sum_{i=1}^n\dd x_i\wedge\dd y_i$, with $(x_1,y_1,\ldots,x_n,y_n)$ being the standard coordinates on $(\Qp)^{2n}$.
\end{proposition}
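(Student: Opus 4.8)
The plan is to iterate the $p$-adic symplectic squeezing theorem, Theorem~\ref{thm:total-embedding}, one coordinate pair at a time, using the elementary fact that a product of symplectomorphisms of the factors is a symplectomorphism of the product.

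First I would record the identity, valid for any positive integers $k_1,\dots,k_m$ and any $p$-adic absolute value $R$,
\[
\ball_p^{2k_1}(R)\times\cdots\times\ball_p^{2k_m}(R)=\ball_p^{2(k_1+\cdots+k_m)}(R),
\]
which is immediate from the definition \eqref{eq:ball} of the $p$-adic ball in terms of the maximum norm: a vector lies in the left-hand side if and only if each of its coordinate blocks has $p$-adic norm $<pR$, if and only if the whole vector has $p$-adic norm $<pR$. In particular $(\ball_p^2(R))^{n-1}=\ball_p^{2n-2}(R)$.

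Then I would argue by induction on $n\ge 2$. For $n=2$, Theorem~\ref{thm:total-embedding} together with the corollary stating that any two $p$-adic cylinders of the same dimension are symplectomorphic (so $\cyl_p^{2n}(R)\cong\cyl_p^{2n}(1)\cong(\Qp)^{2n}$) yields a $p$-adic analytic symplectomorphism $(\Qp)^4\overset{\cong}{\to}\cyl_p^4(R)=\ball_p^2(R)\times(\Qp)^2$, which is the claim. For the inductive step assume the result for $n-1\ge 2$, giving a $p$-adic analytic symplectomorphism $\psi\colon(\Qp)^{2n-2}\overset{\cong}{\to}\ball_p^{2n-4}(R)\times(\Qp)^2$ in the coordinates $(x_2,y_2,\dots,x_n,y_n)$. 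Apply Theorem~\ref{thm:total-embedding} (again composed with the rescaling corollary) to the coordinates $(x_1,y_1,\dots,x_n,y_n)$ to obtain a symplectomorphism $\phi_1\colon(\Qp)^{2n}\overset{\cong}{\to}\cyl_p^{2n}(R)=\ball_p^2(R)\times(\Qp)^{2n-2}$, with the $\ball_p^2(R)$ factor in the variables $(x_1,y_1)$. Since $\sum_{i=1}^n\dd x_i\wedge\dd y_i$ splits as $\dd x_1\wedge\dd y_1$ plus $\sum_{i=2}^n\dd x_i\wedge\dd y_i$, the map $\mathrm{id}_{\ball_p^2(R)}\times\psi$ is a $p$-adic analytic symplectomorphism of $\ball_p^2(R)\times(\Qp)^{2n-2}$ onto $\ball_p^2(R)\times\ball_p^{2n-4}(R)\times(\Qp)^2=\ball_p^{2n-2}(R)\times(\Qp)^2$, the last equality by the identity above. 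The composition $f=(\mathrm{id}_{\ball_p^2(R)}\times\psi)\circ\phi_1$ is the desired symplectomorphism.

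I do not expect any genuine obstacle here: the result is a routine iteration of Theorem~\ref{thm:total-embedding}. The only points needing minor care are the bookkeeping of which coordinate pair each invocation of the squeezing theorem acts on, and the observation that a Cartesian product of $p$-adic balls of equal radius is again a $p$-adic ball, which is what makes the iterated products collapse onto $\ball_p^{2n-2}(R)$.
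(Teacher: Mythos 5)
Your proposal is correct and takes essentially the same route as the paper: the paper also reduces to a fixed radius by conjugating with a scaling and then applies Theorem \ref{thm:total-embedding} successively $n-1$ times, letting the max-norm identity $\ball_p^2(R)\times\ball_p^{2k}(R)=\ball_p^{2k+2}(R)$ collapse the product of balls; your induction on $n$ is just a reformulation of that iteration.
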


\begin{proof}
	It is enough to prove it for $R=1$, and the rest follows by scaling. We apply Theorem \ref{thm:total-embedding} $n-1$ times and in this way obtain a $p$-adic analytic symplectomorphism
	\[(\Qp)^{2n}\overset{\cong}{\to}\ball_p^2(1)\times(\Qp)^{2n-2}\overset{\cong}{\to}\ldots\overset{\cong}{\to}\ball_p^{2n-2}(1)\times(\Qp)^2.\qedhere\]
\end{proof}

\section{Appendix: the $p$-adic numbers}\label{sec:appendix}

We recall here some definitions and results about the $p$-adic numbers and $p$-adic symplectic geometry which we need in the paper.

The field $\R$ of the real numbers is defined as a completion of the field $\Q$ with the standard absolute value given by
\[|x|=\max\{x,-x\}.\]
Analogously, $\Qp$ is defined as the completion of $\Q$ with the $p$-adic absolute value.

\begin{definition}
	\letpprime. The \emph{$p$-adic order} (or valuation) is defined in $\Q$ as
	\[\ord_p(n)=\max\{k\in\N:p^k\mid n\}\]
	for $n\in\Z$, and
	\[\ord_p\left(\frac{m}{n}\right)=\ord_p(m)-\ord_p(n)\]
	for any $m,n\in\Z$.
	
	The \emph{$p$-adic absolute value} is defined in $\Q$ as
	\[|x|_p=p^{-\ord_p(x)}\]
	for $x\in\Q$.
	
	The field of \emph{$p$-adic numbers} $\Qp$ is defined as the completion of $\Q$ with respect to the $p$-adic absolute value.
\end{definition}

See Figure \ref{fig:7adics} for a drawing of the $7$-adic numbers.

\begin{figure}
	\includegraphics{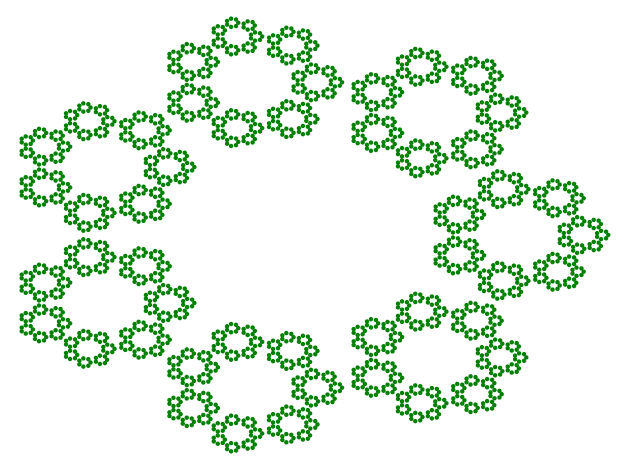}
	\caption{The $7$-adic numbers. If each dot represents a ball of radius $1$, all the dots together represent a ball of radius $7^4$.}
	\label{fig:7adics}
\end{figure}

\begin{proposition}\label{prop:expansion}
	\letpprime. Given $x\in\Qp$, there exist $k\in\Z$ and $a_i\in\{0,\ldots,p-1\}$ for each $i\in\Z$ with $i\ge k$ such that
	\[\sum_{i=k}^{\infty}a_ip^i=x,\]
	meaning that the infinite sum in the left converges to $x$ respect to the $p$-adic distance. These $a_i$ will be called \emph{digits} of $x$, and $k$ is the \emph{order} of $x$, denoted by $\ord_p(x)$; this order satisfies that $|x|_p=p^{-\ord_p(x)}$, and as such it generalizes the order in the previous definition.
\end{proposition}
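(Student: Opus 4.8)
The plan is to produce the expansion by a greedy digit-extraction procedure, after first reducing to the case $x\in\Zp$. The reduction rests on the fact that completion does not enlarge the value group: $|\Q^\times|_p=\{p^m:m\in\Z\}$ is a discrete, hence closed, subgroup of $\R_{>0}$, and since $|\cdot|_p$ on $\Qp$ is the continuous extension of the absolute value on $\Q$, one still has $|\Qp^\times|_p=\{p^m:m\in\Z\}$. Hence for $x\ne 0$ there is a well-defined integer $\ord_p(x)$ with $|x|_p=p^{-\ord_p(x)}$, extending the definition on $\Q$; the case $x=0$ is trivial (all digits $0$). Set $k=\ord_p(x)$ and $y=p^{-k}x$, so that $|y|_p=1$, i.e.\ $y$ is a unit of $\Zp=\{z\in\Qp:|z|_p\le 1\}$.

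Next I would show that $\Z$ is dense in $\Zp$: given $z\in\Zp$ and $N\ge 1$, density of $\Q$ in $\Qp$ gives $q\in\Q$ with $|z-q|_p<p^{-N}$, whence $|q|_p\le\max\{|z|_p,|z-q|_p\}\le 1$ by the ultrametric inequality, so $q=a/b$ with $p\nmid b$; picking $c\in\Z$ with $bc\equiv a\pmod{p^N}$ gives $|q-c|_p\le p^{-N}$ and therefore $|z-c|_p\le p^{-N}$. Using this, define digits recursively: put $y_0=y$; given $y_i\in\Zp$, choose $a_i\in\{0,\dots,p-1\}$ with $y_i\equiv a_i\pmod{p\Zp}$ (take an integer $p$-adically close to $y_i$ and reduce it mod $p$), and set $y_{i+1}=(y_i-a_i)/p\in\Zp$. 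An easy induction gives $y=\sum_{i=0}^{m-1}a_ip^i+p^m y_m$ with $y_m\in\Zp$, so $\bigl|y-\sum_{i=0}^{m-1}a_ip^i\bigr|_p=|p^m y_m|_p\le p^{-m}\to 0$; thus $y=\sum_{i=0}^\infty a_ip^i$ in $\Qp$, and multiplying by $p^k$ yields $x=\sum_{j=k}^\infty a_{j-k}p^j$. Since $y$ is a unit, $a_0\not\equiv 0\pmod p$, so the leading nonzero digit sits at place $k$; this confirms that this $k$ is $\ord_p(x)$ and, since $|x|_p=|p^k y|_p=p^{-k}$, the last assertion of the statement. (That the $a_i$ are unique, which is what lets one call them \emph{the} digits of $x$, follows by comparing two such expansions modulo successive powers of $p$.)

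I do not expect a genuine obstacle here: the only two points needing care are that completion preserves the discrete value group and that $\Z$ is dense in the unit ball $\Zp$ — the latter being precisely what makes the digit extraction legitimate — and everything else is a telescoping estimate in the ultrametric.
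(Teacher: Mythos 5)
Your proof is correct: the reduction to a unit of $\Zp$ via the fact that completion does not enlarge the discrete value group, the density of $\Z$ in $\Zp$, and the greedy digit extraction with the telescoping ultrametric estimate together give exactly the stated expansion, and your remark that $a_0\ne 0$ pins down $k$ as the order. The paper itself offers no proof of this proposition --- it is recalled in the appendix as standard background, with references such as \cite{Gouvea} and \cite{Koblitz} --- and your argument is precisely the standard one found there, so there is nothing to contrast; the only cosmetic point is that the proposition as stated asks only for existence, while the uniqueness you sketch in passing is indeed what the paper later uses (in the proof of Theorem \ref{thm:total-embedding}), so it is worth keeping.
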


\begin{definition}
	\letpprime. The \emph{$p$-adic integers} $\Zp$ are defined as the $p$-adic numbers having nonnegative order.
\end{definition}

The $p$-adic absolute value gives rise to a norm in $(\Qp)^n$:

\begin{definition}
	\letnpos. \letpprime. Given $v=(v_1,\ldots,v_n)\in(\Qp)^n$, we define the \emph{$p$-adic norm} of $v$ as
	\[\|v\|_p=\max\{|v_i|_p:i\in\{1,\ldots,n\}\}.\]
\end{definition}

\begin{definition}
	\letnpos. \letpprime. Given an open set $U\subset(\Qp)^n$, an \emph{analytic function} $f:U\to\Qp$ is given by a collection of power series $\{f_i:i\in I\}$ for some set $I$, where $f_i:U_i\to\Qp$ for each $i\in I$, each $U_i$ is an open subset of $(\Qp)^n$, the union of the sets $U_i$ is $U$, and $f_i$ converges in $U_i$ for all $i\in I$.
\end{definition}

The concepts of differential form, the differential operator and the wedge operator are direct extensions of the real case. See \cite[Appendix B]{CrePel-JC} for precise definitions.

We recommend the books \cite{Gouvea,Koblitz,Schneider} for introductions to the $p$-adic numbers and their use in geometry or analysis, and the works \cite{DjoDra,Dragovich-quantum,Dragovich-harmonic,DKKV} for treatments of the $p$-adic numbers in the context of geometry and physics.

\bigskip

\textbf{Acknowledgments.} The first author is funded by grant PID2022-137283NB-C21 of MCIN/AEI/10.13039/501100011033 / FEDER, UE and by project CLaPPo (21.SI03.64658) of Universidad de Cantabria and Banco Santander.

The second author is funded by a FBBVA (Bank Bilbao Vizcaya Argentaria Foundation) Grant for Scientific Research Projects with title \textit{From Integrability to Randomness in Symplectic and Quantum Geometry}.

The second author thanks the Dean of the School of Mathematical Sciences Antonio Br\'u and the Chair of the Department of Algebra, Geometry and Topology at the Complutense University of Madrid, Rutwig Campoamor, for their support and excellent resources he is being provided with to carry out the FBBVA project.

\end{document}